\documentclass[reqno,hidelinks]{amsart}%hidelinfks to remove the red rectangle in the references

\usepackage{a4,amsmath,amssymb,amsthm}
\usepackage[top=1in, bottom=1in, left=1.25in, right=1.25in]{geometry}%set the margins;
\usepackage{xfrac,array}
\usepackage{tikz,slashed,graphicx}
\usetikzlibrary{snakes,arrows}
\usepackage{latexsym}
\usepackage{mathrsfs}
\usepackage[numbers]{natbib}
\pagestyle{plain}

\usepackage{leftidx}
\usepackage{inputenc}

\usepackage{amsaddr}

\usepackage[T1]{fontenc}

\usepackage{leftidx}
\usepackage{tikz}

\usepackage{hyperref}
\newcounter{mnotecount}[section]

\usepackage{color}

%keepinprivatenotesbutnotpublishedversion

\usepackage{url}

\numberwithin{equation}{section}

\usepackage[mathscr]{eucal}

\usepackage{graphicx}
\graphicspath{pic/}

\newcommand{\half}{\frac{1}{2}}
\newcommand{\Half}{1/2}        %
        %        %

\newcommand{\veps}{\varepsilon}

\newcommand{\prb}{\partial_{\rb}}
\newcommand{\di}{\mathrm{d}} %{\text{d}}

\newcommand{\Donetwo}{\Omega_{\tb_1,\tb_2}}

\newcommand{\Dzeroinfty}{\Omega_{\tb_0,\infty}}

\newcommand{\sfrak}{\mathfrak{s}}

\newcommand{\psis}{\psi_{s} }
\newcommand{\psips}{\psi_{\sfrak} }
\newcommand{\psins}{\psi_{-\sfrak} }
\newcommand{\psipns}{\psi_{\pm\sfrak} }
\newcommand{\phis}{\phi_{s} }
\newcommand{\phips}{\phi_{\sfrak} }
\newcommand{\phins}{\phi_{-\sfrak} }

\newcommand{\Phis}{\Phi_{s} }
\newcommand{\Phips}{\Phi_{\sfrak} }
\newcommand{\Phins}{\Phi_{-\sfrak} }
\newcommand{\hatPhips}{\Phips^{(1)} }
\newcommand{\Psips}{\Psi_{\sfrak} }
\newcommand{\Psins}{\Psi_{-\sfrak} }
\newcommand{\Psinst}{\tilde{\Psi}_{-\sfrak}}
\newcommand{\Psipns}{\Psi_{\pm\sfrak} }
\newcommand{\Phipns}{\Phi_{\pm\sfrak} }

\providecommand{\PhipsHigh}[1]{\Phips^{(#1)}}
\providecommand{\PhinsHigh}[1]{\Phins^{(#1)}}
\providecommand{\PhipnsHigh}[1]{\Phipns^{(#1)}}
\providecommand{\tildePhipsHigh}[1]{\tilde{\Phi}_{\sfrak}^{(#1)}}
\providecommand{\tildePhinsHigh}[1]{\tilde{\Phi}_{-\sfrak}^{(#1)}}

\providecommand{\tildePhipsHighTI}[1]{\tilde{\Phi}_{\sfrak,TI}^{(#1)}}
\providecommand{\tildePhinsHighTI}[1]{\tilde{\Phi}_{-\sfrak,TI}^{(#1)}}

\newcommand{\iip}{i'}

\newcommand{\PigeonTime}{\timefunc}
\newcommand{\iPigeonReg}{\iip}
\newcommand{\timefunc}{\tb}

\newcommand{\Integers}{\mathbb{Z}}
\newcommand{\Reals}{\mathbb{R}}

\newcommand{\Lxi}{\mathcal{L}_{\xi}}
\newcommand{\Leta}{\mathcal{L}_{\eta}}

\newcommand{\TMEOps}{\widehat{\Box}_{g_M,\sfrak}}
\newcommand{\TMEOpns}{\widehat{\Box}_{g_M,-\sfrak}}

\newcommand{\curlL}[1]{\mathcal{L}_{\sf[#1]}}
\newcommand{\curlLd}[1]{\mathcal{L}^{\dagger}_{\sf[#1]}}
\newcommand{\TAO}{\mathbf{T}}

\newcommand{\curlLs}{\edthR'}
\newcommand{\curlLds}{\edthR}

\newcommand{\pu}{\partial_u}
\newcommand{\pv}{\partial_v}

\newcommand{\DOC}{\mathcal{D}}
\newcommand{\tb}{\tau}
\newcommand{\pb}{\phi}
\newcommand{\rb}{\rho}
\newcommand{\Hyper}{\Sigma}
\newcommand{\Sphere}{S^2}

\newcommand{\Sigmazero}{\Hyper_{\tb_0}}
\newcommand{\Sigmatb}{\Hyper_{\tb}}
\newcommand{\Sigmatwo}{\Hyper_{\tb_2}}
\newcommand{\Sigmaone}{\Hyper_{\tb_1}}

\newcommand{\Horizon}{\mathcal{H}^+}
\newcommand{\Scri}{\mathcal{I}^+}
\newcommand{\Horizononetwo}{\Horizon_{\tb_1,\tb_2}}
\newcommand{\Scrionetwo}{\Scri_{\tb_1,\tb_2}}

\newcommand{\CDeri}{\mathbb{D}}
\newcommand{\CDerit}{\tilde{\mathbb{D}}}

\newcommand{\PDeri}{\mathbb{B}}
\newcommand{\PSDeri}{\widetilde{\mathbb{B}}}
\newcommand{\RDeri}{\mathbb{H}}

\providecommand{\NPCP}[1]{\mathbb{Q}_{\sfrak}^{(#1)}}
\providecommand{\NPCN}[1]{\mathbb{Q}_{-\sfrak}^{(#1)}}
\providecommand{\NPCPTI}[1]{\mathbb{Q}_{{\sfrak}, TI}^{(#1)}}

\newcommand{\VR}{\hat{V}}
\newcommand{\curlVR}{\hat{\mathcal{V}}}
\newcommand{\edthR}{\mathring{\eth}}

\newcommand{\FB}{\mathbf{F}}
\newcommand{\FBT}{\tilde{\mathbf{F}}}

\newcommand{\InitEnerNonvanish}{\mathbf{I}^{\neq 0}_{\delta,\regl}}
\newcommand{\InitEnervanish}{\mathbf{I}^{=0}_{\delta,\regl}}

\providecommand{\abs}[1]{\lvert#1\rvert}
\providecommand{\norm}[1]{\lVert#1\rVert}

\providecommand{\absHighOrder}[3]{\abs{#1}_{#2,#3}}

\providecommand{\absCDeri}[2]{\absHighOrder{#1}{#2}{\CDeri}}

\providecommand{\absRDeri}[2]{\absHighOrder{#1}{#2}{\RDeri}}
\providecommand{\absCDerit}[2]{\absHighOrder{#1}{#2}{\CDerit}}

\makeatletter
  \def\moverlay{\mathpalette\mov@rlay}
  \def\mov@rlay#1#2{\leavevmode\vtop{%
     \baselineskip\z@skip \lineskiplimit-\maxdimen
     \ialign{\hfil$#1##$\hfil\cr#2\crcr}}}
\makeatother

\newcommand{\reg}{k}

\newcommand{\regl}{k'}

\theoremstyle{plain}
\newtheorem{thm}{Theorem}[section]
\newtheorem{cor}[thm]{Corollary}
\newtheorem{lemma}[thm]{Lemma}
\newtheorem{prop}[thm]{Proposition}

\theoremstyle{definition}
\newtheorem{definition}[thm]{Definition}

\newtheorem{remark}[thm]{Remark}

%%%%%%%%%%%%%%%%%%%%%%%%%%%%%
\title{Sharp decay estimates for massless Dirac fields on a Schwarzschild background}

\author[S. Ma and L.Zhang]{Siyuan Ma$^\dagger$ and Lin Zhang$^\star$}
\email{siyuan.ma@sorbonne-universite.fr, linzhang2013@pku.edu.cn}
\address{$^\dagger$Laboratoire Jacques-Louis Lions,
Sorbonne Universit\'{e} Campus Jussieu,
4 place Jussieu 75005 Paris, France.\\
$^\star$College of Mathematics and Statistics, Chongqing University, Chongqing 401331, China.}
%\keywords{Decay estimates, Price's law, Dirac field, Teukolsky equation, Kerr spacetimes}

\setcounter{tocdepth}{2}

\begin{document}
\allowdisplaybreaks

\begin{abstract}
We consider the explicit asymptotic profile of massless Dirac fields on a Schwarzschild background.
First, we prove for the spin $s=\pm \half$ components of the Dirac field a uniform bound of a positive definite energy and an integrated local energy decay estimate from a symmetric hyperbolic wave system. Based on these estimates, we further show that these components have globally pointwise decay $fv^{-3/2-s}\tau^{-5/2+s}$ as both an upper and a lower bound outside the black hole, with function $f$ finite and explicitly expressed in terms of the initial data and the coordinates. This establishes the validity of the conjectured Price's law for massless Dirac fields outside a Schwarzschild black hole.
\end{abstract}

\maketitle
\tableofcontents

%%%%%%%%%%%%%%%
\section{Introduction}
\label{sect:intro}
%%%%%%%%%%%%%%%%%%

In this work, we consider the asymptotics of massless Dirac fields on a Schwarzschild black hole background. Our motivation arises from its relevance to many fundamental problems in classic General Relativity, as this model is closely tied to the black hole stability problem, Strong Cosmic Censorship conjecture, and a complete mathematical understanding of the Hawking radiation, etc.

The metric of a Schwarzschild black hole spacetime \cite{schw1916}, when written in Boyer--Lindquist (B--L) coordinates $(t,r,\theta,\phi)$ \cite{boyer:lindquist:1967}, takes the form of
\begin{align}\label{eq:SchwMetricBoyerLindquistCoord}
g_{M}= & -\mu \di t^2 +\mu^{-1} \di r^2 + r^2 (\di \theta^2 +\sin^2\theta\di \phi^2),
\end{align}
where the function
$\mu=\mu(r,M)=\Delta r^{-2}$ with $\Delta=\Delta(r,M)= r^2 -2Mr$ and $M$ being the mass of the black hole. The larger root $r=2M$ of the function $\Delta$ is the location of the event horizon $\mathcal{H}$, and denote the domain of outer communication (DOC) of a Schwarzschild black hole spacetime as
\begin{equation}\label{def:DOC}
\mathcal{D}=\overline{\{(t,r,\theta,\phi)\in \mathbb{R}\times (2M,\infty)\times \mathbb{S}^2\}}.
\end{equation}
We focus on the future development, hence only the future part of the event horizon, called the future event horizon and denoted as $\Horizon$, is relevant.

The governing equations of massless Dirac fields describe the movement of sourceless neutrino, with no coupling to electrons or muons. These Dirac equations take the form of
\begin{align}
\label{eq:Dirac:spinorform}
\nabla^{AA'}\Phi_{A}=0,
\end{align}
where $\Phi_{A}$ is a two-component spinor.
Choose a Hartle--Hawking null tetrad \cite{HHtetrad72} which is regular at $\Horizon$ and reads in B--L coordinates $(t,r,\theta,\phi)$: \begin{align}\label{eq:HartleHawkingtetrad}
l^\mu &= \half (1 , \mu , 0 , 0), &
n^\mu &= (\mu^{-1}, - 1 , 0 , 0), \notag\\
m^\mu &= \frac{1}{\sqrt{2}r}\left(0,0 , 1, i\csc\theta\right),&
(\bar{m})^{\mu}&= \frac{1}{\sqrt{2}r}\left(0,0 , 1, -i\csc\theta\right).
\end{align}
The vectors $(l^{\mu}, n^\mu, m^\mu, \bar{m}^\mu)$ form a null frame and satisfy
$g_M(m,\bar{m})=g_M(l, n)=-2$ and all other inner products being zero. 
Since the DOC, endowed with a Schwarzschild metric $g_M$, is globally hyperbolic, it is a spin manifold and admits a spin-structure.
We denote by $\mathbb{S}$  the spinor bundle over the DOC, where the spinor space at each point of the  DOC is $\mathbb{C}^2$ with the vector representation of $\mathrm{SL}(2,\mathbb{C})$, and the complex conjugate spinor bundle is denoted by $\overline{\mathbb{S}}$. The vector space $\mathbb{R}^4\otimes\mathbb{C}$ and the spinor space $\mathbb{C}^2\otimes\overline{\mathbb{C}^2}$ have a nature isomorphism correspondence via the soldering form
$g^\mu_{\ AA'}$, e.g. $\nu^{\mu}=g^{\mu}_{\ AA'}\nu^{AA'}$,  and this correspondence can be extended to a correspondence between $\mathrm{T}\mathcal{M}\otimes\mathbb{C}$\footnote{In our situation, $\mathcal{M}$ is the DOC of the Schwarzschild spacetime and $\mathrm{T}\mathcal{M}$ is its tangent bundle.}  and spinor bundle $\mathbb{S}\otimes\overline{\mathbb{S}}$. See \cite[Section 3]{penroserindlerI}.
A local basis $\{o^A, \iota^A\}$ of the spinor bundle $\mathbb{S}$ satisfying $o_A\iota^A=1$ is called a pair of dyad legs. Associated to the Hartle--Hawking null tetrad, there exists a pair of  dyad legs $\{o^A, \iota^A\}$, unique modulo an overall sign change, such that\footnote{In fact, such a pair of dyad legs can be associated to any Newman--Penrose null tetrad.}
\begin{align}
\label{def:oandiota}
l^\mu= o^A \bar{o}^{A'}, \quad n^{\mu}= \iota^A \bar{\iota}^{A'}, \quad m^{\mu}=o^A\bar{\iota}^{A'}, \quad \bar{m}^{\mu}=\iota^A\bar{o}^{A'},
\end{align}
where \textquotedblleft{$=$\textquotedblright} is with respect to the soldering correspondence.
Let $\chi_0$ and $\chi_1$ be the components of $\Phi_A$ along dyad legs $o^A$ and $\iota^A$
\begin{align}
\label{def:compsofDiracspinor}
\chi_0=\Phi_{A}o^{A}, \quad \chi_1=\Phi_A \iota^A.
\end{align}
These two components $\chi_0$ and $\chi_1$ are spin-weight $\half$ and $-\half$ scalars, respectively.\footnote{For more about the Dirac field, see \cite{chandrasekhar1976solution} and \cite[Chapter 10]{chandbook1998}.}
Unless otherwise stated, we shall throughout the paper denote $s$  the spin-weight $\pm \frac{1}{2}$ and $\sfrak$ its absolute value $\frac{1}{2}$.

Define the Teukolsky scalars of Dirac field as
\begin{align}
\label{def:psipns:Schw}
\psis=\left\{
             \begin{array}{ll}
              r\chi_0, & \hbox{$s=\half$;} \\
               2^{-\half}\chi_1, & \hbox{$s=-\half$.}
             \end{array}
           \right.
\end{align}
As shown in Appendix \ref{app:TMEandDiracEq}, the Dirac equations \eqref{eq:Dirac:spinorform} on Schwarzschild simplify to
\begin{subequations}\label{eq:Dirac:TMEscalar}
\begin{align}
\curlLs   \psips={}&
(\Delta^{\Half}\VR)(\Delta^{\Half}\psins),\\
\curlLds \psins={}&
Y\psips,
\end{align}
\end{subequations}
where $Y$ and $\VR$ are
two future-directed ingoing and outgoing null vectors in B-L coordinates
\begin{subequations}
\begin{align}\label{def:VectorFieldYandV}
Y&= \mu^{-1}\partial_t -\partial_r, \ &\ \VR&= \mu^{-1}\partial_t+\partial_r,
\end{align}
and
 $\edthR$ and $\edthR'$ are the spherical edth operators defined, when acting on a spin-weight $s$ scalar $\varphi$, by
\begin{align}
\label{def:edthRandedthR'}
\edthR\varphi={}&\partial_{\theta}\varphi
+i\csc\theta\partial_{\phi}\varphi
-s\cot\theta\varphi, &
\edthR'\varphi={}&\partial_{\theta}\varphi
-i\csc\theta\partial_{\phi}\varphi
+s\cot\theta\varphi.
\end{align}
\end{subequations}

 Define additionally a tortoise coordinate $r^*$ by
\begin{align}
\label{def:r*coordinate}
\di r^*=\mu^{-1}\di r,\qquad r^*(3M)=0.
\end{align}
It is convenient to introduce double null coordinates $(u,v,\theta,\phi)$, where $u=t-r^*$ and $v=t+r^*$. Thus $\pu=\half\mu Y$ and $\pv=\half \mu \VR$. Define additionally a function $h=h(r)$ and a hyperboloidal coordinate system $(\tb,\rb,\theta,\phi)$ as in \cite{andersson2019stability} where $\tb=v-h$. In particular, the function $h$ satisfies $\lim\limits_{r\to r_+}h =r_+$, $\lim\limits_{r\to r_+} \partial_rh=1$, $\partial_r h \geq 0$ for $r\geq r_+$, $h=r^*$ for $r\in [r_{away}, R]$ where $r_{away}$ is away from horizon location $r=2M$ and $R/M$ is a large constant, and $1\lesssim \lim\limits_{r\to\infty}M^{-2}r^2 (\partial_rh - 2\mu^{-1})\vert_{\Sigmatb}<\infty$. See Figure \ref{fig:2}.

%%%%%%%%%%%%%%%%%%%%%%%%%%%%
\begin{figure}[htbp]
\begin{minipage}[t]{0.5\linewidth}
\begin{center}
\begin{tikzpicture}[scale=0.8]
\draw[thin]
(0,0)--(2.45,2.45);
\draw[very thin]
(2.5,2.5) circle (0.05);
\coordinate [label=90:$i_+$] (a) at (2.5,2.5);
\draw[thin]
(0,0)--(2.45,-2.45);
\draw[dashed]
(2.55,2.45)--(4.95,0.05);
\draw[very thin]
(5,0) circle (0.05);
\coordinate [label=360:$i_0$] (a) at (5,0);
\draw[dashed]
(4.95,-0.05)--(2.55,-2.45);
\draw[very thin]
(2.5,-2.5) circle (0.05);
\coordinate [label=270:$i_-$] (a) at (2.5,-2.5);
\draw[thin]
(0.9,0.9) arc (215:320:2.1 and 1.7);
\node at (2.5,-0.15) {\small $\Sigma_{\tau_1}$};
\draw[thin]
(1.5,1.5) arc (212:323:1.25 and 0.9);
\node at (2.5,1.35) {\small $\Sigma_{\tau_2}$};
\draw[very thick]
(0.9,0.9)--(1.5,1.5);
\draw[dashed, very thick]
(3.57,1.43)--(4.28,0.72);
\node at (0.95,1.45) [rotate=45] {\small $\mathcal{H}_{\tau_1,\tau_2}^+$};
\node at (4.15,1.35) [rotate=-45] {\small $\mathcal{I}_{\tau_1,\tau_2}^+$};
\node at (2.5,0.7) {\small $\Omega_{\tau_1,\tau_2}$};
\end{tikzpicture}
\end{center}
\caption{Hyperboloidal foliation and some related definitions.}
\label{fig:2}
\end{minipage}%
\begin{minipage}[t]{0.5\linewidth}
\begin{center}
\begin{tikzpicture}[scale=0.8]
\draw[thin]
(0,0)--(2.45,2.45);
\draw[very thin]
(2.5,2.5) circle (0.05);
\coordinate [label=90:$i_+$] (a) at (2.5,2.5);
\draw[thin]
(0,0)--(2.45,-2.45);
\draw[dashed]
(2.55,2.45)--(4.95,0.05);
\draw[very thin]
(5,0) circle (0.05);
\coordinate [label=360:$i_0$] (a) at (5,0);
\draw[dashed]
(4.95,-0.05)--(2.55,-2.45);
\draw[very thin]
(2.5,-2.5) circle (0.05);
\coordinate [label=270:$i_-$] (a) at (2.5,-2.5);
\draw[thin]
(0.6,0.6) arc (209:335:2.1 and 1.1);
\node at (1.08,1.42) [rotate=45] {\small $\mathcal{H}^+$};
\node at (3.9,1.4) [rotate=-45] {\small $\mathcal{I}^+$};
\node at (2.5,-0.25) {\small $\Sigma_{\tau_0}$};
\end{tikzpicture}
\end{center}
\caption{Initial hypersurface $\Sigma_{\tau_0}$.}
\label{fig:1}
\end{minipage}
\end{figure}
%%%%%%%%%%%%%%%%%%%%%%%%%%%%%%%%%%%%%%

Let $\tb_0\geq 1$, and define for any $\tb_0\leq \tb_1<\tb_2$,
\begin{subequations}
\label{def:domainnotations}
\begin{align}
&\Sigmaone={}\{(\tb, \rb, \theta, \pb)|\tb=\tb_1\}\cap \DOC, \quad \Donetwo={}\bigcup_{\tb\in [\tb_1,\tb_2]}\Sigmatb,\\
&\Scrionetwo={}\lim_{c\to \infty}\{v=c\}\cap \Donetwo, \quad
\Horizononetwo={}\Donetwo\cap\Horizon.
\end{align}
\end{subequations}
We fix $\tb_0$ by requiring  $v\geq M$ on $\Sigmazero$ such that $v\geq c (\tb+\rb)$ in $\Omega_{\tb_0,\infty}$. See Figure \ref{fig:1}. The hypersurface $\Sigmazero$ will be our initial hypersurface on which the initial data are imposed.
The level sets of the time function $\tb$ are strictly spacelike with
\begin{align}
c(M)r^{-2}\leq -g(\nabla \tb,\nabla\tb)\leq C(M) r^{-2}
\end{align}
for two positive universal constants $c(M)$ and $C(M)$,
and they cross the future event horizon regularly, and for large $r$, the level sets of $\tb$ are asymptotic to future null infinity $\Scri$.

Throughout this work, we \underline{always} assume that the initial data on $\Sigmazero$, i.e.  the spin $\pm \half$ components $\psipns$ on $\Sigmazero$, are smooth in a regular coordinate system, for instance, the ingoing Eddington--Finkelstein coordinate system. By standard theory of global well-posedness of linear symmetric hyperbolic systems, the components $\psipns$ are globally smooth up to and including $\Horizon$.

We shall need to decompose the spin $\pm \half$ components into $\ell\geq 2$ part and $\ell=1$ mode, and decompose further $\ell=1$ mode into $(m,\ell=1)$ modes in terms of spin-weighted spherical harmonics, where $m=-\half, \half$, cf. Section \ref{sect:decompIntoModes}.
Let $F^{(2)}(\reg,p,\tb,(\Psipns)^{\ell\geq 2})$ and $F^{(1)}(\reg,p,\tb,(\Psipns)^{\ell=1})$ be defined as in Definition \ref{def:Fenergies:big2} by simply replacing $\Psipns$ therein by $(\Psipns)^{\ell\geq 2}$ and $(\Psipns)^{\ell=1}$, respectively. In the end, define
\begin{subequations}
\begin{align}
\label{def:NPCPell=1:positive:mainthm}
\NPCP{1}(m,\ell=1)={}&\lim\limits_{\rb\to\infty}\rb^2 \VR
(\mu^{-\half}r\psips(m,\ell=1)),\\
\label{def:varphisfrak}
\varphi_{\sfrak}={}&(r-M)^{-1}\psips.
\end{align}
\end{subequations}

\begin{thm}
\label{thm:pricelaw:nonzeroNP} (Price's law in nonvanishing first Newman--Penrose constant case)
Let $j\in \mathbb{N}$. Assume 
\begin{enumerate}
\item $\sum\limits_{m=\pm\half}\abs{\NPCP{1}(m,\ell=1)}\neq 0$;
\item there are constants $\beta\in (0,\half)$ and $D_0\geq 0$ such that for all $0\leq i\leq j$, 
\begin{align}
\label{assump:Pricelaw:nonzero}
\sum_{m=\pm\half}\sup_{\Sigmazero\cap\{\rb\geq 4M\}}\bigg|\rb^i\prb^i\bigg(\rb^2\VR(\mu^{-\half}r\psips(m,\ell=1))
-{\NPCP{1}(m,\ell=1)}\bigg)\bigg|\lesssim \rb^{-\beta}D_0;
\end{align}
\item the following initial bound for a suitably small $\delta\in (0,\half)$ and a suitably large $\regl=\regl(j)$
\begin{align}
\InitEnerNonvanish={}&
\big(F^{(1)}(\regl,3-\delta,\tb_0,(\Psipns)^{\ell=1})\big)^{\half}
+\big(F^{(2)}(\regl,1+\delta,\tb_0,(\Psipns)^{\ell\geq 2})\big)^{\half}
\notag\\
&
+\sum_{m=\pm\half}\abs{\NPCP{1}(m,\ell=1)}+D_0<\infty.
\end{align}
\end{enumerate}
Then there exists an $\epsilon>0$ such that in $\Dzeroinfty$,
\begin{subequations}
\label{eq:Pricelaw:nonzero}
\begin{align}
\bigg|\partial_{\tb}^j \varphi_{\sfrak}-c_{\sfrak,j}v^{-2}\tb^{-1-j}
\sum_{m=\pm\half}\NPCP{1}(m,\ell=1)
Y_{m,\ell=1}^{\sfrak}(\cos\theta)e^{im\pb}\bigg|
\lesssim_{j,\delta} {}& v^{-2}\tb^{-1-j-\epsilon}
\InitEnerNonvanish, \\
\bigg|\partial_{\tb}^j \psins
-c_{-\sfrak,j}v^{-1}\tb^{-2-j}
\sum_{m=\pm\half}\NPCP{1}(m,\ell=1)
Y_{m,\ell=1}^{-\sfrak}(\cos\theta)e^{im\pb}\bigg|
\lesssim_{j,\delta} {}& v^{-1}\tb^{-2-j-\epsilon}\InitEnerNonvanish,
\end{align}
\end{subequations}
where
\begin{subequations}
\label{def:cpnsj}
\begin{align}
c_{\sfrak,j}={}&4 (-1)^j j! \sum_{n=0}^{j}\sum_{i=0}^{n}
\bigg(\frac{\tb}{v}\bigg)^{j-i},\\
c_{-\sfrak,j}={}&4(-1)^j j!
\bigg[
(j+2)\sum_{n=0}^j \bigg(\frac{\tb}{v}\bigg)^{j-n}
-\sum_{n=0}^j \sum_{i=0}^n\bigg(\frac{\tb}{v}\bigg)^{j-i}
+(j+1)\bigg(\bigg(\frac{\tb}{v}\bigg)^{j}
-\bigg(\frac{\tb}{v}\bigg)^{j+2}\bigg)
\bigg].
\end{align}
\end{subequations}
\end{thm}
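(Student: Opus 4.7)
My plan is to decompose $\psipns$ along the $(\ell=1)$ and $(\ell\geq 2)$ spherical-harmonic projections. The leading profiles in \eqref{eq:Pricelaw:nonzero} come entirely from the $(\ell=1)$ mode driven by $\NPCP{1}$; the $(\ell\geq 2)$ piece is to be absorbed into the $\tb^{-\epsilon}$-error. For that piece, I would invoke the uniform positive-definite energy bound and the integrated local energy decay established earlier in the paper for the symmetric-hyperbolic wave system on $(\psips,\psins)$, commute by $\edthR$ (which gains angular weight for $\ell\geq 2$) and by $\partial_\tb$, and run the $r^p$-hierarchy of Dafermos--Rodnianski up to the weight $1+\delta$ allowed by the bound on $F^{(2)}$. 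This is intended to produce pointwise decay for $(\Psipns)^{\ell\geq 2}$ that beats the $(\ell=1)$ leading profile by $\tb^{-\epsilon}$ for some $\epsilon=\epsilon(\delta)>0$.

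The second and main step is the asymptotic analysis of the $(\ell=1)$ mode, which reduces to the two scalars with $m=\pm\sfrak$. The hypothesis $\sum_{m}|\NPCP{1}(m,\ell=1)|\neq 0$ together with the initial decay \eqref{assump:Pricelaw:nonzero} and the higher $r^p$-weight $3-\delta$ encoded in $F^{(1)}$ place the analysis in the regime where the first Newman--Penrose constant is the conserved quantity controlling the asymptotics. Using the transport equation for $\rb^2\VR(\mu^{-\half}r\psips(m,\ell=1))$ I would first show that this quantity agrees with $\NPCP{1}(m,\ell=1)$ along $\Scri$ up to a $\tb^{-\epsilon}$ correction. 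Integrating this asymptotic inward along outgoing null rays, and recalling $\varphi_{\sfrak}=(r-M)^{-1}\psips$, yields the leading profile for $\varphi_{\sfrak}$ at $j=0$; the companion profile for $\psins$ follows through the algebraic-differential relation \eqref{eq:Dirac:TMEscalar}, which trades one power of $v$-decay for one power of $\tb$-decay and accounts for the different weights in the two components.

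Higher-order statements in $j$ come from commuting with $\partial_\tb$, which is Killing and preserves the mode decomposition. The closed form of $c_{\pm\sfrak,j}$ in \eqref{def:cpnsj} is to be obtained by explicitly differentiating the leading $(v,\tb)$-profile: each differentiation produces a factor of $(-1)\, j$ and the telescoping sums in $(\tb/v)^{j-i}$ add up to \eqref{def:cpnsj}. Passing from the derived energy bounds to the stated pointwise estimates uses Sobolev embedding on $\Sphere$ and on $\Sigmatb$ at the cost of a finite regularity loss, which fixes $\regl=\regl(j)$.

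The main obstacle is the precise matching of the leading coefficients. After the double integration along null rays needed to translate the $\Scri$-asymptotic into a bulk asymptotic, one must verify that no subleading term climbs up to the leading order, and this verification has to persist through every step of the $\partial_\tb$-iteration; keeping track of all boundary contributions on $\Sigmatb$ and on a late hypersurface near $\Scri$, while tracking the coupling between $\psips$ and $\psins$ at each step, is the delicate computational part. A secondary issue is that the $r^p$-weight $1+\delta$ available in the $(\ell\geq 2)$ sector is only marginally above the critical value $1$, so the Sobolev embedding in $r$ must be executed carefully to guarantee that the $\tb^{-\epsilon}$-gain really closes uniformly throughout $\Dzeroinfty$ rather than only in the wave zone.
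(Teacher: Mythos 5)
Your plan coincides with the paper's proof: Section~\ref{sect:Pricelaw:nonzeroNPconst} carries out exactly the mode split you describe, with the $\ell\geq 2$ piece absorbed into the error via Proposition~\ref{prop:almostpricelaw:ellgeq2}, the $\ell=1$ profile for $\varphi_{\sfrak}$ obtained by integrating the transport equation \eqref{eq:PhipsHigh1:l=1:v1} first in $u$ along $v=\mathrm{const}$ and then in $v$ along $u=\mathrm{const}$, and $\psins$ recovered from the first-order Dirac relation in Section~\ref{sect:Pricelaw:ell=1:posi}. The one device you gloss over but would need to make the ``double integration'' close uniformly is the explicit partition of $\Dzeroinfty$ by the intermediate curves $\gamma_\alpha=\{v-u=v^\alpha\}$ and $\gamma_{\alpha'}$ (with $\alpha<\alpha'<1$ chosen in terms of $\delta$), together with the improved $\prb\varphi_{\sfrak}$-decay from Theorem~\ref{thm:almostPrice} to propagate the asymptotic from $\gamma_{\alpha'}$ across the bounded-$r$ region; this is precisely where the paper spends most of its effort and where your ``keeping track of boundary contributions'' would have to be made quantitative.
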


\begin{remark}
\begin{itemize}
\item
Each of the constants $\{\NPCP{1}(m,\ell=1)\}_{m=\pm\half}$ is the first Newman--Penrose constant for the $(m,\ell=1)$ mode of the spin $\half$ component, and the first assumption justifies that we are considering the nonvanishing first Newman--Penrose constant case. Thus, this result determines the leading asymptotics of the spin $\pm \half$ components from the first Newman--Penrose constants in nonvanishing first Newman--Penrose constant case. If the first Newman--Penrose constants are vanishing, the decay rates can be improved, cf. Theorem \ref{thm:pricelaw:zeroNP}.

\item 
In view of the definition \eqref{def:NPCPell=1:positive:mainthm}, the second assumption characterizes the decay of the scalar $r^2 \VR
(\mu^{-\half}r\psips(m,\ell=1))$ towards its limit $\NPCP{1}(m,\ell=1)$ along $\Sigmazero$. The third assumption contains a natural bound for the constant $D_0$ and the Newman--Penrose constants $\{\NPCP{1}(m,\ell=1)\}_{m=\pm\half}$ and a bound for an initial weighted energy.
\end{itemize}
\end{remark}

\begin{remark}
We remark that the peeling property of massless Dirac fields in a Schwarzschild spacetime is proved and contained in the above theorem. The assumptions can in fact be weaken as can be seen in Section \ref{sect:almostsharpdecayestimates}, but we shall not discuss it further here.
\end{remark}

It is clear from the above theorem that if the first Newman--Penrose constants for all $(m,\ell=1)$ modes vanish, then the scalars $\varphi_{\sfrak}$ and $\psins$ will have faster decay in $\tb$. This is precisely what we will obtain in the theorem below.
To state our main result about the Price's law in the case of vanishing first Newman--Penrose constant, we shall need the following notations and definitions. Define for any spin-weight $\half$ scalar $\varphi$
\begin{align}
\label{def:tildeHs:general}
\tilde{H}_{\sfrak}(\varphi)=(r-M)[r\mu^{\half}(2\mu^{-1}-\partial_r h)\partial_r h\partial_{\tb}\varphi
+2r\mu^{\half}(-\mu^{-1}+\partial_r h)\partial_\rho\varphi
+\partial_r(\Delta^{\half}\partial_r h)\varphi].
\end{align}
We decompose the spin $\pm \half$ components into $\ell\geq 3$ part, $\ell=2$ mode and $\ell=1$ mode, and decompose further $\ell=1$ mode into $(m,\ell=1)$ modes in terms of spin-weighted spherical harmonics, where $m=-\half, \half$, cf. Section \ref{sect:decompIntoModes}.
Let $F^{(3)}(\reg,p,\tb,(\Psipns)^{\ell\geq 3})$, $F^{(2)}(\reg,p,\tb,(\Psipns)^{\ell=2})$ and $F^{(1)}(\reg,p,\tb,(\Psipns)^{\ell=1})$ be defined as in Definition \ref{def:Fenergies:big2} by simply replacing $\Psipns$ therein by $(\Psipns)^{\ell\geq 3}$, $(\Psipns)^{\ell=2}$ and $(\Psipns)^{\ell=1}$, respectively.

\begin{thm}
\label{thm:pricelaw:zeroNP} (Price's law for vanishing first Newman--Penrose constant case)
Let $j\in \mathbb{N}$. Assume
\begin{enumerate}
\item there are constants $\beta\in (0,\half)$, $\tilde{D}_0\geq 0$, and $\{\tilde{D}_1(m,\ell=1)\}_{m=\pm\half}$  such that for all $0\leq i\leq j$,
\begin{align}
\label{assump:Pricelaw:zero}
\sum_{m=\pm\half}\sup_{\Sigmazero\cap\{\rb\geq 4M\}}\bigg|\rb^i\prb^i\bigg(\rb^3\VR(\mu^{-\half}r\psips(m,\ell=1))
-{\tilde{D}_1(m,\ell=1)}\bigg)\bigg|\lesssim \rb^{-\beta}\tilde{D}_0;
\end{align}
\item the following initial bound for a suitably small $\delta\in (0,\half)$ and a suitably large $\regl=\regl(j)$
\begin{align}
\label{thm:Pricelaw:zero:assump2}
\InitEnervanish={}&
\big(F^{(1)}(\regl,5-\delta,\tb_0,(\Psipns)^{\ell=1})\big)^{\half}
+\big(F^{(2)}(\regl(j),3+\delta,\tb_0,(\Psipns)^{\ell=2})\big)^{\half}
 \notag\\
&
+\big(F^{(3)}(\regl(j),1+\delta,\tb_0,(\Psipns)^{\ell\geq 3})\big)^{\half}+\sum_{m=\pm\half}\abs{\tilde{D}_1(m,\ell=1)}+ \tilde{D}_0<\infty.
\end{align}
\end{enumerate}
Then there exists an $\epsilon>0$ such that in $\Dzeroinfty$,
\begin{subequations}
\label{eq:Pricelaw:zero}
\begin{align}
\bigg|\partial_{\tb}^j \varphi_{\sfrak}-c_{\sfrak,j+1}v^{-2}\tb^{-2-j}
\sum_{m=\pm\half}\NPCPTI{1}(m,\ell=1)
Y_{m,\ell=1}^{\sfrak}(\cos\theta)e^{im\pb}\bigg|
\lesssim_{j,\delta} {}& v^{-2}\tb^{-2-j-\epsilon}
\InitEnervanish, \\
\bigg|\partial_{\tb}^j \psins
-c_{-\sfrak,j+1}v^{-1}\tb^{-3-j}
\sum_{m=\pm\half}\NPCPTI{1}(m,\ell=1)
Y_{m,\ell=1}^{-\sfrak}(\cos\theta)e^{im\pb}\bigg|
\lesssim_{j,\delta} {}& v^{-1}\tb^{-3-j-\epsilon}\InitEnervanish,
\end{align}
\end{subequations}
where $c_{\sfrak,j+1}$ and $c_{-\sfrak,j+1}$ are defined as in Definition \ref{def:cpnsj}, and for each $m=-\half,\half$,
\begin{align}
\label{expression:NPCPTI:mainthm}
\NPCPTI{1}(m,\ell=1)=M\int_{2M}^\infty
\tilde{H}_{\sfrak}(\Phips(m,\ell=1))(\tb_0,\rho')\di\rho'
-\frac{2}{3}\tilde{D}_1(m,\ell=1).
\end{align}
\end{thm}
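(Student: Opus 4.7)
The plan is to mirror the argument used for Theorem \ref{thm:pricelaw:nonzeroNP} but with one additional layer of refinement, since under the hypothesis that $\rb^{3}\VR(\mu^{-\half}r\psips(m,\ell=1))$ admits a finite limit $\tilde{D}_1(m,\ell=1)$, the first Newman--Penrose constants $\NPCP{1}(m,\ell=1)$ automatically vanish and the $(m,\ell=1)$ modes gain one extra power of $\tb^{-1}$ decay. The dominant asymptotic coefficient is then the \emph{time-integrated} constant $\NPCPTI{1}(m,\ell=1)$ given by \eqref{expression:NPCPTI:mainthm}.

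\emph{Step 1: Reduction to $(m,\ell=1)$.} I decompose the fields into $\ell\geq 3$, $\ell=2$, and $\ell=1$ parts as in Section \ref{sect:decompIntoModes}. Using the almost sharp decay estimates of Section \ref{sect:almostsharpdecayestimates} applied separately to $(\Psipns)^{\ell\geq 3}$ and $(\Psipns)^{\ell=2}$, together with the weighted initial energy bounds with weights $1+\delta$ and $3+\delta$ in \eqref{thm:Pricelaw:zero:assump2}, I would obtain
\begin{align*}
\abs{\partial_{\tb}^j\varphi_{\sfrak}^{\ell\geq 2}}\lesssim v^{-2}\tb^{-2-j-\epsilon'},\qquad \abs{\partial_{\tb}^j\psins^{\ell\geq 2}}\lesssim v^{-1}\tb^{-3-j-\epsilon'},
\end{align*}
for some $\epsilon'>0$; these are absorbed into the error of \eqref{eq:Pricelaw:zero}. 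This reduces everything to analyzing the $(m,\ell=1)$ modes.

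\emph{Step 2: Time integration for the $\ell=1$ mode.} This is the heart of the argument. Because $\NPCP{1}(m,\ell=1)=0$, the hyperbolic wave system obeyed by $\Psips(m,\ell=1)$ admits a primitive in $\tb$: I define a time-integrated scalar $\tildePhipsHighTI{1}(m,\ell=1)$ solving the renormalized Teukolsky-type equation with source encoded by $\tilde H_{\sfrak}$ in \eqref{def:tildeHs:general}. The operator $\tilde H_{\sfrak}$ is precisely the commutator obstruction that arises when turning a $\partial_{\tb}$-derivative into a $\partial_{\rb}$-transport along $\Sigmazero$, so that integrating the $\ell=1$ evolution equation in $\rho$ from $2M$ to $\infty$ at $\tb=\tb_0$ produces a conserved quantity along $\Scri$ whose value is exactly $M\int_{2M}^{\infty}\tilde H_{\sfrak}(\Phips(m,\ell=1))(\tb_0,\rho')\,\di\rho'-\tfrac{2}{3}\tilde D_1(m,\ell=1)$. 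This is the first Newman--Penrose constant of $\tildePhipsHighTI{1}$ and equals $\NPCPTI{1}(m,\ell=1)$.

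\emph{Step 3: Invoking the nonvanishing case.} I would then apply Theorem \ref{thm:pricelaw:nonzeroNP} to the time-integrated solution, after checking its hypotheses: absolute convergence of the $\rho$-integral in \eqref{expression:NPCPTI:mainthm} follows from the $\rb^{-\beta}\tilde D_0$ tail in \eqref{assump:Pricelaw:zero} together with the weighted $F^{(1)}(\regl,5-\delta,\tb_0,\cdot)$ energy bound (the weight $5-\delta$ is exactly what is needed to control $F^{(1)}$ at level $3-\delta$ for the integrated object); the nonvanishing assumption holds generically by construction. This yields sharp asymptotics for $\tildePhipsHighTI{1}$ of the form $c_{\sfrak,j}v^{-2}\tb^{-1-j}\NPCPTI{1}\cdot Y$. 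Differentiating once in $\tb$ to recover $\Phips(m,\ell=1)$ shifts $j\mapsto j+1$ in the coefficients, producing $c_{\sfrak,j+1}v^{-2}\tb^{-2-j}\NPCPTI{1}$ as claimed; the analogous identity for $\psins$ follows from the coupled Teukolsky system \eqref{eq:Dirac:TMEscalar}.

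\emph{Main obstacle.} The hardest technical point is the construction and closing of estimates for the time-integrated quantity near the horizon and at infinity simultaneously. Unlike the nonvanishing NP case, the integrand in \eqref{expression:NPCPTI:mainthm} mixes $\partial_{\tb}$- and $\partial_{\rho}$-derivatives weighted by powers of $r-M$, so I must carefully control the $r\to\infty$ tail (where the $\rho^{-\beta}$ decay in \eqref{assump:Pricelaw:zero} is the only input, and the $F^{(1)}$-energy with weight $5-\delta$ must be shown to bound all derivatives of $\tildePhipsHighTI{1}$ appearing in the application of Theorem \ref{thm:pricelaw:nonzeroNP}) and the horizon regularity $r\to 2M$ (where the prefactor $r-M$ and the redshift structure ensure that $\tildePhipsHighTI{1}$ remains smooth up to $\Horizon$). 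Once the initial weighted-energy and pointwise bounds for the time-integrated scalar are established, the rest is bookkeeping via the previous theorem and the transport equations \eqref{eq:Dirac:TMEscalar}.
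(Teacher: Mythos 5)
Your proposal follows the same route as the paper's Section \ref{sect:pricelaw:zeroNPconst}: reduce to the $(m,\ell=1)$ modes via the almost-sharp decay of $\ell\geq 2$ modes, construct the time integral $g_{\sfrak}$ with $\Lxi g_{\sfrak}=\Phips$ whose first Newman--Penrose constant is precisely \eqref{expression:NPCPTI:mainthm} (Lemma \ref{lem:timeintegral:posi}, Corollary \ref{coroll:asym:gs:Vgs}), establish the comparison $F^{(1)}(\reg,3-\delta,\tb_0,g_{\sfrak})\lesssim F^{(1)}(\reg+\regl,5-\delta/2,\tb_0,\Psipns)$ (Proposition \ref{prop:energycompare}), and then apply the asymptotics of Theorem \ref{thm:pricelaw:nonzeroNP} to $(g_{\sfrak},g_{-\sfrak})$ followed by a $\partial_{\tb}$-differentiation to shift $c_{\sfrak,j}\mapsto c_{\sfrak,j+1}$. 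One caution: your remark that ``the nonvanishing assumption holds generically by construction'' is both imprecise and superfluous---the asymptotic expansion \eqref{eq:Pricelaw:nonzero} derived in Section \ref{sect:Pricelaw:nonzeroNPconst} never actually uses that $\NPCP{1}\neq 0$, so it can (and must) be applied to the time integral regardless of whether $\NPCPTI{1}$ vanishes, which is what makes Theorem \ref{thm:pricelaw:zeroNP} valid for all admissible initial data rather than only generic ones.
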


\begin{remark}
\begin{itemize}
\item
The first assumption \eqref{assump:Pricelaw:zero} actually implies that the first Newman--Penrose constant of the $\ell=1$ mode vanishes, and the second assumption is again a boundedness assumption for the constant $\tilde{D}_0$, constants $\{\tilde{D}_1(m,\ell=1)\}_{m=\pm\half}$ which are the limits of $\{r^3\VR(\mu^{-\half}r\psips(m,\ell=1))\}_{m=\pm\half}$ along $\Sigmazero$ and shall be compared to the first Newman--Penrose constants in Theorem \ref{thm:pricelaw:nonzeroNP}, and an initial energy.

 In particular, if the initial data is compactly supported on a spacelike hypersurface terminating at spacelike infinity, these assumptions hold because of the finite speed of propagation and the constants $\tilde{D}_0=0$ and all $\tilde{D}_1(m,\ell=1)=0$, therefore, the above sharp decay estimates \eqref{eq:Pricelaw:zero} are clearly valid.
\item
Furthermore, if the initial data are imposed on a Boyer--Lindquist $t=const$ hypersurface and compactly supported away  from both the bifurcation sphere and spatial infinity, the above decay rates can be improved if and only if all $\NPCPTI{1}(m,\ell=1)=M\int_{2M}^\infty
\tilde{H}_{\sfrak}(\Phips(m,\ell=1))\vert_{\Sigma_{t=const}}\di r=0$ are vanishing. In this case, we have $\partial_r h=\mu^{-1}$ on $t=const$ hypersurface such that $\tau=t$ when away from both the horizon and infinity, and substituting this into the expression \eqref{def:tildeHs:general} gives
$\tilde{H}_{\sfrak}(\varphi)=\mu^{-\frac{3}{2}}(r-M)(r\partial_{\tb}\varphi
+r^{-1}(r-3M)\varphi)$. Hence, the above decay rates can be improved if and only if 
\begin{align}
\quad 0={}&\NPCPTI{1}(m,\ell=1)\notag\\
={}&M\int_{2M}^{\infty} \mu^{-\frac{3}{2}} r(r-M) (\partial_t \psips(m,\ell=1) +r^{-2}(r-3M) \psips(m,\ell=1))\vert_{\Sigma_{t=const}}\di r
\end{align}
holds
for all $(m,\ell=1)$ modes. This is \emph{in contrast to} the case of scalar field $\varphi_{scalar}$ where initially static data ($\partial_t \varphi_{scalar}^{\ell=0}\vert_{\Sigma_{t=const}}=0$) lead to extra time decay in the future development as shown in \cite{angelopoulos2018late,hintz2020sharp}.
\end{itemize}
\end{remark}

\begin{remark} 
 We shall emphasis that the standard spin-weighted spherical harmonics in the case of general spin-weight $s\in \half \mathbb{Z}$ have eigenvalue parameter $\ell'\in \{|s|, |s|+1, \ldots\}$, but we make an overall shift of $\frac{1}{2}$ only for convenience of discussions in this work, where $|s|=\frac{1}{2}$ for the Dirac field, such that $\ell=\ell'+\frac{1}{2}\in \{1,2,\ldots\}$. See more in Section \ref{sect:decompIntoModes}. As has been discussed above, for smooth, compactly supported initial data, we have in a compact region where $2M<r<\infty$ that the spin $\pm \half$ components both have $t^{-4}$ decay as a lower and an upper bound. This bound, equal to $t^{-2\ell'-3}$ for the lowest mode $\ell'=|s|=\frac{1}{2}$, obeys the Price's law \cite{Price1972SchwScalar,Price1972SchwIntegerSpin} which, though, predicts the $t^{-2\ell'-3}$ decay for an $\ell'$ mode of the scalar field on Schwarzschild. We believe also that the Price's law, in particular the $t^{-2\ell'-3}$ decay in a compact region, is valid for an arbitrary $\ell'$ mode, $\ell'\in \{|s|, |s|+1, \ldots\}$, of the spin $\pm \half$ components of the Dirac field on a Schwarzschild background.
\end{remark}

\begin{remark}
The asymptotics \eqref{eq:Pricelaw:zero} hold globally in the future domain of outer communication $\Dzeroinfty$. In some asymptotic regions, for instance, $\rb\leq \tb^{1-\epsilon}$ (hence $|\tb/v-1|\lesssim \tb^{-\epsilon}$) and $\rb\geq \tb^{1+\epsilon}$ (hence $\abs{\tb/v}\lesssim \tb^{-\epsilon}$) for a small positive constant $\epsilon$, one can easily calculate the leading constant part of the functions $c_{\sfrak,j+1}$ and $c_{-\sfrak,j+1}$, the remainder being $O(\tb^{-\epsilon})$, and thus derive a simpler form of the asymptotic profiles. In particular, the asymptotics towards null infinity (which can be put into the region $\rb\geq \tb^{1+\epsilon}$) and in a finite radius region (which  is a subset of the region $\rb\leq \tb^{1-\epsilon}$) are manifest. 
\end{remark}

\begin{remark}
Barack and Ori
\cite{bo99} made a heuristic claim that while the spin $-s$ component, for the spin $s=1,2$, obeys the Price's law, the spin $+s$ component has an extra $v^{-1}$ decay at the event horizon than the conjectured Price's law. We show in Theorem \ref{thm:pricelaw:zeroNP} that such a heuristic claim does not hold in the case of the Dirac field, by deriving the precise asymptotics of the spin $\pm \half$ components up to and including the future event horizon and showing both components obey the Price's law. 
\end{remark}

Additionally, we also obtain a result about almost Price's law for each $\ell=\ell_0\geq 2$ mode of each of the spin $\pm \half$ components. The detailed proof can be found in Section \ref{subsect:almostPricelaw:allmodes}.

\begin{thm}
\label{thm:intro:almostPricelaw}
 Let the spin $\pm \half$ components be supported on $\ell= \ell_0$ mode for an $\ell_0\geq 2$. If the $\ell_0$-th Newman--Penrose constant does not vanish, then we have  in $\Dzeroinfty$ that for any $\delta\in (0,\half)$,
\begin{subequations}
\begin{align}
\abs{\partial_{\tb}^j \varphi_{\sfrak}}\lesssim {} & v^{-2}\tb^{-\ell_0-j+\delta/2}(F^{(\ell_0)}(\regl(j,\ell_0),3-\delta,\tb_0,\Psipns))^{\half}, \\
\abs{\partial_{\tb}^j \psins}\lesssim {} & v^{-1}\tb^{-1-\ell_0-j+\delta/2}(F^{(\ell_0)}(\regl(j,\ell_0),3-\delta,\tb_0,\Psipns))^{\half}.
\end{align}
\end{subequations}
While if the $\ell_0$-th Newman--Penrose constant vanishes,
the $\tb$ power of the above pointwise decay estimates is decreased by $1$ in the region $\Dzeroinfty$:
\begin{subequations}
\begin{align}
\abs{\partial_{\tb}^j \varphi_{\sfrak}}\lesssim {} & v^{-2}\tb^{-1-\ell_0-j+\delta/2}(F^{(\ell_0)}(\regl(j,\ell_0),5-\delta,\tb_0,\Psipns))^{\half}, \\
\abs{\partial_{\tb}^j \psins}\lesssim {} & v^{-1}\tb^{-2-\ell_0-j+\delta/2}(F^{(\ell_0)}(\regl(j,\ell_0),5-\delta,\tb_0,\Psipns))^{\half}.
\end{align}
\end{subequations}
\end{thm}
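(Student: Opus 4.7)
\medskip

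\noindent\textbf{Proof proposal for Theorem \ref{thm:intro:almostPricelaw}.} The plan is to combine the $r^p$-hierarchy of Dafermos--Rodnianski with the single-mode restriction to upgrade the weight exponent, and then to convert weighted energy decay into pointwise decay via Sobolev embedding on the hyperboloidal slices $\Sigma_\tau$. Since by hypothesis the solution is supported on a single angular mode $\ell=\ell_0\geq 2$, the elliptic operator $\edthR\edthR'$ acts as multiplication by a constant of size comparable to $\ell_0(\ell_0+1)$, so the angular regularity is free and every ``loss of derivatives'' can be traded for an $\ell_0$-dependent constant; this is the reason an almost-sharp (rather than sharp) statement can be obtained for all $\ell_0\geq 2$ in one stroke.

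First I would record from the symmetric-hyperbolic reformulation mentioned in the abstract the basic inputs: (i) a uniform positive-definite energy bound and an integrated local energy decay estimate for $\psipns$, (ii) the $r^p$-weighted estimates for the transformed Teukolsky-type scalars $\Phips$ and $\Phins$, which on the $\ell=\ell_0$ mode can be iterated up to a critical exponent $p_{\max}$. For the nonvanishing $\ell_0$-th Newman--Penrose constant case, the top weight is $p=3-\delta$; for the vanishing case, the NP-cancellation identity at $\Scri$ allows one to extend the hierarchy by two more powers, giving $p=5-\delta$. This is the mechanism that produces the extra $\tau^{-1}$ decay in the second pair of estimates.

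Next, I would run the hierarchy: at each step, integration of the $r^p$-flux identity between $\Sigma_{\tau_1}$ and $\Sigma_{\tau_2}$ combined with the pigeonhole/mean-value argument in $\tau$ (applied $j$ times to the commuted equation after applying $\partial_\tau^j$, which commutes with the evolution) yields energy decay of the form $F^{(\ell_0)}(\regl,p,\tau,\Psipns)\lesssim \tau^{-p+\delta}F^{(\ell_0)}(\regl,p_{\max},\tau_0,\Psipns)$ and similarly for $\tau$-derivatives, provided one starts with the appropriate initial weighted energy listed in \eqref{thm:Pricelaw:zero:assump2}. After enough iterations of the hierarchy in both $\Phips$ and $\Phins$, one converts back to $\varphi_{\sfrak}=(r-M)^{-1}\psips$ and $\psins$ using the defining identities from \eqref{eq:Dirac:TMEscalar}; the powers of $r$ in these identities together with the $v^p$-weights in the hierarchy produce exactly the $v^{-2}$ and $v^{-1}$ prefactors in the statement. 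Finally, commuting with enough copies of $\partial_\tau$, $\partial_\rho$ and angular operators so that the total order is bounded by $\regl(j,\ell_0)$, I would apply the Sobolev embedding $H^2(\Sigma_\tau)\hookrightarrow L^\infty$ (restricted to the $\ell_0$-mode so that the angular Sobolev norm is comparable to a bounded number of angular derivatives times an $\ell_0$-dependent constant) to convert the weighted energy decay into the claimed pointwise bounds.

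The main obstacle I expect is keeping the weighted $r^p$-hierarchy consistent with the coupled nature of the system \eqref{eq:Dirac:TMEscalar}: the two equations for $\psips$ and $\psins$ are coupled through $\edthR$, $\edthR'$ and $Y$, $\VR$, and the natural top weight differs for the two components because they have different spin weights and different $r$-weights in their definitions $\Phips,\Phins$. One has to track carefully how the NP constant $\NPC{\ell_0}$, which governs the top-order behavior at $\Scri$ for the spin $+\sfrak$ component, translates into the hierarchy for the spin $-\sfrak$ component via the Teukolsky--Starobinsky-type identities. In the vanishing NP-constant case, the supplementary step is a commutation with a suitable $\partial_v$-type derivative (analogous to the $\tilde{H}_{\sfrak}$ construction in \eqref{def:tildeHs:general}) that produces a new auxiliary scalar which again enjoys a nontrivial $r^p$-hierarchy, and verifying that this hierarchy closes without losing more than a $\tau^\delta$ factor is the delicate point. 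Once the hierarchy closes, the rest of the proof is routine: mode projection, pigeonhole in $\tau$, and Sobolev embedding.
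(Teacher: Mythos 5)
Your proposal captures the $r^p$-hierarchy and commutation structure correctly, but it omits the step that actually makes the claimed decay rates attainable: the degenerate elliptic estimates on the hyperboloidal slices $\Sigma_\tau$ (Propositions~\ref{prop:degellip:Phips:ellbig2} and \ref{prop:degellip:Phips:ell=2:new} in the paper). This is not a technical footnote but the decisive mechanism, and without it your scheme stops one power of $\tau$ short of the statement.

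Here is the gap concretely. Running the $r^p$-hierarchy up to $p=3-\delta$ (or $5-\delta$) and commuting with $\Lxi^j=\partial_\tau^j$ yields, via the Sobolev lemmas on $\Sigma_\tau$, a pointwise bound of the schematic form
\begin{align*}
\absCDeri{\Lxi^j\psips}{\reg}\;\lesssim\; v^{-1}\,\tb^{-\ell_0-j+\delta/2}\,\big(F^{(\ell_0)}(\cdot,3-\delta,\tb_0,\Psipns)\big)^{1/2},
\end{align*}
which is precisely equation~\eqref{ptw:BEDC:Phiplus:l=1:0to3}. Converting to $\varphi_{\sfrak}=(r-M)^{-1}\psips$ only buys an extra $r^{-1}$, so near $\Scri$ (where $r\sim v$) this becomes $v^{-2}\tb^{-\ell_0-j+\delta/2}$ as desired, but at finite $r$ (where $(r-M)^{-1}\sim 1$ and $v\sim\tau$) it gives only $\tb^{-1-\ell_0-j+\delta/2}$, whereas the theorem asserts $v^{-2}\tb^{-\ell_0-j+\delta/2}\sim \tb^{-2-\ell_0-j+\delta/2}$. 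You are thus missing a full $\tau^{-1}$ of decay in the interior. Your ``$H^2(\Sigma_\tau)\hookrightarrow L^\infty$'' step cannot supply it, because $\partial_\tau$ is not tangential to $\Sigma_\tau$: the Sobolev embedding consumes $\partial_\rho$ and angular derivatives, which do not pick up the extra $\tau^{-2}$ per commutation that $\Lxi$ does.

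The cure is to rewrite the wave equation as a spatial (degenerate) elliptic identity on each slice, with source terms consisting entirely of $\Lxi$-derivatives, and then derive an elliptic estimate of the form \eqref{eq:degellip:Phipns:ellbig2:Highorder} bounding the $r^{-3}$-weighted $H^2(\Sigma_\tau)$ norm of $\Phips$ and $\Psins$ (with $\CDerit$ commutators) by $\norm{\Lxi\Psipns}_{W_{-3}^{\reg}(\Sigma_\tau)}^2$ plus a $\Lxi\PhinsHigh{1}$ term. Integrating this over $[\tb,\infty)$ and inserting the $\Lxi$-commuted energy decay from Proposition~\ref{prop:BED:Psiplus:l=1}, one gains the missing $\tau^{-2}$ at the energy level, and then the Sobolev lemma~\eqref{eq:Sobolev:3} converts this into the claimed pointwise rate $v^{-2}\tb^{-\ell_0-j+\delta/2}$ (and its analogue $v^{-1}\tb^{-1-\ell_0-j+\delta/2}$ for $\psins$). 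This is exactly what Proposition~\ref{prop:almostpricelaw:ellgeq2} records. Two further remarks: (i) your proposal suggests that the single-mode hypothesis makes the angular regularity ``free,'' which is only part of the story; the more essential use of the mode restriction is the lower bound $\int_{S^2}\abs{\edthR'\Phips}^2\geq(\ell_0^2-(s-1/2)^2)\int_{S^2}\abs{\Phips}^2$ that makes the elliptic estimate close with the chosen weight $f_2=\mu^{-1/2}r^{-3}$ (see Remark~\ref{rem:ell=1andellhigh:diff:ellip}); (ii) the vanishing NP constant case hinges on the $\tildePhipsHigh{\ell_0}$ combination of Definition~\ref{def:tildePhiplusandminusHigh} (which removes the $O(1)\PhipsHigh{\ell_0-1}$ term from the top-level equation), not on a $\partial_v$-commutation analogous to $\tilde H_{\sfrak}$; that construction belongs to the time-integral argument in Section~\ref{sect:pricelaw:zeroNPconst}, not to this theorem.
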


\begin{remark}
These pointwise decay estimates are almost sharp only in the exterior region $\{r\geq \tb\}$. In the interior region $\{r<\tb\}$, including in particular a finite $r$ region,  it is known from the Price's law that one should have better $\tb$ fall-off at the price of an $r$ growth. See also Remark \ref{rem:elliphyper:ellgeq2:interior}.
\end{remark}

\subsection{Outline of the proof}

We provide an outline of the proof in this subsection. This can roughly be divided into three main steps: to show the energy and Morawetz estimates which contain most of the local information of the field, to prove almost sharp pointwise decay estimates via suitable energy decay estimates, and to derive the precise asymptotics by analyzing the equation of motion. We will give an overview of the main ideas in these three steps in Section \ref{sect:1.1.1}, Sections \ref{sect:1.1.2}--\ref{sect:1.1.3}, and Sections \ref{sect:intro:nonzeroN-P}--\ref{sect:1.1.5}, respectively.

%%%%%%%%%%%%%%%%%
\subsubsection{Basic energy and Morawetz estimates}
\label{sect:1.1.1}
%%%%%%%%%%%%%%%%%

Teukolsky \cite{Teukolsky1973I} found that in a Schwarzschild spacetime, the scalars $\psis$ satisfy the celebrated
\emph{Teukolsky Master Equation} (TME), a separable, decoupled wave equation, which takes the following form in Boyer-Lindquist coordinates:
\begin{align}\label{eq:TME}
&\left(r^2\Box_{g_{M}} + \frac{2is\cos\theta}{\sin^2 \theta}\partial_{\phi} - (s^2\cot^2\theta +s)\right)\psis ={}-2s((r-M)Y-2r\partial_t)\psis,
\end{align}
with $\Box_{g_{M}}$ being the scalar wave operator
\begin{align}
\Box_{g_{M}}={}&-\mu^{-1} \partial^2_t
+ r^{-2}\partial_r \left( \Delta \partial_r \right)
+r^{-2}\left(\frac{1}{\sin^2{\theta}} \partial^2_{\phi}+ \frac{1}{\sin{\theta}} \partial_\theta \left( \sin{\theta} \partial_\theta\right)\right).
\end{align}
In particular, the scalars $\psis$ are regular and non-degenerate at the future event horizon $\mathcal{H}^+$. This is a spin-weighted wave equation in the sense that the operator on the LHS of \eqref{eq:TME} is a spin-weighted wave operator. Such a TME is actually derived in \cite{Teukolsky1973I} for general half integer spin fields on a larger family of spacetimes--the Kerr family of spacetimes \cite{kerr63}, and it serves as a starting model for quite many results in obtaining quantitative estimates for these fields, including the Maxwell field and linearized gravity. See the discussions in Section \ref{sect:relatedworks}.

The Chandrasekhar's transformation \cite{chandrasekhar1975linearstabSchw}, which is a differential transformation utilized to obtain a scalar-wave-like equation (to be more precise, Fackerell--Ipser equation \cite{fackerell:ipser:EM} for Maxwell field and Regge--Wheeler equation \cite{ReggeWheeler1957} for linearized gravity) from the TME of integer spin fields, does not exist anymore for Dirac fields. Instead, one has to couple both first order Dirac equations of the spin $\pm \half$ components into a system, and both second order TME \eqref{eq:TME}  into a wave system in order to prove the energy estimate and the Morawetz estimate. An interesting feature for the Dirac field is that the decoupled TME system can be transformed into a coupled symmetric hyperbolic wave system, and this feature is indispensable in achieving both the energy estimate and the integrated local energy decay estimates (or, Morawetz estimates); Further, making use of this feature allows us to derive  these estimates by employing similar techniques in treating the scalar wave equation. In other words, \textbf{such a feature can be viewed as a substitute of the Chandrasekhar's transformation in the case of the Dirac field.} We believe that such a feature, valid in the Kerr family of spacetimes as well, is a requisite in extending the analysis and generalizing these estimates to more general Kerr backgrounds via the energy method.

These two estimates together--we call as \emph{basic energy and Morawetz estimates} (BEAM estimates)--contain the local information of the field and imply already certain weak decay. More importantly, they serve as precursors in obtaining further stronger decay which we will now discuss.

%%%%%%%%%%%%%%%%%%
\subsubsection{Almost sharp energy decay estimates}
\label{sect:1.1.2}
%%%%%%%%%%%%%%%%%%

The $r^p$ method initiated by Dafermos and Rodnianski in \cite{dafermos2009new} is suited and well-developed in recently years to show some basic energy decay results from the BEAM estimates. An application of these $r^p$ estimates, with $p$ ranging from $0$ to $2$, to a wave system of the spin $\pm \half$ components together with the above BEAM estimates yields $\tb^{-2}$ decay for a basic energy of the Dirac field, from which basic pointwise decay $v^{-1}\tb^{-\half}$ can be derived for scalars $\psipns$. The reason that we can obtain such estimates for $\psips$, instead of $\phips=r^{-1}\psips$ which would cause a loss of $r^{-1}$ decay, is due to the damping effect in the TME \eqref{eq:DTMEP} of $\phips$ near infinity. 

To achieve better energy and pointwise decay estimates, we shall decompose the spin $\pm \half$ components into $\ell$ modes. For a fixed $\ell$ mode $\{\psips^{\ell},\psins^{\ell}\}$, we consider the  following wave systems
\begin{align}\label{wave-system:l:mode}
&\Big\{WS[\ell,j],\ j=1,...,\ell \Big|\ \text{the}\ j\text{-th system}\ WS[\ell,j]\ \text{is the wave equations of}\ \{\Phi_{\sfrak}^{(\ell,j')},\Phi_{-\sfrak}^{(\ell,j')}\}_{ 1\leq j'\leq j}\Big \},
\end{align}
where
\begin{align}
\Phi_{\sfrak}^{(\ell,1)}=&\mu^{-\half}r\psips^{\ell},\qquad\quad
\Phi_{-\sfrak}^{(\ell,1)}=\curlVR(\Delta^{\half}\psins^{\ell}),\notag\\
\Phi_{\sfrak}^{(\ell,i)}=&\curlVR^{i-1}\Phi_{\sfrak}^{(\ell,1)},\qquad
\Phi_{-\sfrak}^{(\ell,i)}=\curlVR^{i-1}\Phi_{-\sfrak}^{(\ell,1)},\quad \forall i\geq 2,
\end{align}
and the differential operator $\curlVR=r^2\hat{V}$ equals precisely $\partial_{r_{BS}}$ in Bondi--Sachs coordinates $(u, r_{BS},\theta,\phi)$ with $r_{BS}=r^{-1}$.
Such a treatment of the wave systems is essential for nonzero-spin fields as these scalars are coupled to each other in their governing equations, and is convenient in achieving further energy decay for the lower-index system in terms of energy of the higher-index system. See, for instance, the works \cite{andersson2019stability,Ma20almost} and the discussions below.
For each  wave equations in \eqref{wave-system:l:mode}, an $r^p$ estimate for $p\in [0,2]$ similar to the above can be proven and yields $\tb^{-2}$ decay for the basic energy of each $j$-th system $WS[\ell,j]$, $j=1,...,\ell$. Meanwhile, for each $j\in\{1,\ldots, \ell-1\}$, the basic energy of the $j$-th system $WS[\ell,j]$ can be shown to have $\tb^{-2}$ decay in terms of the basic energy of the $(j+1)$-th system $WS[\ell,j+1]$, thus one can iteratively show that the basic energy of the first system $WS[\ell,1]$ has $\tb^{-2\ell}$ decay in terms of a $r^2$-weighted energy of the $\ell$-th system $WS[\ell,\ell]$. Moreover, because of the property that there is no $O(1)\Phi_{\pm\sfrak}^{(\ell,\ell)}$ term in the wave equation of $\Phi_{\pm\sfrak}^{(\ell,\ell)}$ (see equation \eqref{eq:Phipshighi:lbig2} for $i=\ell=\ell_0$),  the $r^p$ hierarchy for this wave equation of $\Phi_{\pm\sfrak}^{(\ell,\ell)}$  can be extended to $p\in [0,3)$, but no further. For this reason, the basic energy of the first system of $\{\Phi_{\sfrak}^{(\ell,1)},\Phi_{-\sfrak}^{(\ell,1)}\}$ has $\tb^{-2\ell-1+\delta}$ decay with respect to a $r^{3-\delta}$-weighted energy of the $\ell$-th system $WS[\ell,\ell]$ with $\delta\in (0,\half)$ arbitrary.

In the case that the limit $\lim\limits_{\rb \to \infty}\curlVR\tilde{\Phi}_{\sfrak}^{(\ell,\ell)}\vert_{\Sigmazero}\neq 0$, where $\tilde{\Phi}_{\sfrak}^{(\ell,\ell)}$ is a linear combination of $\Phi_{\sfrak}^{(\ell,j)}, j=1,...,\ell$ as defined in Definition \ref{def:tildePhiplusandminusHigh}, this implies that the $r^3$-weighted initial energy of the $\ell$-th system $WS[\ell,\ell]$ will be infinite, hence the above energy decay of $\{\Phi_{\sfrak}^{(\ell,1)},\Phi_{-\sfrak}^{(\ell,1)}\}$ is in fact sharp.  In particular, this limit is a \textquotedblleft{constant\textquotedblright} independent of $\tb$ at future null infinity, and we call it the $\ell$-th Newman--Penrose constant which denoted as $\NPCP{\ell}$ with respect to the $\ell$-th mode $\psips^{\ell}$ of the spin $\half$ component. The corresponding $\ell$-th N--P constant $\NPCN{\ell}$ with respect to the $\ell$-th mode $\psins^{\ell}$ of the spin $-\half$ component can be similarly defined and equals to a constant times $\NPCP{\ell}$. As a result, the above energy decay result of $\{\Phi_{\sfrak}^{(\ell,1)},\Phi_{-\sfrak}^{(\ell,1)}\}$ is sharp in the case of nonvanishing $\ell$-th N--P constant of the $\ell$-th mode
$\{\psips^{\ell},\psins^{\ell}\}$.

To further enlarge the $p$ range in the $r^p$ hierarchy for the wave equations of $\Phi_{\pm\sfrak}^{(\ell,\ell)}$, one has to remove the $O(1) \PhipnsHigh{\ell,\ell-1}$ term in these equations. It suffices to consider only the spin $\half$ component, since the equation of $\PhinsHigh{\ell,\ell}$ is the same as the one of $\PhipsHigh{\ell,\ell}$. It is surprising that there exists a \emph{unique} linear combination of $\{\PhipsHigh{\ell,i}\}_{i=1,\ldots,\ell}$, denoted as $\tildePhipsHigh{\ell,\ell}$, such that in its governing equation, the first and second order operators remain the same and the troublesome $O(1)\PhipsHigh{\ell,\ell-1}$ term is removed, but at the price of introducing extra $\{O(r^{-1})\PhipsHigh{\ell,j}\}_{j=1,\ldots, \ell}$ terms. These new terms with coefficients decaying as $r^{-1}$ are responsible for achieving an $r^p$ hierarchy for the equation of $\tildePhipsHigh{\ell,\ell}$  for $p$ exactly in the range of $[0,5)$, and no further. As a result, for any $\delta\in (0,\half)$, the basic energy of the first system of $\{\PhinsHigh{\ell,1},\PhipsHigh{\ell,1}\}$ has $\tb^{-2\ell-3+\delta}$ decay with respect to a $r^{5-\delta}$-weighted initial energy of this new $\ell$-th system of $\{\tildePhipsHigh{\ell,\ell},\tildePhinsHigh{\ell,\ell}\}$. One should note that such energy decay estimates hold only in the case of vanishing $\ell$-th N--P constant for the $\ell$-th mode $\{\psips^{\ell},\psins^{\ell}\}$, since requiring the $r^{5-\delta}$-weighted initial energy to be finite excludes the case of nonvanishing $\ell$-th N--P constant.

We shall remark that the extension of the $p$ range beyond $2$ is first due to Angelopoulos--Aretakis--Gajic \cite{angelopoulos2018late} where they prove the $r^p$ estimates for  $p\in [0,5)$ for the spherically symmetric $\ell=0$ mode of the scalar field on a Reissner--N\"{o}rdstrom background. Here, we generalize the ideas therein as well as in \cite{andersson2019stability,Ma20almost} to treat an arbitrary mode of the Dirac field.

%%%%%%%%%%%%%%%%%%%%
\subsubsection{Almost sharp pointwise decay estimates}
\label{sect:1.1.3}
%%%%%%%%%%%%%%%%%%%%

We have proven the sharp basic energy decay results for  $\{\PhinsHigh{1,1},\PhipsHigh{1,1}\}$ in the above discussion,  and for simplicity, we will denote $\{\PhinsHigh{1,1},\PhipsHigh{1,1}\}$ by $\{\PhinsHigh{1},\PhipsHigh{1}\}$. One still needs to derive the decay estimates of a basic energy of the scalars $\{\psips^{\ell=1}=\mu^{\half} r^{-1}\PhipsHigh{1},r\psins^{\ell=1}\}$ in order to achieve almost sharp pointwise decay estimates for the spin $\pm \half$ components $\chi_0$ and $\chi_1$ of the Dirac field. On a $\tau=const$ hypersurface, by rewriting the wave equation of
$\{\PhinsHigh{1},\PhipsHigh{1}\}$ into a form that a $3$-dimensional spatial elliptic operator  acting on $\{\PhinsHigh{1},\PhipsHigh{1}\}$ equals the terms involving $\partial_{\tb}$ derivatives, and making use of the fact that for any $j\in \mathbb{N}$, the basic energy of $\{\partial_\tau^j\PhinsHigh{1},\partial_\tau^j\PhipsHigh{1}\}$ has extra $\tb^{-2j}$ decay than the basic energy of $\{\PhinsHigh{1},\PhipsHigh{1}\}$, this enables us to derive (degenerate) elliptic estimates in terms of the terms involving $\partial_{\tb}$ derivative,  and to conclude that a degenerate, basic energy of $\{\psips^{\ell=1},r\psins^{\ell=1}\}$ has further $\tb^{-2}$  decay. Pointwise decay rates $v^{-\frac{3}{2}-s}\tb^{-\ell-j+s-\half+\frac{\delta}{2}}$ and $v^{-\frac{3}{2}-s}\tb^{-\ell-j-\frac{3}{2}+s+\frac{\delta}{2}}$
for $\{\partial_{\tb}^j((r-M)^{-1}\psi_{\sfrak}^{\ell}), \partial_{\tb}^j\psins^{\ell}\}$ follow easily in the case of nonvanishing $\ell$-th N--P constant and vanishing $\ell$-th N--P constant for the $\ell$-th mode, respectively. In both cases, there is only a $\frac{\delta}{2}$ loss of decay in $\tb$ compared to the sharp asymptotics predicted by Price in \cite{Price1972SchwScalar,Price1972SchwIntegerSpin} and Price--Burko in \cite{price2004late}, where $\delta\in (0,\half)$ is arbitrary. We note also that for $\ell=1$ mode, by iteratively substituting these almost sharp asymptotics into the wave equations \eqref{eq:prbprbPhins:ell=1:impro} and
\eqref{eq:simpleformofeqofpsins},  one can show that $\{\partial_{\tb}^j\prb \varphi_{\sfrak}^{\ell=1}, \partial_{\tb}^j\prb \psins^{\ell=1}\}$ with $\varphi_{\sfrak}^{\ell=1}=(r-M)^{-1}\psips^{\ell=1}$ has faster $\tb^{-1}$ decay compared to $\{\partial_{\tb}^j \varphi_{\sfrak}^{\ell=1}, \partial_{\tb}^j \psins^{\ell=1}\}$.

%%%%%%%%%%%%%%%%%%%
\subsubsection{Asymptotics in the case of nonvanishing first Newman--Penrose constant}
\label{sect:intro:nonzeroN-P}
%%%%%%%%%%%%%%%%%%%

To achieve the precise asymptotics, the first N--P constant of the $\ell=1$ mode is of vital importance in deriving the precise behaviours of $\ell=1$ mode, and the higher modes $\ell\geq 2$ have faster decay from the above discussions.
The first N--P constant is one particular conserved quantity at null infinity and contains all information of the leading asymptotics of $\ell=1$ mode $\{\psips^{\ell=1},\psins^{\ell=1}\}$. Without loss of generality, we consider only a fixed $(m,\ell=1)$ mode of the spin $\half$ component which is the $m$-th spin-weighted spherical harmonic mode of $\psips^{\ell=1}$, as the asymptotics of such a $(m,\ell=1)$ mode of the spin $-\half$ component can be fully determined from the first order Dirac system and the asymptotics of the same mode of the spin $\half$ component. For such a mode, its N--P constant is a constant independent of $\theta, \phi, \tb$.

We shall follow the work \cite{angelopoulos2018late} and derive the precise asymptotics for a fixed $(m,\ell=1)$ mode of the spin $\half$ component. Under a very generic assumption \eqref{assump:Pricelaw:nonzero} which states the quantity $r^2\curlVR \PhipsHigh{1}(m,\ell=1)$ converges to the N--P constant $\NPCP{1}(m,\ell=1)$ in a speed of rate $O(r^{-\beta})$ on the initial hypersurface, one can obtain leading asymptotics of $r^{2} \curlVR \PhipsHigh{1}(u,v)$ in the region where $\{v-u\geq v^{\alpha}\}$, $\alpha\in (\half,1)$, by integrating the wave equation \eqref{eq:PhipsHigh1:l=1:v1} along a $v=const$ hypersurface from the initial hypersurface. One can then integrate along $u=const$ hypersurface and make use of the above leading asymptotics of $\curlVR \PhipsHigh{1}(u,v)$ to obtain precise asymptotics for $\varphi_{\sfrak}^{\ell=1}$ in the region $\{v-u\geq v^{\alpha'}\}$ with $\alpha'\in (\alpha, 1)$ suitably chosen. In the remaining region, it suffices to combine this estimate at the boundary hypersurface $\{v-u= v^{\alpha'}\}$ together with better decay for $\prb \varphi_{\sfrak}^{\ell=1}$ to achieve the leading asymptotics of $\varphi_{\sfrak}^{\ell=1}$. A similar argument can be utilized to derive the asymptotics of $\partial_{\tb}^j\varphi_{\sfrak}^{\ell=1}$.

%%%%%%%%%%%%%%%%%%%
\subsubsection{Asymptotics in the case of vanishing first Newman--Penrose constant}
\label{sect:1.1.5}
%%%%%%%%%%%%%%%%%%

A natural idea would be to reduce this case of vanishing first N--P constant to a case of nonvanishing first N--P constant so that the above results in Section \ref{sect:intro:nonzeroN-P} can be applied. This is exactly the idea behind and realized by the uniqueness and existence of the smooth time integral $g_{\sfrak}$ of $\psips$ which solves the spin $s=\half$ TME and satisfies $\partial_{\tb}g_{\sfrak}=\psips$. The wave equation of $g_{\sfrak}$ then yields equation \eqref{regular:equa:gs:horizon:scaled} on the initial hypersurface $\Sigmazero$, from which one can explicitly calculate the N--P constant of the time integral $g_{\sfrak}$  in terms of the initial data of the spin $\half$ component $\psips$. This part is mostly in the same spirit of the work \cite{angelopoulos2018late}.  The rest of the proof is devoted to showing that the assumption \eqref{assump:Pricelaw:zero} implies an assumption
\eqref{assump:Pricelaw:nonzero} for the time integral and that a $r^{3-\delta}$-weighted energy of the time integral $g_{\sfrak}$ is bounded by a $r^{5-\delta/2}$-weighted energy of $\psipns$. In the end, one applies the results in Theorem \ref{thm:pricelaw:nonzeroNP} to conclude Theorem \ref{thm:pricelaw:zeroNP}.

%%%%%%%%%%%%
\subsection{Related works}
\label{sect:relatedworks}
%%%%%%%%%%%

We now put our results in context and give some background and related results. Teukolsky \cite{Teukolsky1973I} found that the two components of the massless Dirac field in a Kerr spacetime satisfy a separable, decoupled wave equation, known as Teukolsky master equation.  In a seminal work, Chandarasekhar \cite{chandrasekhar1976solution} found that the massive Dirac equations in a Kerr spacetime in Boyer--Lindquist coordinates are also separable. There are extensions \cite{page1976dirac,roken2017massive} to the Kerr--Newman spacetimes and the Eddington--Finkelstein coordinates in Kerr spacetimes. The works of Teukolsky and Chandarasekhar are fundamental since they open the possibility of applying various methods to analyze massless and massive Dirac fields.

There are quite many results on the scattering properties of massless, or massive Dirac field on black hole backgrounds.
The scattering of massless Dirac in Schwarzschild and massive charged Dirac fields in Reissner--Nordstr\"{o}m are obtained by Nicolas \cite{nicolas1995scattering} and Melnyk \cite{melnyk2003scattering} respectively.  Melnyk then used this result to study the Hawking effect for massive charge Dirac fields on Reissner--N{o}rdstr\"{o}m in \cite{melnyk2004hawking}.
These works use trace class perturbation methods and cannot be extended to the Kerr case because of the lack of symmetry in the Kerr geometry.
A complete scattering result for massless Dirac fields outside a subextremal Kerr black-hole is first proven by H\"{a}fner--Nicolas \cite{hafner2004scattering}  using the Mourre theory \cite{mourre1981absence}, and
Batic \cite{batic2007scattering} extended it to massive Dirac fields in a subextremal Kerr spacetime by employing an integral representation for the Dirac propagator. H\"{a}fner--Mokdad--Nicolas obtained the scattering for the massive charged Dirac field inside a Reissner--Nordstr\"{o}m--type black hole in \cite{hafner20scattering}.

The peeling properties of massless Dirac fields in Kerr spacetimes are obtained by Pham \cite{truong2020peeling} following earlier works by Mason--Nicolas \cite{mason2012peeling} and Nicolas--Pham \cite{nicolas2019peeling}, and Smoller--Xie \cite{smoller2012asymptotic} proved $t^{-2\ell}$ decay for each $\ell$ mode of massless Dirac fields in a Schwarzschild spacetime using the Chandrasekhar's separation of variables and a detailed analysis of the associated Green's function. Finster--Kamran--Smoller--Yau \cite{finster2003long,finster2002decay} proved local asymptotical decay $t^{-\frac{5}{6}}$ for the massive Dirac field with bounded angular momentum in a subextremal Kerr--Newman spacetime. Dong--LeFloch--Wyatt \cite{Dong2019HiggsBoson} established a nonlinear stability result for a massive Dirac coupled system in Minkowski.

There is a large amount of works on spin fields in asymptotically flat spacetimes. We list here a few in the literature: \cite{morawetz1968time,klainerman1986null,christodoulou1986global,CK93global,lindblad2010global}
on the wave equations on Minkowski background and nonlinear stability of Minkowski spacetime; \cite{wald1979note,kay1987linear,bluesoffer03mora,blue:soffer:integral,
dafrod09red,dafermos2011bdedness,larsblue15hidden,tataru2011localkerr,
dafermos2016decay,schlue2013decay,moschidis2016r,MMTT,tohaneanu2012strichartz} for energy, Morawetz, Strichartz, and pointwise estimates of scalar field on a Schwarzschild or subextremal Kerr background; \cite{fackerell:ipser:EM,blue08decayMaxSchw,pasqualotto2019spin,sterbenz2015decayMaxSphSym,
larsblue15Maxwellkerr,andersson16decayMaxSchw,Ma2017Maxwell,gudapati2019conserved} for similar estimates for Maxwell field in black hole spacetimes; \cite{dafermos2019linear,hung2017linearstabSchw,Jinhua17LinGraSchw,
johnson2019linear,Giorgi2019linearRNfullcharge,
finster2016linear,Ma17spin2Kerr,dafermos2019boundedness,andersson2019stability,hafner2019linear} on linear stability of Schwarzschild, Reissner--N\"{o}rdstrom and Kerr metrics. There are also results \cite{HintzKds2018,kla2015globalstabwavemapKerr} on nonlinear stability of black hole spacetimes.

Researches toward sharp decay of spin fields in black hole spacetimes are quite active in recent years. The precise upper and lower rates of decay in Schwarzschild are predicted by Price \cite{Price1972SchwScalar,Price1972SchwIntegerSpin} and further completed by Price--Burko in \cite{price2004late}. In these works, they predict that for any fixed $\ell$ mode of spin fields in a Schwarzschild spacetime, if the initial data is compactly supported, this mode should fall off as $\tb^{-2\ell-3}$ at any finite radius,  and this sharp decay is  now called as \textquotedblleft{Price's law\textquotedblright}. Donninger--Schlag--Soffer  proved in \cite{donninger2011proof} $\tb^{-2\ell-2}$ decay for an $\ell$ mode of scalar field and in \cite{donninger2012pointwise} $\tb^{-3}$, $\tb^{-4}$ and $\tb^{-6}$ for scalar field, Maxwell field and gravitational perturbations, respectively. Efforts have also been made in proving Price's law in Kerr or more general spacetimes: under an assumption that a basic energy and Morawetz estimate holds, $\tb^{-3}$ decay for scalar field and $\tb^{-4}$ decay for Maxwell field in a class of non-stationary asymptotically flat spacetimes are proved in a series of works by Tataru \cite{tataru2013local} and Metcalfe--Tataru--Tohaneanu \cite{metcalfe2012price,metcalfe2017pointwise}. For the Maxwell field, decay estimates in the Kerr spacetimes and almost sharp decay estimates in a Schwarzschild spacetime are proven in \cite{Ma20almost}. Recently, there are two approaches succeeding in obtaining $\tb^{-3}$ as both an upper and a lower bound for scalar field: Angelopoulos--Aretakis--Gajic in a series of works \cite{angelopoulos2018vector,angelopoulos2018late,
angelopoulos2019logarithmic} obtained using the vector field method almost sharp decay $\tb^{-3+\epsilon}$, Price's law $\tb^{-3}$ decay, and for the subleading term $\tb^{-3}\log \tb$ decay, respectively outside a Schwarzschild black hole; Hintz \cite{hintz2020sharp} computed the $\tb^{-3}$ leading order term in a subextremal Kerr spacetime and obtained $\tb^{-2\ell-3}$ upper bound for a fixed $\ell$ mode on a Schwarzschild background, and his approach relies on an analysis of the resolvent near zero frequency.

These Price's law decay results, in particular, the lower bound of decay, are crucial in resolving the Strong Cosmic Censorship conjecture, that is,  to prove (in)stability of the Cauchy horizon of black hole spacetimes. We direct the readers to the works \cite{dafermos2017interior,luk2019strong,LukOh2019SCC} and references therein.

%%%%%%%%%%%%%%%%%
\subsection*{Overview of the paper}
%%%%%%%%%%%%%%%%%

We collect in Section \ref{sect:preliminaries} some preliminaries, including more definitions, some general facts and a few useful estimates. Sections \ref{mainsect:energyestimates} and
\ref{sect:mora:Schw} are devoted to proving the uniform boundedness of a nondegenerate energy and an integrated local energy estimate, respectively. In Section \ref{sect:almostsharpdecayestimates}, we utilize the proven energy and Morawetz estimates to achieve almost sharp asymptotics, and in particular, prove Theorem \ref{thm:intro:almostPricelaw}. In the last two sections, we give the proofs of
Theorems \ref{thm:pricelaw:nonzeroNP} and
\ref{thm:pricelaw:zeroNP}, respectively.  In the end, we provide in the appendix  a self-contained derivation of both the Dirac equations and the Teukolsky master equation on a Kerr background and, for convenience, a list of the many different scalars constructed from the spin $\pm \half$ components. 
%%%%%%%%%%%%%%%%%%%%%%%%%%%%%%

%%%%%%%%%%%%%
\section{Preliminaries}
\label{sect:preliminaries}
%%%%%%%%%%%%

%%%%%%%
\subsection{General conventions}
%%%%%%%%%%

Denote $\mathbb{N}$ to be the set of natural numbers $\{0,1,\ldots\}$, $\mathbb{Z}$ the set of integers, $\mathbb{Z}^+$ the set of positive integers, $\mathbb{R}$ the set of real numbers, and $\mathbb{R}^+$ the set of positive real numbers. Denote $\Sphere$ the standard unit round sphere.

The notation $\Re(\cdot)$ is to denote the real part. We use an overline or a bar to denote the complex conjugate.

LHS and RHS are short for left-hand side and right-hand side, respectively.

Throughout this work, $F_1\equiv F_2$ means that the two sides are equal after integration over unit round sphere $\Sphere$, i.e. $\int_{\Sphere}F_1 \di^2\mu=
\int_{\Sphere}F_2 \di^2\mu$.

Denote a large (positive) universal constant by $C$ and a small (positive) universal constant by $c$. These universal constants may change from term to term. We denote it by $C(\mathbf{P})$ (or $c(\mathbf{P})$) if it depends on a set of parameters $\mathbf{P}$. Regularity parameters are generally denoted by $\reg$, and $\regl$ is a universal constant that may change from term to term. Also, $\regl(\mathbf{P})$ means a regularity constant depending on the parameter set $\mathbf{P}$.

Let $F_2$ be a nonnegative function. We denote $F_1\lesssim F_2$ if there exists a universal constant $C$ such that $F_1\leq CF_2$, and similarly for $F_1\gtrsim F_2$. If both $F_1\lesssim F_2$ and $F_1\gtrsim F_2$ hold, we say $F_1\sim F_2$.

Let $\mathbf{P}$ be a set of parameters. We say $F_1\lesssim_{\mathbf{P}} F_2$ if there exists a universal constant $C(\mathbf{P})$ such that $F_1\leq C(\mathbf{P})F_2$. Similarly for $F_1\gtrsim_{\mathbf{P}} F_2$. We say $F_1\sim_{\mathbf{P}} F_2$ if both $F_1\lesssim_{\mathbf{P}} F_2$ and $F_1\gtrsim_{\mathbf{P}} F_2$ hold.

For any $\alpha \in \mathbb{N}$, we say a function $f(r,\theta,\pb)$ is $O(r^{-\alpha})$ if it is a sum of two smooth functions $f_1(\theta,\pb) r^{-\alpha}$ and $f_2(r,\theta,\pb)$ satisfying that for any $j\in \mathbb{N}$,
$\abs{(\partial_r)^j f_2}\leq C(j) r^{-\alpha-1-j}$. In particular, if $f$ is $O(1)$, then $\partial_r f=O(r^{-2})$.

Let  $\chi_1$ be a standard smooth cutoff function which is decreasing, $1$ on $(-\infty,0)$, and $0$ on $(1,\infty)$, and let $\chi=\chi_1((R_0-r)/M)$ with $R_0$ suitably large and to be fixed in the proof.  So $\chi=1$ for $r\geq R_0$ and vanishes identically for $r\leq R_0-M$.

%%%%%%%%%%%%%%%%%
\subsection{Further definitions}
%%%%%%%%%%%%%

In this subsection, we aim to define the energy norms and (spacetime) Morawetz norms. To properly define these norms, we need a few further definitions. 

First, we define a few convenient reference volume forms in calculations and defining the norms. Note that these are not the volume element of DOC or the induced volume form on a $3$-dimensional hypersurface.

\begin{definition}
Define $\di^2\mu=\sin\theta \di \theta \wedge \di \pb$, and define the reference volume forms
\begin{subequations}
\begin{align}
\di^3\mu ={}&\di \rb\wedge \di^2\mu,\\
\di^4\mu ={}&\di \tb\wedge\di^3\mu .
\end{align}
\end{subequations}
Given a $1$-form $\nu$, let $\di^3\mu_{\nu}$ denote a Leray $3$-form such that $\nu\wedge\di^3\mu_{\nu}=\di^4\mu$.
\end{definition}

We then define a few vector fields and operators.

\begin{definition}
Define two Killing vector fields
\begin{align}
\label{def:Killingvectors}
\Lxi={}\partial_{\tb}=\partial_t, \quad \Leta=\partial_{\phi}.
\end{align}
Define the edth operators $\edthR$ and $\edthR'$ as in \eqref{def:edthRandedthR'} and the null vectors $Y$ and $\VR$ as in \eqref{def:VectorFieldYandV}. 
Denote also a regular outgoing vector
\begin{align}
\label{def:regularV}
V=\mu\VR=\partial_t +\mu \partial_r.
\end{align}
Define an operator
\begin{align}
\label{def:curlVR}
\curlVR=r^2\VR.
\end{align}
Define a second order Teukolsky angular operator
\begin{align}
\label{def:TAOsOp}
\TAO_{s}={}&\frac{1}{\sin{\theta}} \partial_{\theta}(\sin \theta \partial_{\theta})
+\frac{\Leta^2}{\sin^2\theta}
+\frac{2is\cos\theta}{\sin^2 \theta}\Leta
-({s^2}{\cot^2 \theta}+\abs{s}).
\end{align}
\end{definition}

\begin{remark}
\begin{itemize}
\item  Remind that when acting the edth operators in \eqref{def:edthRandedthR'} on any spin-weighted scalar $\varphi$,  the obtained scalar has a different spin-weight from the scalar $\varphi$. Specifically, if $\varphi$ is a spin-weight $s$ scalar, then $\edthR\varphi$ and $\edthR'\varphi$ are spin-weight $s+1$ and $s-1$ scalars, respectively.
\item
By the definitions of $\edthR$ and $\edthR'$ in \eqref{def:edthRandedthR'},  one finds when acting on a spin-weight $s$ scalar,
\begin{align}
\label{eq:TAopmsfrak:edthRedthR'}
\TAO_{s}={}&\curlLds  \curlLs - \abs{s}+s=\curlLs \curlLds -\abs{s}-s,
\end{align}
where one can use the commutator \eqref{comm:curlLscurlLds} to derive the second relation.
\end{itemize}
\end{remark}

\begin{definition}
Let $m\in \mathbb{N}$ and $n\in \mathbb{Z}^+$. Let $\mathbb{X}=\{X_1, X_2, \ldots, X_n\}$ be a set of spin-weighted operators, and let a multi-index $\mathbf{a}$ be an ordered set $\mathbf{a}=(a_1,a_2,\ldots,a_m)$ with all $a_i\in \{1,\ldots, n\}$. Define $|\mathbf{a}|=m$ and define $\mathbb{X}^{\mathbf{a}}=X_{a_1}X_{a_2}\cdots X_{a_m}$. Let $\varphi$ be a spin-weighted scalar, and define its pointwise norm of order $k$, $k\in \mathbb{N}$, as
\begin{align}
\absHighOrder{\varphi}{m}{\mathbb{X}}={}\sqrt{\sum_{\abs{\mathbf{a}}\leq m}\abs{\mathbb{X}^{\mathbf{a}}\varphi}^2} .
\end{align}
\end{definition}

Further, we need a few sets of operators.

\begin{definition}
Define a set of operators
\begin{subequations}
\begin{align}
\PDeri={}\{Y,V, r^{-1}\edthR,r^{-1}\edthR'\}
\end{align}
adapted to the Hartle--Hawking tetrad, and its rescaled one
\begin{align}
\PSDeri={}\{rY,rV, \edthR,\edthR'\}.
\end{align}
Define a set of commutators
\begin{align}
\CDeri={}\{Y,rV, \edthR,\edthR'\}.
\end{align}
Define also a set of operators
\begin{align}
\RDeri={}&\{\Lxi, Y, \edthR, \edthR'\}.
\end{align}
\end{subequations}
\end{definition}

Now we are able to define the energy norms and Morawetz norms.
\begin{definition}
\label{def:basicweightednorm}
Let $\varphi$ be a spin-weighted scalar, and let $k\in \mathbb{N}$ and $\gamma\in \mathbb{R}$. Let $\Omega$ be a $4$-dimensional subspace of the DOC, and let $\Sigma$ be a $3$-dimensional space that can be parameterized by $(\rb,\theta,\pb)$. Define
\begin{subequations}
\begin{align}
\norm{\varphi}_{W_{\gamma}^{\reg}(\Omega)}^2
={}&\int_{\Omega} r^{\gamma}\absCDeri{\varphi}{\reg}^2\di^4\mu,\\
\norm{\varphi}_{W_{\gamma}^{\reg}(\Sigma)}^2
={}&\int_{\Hyper} r^{\gamma}\absCDeri{\varphi}{\reg}^2\di^3\mu.
\end{align}
\end{subequations}
And define
\begin{align}
\norm{\varphi}_{\tilde{W}_{\gamma}^{\reg}(\Omega)}^2
={}\int_{\Omega} r^{\gamma}\absRDeri{\varphi}{\reg}^2\di^4\mu,\quad
\norm{\varphi}_{\tilde{W}_{\gamma}^{\reg}(\Sigma)}^2
={}\int_{\Hyper} r^{\gamma}\absRDeri{\varphi}{\reg}^2\di^3\mu,
\end{align}
\end{definition}

In the end, we define a few subsets of a hypersurface or a spacetime region. These can be viewed as some choices of the regions $\Omega$ and $\Sigma$ in Definition \ref{def:basicweightednorm} and are of particular importance in deriving the decay estimates.

\begin{definition}
Let $\tb_2>\tb_1\geq \tb_0$  and let $r_2>r_1\geq 2M$. Define
\begin{subequations}
\label{def:domainnotations:subdomains}
\begin{align}
\Sigmaone^{\geq r_1}={}&\Sigmaone\cap \{r\geq r_1\}, & \Donetwo^{\geq r_1}={}&\Donetwo\cap\{r\geq r_1\},\\
\Sigmaone^{r_1,r_2}={}&\Sigmaone\cap \{r_1\leq r\leq r_2\}, &\Donetwo^{r_1,r_2}={}&\Donetwo\cap\{r_1\leq r\leq r_2\},\\
\Sigmaone^{\leq r_1}={}&\Sigmaone\cap \{2M\leq r\leq r_1\}, & \Donetwo^{\leq r_1}={}&\Donetwo\cap\{2M\leq r\leq r_1\}.
\end{align}
\end{subequations}
\end{definition}

%%%%%%%%%%%%%%%
\subsection{General facts}
%%%%%%%%%%%%%%%

Since we are treating complex scalars and are using the edth operators, the following integration by parts over sphere is necessary. Although it is standard, we state it for completeness.

\begin{lemma}
\label{lem:IBPonSphere}
Let $s\in \half \mathbb{Z}$. For two spin-weighted scalars $f$ and $h$ with spin-weight $s+1$ and $s$ respectively, we have
\begin{align}
\label{eq:IBPonsphere:1}
\int_{\Sphere}\Re(\bar{f}\edthR h)\di^2\mu={}&
-\int_{\Sphere}\Re(\overline{\edthR'f}h)\di^2 \mu,
\end{align}
\end{lemma}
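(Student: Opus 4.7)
The plan is to reduce the identity to a straightforward integration by parts on the sphere, applied to the two differential terms in the definition of $\edthR$, after which the algebraic terms involving $\cot\theta$ automatically reassemble into the right-hand side.

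First, I would expand $\edthR h$ using \eqref{def:edthRandedthR'}, remembering that $h$ has spin-weight $s$, so
\[
\bar f\,\edthR h = \bar f\,\partial_\theta h + i\csc\theta\,\bar f\,\partial_\phi h - s\cot\theta\,\bar f\,h.
\]
Multiplying by $\sin\theta$ and integrating over $\Sphere$, I would integrate by parts in $\theta$ on the first term and in $\phi$ on the second. The $\phi$-integration is clean because of periodicity (no boundary contribution), while the $\theta$-integration produces a pole-boundary term $[\sin\theta\,\Re(\bar f h)]_{0}^{\pi}$, and an extra term $-\int \Re(\bar f h)\cos\theta\,d\theta\,d\phi = -\int \cot\theta\,\Re(\bar f h)\,d^2\mu$ coming from differentiating $\sin\theta$. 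Combining this last contribution with the algebraic $-s\cot\theta$ term already present produces a total coefficient of $-(s+1)\cot\theta$ on $\Re(\bar f h)$, which is precisely what one needs: applying $\edthR'$ to the spin-weight $s+1$ scalar $f$ gives $\edthR' f = \partial_\theta f - i\csc\theta\,\partial_\phi f + (s+1)\cot\theta\,f$, so that
\[
\overline{\edthR' f} = \partial_\theta \bar f + i\csc\theta\,\partial_\phi \bar f + (s+1)\cot\theta\,\bar f,
\]
and the three integrated-by-parts terms line up exactly with $-\Re(\overline{\edthR' f}\,h)$.

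The only non-routine point is the vanishing of the polar boundary term $[\sin\theta\,\Re(\bar f h)]_{\theta=0}^{\theta=\pi}$. I would justify this by observing that $\bar f h$ is a (spin-weight $-1$) smooth section in the local trivialization determined by the tetrad, so that $\bar f h$ is bounded near the poles while the $\sin\theta$ factor forces the boundary contribution to vanish; equivalently, for smooth spin-weighted scalars one can view the identity as an application of Stokes' theorem to the $1$-form $\sin\theta\,\Re(\bar f h)\,d\phi$ on $\Sphere$, which has no boundary. This is the step that has to be acknowledged, but it is standard and requires no additional machinery beyond the smoothness already assumed for the scalars in the statement.
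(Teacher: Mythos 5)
Your proposal is correct and follows essentially the same route as the paper: expand $\edthR h$ using \eqref{def:edthRandedthR'}, integrate by parts (in $\theta$ and $\phi$), absorb the derivative of $\sin\theta$ into the $\cot\theta$ term to produce the coefficient $-(s+1)\cot\theta$, and recognize the result as $-\Re(\overline{\edthR' f}\,h)$ once the shifted spin-weight of $f$ is taken into account. The paper simply states that the total-derivative terms vanish; your remark justifying the polar boundary term via smoothness of the spin-weighted scalars is a harmless elaboration of the same fact.
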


\begin{proof}
We calculate the LHS of \eqref{eq:IBPonsphere:1}:
\begin{align*}
\int_{\Sphere}\Re(\bar{f}\edthR h)\di^2\mu ={}&
\int_{\Sphere}\Re(\bar{f} (\partial_{\theta}h +i\csc\theta\partial_{\phi}h-s\cot\theta h)\sin\theta\di\theta\di\phi\notag\\
={}&\int_{\Sphere}\Big(\partial_{\theta}\big(\Re(\bar{f} h \sin\theta)\big)
+\partial_{\phi}\big(\Re(i \bar{f}h)\big)\Big)\di\theta\di\phi\notag\\
&
+\int_{\Sphere}\Re\Big(
-\overline{\partial_{\theta}f}h+\overline{i\csc\theta\partial_{\phi}f}h-(s+1)\cot\theta\bar{f}h\Big)
\Big)\sin\theta\di \theta\di \phi.
\end{align*}
The second last line vanishes, and the last line equals the RHS of \eqref{eq:IBPonsphere:1} by the definition \eqref{def:edthRandedthR'}  of the operator $\edthR'$.
\end{proof}

The following commutators are useful.
\begin{lemma}
We have the following commutators
\begin{subequations}
\label{eq:commutators}
\begin{align}
[\mu Y,\mu\VR]={}&
0,\\
\label{comm:DeltahalfVRY}
[\Delta^{\Half}\VR,\Delta^{\Half}Y]
={}&(r-3M)(Y+\VR)
\end{align}
and 
\begin{align}
\label{comm:edthRedthR'LxiLeta}
[\edthR, \Lxi]={}&[\edthR', \Lxi]=[\edthR, \Leta]=[\edthR', \Leta]=0,\\
\label{comm:edthRedthR'YVR}
[\edthR, \VR]={}&[\edthR', \VR]=[\edthR, Y]=[\edthR', Y]=0,
\end{align}
and when acting on a spin-weight $s$ scalar $\varphi$, 
\begin{align}
\label{comm:curlLscurlLds}
[\curlLs, \curlLds]\varphi={}&
2s\varphi.
\end{align}
\end{subequations}
\end{lemma}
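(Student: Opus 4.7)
The commutators split naturally into three groups, each of which I would handle by a short direct computation in Boyer--Lindquist or double null coordinates.

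For the first two identities, the cleanest approach is to pass to the coordinate vector fields they are proportional to. Since $\mu Y = \partial_t - \mu\partial_r = 2\partial_u$ and $\mu\VR = \partial_t + \mu\partial_r = 2\partial_v$ in double null coordinates $(u,v,\theta,\phi)$, the identity $[\mu Y,\mu\VR]=0$ is just $4[\partial_u,\partial_v]=0$. For the second one, I would write $\Delta^{1/2}Y = (r^2/\Delta^{1/2})\partial_t - \Delta^{1/2}\partial_r$ and $\Delta^{1/2}\VR = (r^2/\Delta^{1/2})\partial_t + \Delta^{1/2}\partial_r$ using $\Delta^{1/2}\mu^{-1}=r^2/\Delta^{1/2}$. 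Since $\partial_t$ acts trivially on the coefficients, only the two mixed brackets survive, and a direct application of the Leibniz rule yields
\begin{equation*}
[\Delta^{1/2}\VR,\Delta^{1/2}Y] = 2\Delta^{1/2}\partial_r\!\left(\frac{r^2}{\Delta^{1/2}}\right)\partial_t.
\end{equation*}
The derivative computes to $r^2(r-3M)/\Delta^{3/2}$, so the bracket equals $2\mu^{-1}(r-3M)\partial_t = (r-3M)(Y+\VR)$, as claimed.

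The vanishing commutators \eqref{comm:edthRedthR'LxiLeta}--\eqref{comm:edthRedthR'YVR} are essentially a statement about variable separation. By definition \eqref{def:edthRandedthR'}, both $\edthR$ and $\edthR'$ involve only $\partial_\theta$ and $\partial_\phi$ with coefficients depending on $\theta$ alone; they also shift spin weight but leave the $(t,r)$-behaviour alone. Meanwhile $\Lxi=\partial_t$, $\Leta=\partial_\phi$, $Y$ and $\VR$ involve only $\partial_t$ and $\partial_r$ with coefficients in $r$ only. Thus the mixed second-derivative terms cancel and the cross-action of one operator on the coefficients of the other is zero, so every such commutator vanishes. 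This is really a one-line observation once the explicit forms of the operators are written down.

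The last identity \eqref{comm:curlLscurlLds} is the only genuine calculation. Since $\curlLs=\edthR'$ and $\curlLds=\edthR$ per the macro definitions, I would compute $\edthR\edthR'\varphi$ and $\edthR'\edthR\varphi$ directly for a spin-weight $s$ scalar $\varphi$, being careful that the inner operator changes the effective spin weight entering the outer operator (by $-1$ for $\edthR'$, by $+1$ for $\edthR$). Expanding both products, the second-order terms $\partial_\theta^2+\csc^2\theta\,\partial_\phi^2$, the $\cot\theta\,\partial_\theta$ term, and the $2si\csc\theta\cot\theta\,\partial_\phi$ term appear identically in both expressions and cancel in the difference. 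The zeroth-order pieces, however, give
\begin{equation*}
(s\csc^2\theta-s(s+1)\cot^2\theta)-(-s\csc^2\theta-s(s-1)\cot^2\theta) = 2s(\csc^2\theta-\cot^2\theta) = 2s,
\end{equation*}
using the Pythagorean identity $\csc^2\theta-\cot^2\theta=1$, which yields \eqref{comm:curlLscurlLds}. The only mild obstacle is keeping track of the spin-weight shift so that the correct $s\pm 1$ appears in the $\cot\theta$ term of each outer operator; once that is set up correctly, the cancellations are mechanical.
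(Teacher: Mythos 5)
Your proposal is correct and follows essentially the same route as the paper: for the first pair of identities you write the null vectors in a coordinate frame ($\partial_u,\partial_v$ versus the paper's $\partial_t\pm\partial_{r^*}$, and the same $\mu^{-1/2}r\,\partial_t \pm \Delta^{1/2}\partial_r$ split for the second) and compute the single surviving coefficient-derivative term; the vanishing commutators are dispatched by the same variable-separation observation; and the final identity is the same direct computation with spin-weight bookkeeping, where your explicit isolation of the zeroth-order pieces $\bigl(s\csc^2\theta-s(s+1)\cot^2\theta\bigr)-\bigl(-s\csc^2\theta-s(s-1)\cot^2\theta\bigr)=2s$ is exactly what the paper arrives at after expanding the two products and cancelling the derivative terms.
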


\begin{proof}
From \eqref{def:VectorFieldYandV} and \eqref{def:r*coordinate}, we have $\mu Y=\partial_t - \partial_{r^*}$ and $\mu \VR=\partial_t +\partial_{r^*}$, hence $[\mu Y, \mu \VR]=0$. Meanwhile, $\Delta^{\Half}\VR= \mu^{-\Half} r \partial_t + \Delta^{\Half}\partial_r$ and $\Delta^{\Half}Y= \mu^{-\Half} r \partial_t - \Delta^{\Half}\partial_r$, hence
\begin{align*}
[\Delta^{\Half}\VR,\Delta^{\Half}Y]
={}&[\mu^{-\Half} r \partial_t + \Delta^{\Half}\partial_r, \mu^{-\Half} r \partial_t - \Delta^{\Half}\partial_r]\\
={}&\Delta^{\Half}\partial_r(\mu^{-\Half} r)\partial_t +\Delta^{\Half}\partial_r(\mu^{-\Half} r)\partial_t - \Delta^{\Half}\partial_r(\Delta^{\Half})\partial_r +\Delta^{\Half}\partial_r(\Delta^{\Half})\partial_r \\
={}&2\Delta^{\Half}\partial_r(\mu^{-\Half} r)\partial_t \\
={}&2\mu^{-1}(r-3M)\partial_t,
\end{align*}
which proves \eqref{comm:DeltahalfVRY}. In addition, the commutators \eqref{comm:edthRedthR'LxiLeta} are manifest, and the commutators \eqref{comm:edthRedthR'YVR} holds by simply observing that the coefficients in the expression \eqref{def:edthRandedthR'} of $\edthR$ and $\edthR'$ are $(r, t)$-independent and the coefficients in the expression \eqref{def:VectorFieldYandV} of $Y$ and $\VR$ are $(\theta,\phi)$-independent. The last commutator \eqref{comm:curlLscurlLds} is a.well-known fact, nevertheless, we show it in details. Recall that $\edthR'\varphi$ has spin-weight $s-1$ and $\edthR\varphi$ has spin-weight $s+1$ since $\edthR$ increases the spin-weight by $1$ while $\edthR'$ decreases the spin-weight by $1$. Consequently, using \eqref{def:edthRandedthR'}, we compute
\begin{align*}
[\curlLs, \curlLds]\varphi={}&(\partial_{\theta}-i\csc\theta\partial_{\phi} +(s+1)\cot\theta)(\partial_{\theta}+i\csc\theta\partial_{\phi} -s\cot\theta)\varphi\notag\\
&
-(\partial_{\theta}+i\csc\theta\partial_{\phi} -(s-1)\cot\theta)(\partial_{\theta}-i\csc\theta\partial_{\phi} +s\cot\theta)\varphi\notag\\
={}&\bigg[\partial_{\theta}(i\csc\theta)\partial_{\phi} -s\cot\theta \partial_{\theta}
+\frac{is\cos\theta}{\sin^2\theta}\partial_{\phi} +(s+1)\cot\theta\partial_{\theta}
+\frac{i(s+1)\cos\theta}{\sin^2\theta}\partial_{\phi}\bigg]\varphi
\notag\\
&-
\bigg[
-\partial_{\theta}(i\csc\theta)\partial_{\phi} +s\cot\theta \partial_{\theta}
+\frac{is\cos\theta}{\sin^2\theta}\partial_{\phi}
-(s-1)\cot\theta\partial_{\theta}
+\frac{i(s-1)\cos\theta}{\sin^2\theta}\partial_{\phi}
\bigg]\varphi
\notag\\
&+(-2\partial_{\theta}(s\cot\theta)-s(s+1)\cot^2\theta+s(s-1)\cot^2\theta)\varphi,
\end{align*}
where we have dropped the second order derivative terms in the second step since they cancel exactly by a simple observation. In the above equation, the sum of the third last and the second last lines equals zero, and the last line equals $2s\varphi$, hence proving \eqref{comm:curlLscurlLds}.
\end{proof}

\begin{lemma}
One can express the two principal null vectors in the hyperboloidal foliation as
\begin{align}
\label{eq:YV:hyper}
Y=-\prb +\partial_r h \partial_{\tb},\quad \VR=\prb +(2\mu^{-1}-\partial_r h)\partial_{\tb}.
\end{align}
\end{lemma}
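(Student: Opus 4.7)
The plan is to apply the chain rule for the coordinate change from Boyer--Lindquist coordinates $(t,r,\theta,\phi)$ to hyperboloidal coordinates $(\tb,\rb,\theta,\phi)$, where $\rb=r$ and $\tb=v-h=t+r^{*}-h(r)$, with $r^{*}$ the tortoise coordinate satisfying $\di r^{*}=\mu^{-1}\di r$. Since the angular coordinates are unchanged and $\partial_{r}h$, $\partial_{r}r^{*}=\mu^{-1}$ are already given, the computation reduces entirely to expressing $\partial_{t}\big|_{r,\theta,\phi}$ and $\partial_{r}\big|_{t,\theta,\phi}$ in terms of $\partial_{\tb}\big|_{\rb,\theta,\phi}$ and $\prb\big|_{\tb,\theta,\phi}$.

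First I would invert the relations, obtaining $t=\tb-r^{*}(\rb)+h(\rb)$ and $r=\rb$. The Jacobian of this coordinate change is upper triangular: $\tb$ depends on $t$ with coefficient $1$ and $\rb$ is independent of $t$, while $\rb=r$ at fixed $t$ and $\partial_{r}\tb\big|_{t}=\partial_{r}r^{*}-\partial_{r}h=\mu^{-1}-\partial_{r}h$. The chain rule therefore yields
\begin{align*}
\partial_{t}\big|_{r}={}&\partial_{\tb}\big|_{\rb},\qquad
\partial_{r}\big|_{t}={}(\mu^{-1}-\partial_{r}h)\,\partial_{\tb}+\prb.
\end{align*}

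Next I substitute these expressions into the definitions $Y=\mu^{-1}\partial_{t}-\partial_{r}$ and $\VR=\mu^{-1}\partial_{t}+\partial_{r}$ from \eqref{def:VectorFieldYandV}. For $Y$ the two $\mu^{-1}\partial_{\tb}$ contributions cancel, leaving $Y=-\prb+\partial_{r}h\,\partial_{\tb}$. For $\VR$ the two $\mu^{-1}\partial_{\tb}$ contributions add, giving $\VR=\prb+(2\mu^{-1}-\partial_{r}h)\partial_{\tb}$, as claimed.

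There is really no obstacle: the only point requiring attention is to be careful about which variables are held fixed in each partial derivative when applying the chain rule, in particular that $\partial_{r}r^{*}\big|_{t}=\mu^{-1}$ only for the Boyer--Lindquist $\partial_{r}$, and not for $\prb$.
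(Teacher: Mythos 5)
Your proof is correct, and the chain-rule computation is exactly the intended (and essentially the only) argument — the paper states this lemma without proof precisely because it is this straightforward change-of-coordinates calculation. Your care in distinguishing $\partial_r\big|_t$ from $\prb\big|_\tb$ is the one point where a careless reader could slip, and you handled it correctly.
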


The following lemma is to expand out a spin-weighted wave operator on Schwarzschild.
\begin{lemma}
For a spin-weight $s$ scalar $\psi$, $s=\pm \half$,
\begin{align}
\l
\hspace{4ex}&\hspace{-4ex}
\left( -\mu^{-1} r^2\partial^2_t
+ \partial_r \left( \Delta \partial_r \right)
+\frac{1}{\sin^2{\theta}} \partial^2_{\phi}+ \frac{1}{\sin{\theta}} \partial_\theta \left( \sin{\theta} \partial_\theta\right) +\frac{ 2is\cos\theta}{\sin^2 \theta}\Leta
- (s^2\cot^2\theta +\sfrak)\right)\psi\notag\\
={}&r^{-1}(-r^2YV
+\TAO_{s}
-2M r^{-1})(r\psi).
\end{align}
\end{lemma}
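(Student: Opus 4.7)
The strategy is a direct verification that reorganizes the spin-weighted wave operator on the left into the product form $r^{-1}(-r^2 YV + \TAO_s - 2M r^{-1})\circ r$ on the right, separating the angular contribution from the radial-temporal one. The angular part of the left-hand side,
\[
\frac{1}{\sin^2\theta}\partial_\phi^2 \psi + \frac{1}{\sin\theta}\partial_\theta(\sin\theta \partial_\theta \psi) + \frac{2is\cos\theta}{\sin^2\theta}\Leta\psi - (s^2\cot^2\theta + \sfrak)\psi,
\]
coincides with $\TAO_s \psi$ by the definition \eqref{def:TAOsOp}, since $|s|=\sfrak$ when $s=\pm\half$. As $\TAO_s$ involves only derivatives in $(\theta,\phi)$, it commutes with multiplication by $r$, so $r^{-1}\TAO_s(r\psi) = \TAO_s \psi$, and this piece matches on the two sides. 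The claim therefore reduces to the purely radial-temporal identity
\[
-\mu^{-1} r^2 \partial_t^2 \psi + \partial_r(\Delta \partial_r \psi) = -r\, YV(r\psi) - 2M r^{-1}\psi.
\]

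The next step is to expand $YV(r\psi)$ using $V = \partial_t + \mu\partial_r$ (note $V = \mu\VR$ from \eqref{def:regularV}) and $Y = \mu^{-1}\partial_t - \partial_r$ from \eqref{def:VectorFieldYandV}. A direct calculation gives
\[
V(r\psi) = r\partial_t\psi + \mu\psi + \mu r \partial_r\psi,
\]
after which applying $Y$ term by term and collecting produces cancellation of the mixed $\partial_t\partial_r$ contributions, yielding
\[
YV(r\psi) = \mu^{-1} r\partial_t^2\psi - (\partial_r \mu)\psi - 2\mu\partial_r\psi - r(\partial_r\mu)\partial_r\psi - \mu r\partial_r^2\psi.
\]
Multiplying by $-r$ and using $\mu = 1 - 2Mr^{-1}$, so that $r\partial_r\mu = 2M r^{-1}$, $r^2\partial_r \mu = 2M$, $\mu r^2 = \Delta$, and $\partial_r\Delta = 2r - 2M$, this reorganizes into
\[
-r\, YV(r\psi) = -\mu^{-1} r^2 \partial_t^2\psi + \frac{2M}{r}\psi + (2r - 2M)\partial_r\psi + \Delta \partial_r^2\psi.
\]
Since $\partial_r(\Delta \partial_r\psi) = (2r-2M)\partial_r\psi + \Delta\partial_r^2\psi$, the last three terms on the right combine into $\partial_r(\Delta\partial_r\psi) + 2Mr^{-1}\psi$, and rearranging gives the reduced identity, completing the proof.

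There is no real obstacle in this argument: the computation is mechanical, and the two points that need care are simply (i) the cancellation of the $\partial_t\partial_r$ cross terms in $YV(r\psi)$, which occurs because $\partial_t$ has constant coefficient in both $Y$ and $V$, and (ii) the identification $|s|=\sfrak$ for $s = \pm\half$, which is precisely what matches the zeroth-order constant $-\sfrak$ on the left-hand side with the $-|s|$ appearing in the definition of $\TAO_s$; without this coincidence, a leftover $O(1)$ potential term would obstruct the factorization.
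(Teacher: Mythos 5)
Your proof is correct and follows essentially the same route as the paper: both reduce the identity to the radial-temporal part after observing that the angular terms assemble into $\TAO_s$ (which commutes with multiplication by $r$), and both then verify $-rYV(r\psi)-2Mr^{-1}\psi=-\mu^{-1}r^2\partial_t^2\psi+\partial_r(\Delta\partial_r\psi)$ by direct expansion, relying on the cancellation of the $\partial_t\partial_r$ cross terms and the identity $2\mu r + r^2\partial_r\mu=\partial_r\Delta$.
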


\begin{proof}
In view of the definition of $\TAO_s$ in \eqref{def:TAOsOp}, it suffices to show
\begin{align}
\label{eq:r2YVexpansion}
(-\mu^{-1} r^2\partial^2_t
+ \partial_r \left( \Delta \partial_r \right))\psi ={}&
r^{-1}(-r^2YV
-2M r^{-1})(r\psi).
\end{align}
The RHS equals
\begin{align*}
\hspace{4ex}&\hspace{-4ex}
-r(\mu^{-1}\partial_t -\partial_r)(\partial_t +\mu\partial_r)(r\psi)-2Mr^{-1}\psi\notag\\
={}&-\mu^{-1}r^2\partial_t^2\psi
+r\partial_r(\mu \partial_r (r\psi))
-r\partial_t \partial_r (r\psi)
+r\partial_r \partial_t (r\psi)
-2Mr^{-1}\psi\notag\\
={}&-\mu^{-1}r^2\partial_t^2\psi+r\partial_r (r^{-1}\Delta\partial_r \psi+\mu\psi)-2Mr^{-1}\psi\notag\\
={}&-\mu^{-1}r^2\partial_t^2\psi+\partial_r (\Delta\partial_r\psi)+(-r^{-1}\Delta +\mu r )\partial_r \psi +(r\partial_r \mu -2Mr^{-1})\psi,
\end{align*}
and the coefficients of the last two terms both vanish, hence the proof is completed.
\end{proof}

\subsection{Decomposition into modes for spin-weight $s$ scalars}
\label{sect:decompIntoModes}

A theory of decomposing any spin-weighted scalar into spin-weighted spherical harmonics is standard,  and the eigenvalue parameter $\ell$ takes its value in the set $\{\abs{s}, \abs{s}+1,\ldots\}$.  See \cite[Section 4]{penroserindlerI}. In this work, for any spin-weight $s$ scalar $\varphi$, $s-\half \in \mathbb{Z}$, we make an overall shift $\half$ for the eigenvalue parameter $\ell$ such that the new parameter is $\tilde{\ell}=\ell+\half$, thus the eigenvalue parameter $\tilde{\ell}$ takes values in $\{\abs{s}+\half, \abs{s}+\frac{3}{2}, \ldots \}$ and, in particular, is a positive integer. This overall shift of the eigenvalue parameter is solely for convenience of discussions and stating the estimates  in the context of this work, and without confusion, we denote $\tilde{\ell}$ by $\ell$. Meanwhile, in the rest of this subsection, we consider only spin-weighted scalar with spin-weight $s$ satisfying $s-\half\in \mathbb{Z}$.

For any spin-weight $s$ scalar $\varphi$, $s- \half\in \mathbb{Z}$, we can decompose it into modes $\varphi=\sum\limits_{\ell_0=\abs{s}+1/2}^{\infty}\varphi^{\ell=\ell_0}$, with $\ell\in \mathbb{N}$ and each mode
$\varphi^{\ell=\ell_0}
=\sum\limits_{m}\varphi_{m,\ell_0}
(\tb,\rb)Y_{m,\ell_0}^{s}(\cos\theta)e^{im\pb}$, with $m$ taking all values  satisfying $\ell_0-\abs{s}-\abs{m}\in \mathbb{N}$.
Here, $\left\{Y_{m,\ell}^{s}(\cos\theta)e^{im\pb}\right\}_{m,\ell}$ are the eigenfunctions, called as \textquotedblleft{\emph{spin-weighted spherical harmonics},\textquotedblright} of a self-adjoint operator
$\edthR\edthR'$, form a complete orthonormal basis on $L^2(\sin\theta \di\theta\di \pb)$ and have eigenvalues $-\Lambda_{\ell}=-(\ell-\half+s)(\ell-s+\half)$ defined by
\begin{equation}
\label{eq:eigenvalueSWSHO}
\edthR\edthR'(Y_{m,\ell}^{s}(\cos\theta)e^{im\pb})=
-\Lambda_{\ell}
Y_{m,\ell}^{s}(\cos\theta)e^{im\pb}.
\end{equation}
In particular,
\begin{subequations}
\label{eq:edthRandedthR':eigenvalues}
\begin{align}
\edthR (Y_{m,\ell}^{s}(\cos\theta)e^{im\pb})
={}&-\sqrt{\Big(\ell+s+\half\Big)\Big(\ell-s-\half\Big)}
Y_{m,\ell}^{s+1}(\cos\theta)e^{im\pb},\\
\edthR' (Y_{m,\ell}^{s}(\cos\theta)e^{im\pb})
={}&\sqrt{\Big(\ell+s-\half\Big)\Big(\ell-s+\half\Big)}
Y_{m,\ell}^{s-1}(\cos\theta)e^{im\pb}
\end{align}
\end{subequations}
and
\begin{subequations}
\begin{align}
\label{eq:l=l0mode:eigenvalue}
\edthR\edthR'\varphi^{\ell=\ell_0}
={}\Big(\Big(s-\half\Big)^2-\ell_0^2 \Big)\varphi^{\ell=\ell_0}, \quad
\edthR'\edthR\varphi^{\ell=\ell_0}
={}\Big(\Big(s+\half\Big)^2-\ell_0^2 \Big)\varphi^{\ell=\ell_0}.
\end{align}
The above are standard facts, see \cite{E82E} and \cite[Section 4]{penroserindlerI}.
Meanwhile, in view of \eqref{eq:TAopmsfrak:edthRedthR'},
\begin{align}
\label{eq:l=l0mode:eigenvalueofTAOpmsfrak}
\TAO_{s}\varphi^{\ell=\ell_0}
={}\Big(\Big(s-\half\Big)^2-\ell_0^2 -\abs{s}+s\Big)
\varphi^{\ell=\ell_0}=\Big(\Big(\abs{s}-\half\Big)^2-\ell_0^2 \Big)
\varphi^{\ell=\ell_0}.
\end{align}
\end{subequations}

The following proposition collects a few estimates for the edth operators $\edthR$ and $\edthR$' and the Teukolsky angular operator $\TAO_s$.

\begin{prop}
\label{prop:basicesti}
\begin{itemize}
\item
Let $\varphi$ be a spin-weight $s$ scalar and supported on $\ell\geq \ell_0$ modes, then
\begin{align}
\label{eq:ellip:highermodes}
\hspace{4ex}&\hspace{-4ex}
\int_{\mathbb{S}^2}\left(\abs{\edthR'\varphi}^2
-\Big(\ell_0^2- \Big(s-\half\Big)^2 \Big)\abs{\varphi}^2\right) \di^2\mu
=\int_{\mathbb{S}^2}\left(\abs{\edthR\varphi}^2
-\Big(\ell_0^2- \Big(s+\half\Big)^2 \Big)\abs{\varphi}^2\right) \di^2\mu \geq 0.
\end{align}
Let $\varphi$ be an arbitrary spin-weight $s$ scalar, then
\begin{align}
\label{eq:ellipestis}
\int_{\mathbb{S}^2}\left(\abs{\edthR'\varphi}^2
-(s+\abs{s})\abs{\varphi}^2\right) \di^2\mu =\int_{\mathbb{S}^2}\left(\abs{\edthR\varphi}^2
-(\abs{s}-s)\abs{\varphi}^2\right) \di^2\mu \geq 0.
\end{align}
\item Let $\varphi$ be a spin-weight $s$ scalar and supported on $\ell\geq \ell_0$ modes, then
\begin{align}
\label{eq:ellip:highermodes:TAO}
\int_{\mathbb{S}^2}\Re(-\TAO_s\varphi \bar{\varphi})
\di^2\mu
\geq{}&\int_{\mathbb{S}^2}\Big(\ell_0^2 - \Big(\abs{s}-\half\Big)^2\Big)\abs{\varphi}^2 \di^2\mu.
\end{align}
In particular, let $\varphi$ be an arbitrary spin-weight $s$ scalar, then
\begin{align}
\label{eq:ellipestis:TAO}
\int_{\mathbb{S}^2}\Re(-\TAO_s\varphi \bar{\varphi})
\di^2\mu
\geq{}&2\abs{s}\int_{\mathbb{S}^2}\abs{\varphi}^2 \di^2\mu.
\end{align}

\end{itemize}
\end{prop}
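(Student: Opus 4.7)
The plan is to reduce everything to the spin-weighted spherical harmonic decomposition already set up in equations \eqref{eq:l=l0mode:eigenvalue} and \eqref{eq:l=l0mode:eigenvalueofTAOpmsfrak}, after first converting the quadratic forms involving $\edthR$ and $\edthR'$ into pairings against $\edthR'\edthR\varphi$ and $\edthR\edthR'\varphi$ via the spherical integration-by-parts Lemma \ref{lem:IBPonSphere}.

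First, I apply Lemma \ref{lem:IBPonSphere} twice. Taking $f=\edthR\varphi$ (which has spin-weight $s+1$) and $h=\varphi$ gives
\begin{align*}
\int_{\Sphere}\abs{\edthR\varphi}^2\di^2\mu
={}&-\int_{\Sphere}\Re\bigl(\overline{\edthR'\edthR\varphi}\,\varphi\bigr)\di^2\mu,
\end{align*}
while taking $f=\varphi$ (of spin-weight $(s-1)+1=s$) and $h=\edthR'\varphi$ gives
\begin{align*}
\int_{\Sphere}\abs{\edthR'\varphi}^2\di^2\mu
={}&-\int_{\Sphere}\Re\bigl(\bar{\varphi}\,\edthR\edthR'\varphi\bigr)\di^2\mu.
\end{align*}
These are exactly the quantities to which \eqref{eq:l=l0mode:eigenvalue} applies.

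Next, I expand $\varphi=\sum_{\ell\geq\ell_0}\varphi^{\ell}$ in spin-weighted spherical harmonics, which is an orthogonal decomposition in $L^2(\di^2\mu)$. By \eqref{eq:l=l0mode:eigenvalue}, on each mode
\begin{align*}
\edthR\edthR'\varphi^{\ell}={}\bigl((s-\tfrac12)^2-\ell^2\bigr)\varphi^{\ell},\qquad
\edthR'\edthR\varphi^{\ell}={}\bigl((s+\tfrac12)^2-\ell^2\bigr)\varphi^{\ell},
\end{align*}
so summing yields
\begin{align*}
\int_{\Sphere}\abs{\edthR'\varphi}^2\di^2\mu
={}\sum_{\ell\geq\ell_0}\bigl(\ell^2-(s-\tfrac12)^2\bigr)\int_{\Sphere}\abs{\varphi^{\ell}}^2\di^2\mu,
\end{align*}
with the analogous identity for $\edthR$ in which $(s-\tfrac12)^2$ is replaced by $(s+\tfrac12)^2$. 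Subtracting the stated constant multiple of $\int\abs{\varphi}^2$ in each case leaves $\sum_{\ell\geq\ell_0}(\ell^2-\ell_0^2)\int\abs{\varphi^{\ell}}^2\di^2\mu$ in both formulas, which proves both the equality and the nonnegativity in \eqref{eq:ellip:highermodes}. The estimate \eqref{eq:ellipestis} is then just the specialization $\ell_0=\abs{s}+\tfrac12$, which lies in the support of any spin-weight $s$ scalar: a direct check splitting into $s\geq 0$ and $s\leq 0$ shows $\ell_0^2-(s-\tfrac12)^2=s+\abs{s}$ and $\ell_0^2-(s+\tfrac12)^2=\abs{s}-s$.

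For the Teukolsky angular operator bounds I proceed identically, pairing $-\TAO_s\varphi$ against $\bar\varphi$ in $L^2(\Sphere)$ and using the eigenvalue formula \eqref{eq:l=l0mode:eigenvalueofTAOpmsfrak} on each mode:
\begin{align*}
\int_{\Sphere}\Re(-\TAO_s\varphi\,\bar\varphi)\di^2\mu
={}\sum_{\ell\geq\ell_0}\bigl(\ell^2-(\abs{s}-\tfrac12)^2\bigr)\int_{\Sphere}\abs{\varphi^{\ell}}^2\di^2\mu.
\end{align*}
Dropping all terms with $\ell>\ell_0$ gives \eqref{eq:ellip:highermodes:TAO}, and the universal bound \eqref{eq:ellipestis:TAO} follows again by setting $\ell_0=\abs{s}+\tfrac12$, since $(\abs{s}+\tfrac12)^2-(\abs{s}-\tfrac12)^2=2\abs{s}$. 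Alternatively, \eqref{eq:ellip:highermodes:TAO} is immediate from \eqref{eq:ellip:highermodes} combined with the algebraic identity \eqref{eq:TAopmsfrak:edthRedthR'}, which writes $-\TAO_s$ as $-\edthR\edthR'+\abs{s}-s$ (so that its $L^2$ pairing equals $\int\abs{\edthR'\varphi}^2-(s+\abs{s})\int\abs{\varphi}^2$ plus $2\abs{s}\int\abs{\varphi}^2$, matching the claimed constants).

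There is essentially no obstacle here; the only place requiring a moment of care is verifying the identity between the two versions in \eqref{eq:ellip:highermodes}, which is what forces the use of both eigenvalue relations in \eqref{eq:l=l0mode:eigenvalue} rather than just one, and checking that the specialization $\ell_0=\abs{s}+\tfrac12$ really is admissible for any spin-weight $s$ scalar, which holds because the spin-weighted harmonics start at $\ell=\abs{s}+\tfrac12$ under the shifted convention explained in Section \ref{sect:decompIntoModes}.
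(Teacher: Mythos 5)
Your argument is correct and is essentially the paper's own proof: the paper decomposes $\varphi$ into spin-weighted spherical harmonics and reads the mode-wise norms $\int\abs{\edthR\varphi^\ell}^2$, $\int\abs{\edthR'\varphi^\ell}^2$ directly off the first-order ladder relations \eqref{eq:edthRandedthR':eigenvalues}, whereas you obtain the same mode-wise quantities by pairing against the second-order operators via Lemma \ref{lem:IBPonSphere} and then invoking \eqref{eq:l=l0mode:eigenvalue}; since \eqref{eq:l=l0mode:eigenvalue} is itself a consequence of \eqref{eq:edthRandedthR':eigenvalues}, this is a cosmetic rather than substantive difference. The specialization $\ell_0=\abs{s}+\tfrac12$ for \eqref{eq:ellipestis}, the treatment of $\TAO_s$ via \eqref{eq:l=l0mode:eigenvalueofTAOpmsfrak}, and the alternative derivation from \eqref{eq:TAopmsfrak:edthRedthR'} are all checked correctly.
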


\begin{proof}
The proof for the estimates of the edth operators is similar to the one in \cite[Lemma 4.25]{andersson2019stability}.
By decomposing the spin-weight $s$ scalar $\varphi$, which is supported on $\ell\geq\ell_0$ modes, into the spin-weighted spherical harmonics as above and using equation \eqref{eq:edthRandedthR':eigenvalues}, equation \eqref{eq:ellip:highermodes}  is manifest. For an arbitrary spin-weight $s$ scalar, one can view it as a spin-weight $s$ scalar supported on $\ell\geq \ell_0=\abs{s}+\half$ modes. Equation \eqref{eq:ellipestis}  then follows from \eqref{eq:ellip:highermodes} by simply taking $\ell_0=\abs{s}+\half$.

We decompose $\varphi$ into $\ell$ modes and, using \eqref{eq:l=l0mode:eigenvalueofTAOpmsfrak}, we easily deduce \eqref{eq:ellip:highermodes:TAO}. The other equation follows from \eqref{eq:ellip:highermodes:TAO} by taking $\ell_0$ to be the lowest mode, that is, $\ell_0=\abs{s}+\half$.
\end{proof}

\subsection{Hardy and Sobolev estimates}

The following simple Hardy's inequality will be useful.
\begin{lemma}
Let $\varphi$ be a spin-weight $s$ scalar. Then for any $r'>r_+$,
\begin{align}
\label{eq:Hardy:trivial}
\int_{r_+}^{r'}\abs{\varphi}^2\di r
\lesssim{}&\int_{r_+}^{r'}\mu^2r^2\abs{\partial_r\varphi}^2\di r
+(r'-r_+)\abs{\varphi(r')}^2.
\end{align}
In particular, if $\lim\limits_{r\to \infty} r\abs{\varphi}^2 =0$, then
\begin{align}
\label{eq:Hardy:trivial:1}
\int_{r_+}^{\infty}\abs{\varphi}^2\di r
\lesssim{}&\int_{r_+}^{\infty}\mu^2r^2\abs{\partial_r\varphi}^2\di r.
\end{align}
\end{lemma}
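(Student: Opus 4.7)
The plan is to prove both inequalities by a standard integration-by-parts Hardy argument, exploiting the fact that on Schwarzschild $\mu r = r - 2M = r - r_+$, so the weight $\mu^2 r^2$ appearing on the right-hand side is exactly $(r-r_+)^2$, which is the natural Hardy weight vanishing simply at the horizon.

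For the first inequality, I would write $1 = \partial_r(r - r_+)$ and integrate by parts:
\begin{align*}
\int_{r_+}^{r'} |\varphi|^2 \, dr
={}& \int_{r_+}^{r'} \partial_r(r - r_+) \, |\varphi|^2 \, dr
= (r'-r_+)|\varphi(r')|^2 - 2\int_{r_+}^{r'} (r-r_+)\, \Re(\bar\varphi\, \partial_r \varphi)\, dr,
\end{align*}
noting that the boundary term at $r = r_+$ vanishes since the factor $(r-r_+)$ kills any contribution from $\varphi(r_+)$ (no decay hypothesis at the horizon is needed). Then I would apply the weighted Cauchy--Schwarz / AM-GM inequality
\[
2(r-r_+)|\Re(\bar\varphi\, \partial_r \varphi)| \leq \tfrac{1}{2}|\varphi|^2 + 2(r-r_+)^2 |\partial_r \varphi|^2,
\]
and absorb the $\tfrac{1}{2}|\varphi|^2$ term into the left-hand side. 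Rearranging and using $(r-r_+)^2 = \mu^2 r^2$ yields the stated estimate with a universal constant. Because $\varphi$ is merely spin-weighted (not differentiated in the angular variables) the spin weight plays no role here: the argument is pointwise in $(\theta,\phi)$ and the squared modulus is a real scalar function of $r$.

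For the second inequality, I would take the first one with $r'$ replaced by an arbitrary $R > r_+$ and let $R \to \infty$. Under the hypothesis $\lim_{r\to\infty} r|\varphi|^2 = 0$, the boundary term satisfies $(R - r_+)|\varphi(R)|^2 \leq R|\varphi(R)|^2 \to 0$, while the left-hand side converges monotonically to $\int_{r_+}^\infty |\varphi|^2\, dr$ and the right-hand volume integral is nondecreasing in $R$. Passing to the limit gives the unweighted-on-the-left Hardy estimate. There is no serious obstacle here; the only thing to verify carefully is the vanishing of the boundary term at $r_+$ (handled automatically by the $(r-r_+)$ weight) and at infinity (handled by the stated hypothesis).
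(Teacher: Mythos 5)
Your proof is correct and matches the paper's argument: integrating the identity $\partial_r\big((r-r_+)|\varphi|^2\big)=|\varphi|^2+2(r-r_+)\Re(\bar\varphi\,\partial_r\varphi)$ from $r_+$ to $r'$ and applying Cauchy--Schwarz is exactly what the paper does, and your observation that $\mu^2 r^2 = (r-r_+)^2$ on Schwarzschild is the correct identification of the weight.
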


\begin{proof}
It follows easily by integrating the following equation
\begin{align}
\partial_r((r-r_+)\abs{\varphi}^2)=\abs{\varphi}^2
+2(r-r_+)\Re(\bar{\varphi}\partial_r\varphi)
\end{align}
from $r_+$ to $r'$ and applying the Cauchy-Schwarz inequality to the last product term.
\end{proof}

We will also use the following standard Hardy's inequality, cf. \cite[Lemma 4.30]{andersson2019stability}.
\begin{lemma}[One-dimensional Hardy estimates]
\label{lem:HardyIneq}
Let $\alpha \in \mathbb{R}\setminus \{0\}$  and $h: [r_0,r_1] \rightarrow \mathbb{R}$ be a $C^1$ function.
\begin{enumerate}
\item \label{point:lem:HardyIneqLHS} If $r_0^{\alpha}\vert h(r_0)\vert^2 \leq D_0$ and $\alpha<0$, then
\begin{subequations}
\label{eq:HardyIneqLHSRHS}
\begin{align}\label{eq:HardyIneqLHS}
-2\alpha^{-1}r_1^{\alpha}\vert h(r_1)\vert^2+\int_{r_0}^{r_1}r^{\alpha -1} \vert h(r)\vert ^2 \di r \leq \frac{4}{\alpha^2}\int_{r_0}^{r_1}r^{\alpha +1} \vert \partial_r h(r)\vert ^2 \di r-2\alpha^{-1}D_0.
\end{align}
\item \label{point:lem:HardyIneqRHS} If $r_1^{\alpha}\vert h(r_1)\vert^2 \leq D_0$ and $\alpha>0$, then
\begin{align}\label{eq:HardyIneqRHS}
2\alpha^{-1}r_0^{\alpha}\vert h(r_0)\vert^2+\int_{r_0}^{r_1}r^{\alpha -1} \vert h(r)\vert ^2 \di r \leq \frac{4}{\alpha^2}\int_{r_0}^{r_1}r^{\alpha +1} \vert \partial_r h(r)\vert ^2 \di r +2\alpha^{-1}D_0.
\end{align}
\end{subequations}
\end{enumerate}
\end{lemma}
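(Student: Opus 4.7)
The statement is the classical weighted Hardy inequality on a bounded interval, and both parts follow from a single completion-of-squares identity once one tracks boundary terms carefully. My plan is to prove them simultaneously by first establishing the two-sided inequality
\begin{align*}
\int_{r_0}^{r_1} r^{\alpha-1} h(r)^2\, \di r
\;\leq\; \frac{4}{\alpha^2}\int_{r_0}^{r_1} r^{\alpha+1} (\partial_r h(r))^2\, \di r
+ 2\alpha^{-1}\bigl(r_1^\alpha h(r_1)^2 - r_0^\alpha h(r_0)^2\bigr),
\end{align*}
valid for every $\alpha\neq 0$ and every $C^1$ function $h$, and then specializing the sign of $\alpha$ together with the one-sided hypothesis on $D_0$.

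The first step is the identity. For a free parameter $\beta\in\mathbb{R}$ to be optimized below, expand the non-negative integrand
\begin{align*}
0 \;\leq\; \int_{r_0}^{r_1} r^{\alpha+1}\bigl(\partial_r h + \beta r^{-1} h\bigr)^2 \di r
\;=\; \int_{r_0}^{r_1} r^{\alpha+1}(\partial_r h)^2\, \di r
+ 2\beta \int_{r_0}^{r_1} r^{\alpha} h\, \partial_r h\, \di r
+ \beta^2 \int_{r_0}^{r_1} r^{\alpha-1} h^2\, \di r.
\end{align*}
Next I will rewrite the cross term by one integration by parts, using $2h\,\partial_r h = \partial_r(h^2)$, which yields
\begin{align*}
2\int_{r_0}^{r_1} r^{\alpha} h\, \partial_r h\, \di r
= r_1^\alpha h(r_1)^2 - r_0^\alpha h(r_0)^2
- \alpha \int_{r_0}^{r_1} r^{\alpha-1} h^2\, \di r.
\end{align*}
Inserting this back gives an inequality whose coefficient in front of $\int r^{\alpha-1}h^2\,\di r$ equals $\beta^2-\alpha\beta$. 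The natural choice $\beta=\alpha/2$ minimizes this coefficient, producing the sharp constant $-\alpha^2/4$; multiplying through by $4/\alpha^2$ then yields the displayed two-sided Hardy inequality.

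The remaining step is essentially bookkeeping. For part \eqref{point:lem:HardyIneqRHS}, where $\alpha>0$, the coefficient $2\alpha^{-1}$ is positive, so the boundary term $2\alpha^{-1}r_1^\alpha h(r_1)^2$ on the right can be bounded above by $2\alpha^{-1}D_0$ using the hypothesis $r_1^\alpha h(r_1)^2\leq D_0$, while $-2\alpha^{-1}r_0^\alpha h(r_0)^2$ is absorbed into the left-hand side as $+2\alpha^{-1}r_0^\alpha h(r_0)^2$, giving \eqref{eq:HardyIneqRHS}. For part \eqref{point:lem:HardyIneqLHS}, with $\alpha<0$, the role of the endpoints simply reverses: now $-2\alpha^{-1}>0$ and the hypothesis $r_0^\alpha h(r_0)^2\leq D_0$ controls the boundary term at $r_0$, while the term at $r_1$ moves to the left-hand side as $-2\alpha^{-1}r_1^\alpha h(r_1)^2$, yielding \eqref{eq:HardyIneqLHS}. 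There is no genuine obstacle; the only subtlety worth pointing out is that the optimal choice $\beta=\alpha/2$ is exactly what produces the constant $4/\alpha^2$, and any other choice of $\beta$ gives a strictly worse constant.
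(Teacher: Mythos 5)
Your proof is correct. The completion-of-squares identity
\[
0\leq \int_{r_0}^{r_1} r^{\alpha+1}\bigl(\partial_r h + \tfrac{\alpha}{2} r^{-1}h\bigr)^2\di r
\]
together with $2h\,\partial_r h=\partial_r(h^2)$ and a single integration by parts gives exactly the unified estimate you state, the bookkeeping for the signs of $\alpha$ is handled properly, and the optimal choice $\beta=\alpha/2$ does produce the constant $4/\alpha^2$. The paper itself supplies no proof here (it cites \cite[Lemma 4.30]{andersson2019stability}), so there is nothing to diverge from; your argument is the standard one-dimensional Hardy derivation and is what the cited lemma amounts to as well. No gaps.
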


Recall the following Sobolev-type estimates from \cite[Lemmas 4.32 and 4.33]{andersson2019stability}.
\begin{lemma}
\label{lem:Sobolev}
Let $\varphi$ be a spin-weight $s$ scalar. Then
\begin{align}
\label{eq:Sobolev:1}
\sup_{\Sigmatb}\abs{\varphi}^2\lesssim_{s}{} \norm{\varphi}_{W_{-1}^3(\Sigmatb)}^2.
\end{align}
If $\alpha\in (0,1]$, then
\begin{align}
\label{eq:Sobolev:2}
\sup_{\Sigmatb}\abs{\varphi}^2\lesssim_{s,\alpha} {} (\norm{\varphi}_{W_{-2}^3(\Sigmatb)}^2
+\norm{rV\varphi}_{W_{-1-\alpha}^2(\Sigmatb)}^2)^{\half}
(\norm{\varphi}_{W_{-2}^3(\Sigmatb)}^2
+\norm{rV\varphi}_{W_{-1+\alpha}^2(\Sigmatb)}^2)^{\half}.
\end{align}
If $\lim\limits_{{\tb\to\infty}}\abs{r^{-1}\varphi}=0$ pointwise in $(\rb,\theta,\pb)$, then
\begin{align}
\label{eq:Sobolev:3}
\abs{r^{-1}\varphi}^2\lesssim_{s} {}\norm{\varphi}_{W_{-3}^3(\DOC_{\tb,\infty})}
\norm{\Lxi\varphi}_{W_{-3}^3(\DOC_{\tb,\infty})}.
\end{align}
For any $r'>2M$ away from horizon, if $\lim\limits_{{\tb\to\infty}}\abs{r^{-1}\varphi}=0$ pointwise in $(\rb,\theta,\pb)$, then
\begin{align}
\label{eq:Sobolev:4}
\abs{(r')^{-1}\varphi(r')}^2\lesssim_{s,r'} {}\norm{\varphi}_{W_{-3}^3(\DOC_{\tb,\infty}^{\geq ({r'+2M})/{2}})}
\norm{\Lxi\varphi}_{W_{-3}^3(\DOC_{\tb,\infty}^{\geq ({r'+2M})/{2}})}.
\end{align}
\end{lemma}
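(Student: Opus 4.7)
My plan is to combine a two-dimensional Sobolev embedding on $\Sphere$, a one-dimensional Agmon-type estimate in the radial variable $\rb$, and (for the spacetime estimates) a further Agmon-type estimate in $\tb$. The point of the set $\CDeri=\{Y,rV,\edthR,\edthR'\}$ is that its elements span the tangent bundle with uniformly bounded coefficients: \eqref{eq:YV:hyper} gives $\prb=-Y+(\prb h)\Lxi$, and writing $\Lxi=\half\mu(Y+\VR)=\half\mu Y+\half r^{-1}(rV)$ shows $\prb$ and $\Lxi$ are both controlled by $\CDeri$ uniformly up to and including $\Horizon$. Consequently the $W_{-1}^3(\Sigmatb)$ norm controls three spatial derivatives in every direction.

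For \eqref{eq:Sobolev:1}, I would fix $\rb$ and apply $H^2(\Sphere)\hookrightarrow L^\infty(\Sphere)$, realizing the two angular derivatives by $\edthR,\edthR'$ and their compositions, to get $\sup_{\Sphere}\abs{\varphi}^2(\rb,\cdot)\lesssim G(\rb)$, where $G(\rb)$ is a sum of $L^2(\Sphere)$-norms of $\varphi$ and of its $\leq 2$ angular derivatives. Then I bound $\sup_\rb G(\rb)$ by averaging the identity $G(\rb_0)=G(\rb_1)-\int_{\rb_0}^{\rb_1}\prb G\,\di\rb$ over a finite reference interval in $\rb_1$ (so no decay at $\infty$ is needed), followed by Cauchy--Schwarz with the common weight $r^{-1}$ on both the $G^2$ and $\abs{\prb G}\cdot G$ factors. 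The extra $\prb$ supplies the third derivative, yielding \eqref{eq:Sobolev:1}. For \eqref{eq:Sobolev:2}, run exactly the same argument, but in the last Cauchy--Schwarz split the two factors asymmetrically using weights $r^{-1-\alpha}$ and $r^{-1+\alpha}$, i.e.\ $\int\abs{G\prb G}\,\di\rb\lesssim(\int r^{-1-\alpha}G^2+\int r^{-1-\alpha}r^2\abs{\prb G}^2)^{1/2}\cdot(\int r^{-1+\alpha}G^2+\int r^{-1+\alpha}r^2\abs{\prb G}^2)^{1/2}$, which is exactly the advertised product structure. The derivative $rV$ appears rather than $\prb$ because $r^2\abs{\prb G}^2$ is essentially $\abs{rV G}^2$ plus a bounded-coefficient term controlled by $G$ itself (via $\prb=-Y+(\prb h)\Lxi$ and $\Lxi=\half\mu Y+\half r^{-1}(rV)$), and the missing third derivative is absorbed into the $W_{-2}^3$ norm of $\varphi$ on both factors.

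For \eqref{eq:Sobolev:3}, the assumption $r^{-1}\varphi\to 0$ as $\tb\to\infty$ (pointwise in the spatial variables) lets me write, at any spatial point $x$,
\begin{align*}
\abs{r^{-1}\varphi}^2(\tb_0,x)={}&-2\int_{\tb_0}^{\infty}r^{-2}\Re\bigl(\bar\varphi\,\Lxi\varphi\bigr)(\tb,x)\,\di\tb
\leq 2\|r^{-1}\varphi(\cdot,x)\|_{L^2_\tb}\|r^{-1}\Lxi\varphi(\cdot,x)\|_{L^2_\tb},
\end{align*}
by Cauchy--Schwarz in $\tb$. Taking $\sup_x$ of both sides, using $\sup_x\int\leq\int\sup_x$, and then applying the spatial estimate \eqref{eq:Sobolev:1} to $r^{-1}\psi$ at each $\tb$-slice gives $\sup_x\abs{r^{-1}\psi}^2(\tb,\cdot)\lesssim\norm{r^{-1}\psi}_{W_{-1}^3(\Sigmatb)}^2$. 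Commuting $r^{-1}$ through $\CDeri$ (using $Y(r^{-1})=r^{-2}$, $(rV)(r^{-1})=-\mu r^{-1}=O(r^{-1})$, and $\edthR(r^{-1})=\edthR'(r^{-1})=0$) costs only a factor of $r^{-2}$, so $\norm{r^{-1}\psi}_{W_{-1}^3(\Sigmatb)}^2\lesssim\int_{\Sigmatb}r^{-3}\absCDeri{\psi}{3}^2\,\di^3\mu$; integrating in $\tb$ over $[\tb_0,\infty)$ yields $\norm{\psi}_{W_{-3}^3(\DOC_{\tb_0,\infty})}^2$. Plugging in $\psi=\varphi$ for one factor and $\psi=\Lxi\varphi$ for the other gives \eqref{eq:Sobolev:3}; restricting the same argument to the region $\{r\geq(r'+2M)/2\}$ gives \eqref{eq:Sobolev:4}.

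The main technical obstacle is bookkeeping the weights so that the final $r$-powers match exactly, especially verifying that $r^{-1}$ commutes harmlessly through $\CDeri$ up to order three (so that the $r^{-1}$ weight of $W_{-1}^3(\Sigmatb)$ combined with the $r^{-2}$ coming from $(r^{-1})^2$ produces precisely $r^{-3}$). A secondary issue is the 1D Agmon step in \eqref{eq:Sobolev:1}--\eqref{eq:Sobolev:2}, where no pointwise decay in $\rb$ is assumed: one cannot integrate a fundamental-theorem identity from a boundary point to $\infty$, and must instead use the averaging-over-an-interval variant, checking that the averaged boundary contribution is dominated by the intended $W_{-1}^3$ (resp.\ $W_{-2}^3$) norm of $\varphi$.
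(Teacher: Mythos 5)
The paper does not actually prove this lemma; it is cited verbatim from \cite[Lemmas~4.32 and~4.33]{andersson2019stability}, so there is no in-paper proof to compare against. Your overall strategy---$H^2(\Sphere)\hookrightarrow L^\infty(\Sphere)$, an averaged fundamental-theorem-of-calculus (Agmon) estimate in $\rb$, and a further FTC in $\tb$ for the spacetime versions---is indeed the standard one, and your outlines of \eqref{eq:Sobolev:1}, \eqref{eq:Sobolev:3} and \eqref{eq:Sobolev:4} are sound. Your observation that $\prb$ and $\Lxi$ are bounded linear combinations of $Y$ and $r^{-1}(rV)$, and that $r^{-1}$ commutes through $\CDeri$ at the cost of one extra $r^{-1}$, is exactly what is needed to turn the $\tb$-FTC step into the advertised $W_{-3}^3(\DOC_{\tb,\infty})$ norms.

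There is, however, a genuine gap in your treatment of \eqref{eq:Sobolev:2}. After the asymmetric Cauchy--Schwarz you propose, the first factor contains the zeroth-radial-order term $\int_\rb r^{-1-\alpha}\,\bigl\|\varphi\bigr\|^2_{H^2(\Sphere)}\,\di\rb$. For $\alpha\in(0,1)$ one has $r^{-1-\alpha}>r^{-2}$ at large $r$, so this term is \emph{not} controlled by $\norm{\varphi}_{W_{-2}^3(\Sigmatb)}^2$; nor is it controlled by $\norm{rV\varphi}_{W_{-1-\alpha}^2(\Sigmatb)}^2$, since it carries no $rV$ derivative. Declaring the product ``exactly the advertised structure'' therefore does not close the estimate. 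What is missing is a one-dimensional Hardy inequality (Lemma~\ref{lem:HardyIneq}) converting $\int r^{-1-\alpha}|f|^2\,\di\rb$ into $\int r^{1-\alpha}|\prb f|^2\,\di\rb$ (which then becomes a legitimate $r^{-1-\alpha}$-weighted $rV$ term up to a decaying $Y$ contribution), together with the accompanying Hardy boundary term at a fixed finite radius, which must itself be absorbed by the local part of $\norm{\varphi}_{W_{-2}^3}^2$ via a separate compactly-supported estimate. Your closing remark anticipates only the averaged-boundary term of the Agmon step, which is easy; the Hardy step and its boundary are the genuinely delicate pieces of \eqref{eq:Sobolev:2} and are not addressed.
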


%%%%%%%%%%%%%%
\subsection{$r^p$ estimate for a general spin-weighted wave equation on Schwarzschild}
%%%%%%%%%%%%

We state here an $r^p$ estimate for a general spin-weighted wave equation, which is crucial in obtaining energy decay estimates as shown originally in \cite{dafermos2009new} for the scalar field and later in \cite{andersson2019stability} for the linearized gravity. The following statement and its proof are adapted from \cite{andersson2019stability,Ma20almost}. 

\begin{prop}
\label{prop:wave:rp}
Let $\reg\in \mathbb{N}$, $\abs{s}\in \half \mathbb{N}$, $\abs{s}\leq 2$\footnote{This proposition actually applies to a more general case where the spin-weight $s$ is an arbitrary half integer.}, and $p\in [0,2]$. Let $\delta\in (0,1/2)$ be arbitrary. Let $\varphi$ and $\vartheta=\vartheta(\varphi)$ be spin-weight $s$ scalars satisfying
\begin{align}
\label{eq:wave:rp}
-r^2YV\varphi
+\edthR\edthR'\varphi -b_V V\varphi -b_0\varphi=\vartheta.
\end{align}
Let the maximal eigenvalue of $\edthR\edthR'$ be $-\Lambda_s\leq 0$, i.e. $\abs{\edthR'\varphi}^2\geq \Lambda_s \abs{\varphi}^2$.
Let $b_{V}$, $b_{\phi}$ and $b_0$ be smooth real functions of $r$ such that
\begin{enumerate}
\item $\exists b_{V,-1}\in \mathbb{R}^+\cup\{0\}$ such that $b_V=b_{V,-1} r +O(1)$, and
\item $\exists b_{0,0}\in \mathbb{R}$ such that $b_0=b_{0,0}+O(r^{-1})$ and $b_{0,0}+\Lambda_s\geq 0$.
\end{enumerate}
Then there is a constant $\hat{R}_0=\hat{R}_0(p,b_0,b_V)$ such that for all $R_0\geq \hat{R}_0$ and $\tb_2>\tb_1\geq \tb_0$,
\begin{enumerate}
\item for $p\in (0, 2)$,
    \begin{align}\label{eq:rp:less2:2}
\hspace{2ex}&\hspace{-2ex}
\norm{rV\varphi}^2_{W_{p-2}^\reg(\Sigmatwo^{\geq R_0})}
+\norm{\varphi}^2_{W_{-2}^{\reg+1}(\Sigmatwo^{\geq R_0})}
+\norm{\varphi}^2_{W_{p-3}^{\reg+1}(\Donetwo^{\geq R_0})}
+\norm{Y\varphi}^2_{W_{-1-\delta}^{\reg}(\Donetwo^{\geq R_0})}
\notag\\
&\lesssim_{[R_0-M,R_0]} {}\norm{rV\varphi}^2_{W_{p-2}^\reg(\Sigmaone^{\geq R_0})}
+\norm{\varphi}^2_{W_{-2}^{\reg+1}(\Sigmaone^{\geq R_0})}
+\norm{\vartheta}^2_{W_{p-3}^{\reg}(\Donetwo^{\geq R_0})}
;
\end{align}
\item\label{pt:2:prop:wave:rp}for $p=2$,
\begin{align}\label{eq:rp:p=2:2}
\hspace{2ex}&\hspace{-2ex}
\norm{rV\varphi}^2_{W_{0}^\reg(\Sigmatwo^{\geq R_0})}
+\norm{\varphi}^2_{W_{-2}^{\reg+1}(\Sigmatwo^{\geq R_0})}
+\norm{\varphi}^2_{W_{-1-\delta}^{\reg+1}(\Donetwo^{\geq R_0})}
+\norm{rV\varphi}^2_{W_{-1}^{\reg}(\Donetwo^{\geq R_0})}
\notag\\
&\lesssim_{[R_0-M,R_0]} {}\norm{rV\varphi}^2_{W_{0}^\reg(\Sigmaone^{\geq R_0})}
+\norm{\varphi}^2_{W_{-2}^{\reg+1}(\Sigmaone^{\geq R_0})}
+\norm{\vartheta}^2_{W_{-1}^{\reg}(\Donetwo^{\geq R_0})},
\end{align}
and the term $\norm{\vartheta}^2_{W_{-1}^{\reg}(\Donetwo^{\geq R_0})}$ can be replaced by
\begin{align}
\label{eq:rp:p=2:2:errorterm}
\sum\limits_{\abs{\mathbf{a}}\leq \reg}\bigg|\int_{\Donetwo^{R_0}}\Re\Big(
V\overline{\mathbb{D}^{\mathbf{a}}\varphi}
\mathbb{D}^{\mathbf{a}}\vartheta\Big) \di^4 \mu\bigg|
+\norm{\vartheta}^2_{W_{-1-\delta}^{\reg}(\Donetwo^{R_0})};
\end{align}
\item for $p=0$ and $b_{V,-1}>0$,
\begin{align}\label{eq:rp:p=0}
\norm{\varphi}^2_{W_{-2}^{\reg+1}(\Sigmatwo^{R_0})}
+\norm{\varphi}^2_{W_{-3}^{\reg+1}(\Donetwo^{R_0})}
&\lesssim_{[R_0-M,R_0]} {}\norm{\varphi}^2_{W_{-2}^{\reg+1}(\Sigmaone^{R_0})}
+\norm{\vartheta}^2_{W_{-3}^{\reg}(\Donetwo^{R_0})};
\end{align}
\item for $p=0$ and $b_{V,-1}=0$,
\begin{align}\label{eq:rp:p=0:v2}
\norm{\varphi}^2_{W_{-2}^{\reg+1}(\Sigmatwo^{R_0})}
+\norm{\varphi}^2_{W_{-3}^{\reg+1}(\Donetwo^{R_0})}
&\lesssim_{[R_0-M,R_0]} {}\norm{\varphi}^2_{W_{-2}^{\reg+1}(\Sigmaone^{R_0})}
+\norm{rV\varphi}^2_{W_{-3}^{\reg}(\Donetwo^{R_0})}
+\norm{\vartheta}^2_{W_{-3}^{\reg}(\Donetwo^{R_0})},
\end{align}
\end{enumerate}
where integral terms $\norm{\varphi}^2_{W_{0}^{\reg+1}(\Sigmatwo^{R_0-M,R_0})}
+\norm{\varphi}^2_{W_{0}^{\reg+1}(\Sigmaone^{R_0-M,R_0})} +\norm{\varphi}^2_{W_{0}^{\reg+1}(\Donetwo^{R_0-M,R_0})}
+\norm{\vartheta}^2_{W_{0}^{\reg}(\Donetwo^{R_0-M, R_0})}$ supported on $[R_0-M, R_0]$ are implicit in the symbol $\lesssim_{[R_0-M, R_0]}$.

\end{prop}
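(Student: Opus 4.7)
The approach is the Dafermos--Rodnianski $r^p$-method specialized to the spin-weighted wave equation \eqref{eq:wave:rp}, in a form adapted to spin-weighted fields as in \cite{andersson2019stability,Ma20almost}. I first prove the base-regularity ($\reg=0$) estimate and then commute with the elements of $\mathbb{D}=\{Y,rV,\edthR,\edthR'\}$, whose commutators with the principal part of \eqref{eq:wave:rp} are controlled by \eqref{comm:edthRedthR'YVR}, to obtain the higher-regularity versions.

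At base regularity, I multiply \eqref{eq:wave:rp} by $\chi r^{p-2}\overline{V\varphi}$, where $\chi$ is a cutoff identically $1$ for $r\geq R_0$ and supported in $r\geq R_0-M$, and take twice the real part. Using Lemma~\ref{lem:IBPonSphere} on the sphere for the $\edthR\edthR'$ term and the product rule on the other terms produces, modulo a total spherical divergence, the divergence identity
\begin{align*}
-r^p Y|V\varphi|^2-r^{p-2}V|\edthR'\varphi|^2-2b_V r^{p-2}|V\varphi|^2-b_0 r^{p-2}V|\varphi|^2\equiv 2r^{p-2}\Re(\vartheta\cdot\overline{V\varphi}).
\end{align*}
Integrating over $\Donetwo^{\geq R_0}$ with the hyperboloidal decomposition \eqref{eq:YV:hyper} and performing standard integrations by parts in $\rb$ and $\tb$ yields boundary fluxes on $\Sigmaone,\Sigmatwo$ and $\Scrionetwo$ with positive weights $r^p|V\varphi|^2$ and $r^{p-2}|\edthR'\varphi|^2$, together with bulk terms whose leading parts are $pr^{p-1}|V\varphi|^2$ and $(2-p)r^{p-3}|\edthR'\varphi|^2$, both nonnegative for $p\in[0,2]$. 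The expansion $b_V=b_{V,-1}r+O(1)$ means $-2b_V r^{p-2}|V\varphi|^2$ contributes $-2b_{V,-1}r^{p-1}|V\varphi|^2$ at leading order, combining with the previous to yield $(p-2b_{V,-1})r^{p-1}|V\varphi|^2$, which is coercive for $p\in(0,2)$, and for $p=0$ when $b_{V,-1}>0$. The term $-b_0 r^{p-2}V|\varphi|^2$, after integration by parts in $V$, produces $V(b_0 r^{p-2})|\varphi|^2$ of leading order $(p-2)b_{0,0}r^{p-3}|\varphi|^2$; the spectral inequality $|\edthR'\varphi|^2\geq\Lambda_s|\varphi|^2$ from Proposition~\ref{prop:basicesti} combined with $b_{0,0}+\Lambda_s\geq 0$ absorbs this into the $|\edthR'\varphi|^2$ bulk. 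Cutoff errors from $\chi$ are supported in $[R_0-M,R_0]$ and folded into the implicit constants $\lesssim_{[R_0-M,R_0]}$; choosing $R_0\geq\hat{R}_0(p,b_V,b_0)$ sufficiently large renders all subleading $O(r^{p-2})$ and $O(r^{p-3})$ corrections strictly dominated by the coercive bulk. Cauchy--Schwarz against the coercive $|V\varphi|^2$ bulk places $\vartheta$ in $W_{p-3}^0$, and the Hardy inequality \eqref{eq:Hardy:trivial} converts bulk control of $|V\varphi|^2$ into the zeroth-order $|\varphi|^2$ weights appearing in the stated $W_{-2}^{\reg+1}$ and $W_{p-3}^{\reg+1}$ norms.

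Two endpoints require modifications. For $p=2$, the bulk coercivity on $|V\varphi|^2$ degenerates, so I close the forcing either by integrating $V$ back onto $\vartheta$ to produce the spacetime integral $\int\Re(V\overline{\mathbb{D}^{\mathbf{a}}\varphi}\cdot\mathbb{D}^{\mathbf{a}}\vartheta)$ appearing in \eqref{eq:rp:p=2:2:errorterm}, or by a $\delta$-weighted Cauchy--Schwarz placing $\vartheta$ in $W_{-1-\delta}^{\reg}$ against the now-available $\|\varphi\|_{W_{-1-\delta}^{\reg+1}}$ bulk. For $p=0$ with $b_{V,-1}=0$ the $|V\varphi|^2$ bulk is lost entirely, so I must retain $rV\varphi$ as a spacetime source on the right, yielding \eqref{eq:rp:p=0:v2}; the $|\edthR'\varphi|^2$ and $|\varphi|^2$ bulks remain coercive and deliver the stated slice and spacetime controls. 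The higher-regularity version follows by commuting \eqref{eq:wave:rp} with $\mathbb{D}^{\mathbf{a}}$ for $|\mathbf{a}|\leq\reg$, using \eqref{comm:edthRedthR'YVR} to note that $\edthR,\edthR'$ commute cleanly with $Y,V$, and iterating.

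The main obstacle is twofold. First, the threshold $\hat{R}_0(p,b_V,b_0)$ must be pinned down so that the $O(r^{p-2})$ and $O(r^{p-3})$ errors from the non-leading parts of $b_V$ and $b_0$, together with the cutoff errors from $\chi$, are strictly dominated by the coercive bulk on $[R_0,\infty)$; tracking these error constants uniformly in $R_0$ is the key technical point. Second, the $p=2$ case does not close by a standard Cauchy--Schwarz absorption of $\vartheta$: one must carry the unintegrated form \eqref{eq:rp:p=2:2:errorterm} throughout, whose first term cannot in general be bounded by a norm on $\vartheta$ alone and must appear as a distinct contribution in the final estimate.
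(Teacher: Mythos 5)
Your proposal follows the paper's approach exactly: multiply \eqref{eq:wave:rp} by a multiplier of the form $\pm 2\chi^2 r^{p-2}V\bar\varphi$, integrate by parts in the spherical, $V$, and $Y$ directions, and use the structure conditions on $b_V,b_0$ together with the spectral inequality to obtain a coercive bulk. However, two steps in your bookkeeping are wrong as written.

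First, the bulk coefficient of $|V\varphi|^2$ is $(p+2b_{V,-1})r^{p-1}$, not $(p-2b_{V,-1})r^{p-1}$. Working from the paper's conventions, $\Re\big((-r^2YV\varphi)(-2\chi^2r^{p-2}V\bar\varphi)\big)$ contributes the bulk $\partial_r(\chi^2r^p)\abs{V\varphi}^2=pr^{p-1}\abs{V\varphi}^2$ for $r\geq R_0$, and $\Re\big((-b_VV\varphi)(-2\chi^2r^{p-2}V\bar\varphi)\big)=+2\chi^2b_Vr^{p-2}\abs{V\varphi}^2$; both come with the \emph{same} sign, so they add. Your formula $(p-2b_{V,-1})$ is actually inconsistent with your own conclusion: at $p=0$ with $b_{V,-1}>0$ it would give $-2b_{V,-1}r^{-1}<0$, i.e.\ no coercivity, whereas the correct $(p+2b_{V,-1})r^{p-1}=2b_{V,-1}r^{-1}>0$ is exactly why case $(3)$ of the proposition holds. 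With your multiplier $+2\chi r^{p-2}\overline{V\varphi}$ all the bulks come out with reversed sign and you must multiply the integrated identity by $-1$; once you do so consistently you also get $+(p+2b_{V,-1})$.

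Second, at $p=2$ it is \emph{not} the $\abs{V\varphi}^2$ bulk that degenerates; that bulk is $\approx 2(1+b_{V,-1})r\abs{V\varphi}^2$, which is the strongest of all. What vanishes is the angular/potential bulk $(2-p)r^{p-3}\big(\abs{\edthR'\varphi}^2-\Lambda_s\abs{\varphi}^2+(b_{0,0}+\Lambda_s)\abs{\varphi}^2\big)$, and with it any coercive control of $\abs{\varphi}^2$ in the bulk. The reason the paper sums in the $p=2-\delta$ estimate is precisely to recover the $\norm{\varphi}^2_{W_{-1-\delta}^{\reg+1}}$ bulk needed to absorb the lower-order errors coming from the subleading parts $b_V-rb_{V,-1}=O(1)$ and $b_0-b_{0,0}=O(r^{-1})$, which are only bounded below by $-C\big(\norm{\varphi}^2_{W_{-3}(\Sigmatwo)}+\norm{\varphi}^2_{W_{-2}(\Donetwo)}+\cdots\big)$; it is not a repair of the $\abs{V\varphi}^2$ bulk. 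Relatedly, the phrase ``integrating $V$ back onto $\vartheta$'' does not describe what happens: in \eqref{eq:rp:p=2:2:errorterm} the forcing $\Re(V\bar\varphi\,\vartheta)$ is simply left unintegrated, and in \eqref{eq:rp:p=2:2} it is closed by Cauchy--Schwarz against the $\norm{rV\varphi}^2_{W_{-1}^\reg}$ bulk, which yields $\norm{\vartheta}^2_{W_{-1}^\reg}$ (not $W_{-1-\delta}^\reg$). Finally, for $p=0$ with $b_{V,-1}=0$ your approach of ``retaining $rV\varphi$ as a spacetime source'' is equivalent to, but less clean than, the paper's reformulation of shifting $b_V\mapsto b_V+r$ and adding $-rV\varphi$ to $\vartheta$, which reduces cleanly to the $b_{V,-1}>0$ case.
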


\begin{proof}
The estimate \eqref{eq:rp:less2:2} with $p\in(0,2)$ has been proven in \cite[Lemmas 5.5 and 5.6]{andersson2019stability}. Note that there is an overall sign between equation and the one in \cite[Equation (5.29)]{andersson2019stability}, and this is responsible for the sign change in some terms in \eqref{eq:wave:rp}. This estimate is achieved by the following steps. We multiply equation \eqref{eq:wave:rp} by $-2\chi^2 r^{p-2}V\bar{\varphi}$ and take the real part, and based on the following calculations:
\begin{align}
\Re\big((-r^2 YV\varphi)(-2\chi^2 r^{p-2}V\bar{\varphi})\big)={}&Y(\chi^2 r^p \abs{V\varphi}^2)+\partial_r (\chi^2 r^p)\abs{V\varphi}^2,\\
\Re\big((\edthR\edthR'\varphi-b_0\varphi)(2\chi^2 r^{p-2}V\bar{\varphi})\big)={}&\Re\big(\edthR(-2\chi^2r^{p-2}\edthR'\varphi V\bar{\varphi})\big)
+V\big(\chi^2 r^{p-2}(\abs{\edthR'\varphi}^2+b_0\abs{\varphi}^2)\big)\notag\\
&
-\big(\mu \partial_r (\chi^2r^{p-2})\abs{\edthR'\varphi}^2 +\mu\partial_r (b_0\chi^2 r^{p-2})\abs{\varphi}^2\big),\\
\Re\big((-b_V V\varphi)(-2\chi^2 r^{p-2}V\bar{\varphi})\big)={}&2\chi^2 b_Vr^{p-2}\abs{V\varphi}^2,
\end{align}
we arrive at
\begin{align}
\label{eq:rplemma:0to2:general:mul}
&\Re(\edthR(-2\chi^2r^{p-2}\edthR'\varphi  V\overline{\varphi}))
+V(\chi^2 r^{p-2}(\abs{ \edthR'\varphi}^2-\Lambda_s\varphi^2
+(b_{0}+\Lambda_s)\abs{\varphi}^2))
+Y(\chi^2r^p\abs{V\varphi}^2)
\notag\\
&
+(\partial_r(\chi^2r^p)
+2\chi^2 b_{V}r^{p-2})\abs{V\varphi}^2
-\mu\partial_r(\chi ^2r^{p-2})
(\abs{\edthR'\varphi}^2-\Lambda_s\abs{\varphi}^2
+(b_{0}+\Lambda_s)\abs{\varphi}^2)
-\mu\partial_r b_0 \chi^2 r^{p-2} \abs{\varphi}^2\notag\\
&
={}-2\chi^2 r^{p-2}\Re(V\overline{\varphi}\vartheta).
\end{align}
As shown in  \cite[Lemmas 5.5 and 5.6]{andersson2019stability}, the estimate \eqref{eq:rp:less2:2}  then follows from a standard Morawetz estimate near infinity together with an integration over $\Donetwo$ and we omit the details.

For the remaining estimates, it suffices to show the case $\reg=0$, since the proof for the general $\reg\geq 1$ case is the same as the one in \cite[Lemmas 5.5 and 5.6]{andersson2019stability} or \cite[Proposition 2.9]{Ma20almost} and hence omitted.

To show point \eqref{pt:2:prop:wave:rp} for $p=2$, we take $p=2$ in equation \eqref{eq:rplemma:0to2:general:mul} and substitute  in $b_0=b_{0,0}+ (b_0-b_{0,0})$ and $b_V=rb_{V,-1}+ (b_{V}- rb_{V,-1})$, then we obtain
\begin{align}
&\Re(\edthR(-2\chi^2\edthR'\varphi  V\overline{\varphi}))
+V(\chi^2 (\abs{ \edthR'\varphi}^2-\Lambda_s\varphi^2
+(b_{0,0}+\Lambda_s)\abs{\varphi}^2))
+Y(\chi^2r^2\abs{V\varphi}^2)
\notag\\
&-\mu\partial_r(\chi ^2)
(\abs{\edthR'\varphi}^2-\Lambda_s\abs{\varphi}^2
+(b_{0,0}+\Lambda_s)\abs{\varphi}^2)
+(\partial_r(\chi^2r^2)
+2\chi^2 (rb_{V,-1}+ (b_{V}- rb_{V,-1})))\abs{V\varphi}^2
\notag\\
&
+V\big(\chi^2 (b_0-b_{0,0})\abs{\varphi}^2\big)
-\big(\mu \partial_r (\chi^2) (b_0-b_{0,0})
+\mu\partial_r (b_0-b_{0,0}) \chi^2 \big)\abs{\varphi}^2
\notag\\
&
={}-2\chi^2 \Re(V\overline{\varphi}\vartheta).
\end{align}
By integrating over $\Donetwo$ with a reference volume element $\di^4\mu$, the integral of the first term vanishes, the integral of the second term of the first line gives positive contribution of energy at $\Sigmatwo^{R_0}$ in terms of energy at $\Sigmaone^{R_0-M}$ by assymption, and the integral of the second line is positive definite and bounded below by $c\norm{rV\varphi}^2_{W_{-1}^{0}(\Donetwo^{\geq R_0})}$ by assumption. Further, since $b_0-b_{0,0}=O(r^{-1})$ and $b_V- rb_{V,-1}=O(1)$, the integral of the third line is bounded below by $-C\big(\norm{\varphi}^2_{W_{-3}^{0}(\Sigmatwo^{\geq R_0})}+\norm{\varphi}^2_{W_{-3}^{0}(\Sigmaone^{\geq R_0})}
+\norm{\varphi}^2_{W_{-2}^{0}(\Donetwo^{\geq R_0})}\big)$ modulo terms supported on $[R_0-M,R_0]$. Therefore, we add this estimate to the estimate with $p=2-\delta$ and infer
\begin{align}
&
\norm{rV\varphi}^2_{W_{0}^0(\Sigmatwo^{\geq R_0})}
+\norm{\varphi}^2_{W_{-2}^{1}(\Sigmatwo^{\geq R_0})}
+\norm{\varphi}^2_{W_{-1-\delta}^{1}(\Donetwo^{\geq R_0})}
+\norm{rV\varphi}^2_{W_{-1}^{0}(\Donetwo^{\geq R_0})}
\notag\\
&\lesssim_{[R_0-M,R_0]} {}\norm{rV\varphi}^2_{W_{0}^0(\Sigmaone^{\geq R_0})}
+\norm{\varphi}^2_{W_{-2}^{1}(\Sigmaone^{\geq R_0})}
+\norm{\vartheta}^2_{W_{-1-\delta}^{0}(\Donetwo^{R_0})}
+\bigg|\int_{\Donetwo^{R_0}}\Re(
V\bar{\varphi}
\vartheta) \di^4 \mu\bigg|.
\end{align}
One can also apply the
Cauchy--Schwarz inequality to the last term. and this proves the statement in point \eqref{pt:2:prop:wave:rp}.

The estimate \eqref{eq:rp:p=0} can be proved in a similar way. We multiply the wave equation \eqref{eq:wave:rp} by $-2\chi^2 r^{-2}V\bar{\varphi}$ and take the real part, arriving at
\begin{align}
&\Re(\edthR(-2\chi^2r^{-2}\edthR'\varphi  V\overline{\varphi}))
+V(r^{-2}\chi^2(\abs{ \edthR'\varphi}^2-\Lambda_s\varphi^2
+(b_{0,0}+\Lambda_s)\abs{\varphi}^2))
+Y(\chi^2\abs{V\varphi}^2)
\notag\\
&
+(\partial_r(\chi^2)
+2\chi^2 r^{-1}b_{V,-1})\abs{V\varphi}^2
-2\mu\partial_r(|\chi |^2r^{-2})
(\abs{\edthR'\varphi}^2-\Lambda_s\abs{\varphi}^2
+(b_{0,0}+\Lambda_s)\abs{\varphi}^2)\notag\\
&
+2\chi^2 r^{-2} \Re(V\bar{\varphi}[(b_V-rb_{V,-1})V\varphi+(b_0-b_{0,0})\varphi])\notag\\
&\quad
={}-2\chi^2 r^{-2}\Re(V\overline{\varphi}\vartheta).
\end{align}
By integrating over $\Donetwo$ with a reference volume element $\di^4\mu$, the integral of the first term vanishes, the integral of the second term gives positive contribution of energy at $\Sigmatwo^{R_0}$ in terms of energy at $\Sigmaone^{R_0-M}$, the integral of the second line dominates over
\begin{align}
\label{eq:rp:p=0:term1}
\int_{\Donetwo^{R_0}} r^{-3}(\abs{rV\varphi}^2+(\abs{\edthR'\varphi}^2-\Lambda_s\abs{\varphi}^2
+(b_{0,0}+\Lambda_s)\abs{\varphi}^2))\di^4\mu,
\end{align}
and the absolute value of the integrals of the last two lines are bounded from above using the Cauchy--Schwarz inequality by
\begin{align}
\label{eq:rp:p=0:term2}
\int_{\Donetwo^{R_0-M}} (\veps r^{-3}\abs{rV\varphi}^2
+\veps^{-1}r^{-5}\abs{\varphi}^2)\di^4\mu
+\veps^{-1}\int_{\Donetwo^{R_0-M}} r^{-3}\abs{\vartheta}^2\di^4\mu.
\end{align}
By using the  Hardy's inequality \eqref{eq:HardyIneqLHS}, the integral term \eqref{eq:rp:p=0:term1} is further bounded below by
\begin{align}
c_s\int_{\Donetwo^{R_0}} r^{-3}(\abs{rV\varphi}^2+\abs{\varphi}^2)\di^4\mu-C\int_{\Donetwo^{R_0-M,R_0}} \abs{\varphi}^2\di^4\mu.
\end{align}
and the first integral of this expression dominates over the first integral of \eqref{eq:rp:p=0:term2} by first taking $\veps$ small and then choosing $\hat{R}_0$ sufficiently large. Thus, this proves the $\reg=0$ case of inequality \eqref{eq:rp:p=0}.

In the last case that $b_{V,-1}=0$, we can subtract $-rV\varphi$ on both sides of \eqref{eq:wave:rp} such that the obtained equation satisfies the estimate \eqref{eq:rp:p=0}. The source term of this new equation becomes $\vartheta -rV\varphi$, hence by taking into account of this replacement,
the estimate \eqref{eq:rp:p=0:v2} follows manifestly from \eqref{eq:rp:p=0}.
\end{proof}

%%%%%%%%%%%%%%55
\subsection{Decay estimates}
%%%%%%%%%%%%%%

The following two lemmas are quite useful in deriving energy decay estimates.

The first one proves that a hierarchy of energy and Morawetz estimate implies a decay rate for the energy terms in the hierarchy. The current statement of this lemma is essentially the same as \cite[Lemma 5.2]{andersson2019stability}, and it can be proved in the exactly same way. In applications, $\iPigeonReg$ represents a level of regularity, $\alpha$ represents a weight, and $\PigeonTime$ represents a time coordinate. The weights take values in an interval, whereas the levels of regularity are discrete.

\begin{lemma}[A hierarchy of estimates implies decay rates]
\label{lem:hierarchyImpliesDecay}
Let $D\geq 0$. Let $\alpha_1,\alpha_2\in\Reals$ and $i\in\Integers^+$ be such that $\alpha_1\leq\alpha_2-1$, and $\alpha_2-\alpha_1\leq i$. Let $F:\{-1,\ldots,i\}\times[\alpha_1-1,\alpha_2]\times[\timefunc_0,\infty)\rightarrow[0,\infty)$ be such that $F(\iPigeonReg,\alpha,\PigeonTime)$ is Lebesgue measurable in $\PigeonTime$ for each $\alpha$ and $\iPigeonReg$. Let $\gamma\geq 0$.

If
\begin{subequations}
\begin{enumerate}
\item{} [monotonicity] \label{assump:HierarchyToDecay(1)}for all $\iPigeonReg,\iPigeonReg_1,\iPigeonReg_2\in\{-1,\ldots,i\}$ with $\iPigeonReg_1\leq \iPigeonReg_2$, all $\beta, \beta_1,\beta_2\in[\alpha_1,\alpha_2]$ with $\beta_1\leq\beta_2$, and all $\PigeonTime\geq \timefunc_0$,
\begin{align}
F(\iPigeonReg_1,\beta,\PigeonTime)\lesssim{}& F(\iPigeonReg_2,\beta,\PigeonTime) ,
\label{eq:Rev:HierarchyToDecayReal:MonotonicityHypothesis:j}\\
F(\iPigeonReg,\beta_1,\PigeonTime)\lesssim{}& F(\iPigeonReg,\beta_2,\PigeonTime) ,
\label{eq:Rev:HierarchyToDecayReal:MonotonicityHypothesis:beta}
\end{align}
\item{} [interpolation] \label{assump:HierarchyToDecay(2)}for all $\iPigeonReg\in\{-1,\ldots,i\}$, all $\alpha,\beta_1,\beta_2\in[\alpha_1,\alpha_2]$ such that $\beta_1\leq\alpha\leq\beta_2$, and all $\PigeonTime\geq \timefunc_0$,
\begin{align}
F(\iPigeonReg,\alpha,\PigeonTime)
\lesssim{}&
F(\iPigeonReg,\beta_1,\PigeonTime)^{\frac{\beta_2-\alpha}{\beta_2-\beta_1}}
F(\iPigeonReg,\beta_2,\PigeonTime)^{\frac{\alpha-\beta_1}{\beta_2-\beta_1}} ,
\label{eq:Rev:HierarchyToDecayReal:InterpolationHypothesis}
\end{align}
\item{} [energy and Morawetz estimate] for all $\iPigeonReg\in\{0,\ldots,i\}$, $\alpha\in[\alpha_1,\alpha_2]$, and $\PigeonTime_2\geq \PigeonTime_1\geq \timefunc_0$,
\begin{align}
F(\iPigeonReg,\alpha,\PigeonTime_2)
+\int_{\PigeonTime_1}^{\PigeonTime_2} F(\iPigeonReg-1,\alpha-1,t) \di \PigeonTime
\lesssim F(\iPigeonReg,\alpha,\PigeonTime_1) +D \PigeonTime_1^{\alpha-\alpha_2-\gamma} ,
\label{eq:Rev:HierarchyToDecayReal:EvolutionHypothesis}
\end{align}
and
\item{} [initial decay rate] if $\gamma>0$, then for any  $\PigeonTime\geq \timefunc_0$,
\begin{align}
F(i,\alpha_2,\PigeonTime)
\lesssim \PigeonTime^{-\gamma} \left(F(i,\alpha_2,\timefunc_0) +D\right) ,
\label{eq:Rev:HierarchyToDecayReal:InitialDecay}
\end{align}
\end{enumerate}
\end{subequations}
then, for all $\iPigeonReg\in\{0,\ldots,i\}$, all $\alpha\in[\max\{\alpha_1,\alpha_2-\iPigeonReg\},\alpha_2]$, and all $\tb\geq 2\tb_0$,
\begin{align}
F(i-\iPigeonReg,\alpha,\PigeonTime) \lesssim{}& \PigeonTime^{\alpha-\alpha_2-\gamma}  (F(i,\alpha_2,\timefunc/2) +D),
\end{align}
and for all $\PigeonTime\geq \timefunc_0$,
\begin{align}
F(i-\iPigeonReg,\alpha,\PigeonTime) \lesssim{}& \PigeonTime^{\alpha-\alpha_2-\gamma}  (F(i,\alpha_2,\timefunc_0) +D),
\label{eq:Rev:HierarchyToDecayReal}
\end{align}
where the implicit constant in $\lesssim$ can depend on $\alpha_2$ and $\alpha_1$.
\end{lemma}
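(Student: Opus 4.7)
The plan is to follow the classical Dafermos--Rodnianski dyadic pigeonhole scheme and conclude by interpolation. Set $E_0 := F(i, \alpha_2, \tau_0) + D$. The base case at maximal weight and regularity is handled by assumption (4) when $\gamma > 0$, and when $\gamma = 0$ by specializing the energy-Morawetz estimate to $\alpha = \alpha_2$, which forces the uniform bound $F(i, \alpha_2, \tau) \lesssim E_0$. In either case one has $F(i, \alpha_2, \tau) \lesssim \tau^{-\gamma} E_0$.

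I then prove by induction on $k \in \{0, 1, \ldots, \min\{i, \lceil \alpha_2 - \alpha_1 \rceil\}\}$ the intermediate diagonal bound $F(i-k, \alpha_2 - k, \tau) \lesssim \tau^{-k-\gamma} E_0$ for every $\tau \geq \tau_0$. For the induction step, while $\alpha_2 - k \geq \alpha_1$ the energy-Morawetz estimate at level $i-k$ and weight $\alpha_2 - k$, applied on the dyadic interval $[\tau, 2\tau]$ and combined with the induction hypothesis, yields
\begin{equation*}
\int_{\tau}^{2\tau} F(i-k-1, \alpha_2-k-1, t)\,dt \lesssim F(i-k, \alpha_2-k, \tau) + D\tau^{-k-\gamma} \lesssim \tau^{-k-\gamma} E_0.
\end{equation*}
A pigeonhole argument produces $\tau^* \in [\tau, 2\tau]$ with $F(i-k-1, \alpha_2 - k - 1, \tau^*) \lesssim \tau^{-k-1-\gamma} E_0$, and reapplying the energy-Morawetz estimate at level $i-k-1$ from time $\tau^*$ propagates this bound to all later times; rescaling by a factor of two delivers the step-$(k+1)$ claim.

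To extract the conclusion at a general weight $\alpha \in [\max\{\alpha_1, \alpha_2 - k\}, \alpha_2]$, I use monotonicity in the regularity index to upgrade the top-weight bound to $F(i-k, \alpha_2, \tau) \lesssim F(i, \alpha_2, \tau) \lesssim \tau^{-\gamma} E_0$, and then invoke the interpolation hypothesis with $\beta_1 = \alpha_2 - k$ and $\beta_2 = \alpha_2$ to obtain
\begin{equation*}
F(i-k, \alpha, \tau) \lesssim \bigl(\tau^{-k-\gamma} E_0\bigr)^{(\alpha_2 - \alpha)/k} \bigl(\tau^{-\gamma} E_0\bigr)^{(\alpha - \alpha_2 + k)/k} = \tau^{\alpha - \alpha_2 - \gamma} E_0,
\end{equation*}
where the exponent arithmetic collapses exactly to the target rate. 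For $i' > \lceil \alpha_2 - \alpha_1 \rceil$ no further decay is gained, and monotonicity in the regularity index, $F(i-i', \alpha, \tau) \lesssim F(i - \lceil \alpha_2 - \alpha_1 \rceil, \alpha, \tau)$, reduces that range to the previous case. Finally, the sharper version of the estimate in which $E_0$ is replaced by $F(i, \alpha_2, \tau/2) + D$ is obtained by initializing the whole pigeonhole procedure at time $\tau/2$ rather than $\tau_0$, which is legitimate precisely when $\tau \geq 2\tau_0$.

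The principal technical obstacle is the bookkeeping around the boundary of the allowed weight range: the diagonal iteration must terminate exactly when $\alpha_2 - k$ first fails to lie in $[\alpha_1, \alpha_2]$, since neither the monotonicity nor the interpolation hypothesis is assumed at $\alpha < \alpha_1$; the remainder of the induction for large $i'$ then has to be covered by a separate appeal to monotonicity in the regularity index $i$. A minor additional care is needed for the $\gamma = 0$ case, where assumption (4) is vacuous and the initial uniform bound must instead be extracted from the energy-Morawetz estimate at weight $\alpha_2$ itself.
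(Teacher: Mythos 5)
The paper does not give an independent proof of this lemma; it refers the reader to \cite[Lemma~5.2]{andersson2019stability}. Your proposal correctly reconstructs the dyadic pigeonhole scheme, the base case at $(\iPigeonReg,\alpha)=(i,\alpha_2)$ (including the $\gamma=0$ variant via the energy--Morawetz estimate at top weight), the diagonal induction step, and the interpolation to non-diagonal weights. However, there is a genuine gap precisely where you flag the ``principal technical obstacle,'' and the remedy you propose does not close it.

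\textbf{The gap: the non-integer case $\alpha_2-\alpha_1\notin\mathbb{Z}$.} Set $k_0=\lfloor\alpha_2-\alpha_1\rfloor$. Your diagonal induction produces $F(i-k,\alpha_2-k,\tau)\lesssim\tau^{-k-\gamma}E_0$ for $k\le k_0$ only: the step $k\to k+1$ requires propagating the pigeonhole value forward via the energy--Morawetz estimate at level $i-k-1$ and weight $\alpha_2-k-1$, which is admissible only when $\alpha_2-k-1\ge\alpha_1$, i.e.\ $k+1\le\alpha_2-\alpha_1$. When $\alpha_2-\alpha_1\notin\mathbb{Z}$, one has $\alpha_2-k_0>\alpha_1$, and the diagonal never reaches the endpoint $\alpha_1$. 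Your interpolation step, with $\beta_1=\alpha_2-k$ and $\beta_2=\alpha_2$, then covers only $\alpha\in[\alpha_2-k_0,\alpha_2]$. For the conclusion at $\iPigeonReg>k_0$ the lemma demands $\alpha\in[\alpha_1,\alpha_2]$, and the sub-interval $[\alpha_1,\alpha_2-k_0)$ is uncovered: monotonicity in the regularity index reduces $\iPigeonReg$ to $k_0$ (or $\lceil\alpha_2-\alpha_1\rceil$), but the bound $F(i-k_0,\alpha,\tau)\lesssim\tau^{\alpha-\alpha_2-\gamma}E_0$ for $\alpha<\alpha_2-k_0$ has not been established at that level either, so the reduction is circular. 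Note that this case is not vacuous in the paper: the hierarchies used later have $\alpha_2=3-\delta$ or $\alpha_2=5-\delta$ with $\alpha_1=0$, for which $\alpha_2-\alpha_1$ is deliberately non-integer.

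\textbf{A fix.} Interpolate once at level $i-1$ between $\alpha_2-1$ (where the diagonal gives $\tau^{-1-\gamma}E_0$) and $\alpha_2$ (where regularity monotonicity and the base case give $\tau^{-\gamma}E_0$) to obtain $F(i-1,\alpha_1+k_0,\tau)\lesssim\tau^{(\alpha_1+k_0)-\alpha_2-\gamma}E_0$, noting $\alpha_1+k_0\in(\alpha_2-1,\alpha_2]$. Then run the same pigeonhole iteration starting from this shifted top; after $k_0$ integer steps, which all stay in $[\alpha_1,\alpha_2]$, one lands exactly at $F(i-1-k_0,\alpha_1,\tau)\lesssim\tau^{\alpha_1-\alpha_2-\gamma}E_0$. (The constraint $i-1-k_0\ge 0$ is guaranteed by $\alpha_2-\alpha_1\le i$, since $k_0<\alpha_2-\alpha_1\le i$ forces $k_0\le i-1$.) Regularity monotonicity and interpolation between the endpoints $\alpha_1$ and $\alpha_2$ then cover the missing range $\alpha\in[\alpha_1,\alpha_2-k_0)$ for all $\iPigeonReg\ge k_0+1$.

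A secondary, smaller issue: your justification of the sharper conclusion with $F(i,\alpha_2,\tau/2)+D$ in place of $E_0$ by ``initializing the pigeonhole at $\tau/2$'' is not quite right as stated, since reaching time $\tau$ from the diagonal requires $O(\alpha_2-\alpha_1)$ dyadic halvings, not one, and assumption (4) gives decay of the top energy only from the fixed base $\timefunc_0$. The sharper form should be traced through the iteration by bookkeeping which time the energy--Morawetz estimate is restarted from, rather than by a single shift of the initial slice.
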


The second one is one type of Gr\"{o}nwall inequality cited from \cite[Lemma 7.4]{angelopoulos2018late}.

\begin{lemma}
\label{lem:AAGlemma}
Let $f: [\tb_0,\infty)\rightarrow \mathbb{R}^+$ be a continuous, positive function. Assume there exist positive constants $E_0, C_0, b$ and $p$ such that for all $\tb_0\leq \tb_1< \tb_2$,
\begin{subequations}
\begin{align}
f(\tb_2) +b\int_{\tb_1}^{\tb_2}f(\tb)\di \tb
\leq {} f(\tb_1) +
E_0(\tb_2-\tb_1)\tb_1^{-p},\\
f(\tb_2) +b\int_{\tb_1}^{\tb_2}f(\tb)\di \tb
\leq {} f(\tb_1) + C_0(\tb_2 -\tb_1)f(\tb_0).
\end{align}
\end{subequations}
Then for all $\tb\geq \tb_0$,
\begin{subequations}
\begin{align}
f(\tb)\lesssim_{C_0,b} f(\tb_0)
\end{align}
and
\begin{align}
f(\tb)\lesssim_{E_0, C_0, b, p}\tb^{-p}(f(\tb_0)+E_0).
\end{align}
\end{subequations}
\end{lemma}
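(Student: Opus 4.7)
The plan is to prove the two conclusions in sequence: the uniform boundedness from the second inequality alone by a barrier argument, and then the polynomial decay by combining the first inequality with the uniform bound via a Gr\"onwall-type argument.

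For the boundedness $f(\tb)\lesssim_{C_0,b} f(\tb_0)$, I set $M := (1+C_0/b) f(\tb_0)$ and show $f \leq M$ on $[\tb_0,\infty)$ by contradiction. If $f(\tb^*)>M$ for some $\tb^*>\tb_0$, continuity and $f(\tb_0)\leq M$ yield $\tau := \sup\{\tb\in[\tb_0,\tb^*]:f(\tb)\leq M\}\in[\tb_0,\tb^*)$ with $f(\tau)=M$ and $f>M$ on $(\tau,\tb^*]$. Applying the second hypothesis on $[\tau,\tb^*]$ and using $\int_{\tau}^{\tb^*} f \geq M(\tb^*-\tau)$ together with the algebraic identity $bM - C_0 f(\tb_0) = bf(\tb_0)>0$, one obtains
\[
f(\tb^*) \leq M + (C_0 f(\tb_0) - bM)(\tb^*-\tau) = M - bf(\tb_0)(\tb^*-\tau) < M,
\]
contradicting $f(\tb^*)>M$.

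For the polynomial decay, I reinterpret the first inequality as an integrated ODE: dividing by $\tb_2-\tb_1$ and sending $\tb_2\to\tb_1^+$ yields, by continuity of $f$, the upper Dini derivative bound $D^+ f(\tb) + b f(\tb) \leq E_0 \tb^{-p}$ valid for every $\tb\geq \tb_0$. The classical one-sided Gr\"onwall inequality for continuous functions with bounded upper Dini derivative then produces
\[
f(\tb) \leq e^{-b(\tb-\tb_0)} f(\tb_0) + E_0 \int_{\tb_0}^{\tb} e^{-b(\tb-s)} s^{-p}\,\di s.
\]
The convolution integral is $\lesssim_{b,p}\tb^{-p}$: splitting at $s = \tb/2$, on $[\tb/2,\tb]$ use $s^{-p}\lesssim \tb^{-p}$ and $\int e^{-b(\tb-s)}\di s\leq b^{-1}$, while on $[\tb_0,\tb/2]$ use $s^{-p}\leq \tb_0^{-p}\leq 1$ together with the elementary bound $(\tb/2)e^{-b\tb/2}\lesssim_{b,p}\tb^{-p}$. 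The exponential term satisfies $e^{-b(\tb-\tb_0)}\lesssim_{b,p}\tb^{-p}$ whenever $\tb\geq 2\tb_0$ (exponential beats polynomial), while for $\tb\in[\tb_0,2\tb_0]$ the previous boundedness step combined with $\tb^{-p}\geq(2\tb_0)^{-p}$ already gives the bound with an absorbed constant. Assembling the two regimes produces $f(\tb)\lesssim \tb^{-p}(f(\tb_0)+E_0)$.

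The main technical point is the passage from the integral hypothesis to the pointwise upper Dini derivative bound; this is straightforward given continuity of $f$, but one should avoid assuming differentiability. A purely integral alternative would replace the Dini-derivative Gr\"onwall step with a time-dependent barrier $M(\tb)=A\tb^{-p}$ and a contradiction argument parallel to the boundedness step; however comparing $b\int f$ against $bA\int s^{-p}\di s$ requires separate handling of $p<1$, $p=1$, and $p>1$, so the Gr\"onwall route is shorter and more transparent.
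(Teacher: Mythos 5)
The paper does not prove this lemma; it is cited verbatim from Angelopoulos--Aretakis--Gajic~\cite[Lemma~7.4]{angelopoulos2018late} and never reproved. Your proof is therefore a self-contained replacement, and it is correct. The barrier argument for boundedness is sound: with $M=(1+C_0/b)f(\tb_0)$, the supremum $\tau$ satisfies $\tau<\tb^*$ (because $f(\tb^*)>M$ and $f$ is continuous) and $f(\tau)=M$, and the key algebraic cancellation $bM-C_0 f(\tb_0)=bf(\tb_0)>0$ produces the strict contradiction $f(\tb^*)<M$ since $f(\tb_0)>0$ by positivity of $f$. The passage from the integral hypothesis to the Dini-derivative bound $D^+f(\tb)+bf(\tb)\le E_0\tb^{-p}$ is valid: dividing by $\tb_2-\tb_1$ and letting $\tb_2\to\tb_1^+$, continuity of $f$ makes the averaged integral converge to $f(\tb_1)$, while the difference quotient gives $D^+f$. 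The one-sided Gr\"onwall step (via the auxiliary function $K(\tb)=e^{b\tb}f(\tb)-e^{b\tb_0}f(\tb_0)-\int_{\tb_0}^\tb e^{bs}\,E_0 s^{-p}\,\di s$, which has $D^+K\le 0$ and hence is non-increasing) and the splitting of the convolution integral at $s=\tb/2$ are both standard and correctly executed, with the $\tb_0\ge 1$ convention of the paper absorbing the $s^{-p}\le\tb_0^{-p}\le 1$ bound on the far-left piece.

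One small point worth flagging explicitly: in the near-initial regime $\tb\in[\tb_0,2\tb_0]$ your argument introduces a factor $(2\tb_0)^p$ into the implied constant when converting the uniform bound $f(\tb)\lesssim_{C_0,b}f(\tb_0)$ into the decay form $\lesssim\tb^{-p}(f(\tb_0)+E_0)$. The paper's notation lists only $E_0,C_0,b,p$ as parameters of the constant, but since $\tb_0$ is fixed once and for all in the paper (it is the initial hypersurface), this dependence is benign and implicit; it would be cleaner to either add $\tb_0$ to the parameter list or note this explicitly. Otherwise the argument is complete and, in my view, somewhat more transparent than the iterated pigeonhole-type arguments one often sees for such integrated Gr\"onwall lemmas, at the cost of introducing the Dini-derivative formalism (which you handle correctly and which requires only continuity, as you observe).
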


%%%%%%%%%%%%%%%%%%%%
\section{Energy estimates}
\label{mainsect:energyestimates}
%%%%%%%%%%%%%%%%%%

In this section, we prove the uniform boundedness for a nondegenerate, positive definite energy. In contrast to the scalar field case where the spin-weight is zero, it is usually nontrivial to derive a uniform energy bound for the TME \eqref{eq:TME} under non-zero spin-weight case.
For Dirac field, it differs from the integer spin-weight case (including the scalar field case) in the absence of the Chandrasekhar's transformation \cite{chandrasekhar1975linearstabSchw} which is a differential operator and transforms the TME \eqref{eq:TME} into a scalar-wave-like equation such that a conserved energy can be easily constructed and energy and Morawetz estimates can be derived following a rather similar proof as the scalar field case. See \cite{dafermos2019linear,pasqualotto2019spin,Ma2017Maxwell,Ma17spin2Kerr} the application of the Chandasekhar's transformation to the spin $\pm 1$ and $\pm 2$ cases on Schwarzschild and Kerr backgrounds. An interesting feature about the Dirac equation is that one can use the first order Dirac equations to rewrite the TME for both spin $\pm \half$ components and obtain \emph{a second order symmetric hyperbolic wave system} for these two components. This feature enables us to derive a conservation law, the density of which contains both components, from this symmetric hyperbolic wave system, and it is also an essential property used in proving the Morawetz estimate in the next section.

\subsection{Rewrite the TME and Dirac equations}

%%%%%%%%%%%%%%%

The purpose of this subsection is to rewrite the first order Dirac equations and the TME into more concise forms. 

We first introduce a few scalars and wave operators. Apart from the scalars $\psis$, we shall need as well the following ones which are defined by performing $r$-rescalings on them.
\begin{definition}
Define
\begin{subequations}
\begin{align}
\label{def:phis}
\phis={}&\left\{
     \begin{array}{ll}
       \psips/r^{2\sfrak}=r^{-1}\psips, & \hbox{$s=\sfrak$;} \\
       \Delta^{\sfrak}\psins/r^{2\sfrak}=\mu^{\half}\psins, & \hbox{$s=-\sfrak$,}
     \end{array}
   \right.\\
\label{def:varphis}
\Phis={}&r\phis
={}\left\{
     \begin{array}{ll}
       \psips, & \hbox{$s=\sfrak$;} \\
       \Delta^{\half}\psins, & \hbox{$s=-\sfrak$.}
     \end{array}
   \right.
\end{align}
\end{subequations}
\end{definition}

\begin{remark}
In particular, the scalars $\phins$ and $\Phins$ are degenerate at $\Horizon$, and, $\Delta^{-\sfrak}\phins$ and $\Delta^{-\sfrak}\Phins$ are nondegenerate at $\Horizon$.
\end{remark}

\begin{definition}
Define Teukolsky wave operators 
\begin{subequations}
\label{eq:Teukolskywaveoperators}
\begin{align}
\label{eq:TMEope:posi}
\TMEOps={}&r^2\Box_{g_{M}} + \tfrac{2i\sfrak\cos\theta}{\sin^2 \theta}\Leta - (\sfrak^2\cot^2\theta +\sfrak)
-(r-3M)(Y-r^{-1})
+2Mr^{-1}\notag\\
={}&-\mu^{-1}r^2 \partial^2_t
+ \partial_r \left( \Delta \partial_r \right)
+\TAO_{\sfrak}-(r-3M)(Y-r^{-1})
+2Mr^{-1},\\
\label{eq:TMEope:nega}
\TMEOpns={}&r^2\Box_{g_{M} } -\tfrac{2i\sfrak\cos\theta}{\sin^2 \theta}\Leta - (\sfrak^2\cot^2\theta +\sfrak)
+(r-3M)(\VR+r^{-1})
+2Mr^{-1}\notag\\
={}&-\mu^{-1}r^2 \partial^2_t
+ \partial_r \left( \Delta \partial_r \right)
+\TAO_{-\sfrak}+(r-3M)(\VR+r^{-1})
+2Mr^{-1}.
\end{align}
\end{subequations}
\end{definition}

We can then rewrite the Dirac equations and the TME as follows. 

\begin{prop}\label{prop:DiracandTME}
The Dirac equations for $\Phis$ are
\begin{subequations}\label{eq:Dirac:sym}
\begin{align}
\label{eq:Dirac:sym:posi}
\curlLs   \Phips ={}&
(\Delta^{\Half}\VR)\Phins ,\\
\label{eq:Dirac:sym:nega}
\curlLds  \Phins ={}&
(\Delta^{\Half}Y)\Phips ,
\end{align}
\end{subequations}
and the wave equations for $\Phis$ are
\begin{subequations}
\label{eq:TME:varphis}
\begin{align}
\label{eq:TME:varphis:posi}
\curlLds  \curlLs   \Phips
-\Delta^{\Half}\VR(\Delta^{\Half}Y\Phips )={}&0,\\
\label{eq:TME:varphis:nega}
\curlLs   \curlLds  \Phins
-\Delta^{\Half}Y(\Delta^{\Half}\VR\Phins )={}&0,
\end{align}
\end{subequations}
or, equivalently, 
\begin{subequations}
\label{eq:TME:varphis:exf}
\begin{align}
\label{eq:TME:varphis:posi:exf}
-r^2 YV\Phips+\curlLds  \curlLs   \Phips
={}&(r-3M)Y\Phips,\\
\label{eq:TME:varphis:nega:exf}
-r^2 YV\Phins+\curlLs   \curlLds  \Phins
={}&-(r-3M)\VR\Phins,
\end{align}
\end{subequations}
Meanwhile, $\phis$ satisfies the Teukolsky master equation (TME) 
\begin{subequations}
\label{eq:Dirac:TME:phis}
\begin{align}
\TMEOps\phips={}0,
\label{eq:DTMEP}\\
\TMEOpns\phins={}&0.
\label{eq:DTMEN}
\end{align}
\end{subequations}
\end{prop}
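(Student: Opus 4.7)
The proposition rearranges the Dirac system \eqref{eq:Dirac:TMEscalar} into several equivalent forms, and my plan is to handle the four assertions in the order stated, each via a short substitution-and-commutator argument drawing on the relations already assembled in Section~2.

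First, I would obtain the first-order rewrite \eqref{eq:Dirac:sym} by directly substituting $\Phips=\psips$ and $\Phins=\Delta^{\Half}\psins$ into \eqref{eq:Dirac:TMEscalar}. The first identity is immediate; for the second, the edth operators commute with any function of $r$ alone (a special case of \eqref{comm:edthRedthR'YVR}), so $\edthR\psins=\Delta^{-\Half}\edthR\Phins$ and cross-multiplying by $\Delta^{\Half}$ produces the stated form. Then, for \eqref{eq:TME:varphis} I would apply $\edthR$ to \eqref{eq:Dirac:sym:posi} and $\edthR'$ to \eqref{eq:Dirac:sym:nega}, use \eqref{comm:edthRedthR'YVR} once more to pull the edth past $\Delta^{\Half}\VR$ (respectively $\Delta^{\Half}Y$), and close the system by substituting the other first-order equation on the right-hand side.

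The equivalent form \eqref{eq:TME:varphis:exf} is the only step that calls for real bookkeeping. My plan is to expand $\Delta^{\Half}\VR(\Delta^{\Half}Y\Phips)$ by the product rule, producing $(r-M)Y\Phips+\Delta\VR Y\Phips$ via $\Delta^{\Half}\partial_r\Delta^{\Half}=r-M$, then use $\Delta\VR=r^2V$ (from $\Delta=r^2\mu$ and $V=\mu\VR$) together with the commutator $[V,Y]=-(2M/r^2)Y$, a one-line calculation from \eqref{def:VectorFieldYandV} and \eqref{def:regularV} consistent with the $\Delta^{\Half}$-conjugated form \eqref{comm:DeltahalfVRY}. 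The contributions $(r-M)$ from the product rule and $-2M$ from the commutator then assemble into the coefficient $r-3M$ required in \eqref{eq:TME:varphis:posi:exf}. The $-\sfrak$ case is obtained by the mirror manipulation.

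For the scalar TMEs \eqref{eq:Dirac:TME:phis}, I would substitute $\phips=r^{-1}\Phips$ into the operator \eqref{eq:TMEope:posi}. The second-order part collapses by the identity \eqref{eq:r2YVexpansion} to $r^{-1}(-r^2YV\Phips)-2Mr^{-2}\Phips$; the angular part becomes $r^{-1}\edthR\edthR'\Phips$ since $\TAO_{\sfrak}=\edthR\edthR'$ on spin-$\sfrak$ scalars (from \eqref{eq:TAopmsfrak:edthRedthR'} with $|\sfrak|=\sfrak$); and using $Y(r^{-1})=r^{-2}$, the first-order piece $(r-3M)(Y-r^{-1})\phips$ together with the $2Mr^{-1}\phips$ term arrange so that every zeroth-order occurrence of $\Phips$ cancels, leaving exactly $r^{-1}$ times the left-hand side of \eqref{eq:TME:varphis:posi:exf}, which vanishes. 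The $-\sfrak$ case follows the same route with $\TAO_{-\sfrak}=\edthR'\edthR$ and $\VR(r^{-1})=-r^{-2}$. I do not expect any genuine obstacle; the only real risk is a sign error in the $[V,Y]$ commutator of step three, which would corrupt the $r-3M$ coefficient and destroy the subsequent cancellations in the TME.
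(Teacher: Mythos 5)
Your proposal is correct and follows essentially the same route as the paper's proof: both obtain \eqref{eq:Dirac:sym} by direct substitution, derive \eqref{eq:TME:varphis} by applying the commuting edth operators and cross-substituting, and then pass to \eqref{eq:TME:varphis:exf} and \eqref{eq:Dirac:TME:phis} via a product-rule/commutator expansion together with the expansion identity \eqref{eq:r2YVexpansion}. The only variation is bookkeeping in the middle step — you use $\Delta\VR=r^2V$ with $[V,Y]=-(2M/r^2)Y$, while the paper instead computes $\Delta^{\half}\partial_r(\mu^{-1}\Delta^{\half})=\mu^{-1}(r-3M)$ and invokes the conjugated commutator \eqref{comm:DeltahalfVRY} — but these are equivalent derivations of the $(r-3M)$ coefficient.
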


\begin{proof}
Equations \eqref{eq:Dirac:sym} are direct from equations \eqref{eq:Dirac:TMEscalar}.

We apply $\curlLds  $ to \eqref{eq:Dirac:sym:posi} and $\Delta^{\Half}V$ to \eqref{eq:Dirac:sym:nega} and then take the difference. Owing to the commuatator  $[\edthR, \VR]=0$ from equation \eqref{comm:edthRedthR'YVR}, we thus eliminate $\Phins$ and arrive at \eqref{eq:TME:varphis:posi}. The other equation \eqref{eq:TME:varphis:nega} can be derived in a similar fashion.

The TME system \eqref{eq:TME:varphis} can be written as
\begin{subequations}
\begin{align}
\edthR\edthR'\Phips -r^2 YV \Phips+\Delta^{\half}\partial_r (\mu^{-1}\Delta^{\half}) V\Phips -(r-3M)(Y+\VR)\Phips={}&0,\\
\edthR'\edthR\Phins-r^2 YV \Phins+\Delta^{\half}\partial_r (\mu^{-1}\Delta^{\half}) V\Phins={}&0.
\end{align}
\end{subequations}
Note that $\Delta^{\half}\partial_r (\mu^{-1}\Delta^{\half})V\Phins=\mu^{-1}(r-3M)V\Phins=(r-3M)\VR\Phins$ and $\Delta^{\half}\partial_r (\mu^{-1}\Delta^{\half}) V\Phips -(r-3M)(Y+\VR)\Phips=-(r-3M)Y\Phips$, hence equations \eqref{eq:TME:varphis:exf} follow.

Equation \eqref{eq:TME:varphis:nega} can be written as
\begin{align*}
\TAO_{-\sfrak}\Phins-r^2 YV \Phins+\Delta^{\half}\partial_r (\mu^{-1}\Delta^{\half}) V\Phins={}0
\end{align*}
We use \eqref{eq:r2YVexpansion} and definition \eqref{def:varphis} and find $\TAO_{-\sfrak}\Phins-r^2 YV \Phins=r\TAO_{-\sfrak}\phins+r[-\mu^{-1} r^2\partial^2_t
+ \partial_r \left( \Delta \partial_r \right)+2Mr^{-1}]\phins$ and $\Delta^{\half}\partial_r (\mu^{-1}\Delta^{\half}) V\Phins=r(r-3M)(\VR+r^{-1})\phins$, hence, in view of  \eqref{eq:TMEope:nega}, this proves equation \eqref{eq:DTMEN}. Similarly, we use the commutator \eqref{comm:DeltahalfVRY} to find that the LHS of \eqref{eq:TME:varphis:posi} equals
\begin{align*}
\hspace{4ex}&\hspace{-4ex}
\TAO_{\sfrak}\Phips- \Delta^{\Half}Y(\Delta^{\Half}\VR\Phips)
-[\Delta^{\half}\VR, \Delta^{\half}Y]\Phips\notag\\
={}&r[\TAO_{\sfrak} -\mu^{-1} r^2\partial^2_t
+ \partial_r \left( \Delta \partial_r \right)+2Mr^{-1}]\phips
+r(r-3M)(\VR+r^{-1})\phips
-(r-3M)(Y+\VR)(r\phips)\notag\\
={}&r[\TAO_{\sfrak} -\mu^{-1} r^2\partial^2_t
+ \partial_r \left( \Delta \partial_r \right)+2Mr^{-1}-(r-3M) (Y-r^{-1})]\phips,
\end{align*}
and this proves \eqref{eq:DTMEP}.
\end{proof}

\begin{remark}
\label{rem:symhypform:TMEsystem}
The rescaling in defining $\phi_s$ is chosen such that the RHS contains only $Y$ or $\VR$ derivative. In particular, by rewriting $(Y-r^{-1})\phips$ and $(\VR+r^{-1})\phins$  as $\curlLds   \phins$ and $\curlLs   \phips$ respectively in the wave system \eqref{eq:Dirac:TME:phis},  we obtain a second order symmetric hyperbolic system. See Section \ref{sect:symhypwavesys}.
\end{remark}

\begin{remark}
The coefficients of the first order $\partial_{\tb}$ operator in the Teukolsky wave operators in \eqref{eq:Teukolskywaveoperators} change the sign at $r=3M$ and have opposite signs at any fixed radius for spin $\half$ and $-\half$ components. As a result, damping or antidamping occurs in different radius regions for different spin components.
\end{remark}

%%%%%%%%%%%%%%%%%%%
\subsection{Symmetric hyperbolic wave system}
\label{sect:symhypwavesys}
%%%%%%%%%%%%%%%%%%%

As is discussed in Remark \ref{rem:symhypform:TMEsystem}, one can
rewrite $(Y-r^{-1})\phips$ and $(\VR+r^{-1})\phins$  as $\curlLds   \phins$ and $\curlLs   \phips$ respectively in the TME system to achieve a second order symmetric hyperbolic wave system. We show this explicitly here, and the obtained symmetric hyperbolic wave system is  crucial in deriving both the energy estimate and the integrated local energy decay (or Morawetz) estimate in the later sections.

\begin{prop}
The spin $\pm \half$ components $\Phis$ satisfy the following symmetric hyperbolic wave system
\begin{subequations}
\label{eq:TME:varphis:mode}
\begin{align}
\label{eq:TME:varphis:mode:posi}
\Delta^{-1}r^4(\partial_t +\mu \partial_r)(\partial_t -\mu \partial_r)\Phips -\TAO_{\sfrak} \Phips={}&-\Delta^{-\half}(r-3M)\curlLds\Phins,\\
\label{eq:TME:varphis:mode:nega}
\Delta^{-1}r^4(\partial_t +\mu \partial_r)(\partial_t -\mu \partial_r)\Phins
-\TAO_{-\sfrak}  \Phins ={}&\Delta^{-\half}(r-3M)\curlLs\Phips.
\end{align}
\end{subequations}
Also, the spin $\pm \half$ components $\phis$ satisfy  the following symmetric hyperbolic wave system
\begin{subequations}
\label{eq:Dirac:TME:phis:SH}
\begin{align}
-\mu^{-1}r^2 \partial^2_t\phips
+ \partial_r \left( \Delta \partial_r \phips\right)
+\edthR\edthR'\phips
+2Mr^{-1}\phips={}&\Delta^{-\half}(r-3M)\edthR\phins,
\label{eq:DTMEP:SH}\\
-\mu^{-1}r^2 \partial^2_t\phins
+ \partial_r \left( \Delta \partial_r \phins\right)
+\edthR'\edthR\phins
+2Mr^{-1}\phins={}&-\Delta^{-\half}(r-3M)\edthR'\phips.
\label{eq:DTMEN:SH}
\end{align}
\end{subequations}

\end{prop}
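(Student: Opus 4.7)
The strategy is to start from the second-order equations already assembled in Proposition \ref{prop:DiracandTME} and to use the \emph{first-order} Dirac system \eqref{eq:Dirac:sym} to trade the remaining first-order radial derivatives for angular edth-derivatives of the opposite spin component. In this way the damping term in each Teukolsky wave operator, which is singled-out by its $(r-3M)$ weight, becomes the coupling term on the right-hand side of the claimed symmetric hyperbolic system.

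For \eqref{eq:TME:varphis:mode}, I would begin from \eqref{eq:TME:varphis:exf}. Two observations suffice. First, since $\abs{s}=\sfrak$, the identity \eqref{eq:TAopmsfrak:edthRedthR'} gives $\curlLds\curlLs\Phips=\TAO_{\sfrak}\Phips$ and $\curlLs\curlLds\Phins=\TAO_{-\sfrak}\Phins$, which replaces the angular parts. Second, \eqref{eq:Dirac:sym:nega} yields $Y\Phips=\Delta^{-\Half}\curlLds\Phins$ and \eqref{eq:Dirac:sym:posi} yields $\VR\Phins=\Delta^{-\Half}\curlLs\Phips$, turning the $(r-3M)$-damping terms in \eqref{eq:TME:varphis:exf} into $\pm\Delta^{-\Half}(r-3M)$ times the opposite-spin angular derivative. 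To rewrite the principal symbol, note that $\mu Y=\partial_t-\mu\partial_r$ and $V=\mu\VR=\partial_t+\mu\partial_r$, and the commutator $[\mu Y,V]=0$ from \eqref{eq:commutators} allows $r^2 YV = \mu^{-1}r^2\cdot(\mu Y) V = \Delta^{-1}r^4(\partial_t+\mu\partial_r)(\partial_t-\mu\partial_r)$. Assembling these three ingredients gives \eqref{eq:TME:varphis:mode:posi} and \eqref{eq:TME:varphis:mode:nega}.

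For \eqref{eq:Dirac:TME:phis:SH} I would start from the TME \eqref{eq:Dirac:TME:phis} with the operators \eqref{eq:TMEope:posi}, \eqref{eq:TMEope:nega}, and use $\TAO_{\sfrak}\phips=\edthR\edthR'\phips$, $\TAO_{-\sfrak}\phins=\edthR'\edthR\phins$. The only damping terms to convert are $-(r-3M)(Y-r^{-1})\phips$ and $(r-3M)(\VR+r^{-1})\phins$. The crucial step is to derive the identities
\begin{align*}
(Y-r^{-1})\phips=\Delta^{-\Half}\edthR\phins, \qquad (\VR+r^{-1})\phins=\Delta^{-\Half}\edthR'\phips,
\end{align*}
which I would obtain by substituting $\Phis=r\phis$ into \eqref{eq:Dirac:sym} and using $Yr=-1$, $\VR r=1$ together with $[\edthR,\VR]=[\edthR,Y]=[\edthR',\VR]=[\edthR',Y]=0$ from \eqref{comm:edthRedthR'YVR}. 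For instance, $\curlLds\Phins=\Delta^{\Half}Y\Phips$ becomes $r\edthR\phins=\Delta^{\Half}(rY\phips-\phips)=\Delta^{\Half}r(Y-r^{-1})\phips$; dividing by $r$ and by $\Delta^{\Half}$ yields the first identity, and the other is analogous. Substituting back into \eqref{eq:DTMEP} and \eqref{eq:DTMEN} produces exactly \eqref{eq:DTMEP:SH} and \eqref{eq:DTMEN:SH}.

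There is no serious analytic obstacle; the proposition is an algebraic rearrangement. The only subtle point is the bookkeeping of the $r^{-1}$ terms coming from $Yr=-1,\,\VR r=1$ versus the $\Delta^{-\Half}$ factors coming from the Dirac system: it is precisely because the Teukolsky operator contains the combination $(Y-r^{-1})$ (rather than $Y$) as a consequence of the rescaling $\phis=\Phis/r$ chosen in \eqref{def:varphis}, that the coupling on the right-hand side comes out in the clean form $\Delta^{-\Half}(r-3M)\edthR\phins$ (and similarly for the minus spin). This compatibility of normalisations is what makes the coupled wave system symmetric hyperbolic, as anticipated in Remark \ref{rem:symhypform:TMEsystem}.
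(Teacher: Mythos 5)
Your proof is correct and follows essentially the same route as the paper: for \eqref{eq:TME:varphis:mode} it expands $Y,V$ in $(t,r)$ coordinates and trades $Y\Phips$, $\VR\Phins$ via the first-order Dirac system \eqref{eq:Dirac:sym}, and for \eqref{eq:Dirac:TME:phis:SH} it expands the Teukolsky operators \eqref{eq:Teukolskywaveoperators} and substitutes the same two identities $(Y-r^{-1})\phips=\Delta^{-\half}\edthR\phins$, $(\VR+r^{-1})\phins=\Delta^{-\half}\edthR'\phips$ that the paper invokes. You spell out a few intermediate steps (the vanishing commutator to reorder the principal part, and the $Yr=-1$, $\VR r=1$ bookkeeping behind the $r^{-1}$ shifts) that the paper leaves implicit, but the argument is the same.
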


\begin{proof}
Recall the system \eqref{eq:TME:varphis:exf} and the expressions $Y=\mu^{-1}(\partial_t -\mu \partial_r)$ and $V=\partial_t+\mu \partial_r$. We use the first order Dirac equations \eqref{eq:Dirac:sym} to rewrite the RHS of each subequation in the system \eqref{eq:TME:varphis:exf}  as the angular derivative of the other spin component and this proves \eqref{eq:TME:varphis:mode}.

We expand the Teukolsky wave operators in the TME system \eqref{eq:Dirac:TME:phis} using equation \eqref{eq:Teukolskywaveoperators}, and, substituting in the relations $(Y-r^{-1})\phips=\Delta^{-\half}\edthR\phins$ and $(\VR+r^{-1})\phins=\Delta^{-\half}\edthR'\phips$ which follow directly from the first order Dirac equations \eqref{eq:Dirac:sym}, the system \eqref{eq:Dirac:TME:phis:SH} then follows.
\end{proof}

%%%%%%%%%%%%%%%%%%%%%%%%%

\subsection{A conservation law for the Dirac system}
%%%%%%%%%%%%
We prove a conservation law for the Dirac system \eqref{eq:Dirac:sym}. This is usually interpreted as a conserved charge. 

\begin{definition}
Let $\mathbf{a}=(a_1,a_2,a_3,a_4)$ be a multi-index with $a_1\in \{0,1\}$, $a_3$ and $a_4$ being any nonnegative integers, and $a_2$ being any nonnegative even integer. Define for a spin-weight $-\half$ component $\varphi$ that
\begin{subequations}
\begin{align}
\varphi^{(\mathbf{a})}={}&
(\curlLds  )^{a_1}
(\curlLs   \curlLds  )^{a_2/2}(\Lxi)^{a_3} (\Leta)^{a_4}\varphi ,
\end{align}
and for a spin-weight $\half$ component $\varphi$ that
\begin{align}
\varphi^{(\mathbf{a})}={}&(\curlLs   )^{a_1}
(\curlLds  \curlLs   )^{a_2/2}(\Lxi)^{a_3} (\Leta)^{a_4}\varphi .
\end{align}
\end{subequations}
\end{definition}

\begin{prop}
\label{prop:conservationlaw:Highorder}
It holds true that for any $\tb_0\leq \tb_1<\tb_2$, 
\begin{align}
\label{eq:current:highorder}
\hspace{4ex}&\hspace{-4ex}
\int_{\Sigmatwo}
\Big[\partial_r h |\Phips ^{(\mathbf{a})}|^2
+(2\mu^{-1}-\partial_rh)|\Phins ^{(\mathbf{a})}|^2
\Big]\di^3 \mu
+\int_{\Horizononetwo}|
\Phips ^{(\mathbf{a})}|^2 \di v\di^2 \mu
+\int_{\Scrionetwo}|\Phins ^{(\mathbf{a})}|^2\di u\di^2 \mu\notag\\
={}&\int_{\Sigmaone}
\Big[\partial_r h |\Phips ^{(\mathbf{a})}|^2
+(2\mu^{-1}-\partial_rh)|\Phins ^{(\mathbf{a})}|^2
\Big]\di^3 \mu.
\end{align}
In particular, from the choice of the function $h$, the integrand in the integrals over $\Sigmaone$ and $\Sigmatwo$ is positive definite and is equivalent to $|\Phips ^{(\mathbf{a})}|^2 + \mu^{-1}r^{-2}|\Phins ^{(\mathbf{a})}|^2$.
\end{prop}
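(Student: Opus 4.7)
The plan is to derive a pointwise divergence identity from the first-order Dirac equations \eqref{eq:Dirac:sym}, integrate it over $\Donetwo$, and identify the radial boundary contributions as the stated fluxes on $\Horizon$ and $\Scri$.

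I would first handle the base case $\mathbf{a}=0$. Multiplying \eqref{eq:Dirac:sym:posi} by $2\Delta^{-1/2}\overline{\Phi_{-\sfrak}}$ and \eqref{eq:Dirac:sym:nega} by $2\Delta^{-1/2}\overline{\Phi_{\sfrak}}$ and taking real parts yields the pointwise identity
\[
Y|\Phi_{\sfrak}|^2 + \VR|\Phi_{-\sfrak}|^2 = 2\Delta^{-1/2}\Re\!\left(\overline{\Phi_{\sfrak}}\,\curlLds\Phi_{-\sfrak} + \overline{\Phi_{-\sfrak}}\,\curlLs\Phi_{\sfrak}\right),
\]
whose right-hand side vanishes after integration over $\Sphere$ by Lemma \ref{lem:IBPonSphere} (the two terms are sphere-IBP negatives of each other). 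Expanding $Y$ and $\VR$ in hyperboloidal coordinates via \eqref{eq:YV:hyper} rewrites the left-hand side as $\partial_{\tb}[\partial_r h|\Phi_{\sfrak}|^2 + (2\mu^{-1}-\partial_r h)|\Phi_{-\sfrak}|^2] + \prb[|\Phi_{-\sfrak}|^2 - |\Phi_{\sfrak}|^2]$. Integrating over $\Donetwo$ against the reference volume $\di\tb\,\di\rb\,\di^2\mu$ and applying Fubini produces the two $\Sigma_{\tb_i}$-integrals from the $\partial_{\tb}$-divergence plus boundary terms at $\rb=2M$ and $\rb\to\infty$. At the horizon $\Phi_{-\sfrak}=\Delta^{1/2}\psi_{-\sfrak}$ vanishes since $\Delta(2M)=0$, and $h(2M)=r_+$ gives $\di\tb=\di v$ along $\Horizon$, producing $\int_{\Horizononetwo}|\Phi_{\sfrak}|^2\di v\,\di^2\mu$; at $\Scri$, the peeling property makes $\Phi_{\sfrak}=r\chi_0=O(r^{-1})$ vanish while $\Phi_{-\sfrak}$ stays $O(1)$, and the asymptotic $h\sim 2r^\ast$ makes $\tb\sim u$ on $\Sigmatb$ as $\rb\to\infty$, producing $\int_{\Scrionetwo}|\Phi_{-\sfrak}|^2\di u\,\di^2\mu$.

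For the higher-order case I would verify that $(\Phi_{\sfrak}^{(\mathbf{a})},\Phi_{-\sfrak}^{(\mathbf{a})})$ satisfies a Dirac-type system and then run the base-case argument verbatim. When $a_1=0$, the commutators \eqref{comm:edthRedthR'LxiLeta}--\eqref{comm:edthRedthR'YVR} let us push $(\curlLds\curlLs)^{a_2/2}\Lxi^{a_3}\Leta^{a_4}$ through \eqref{eq:Dirac:sym} to give $\curlLs\Phi_{\sfrak}^{(\mathbf{a})}=\Delta^{1/2}\VR\Phi_{-\sfrak}^{(\mathbf{a})}$ and $\curlLds\Phi_{-\sfrak}^{(\mathbf{a})}=\Delta^{1/2}Y\Phi_{\sfrak}^{(\mathbf{a})}$, the original Dirac system. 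When $a_1=1$, writing $\mathbf{a}_0=(0,a_2,a_3,a_4)$ and applying Dirac to $(\Phi_{\pm\sfrak}^{(\mathbf{a}_0)})$ gives $\Phi_{\sfrak}^{(\mathbf{a})}=\Delta^{1/2}\VR\Phi_{-\sfrak}^{(\mathbf{a}_0)}$ and $\Phi_{-\sfrak}^{(\mathbf{a})}=\Delta^{1/2}Y\Phi_{\sfrak}^{(\mathbf{a}_0)}$, from which a direct computation (with no commutator obstruction, since we never reorder $\Delta^{1/2}Y$ past $\Delta^{1/2}\VR$) yields the ``swapped'' Dirac system $\curlLs\Phi_{-\sfrak}^{(\mathbf{a})}=\Delta^{1/2}Y\Phi_{\sfrak}^{(\mathbf{a})}$ and $\curlLds\Phi_{\sfrak}^{(\mathbf{a})}=\Delta^{1/2}\VR\Phi_{-\sfrak}^{(\mathbf{a})}$. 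In either case, multiplying by the appropriate conjugate components and applying the sphere IBP reproduces the sphere-divergence identity, and the boundary analysis carries through because $\curlLds,\curlLs,\Lxi,\Leta$ have smooth coefficients at $\Horizon$ and do not spoil the factors of $\Delta^{1/2}$ controlling the horizon vanishing, while the peeling of $\Phi_{\sfrak}$ persists under these commuting operators at $\Scri$.

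The algebraic portion is routine once the Dirac structure is recognized; the main technical obstacle is the careful identification of the boundary measures on the null hypersurfaces: showing via the relations $h(2M)=r_+$ and $h-2r^\ast=O(1)$ that $\di\tb$ pulls back to $\di v$ on $\Horizon$ and to $\di u$ on $\Scri$, and that the peeling of $\Phi_{\pm\sfrak}^{(\mathbf{a})}$ ensures the ``wrong-signed'' boundary contribution ($-|\Phi_{\sfrak}^{(\mathbf{a})}|^2$ at $\Scri$ and $-|\Phi_{-\sfrak}^{(\mathbf{a})}|^2$ at $\Horizon$) vanishes.
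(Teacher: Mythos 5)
Your argument follows the same strategy as the paper's: multiply the first-order Dirac system \eqref{eq:Dirac:sym} by the conjugate of the opposite component, take real parts, and exploit Lemma \ref{lem:IBPonSphere} to kill the right-hand side after sphere integration; then for $a_1=1$ observe that $(\curlLs\Phips,\curlLds\Phins)$ again satisfies a Dirac-type system (the paper writes this as \eqref{eq:Dirac:sym:firstorder}, your ``swapped'' system), and handle $a_2,a_3,a_4$ by commutation with $\curlLds\curlLs$ (resp.\ $\curlLs\curlLds$), $\Lxi$, $\Leta$. The proposal is correct; it additionally spells out the identification of the null-boundary measures $\di v$ at $\Horizon$ and $\di u$ at $\Scri$ and the vanishing of $\Phips^{(\mathbf{a})}$ at $\Scri$ and $\Phins^{(\mathbf{a})}$ at $\Horizon$, which the paper compresses into ``integrating over $\Donetwo$ and making use of \eqref{eq:YV:hyper}.''
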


\begin{remark}
\label{rem:conservationlaw:degeneracy}
For $\abs{\mathbf{a}}\geq 1$, one can make use of the Dirac equations \eqref{eq:Dirac:sym} to obtain estimates for one principle null derivative of each component and thus find the first integral term on the LHS of \eqref{eq:current:highorder} bounds over $c(r')\big(\norm{\psips}_{\tilde{W}_{0}^{\reg+1}(\Sigmatwo^{\geq r'})}^2
+\norm{\psins}_{\tilde{W}_{0}^{\reg+1}(\Sigmatwo^{\geq r'})}^2\big)$ for any $r'>2M$. However, this energy does not have full control over all derivatives up to  event horizon (for instance, an integral of $\abs{Y^{\abs{\mathbf{a}}} \psins}^2$ over $\Sigmatwo$ can not be dominated by such an energy) or null infinity, and it is from this respect that such an energy is degenerate.
\end{remark}

\begin{proof}
By multiplying \eqref{eq:Dirac:sym:posi} by $2\Delta^{-\Half}\overline{\Phins }$ and \eqref{eq:Dirac:sym:nega} by $2\Delta^{-\Half}\overline{\Phips }$, taking the real part, and adding the obtained two identities together, one obtains
\begin{align}
\int_{\Sphere}\VR(|\Phins |^2) + Y(|\Phips |^2)\di^2 \mu
={}&\int_{\Sphere}2\Delta^{-\Half}\Re\big(\curlLs   \Phips
\overline{\Phins } +\curlLds   \Phins
\overline{\Phips }\big)\di^2 \mu=0.
\end{align}
As shown in Proposition \ref{prop:DiracandTME}, the wave equations \eqref{eq:TME:varphis} can be rewritten using the Dirac equations \eqref{eq:Dirac:sym} as
\begin{subequations}\label{eq:Dirac:sym:firstorder}
\begin{align}
\label{eq:Dirac:sym:firstorder:posi}
\curlLds  (\curlLs   \Phips )={}&
(\Delta^{\Half}\VR)(\curlLds  \Phins ),\\
\label{eq:Dirac:sym:firstorder:nega}
\curlLs   (\curlLds  \Phins )={}&
(\Delta^{\Half}Y)(\curlLs   \Phips ).
\end{align}
\end{subequations}
Similarly as above, one can obtain an equality
\begin{align}
\int_{\Sphere}\big[\VR(|\curlLds  \Phins |^2) + Y(|\curlLs   \Phips |^2)\big]\di^2 \mu
={}&\int_{\Sphere}2\Delta^{-\Half}\Re\big(\curlLds
\curlLs   \Phips
\overline{\curlLds  \Phins } +\curlLs   \curlLds  \Phins
\overline{\curlLs   \Phips }\big)\di^2 \mu=0.
\end{align}
Additionally, the Killing vectors $\Lxi$ and $\Leta$ and Killing tensor $\TAO_s$ commute with the Dirac equations \eqref{eq:Dirac:sym},
therefore, we have the following equality
\begin{align}
\label{eq:current:firstorder}
\int_{\Sphere}\big[\VR(|\Phins ^{(\mathbf{a})}|^2) + Y(|
\Phips ^{(\mathbf{a})}|^2)\big]\di^2 \mu
={}&0.
\end{align}
By integrating over $\Donetwo$ and making use of \eqref{eq:YV:hyper},
this implies the desired conservation law.
\end{proof}

%%%%%%%%%%%%%%%%%%%%%%
\subsection{A conservation law for the symmetric hyperbolic wave system}
\label{sect:ConserLaw:Wavesystem}
%%%%%%%%%%%%%%%%

It is difficult to find a conserved energy for one single spin component from each of the wave system \eqref{eq:TME:varphis} because of the presence of the first order null derivative term. Nevertheless, we show below that we can construct a conserved energy, the density of which involves \underline{both} spin $\pm \half$ components, from an alternative form of the TME system \eqref{eq:TME:varphis} -- the symmetric hyperbolic system \eqref{eq:Dirac:TME:phis:SH}. This energy (albeit indefinite) conservation allows us to bound an energy flux at $\Scrionetwo$ and a flux on $\Horizononetwo$ in terms of energies on both $\Sigmatwo$ and $\Sigmaone$. Together with the conserved energy in Proposition \ref{prop:conservationlaw:Highorder} which is constructed from the first order Dirac system \eqref{eq:Dirac:sym} and a red-shift estimate near horizon, we will achieve the uniform boundedness of a nondegenerate positive definite energy in Section \ref{sect:uniformbd:nondegenergy}.

\begin{definition}
\label{def:TAOshalfop}
Denote $\TAO_{s}^{\half}=\edthR'$ when acting on a spin-weight $s=\half$ scalar and $\TAO_{s}^{\half}=\edthR$ for $s=-\half$ if acting on a spin-weight $-\half$ scalar.
\end{definition}

\begin{remark}
\label{rem:TAOshalfandTAOsellip}
By this definition, equation \eqref{eq:TAopmsfrak:edthRedthR'} and Lemma \ref{lem:IBPonSphere}, we have for any spin-weight $s=\pm \half$ scalar $\varphi$ that
\begin{align}
\int_{\Sphere} -\Re(\TAO_{s}\varphi \bar{\varphi})\di^2\mu=\int_{\Sphere}|\TAO_{s}^{\half}\varphi|^2\di^2\mu.
\end{align}
\end{remark}

\begin{prop}
\label{prop:conservationlaw:symhypwavesys}
For any $\tb_0\leq \tb_1<\tb_2$, there is a conservation law
\begin{align}
\label{eq:EnerEsti:(4)}
\hspace{4ex}&\hspace{-4ex}
\int_{\Horizononetwo}\left[
\mu e_t(\phi_{\pm\sfrak}^{(\mathbf{a})})
-e_r(\phi_{\pm\sfrak}^{(\mathbf{a})})\right]\di v\di^2\mu
+\int_{\Scrionetwo}\left[
e_t(\phi_{\pm\sfrak}^{(\mathbf{a})})
+\mu^{-1}e_r(\phi_{\pm\sfrak}^{(\mathbf{a})})\right]\di u\di^2\mu\notag\\
\hspace{4ex}&\hspace{-4ex}
+\int_{\Sigmatwo}\left[e_t(\phi_{\pm\sfrak}^{(\mathbf{a})})
+(\mu^{-1}-\partial_r h)e_r(\phi_{\pm\sfrak}^{(\mathbf{a})})\right]\di^3\mu
=\int_{\Sigmaone}\left[e_t(\phi_{\pm\sfrak}^{(\mathbf{a})})
+(\mu^{-1}-\partial_r h)e_r(\phi_{\pm\sfrak}^{(\mathbf{a})})\right]\di^3\mu,
\end{align}
where
\begin{subequations}
\begin{align}
\label{eq:etHigh:conservationlaw:wavesystem}
e_t(\phi_{\pm\sfrak}^{(\mathbf{a})})={}&
r^2 \mu^{-1}
|\partial_t\phi_{\pm\sfrak}^{(\mathbf{a})}|^2
+r^2 \mu \abs{\partial_r \phi_{\pm\sfrak}^{(\mathbf{a})}}^2
+\abs{\edthR'\phi_{\sfrak}^{(\mathbf{a})}}^2
+\abs{\edthR\phi_{-\sfrak}^{(\mathbf{a})}}^2-2Mr^{-1}
|\phi_{\pm\sfrak}^{(\mathbf{a})}|^2\notag\\
&+{2(r-3M)}{\Delta^{-1/2}}
\Re\big(\TAO_{s}^{\half}\phins^{(\mathbf{a})}\overline{   \phips^{(\mathbf{a})}}\big),
\\
e_r(\phi_{\pm\sfrak}^{(\mathbf{a})})={}&-2\Delta\Re\big(
\partial_{t}\overline{\phi_{\pm\sfrak}^{(\mathbf{a})}}
\partial_{r}{\phi_{\pm\sfrak}^{(\mathbf{a})}}\big).
\end{align}
\end{subequations}
\end{prop}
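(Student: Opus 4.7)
The plan is to derive the identity \eqref{eq:EnerEsti:(4)} by applying an energy-type multiplier to the symmetric hyperbolic system \eqref{eq:Dirac:TME:phis:SH}, and then converting the resulting divergence identity into integrated form on $\Donetwo$ using Stokes' theorem in the hyperboloidal coordinates $(\tb,\rb)$. For the base case $\mathbf{a}=0$, first multiply \eqref{eq:DTMEP:SH} by $2\overline{\partial_t\phips}$ and \eqref{eq:DTMEN:SH} by $2\overline{\partial_t\phins}$, take real parts, add, and integrate over $\Sphere$. The wave-operator pieces give $\Re[-2\mu^{-1}r^2\partial_t^2\phi\,\overline{\partial_t\phi}]=-\mu^{-1}r^2\partial_t|\partial_t\phi|^2$ and $\Re[2\partial_r(\Delta\partial_r\phi)\overline{\partial_t\phi}]=\partial_r[2\Delta\,\Re(\partial_r\phi\,\overline{\partial_t\phi})]-\Delta\,\partial_t|\partial_r\phi|^2$, while Lemma~\ref{lem:IBPonSphere} converts the angular operators $\edthR\edthR'$ and $\edthR'\edthR$ after spherical integration into $-\partial_t\int|\edthR'\phips|^2\di^2\mu$ and $-\partial_t\int|\edthR\phins|^2\di^2\mu$ respectively, and the $2Mr^{-1}$ zeroth-order term gives $2Mr^{-1}\partial_t|\phi|^2$.

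The delicate step is the handling of the two RHS coupling terms. Expanding $\partial_t\Re(\edthR\phins\overline{\phips})$ by the product rule and applying Lemma~\ref{lem:IBPonSphere} to relate one of the mixed products to the other, one obtains
\begin{equation*}
\int_{\Sphere}2\Delta^{-1/2}(r-3M)\big[\Re(\edthR\phins\,\overline{\partial_t\phips})-\Re(\edthR'\phips\,\overline{\partial_t\phins})\big]\di^2\mu = \partial_t\int_{\Sphere}2\Delta^{-1/2}(r-3M)\Re(\edthR\phins\,\overline{\phips})\di^2\mu,
\end{equation*}
so that the coupling absorbs cleanly into the $\partial_t$-density. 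Assembling everything yields the divergence identity $\partial_t\int_\Sphere e_t\,\di^2\mu+\partial_r\int_\Sphere e_r\,\di^2\mu=0$, with $e_t,e_r$ as in the statement.

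Next, transforming from $(t,r)$ to $(\tb,\rb)$ via $\partial_t|_r=\partial_\tb|_\rb$ and $\partial_r|_t=\partial_\rb|_\tb+(\mu^{-1}-\partial_r h)\partial_\tb|_\rb$ rewrites the divergence as $\partial_\tb[\int(e_t+(\mu^{-1}-\partial_r h)e_r)\di^2\mu]+\partial_\rb\int e_r\di^2\mu=0$. Integrating over $\Donetwo$ produces the $\Sigmaone,\Sigmatwo$ boundary terms in the stated form, while the $\rb$-boundary contributions at $\rb=2M$ and $\rb\to\infty$ produce the $\Horizononetwo$ and $\Scrionetwo$ contributions (using $\di\tb=\di v$ on $\Horizon$ and $\di\tb=\di u$ on $\Scri$). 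Identifying the correct finite integrands at those null boundaries requires passage to ingoing, respectively outgoing, Eddington--Finkelstein coordinates, in which $\phi_{\pm\sfrak}$ and their regular null derivatives are well-defined; the apparent $\mu^{-1}$-divergence in $e_t$ and the $\Delta\to0$ factor in $e_r$ then conspire to yield the finite expressions $\mu e_t-e_r$ on $\Horizon$ and $e_t+\mu^{-1}e_r$ on $\Scri$.

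The higher-order case $|\mathbf{a}|\geq1$ follows by commutation: the Killing vectors $\Lxi,\Leta$ commute with every coefficient by \eqref{comm:edthRedthR'LxiLeta}, $\curlLs,\curlLds$ commute with $V,Y$ by \eqref{comm:edthRedthR'YVR}, and the only nontrivial commutator \eqref{comm:curlLscurlLds} $[\curlLs,\curlLds]=2s$ contributes only a bounded zeroth-order term that preserves the symmetric hyperbolic structure. Hence $\{\phips^{(\mathbf{a})},\phins^{(\mathbf{a})}\}$ satisfies a wave system of the same form as \eqref{eq:Dirac:TME:phis:SH}, and the same multiplier argument yields \eqref{eq:EnerEsti:(4)} in full. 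The hardest part will be the careful extraction of the finite boundary integrands $\mu e_t-e_r$ at $\Horizon$ and $e_t+\mu^{-1}e_r$ at $\Scri$ from the formally singular expressions for $e_t$ and $e_r$; this requires systematically tracking cancellations between $\mu^{-1}$-divergences and $\Delta\to0$-vanishings when passing to the respective regular coordinate systems.
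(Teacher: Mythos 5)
Your proposal is correct and follows essentially the same route as the paper: multiply the coupled symmetric-hyperbolic system \eqref{eq:Dirac:TME:phis:SH} by $\pm 2\partial_t\overline{\phi_{\pm\sfrak}}$, take real parts, use Lemma~\ref{lem:IBPonSphere} to turn the $\Delta^{-1/2}(r-3M)$-coupling into a pure $\partial_t$-density (this is exactly how the cross term $2(r-3M)\Delta^{-1/2}\Re(\TAO_s^{1/2}\phins\overline{\phips})$ enters $e_t$), pass to hyperboloidal coordinates, integrate over $\Donetwo$, and commute with $\Lxi,\Leta,\curlLs,\curlLds$ for $|\mathbf{a}|\geq 1$. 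One small remark: the paper uses the multiplier $-2\partial_t\overline{\phi_{\pm\sfrak}}$ rather than $+2\partial_t\overline{\phi_{\pm\sfrak}}$, so with your sign choice the divergence identity you assemble is $-\partial_t e_t-\partial_r e_r\equiv 0$ rather than $\partial_t e_t+\partial_r e_r\equiv 0$ with $e_t,e_r$ as stated; this is harmless but worth noting for consistency of signs. Your acknowledgement that extracting the finite null-boundary integrands $\mu e_t-e_r$ on $\Horizon$ and $e_t+\mu^{-1}e_r$ on $\Scri$ is the delicate step is fair --- the paper itself declares this part ``manifest'' without further elaboration.
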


\begin{remark}
This conservation law alone does not provide a bound on a positive definite energy, as the indefinite term in last line of \eqref{eq:etHigh:conservationlaw:wavesystem} can not be bounded by the first line due to the blowup factor $\Delta^{-\half}$ near horizon. \end{remark}

\begin{proof}
We prove only for $\abs{\mathbf{a}}=0$, and the general $\abs{\mathbf{a}}\geq 0$ follows in the same manner as proving Proposition \ref{prop:conservationlaw:Highorder}.
 Multiplying equation \eqref{eq:DTMEP:SH} by $-2\partial_t\overline{\phips}$  and equation \eqref{eq:DTMEN:SH} by $-2\partial_t\overline{\phins}$, taking the real part, and summing together, we obtain
\begin{align}
\label{eq:EnerEsti:t}
\partial_t e^{(1)}_t(\phi_{\pm\sfrak})+\partial_r e^{(1)}_r(\phi_{\pm\sfrak})
\equiv{}&
{2(r-3M)}\Delta^{-\half}\Re\left(
-\curlLds  \phins \partial_t\overline{\phips }
+\curlLs   \phips \partial_t\overline{\phins }\right),
\end{align}
where
\begin{subequations}
\begin{align}
e^{(1)}_t(\phi_{\pm\sfrak})={}&
r^2 \mu^{-1}
|\partial_t\phi_{\pm\sfrak}|^2
+r^2 \mu \abs{\partial_r \phi_{\pm\sfrak}}^2
+\abs{\edthR'\phi_{\sfrak}}^2
+\abs{\edthR\phi_{-\sfrak}}^2-2Mr^{-1}
|\phi_{\pm\sfrak}|^2,\\
e^{(1)}_r(\phi_{\pm\sfrak})={}&
-2\Delta\Re\left(\partial_{t}\overline{\phi_{\pm\sfrak}}
\partial_{r}{\phi_{\pm\sfrak}}\right).
\end{align}
\end{subequations}
We then use  the Dirac equations \eqref{eq:Dirac:sym}  and Lemma \ref{lem:IBPonSphere} and find that the terms on the RHS of equation \eqref{eq:EnerEsti:t}, after integration over sphere, are total derivatives:
\begin{align*}
 \int_{\mathbb{S}^2}{2(r-3M)}\Delta^{-\half}\Re\left(
-\curlLds  \Phins \partial_t\overline{\Phips }
+\curlLs   \Phips \partial_t\overline{\Phins }\right)
={}&-\int_{\mathbb{S}^2}{2(r-3M)}\Delta^{-\half}
\partial_t\left(\Re\big(
\edthR\Phins \overline{   \Phips }\big)\right).
\end{align*}
It thus follows from substituting this equality  into equality \eqref{eq:EnerEsti:t} and integrating over $\Donetwo$ that
\begin{align}
\label{eq:EnerEsti:(4):trcoord}
\int_{\Donetwo}\left(\partial_t e_t(\phi_{\pm\sfrak})+\partial_r e_r(\phi_{\pm\sfrak})\right)={}&0.
\end{align}
The conservation law \eqref{eq:EnerEsti:(4)} for the case of $\abs{\mathbf{a}}=0$ is then manifest.
\end{proof}

%%%%%%%%%%%%%%
\subsection{Uniform boundedness of a nondegenerate positive definite energy}
\label{sect:uniformbd:nondegenergy}
%%%%%%%%%%%%%%%%%%%
As illustrated in Remark \ref{rem:conservationlaw:degeneracy}, the energy in \eqref{eq:current:highorder} shows degeneracy at $\Horizon$, and the following red-shift estimates will be utilized to remove this degeneracy.

\begin{prop}
\textbf{(Red-shift estimates near horizon).}
There exist two constants $2M<r_0<r_1<2.1M$ such that for any $\tb_2>\tb_1\geq \tb_0$ and any $\reg\in \mathbb{N}$,
\begin{align}\label{eq:redshift:psipns}
\hspace{4ex}&\hspace{-4ex}
\norm{\psipns}^2_{W_{0}^{\reg+1}(\Sigmatwo^{\leq r_0})}
+\sum_{\abs{\mathbf{a}}\leq \reg+1}\int_{\Horizononetwo}\abs{\psipns^{(\mathbf{a})}}^2 \di v\di^2\mu
+\norm{\psipns}^2_{W_{0}^{\reg+1}(\Donetwo^{\leq r_0})}
\notag\\
\lesssim {}&\norm{\psipns}^2_{W_{0}^{\reg+1}(\Sigmaone^{r_1})}
+\norm{\psipns}^2_{W_{0}^{\reg+1}(\Donetwo^{r_0,r_1})}.
\end{align}

\end{prop}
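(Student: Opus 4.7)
The plan is to execute a Dafermos--Rodnianski red-shift multiplier argument for the coupled spin $\pm\sfrak$ Dirac system. Fix $2M < r_0 < r_1 < 2.1M$ close to $2M$, introduce a smooth cutoff $\chi(r)$ equal to $1$ on $\{r\leq r_0\}$ and vanishing on $\{r\geq r_1\}$, and take $N=Y+\alpha\Lxi$ with $\alpha>0$ small; this $N$ is past-directed timelike on a neighborhood of $\Horizon$ and coincides with $Y$ on $\Horizon$ itself. I would apply $N$ as a multiplier to the wave equations \eqref{eq:TME:varphis:exf} (equivalently to the symmetric hyperbolic form \eqref{eq:Dirac:TME:phis:SH}) by multiplying each spin component's equation by $\chi^2\overline{N\Phipns}$, taking real parts, summing over the two spins, and integrating over $\Donetwo$. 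The deformation tensor ${}^{(N)}\pi$ is coercive near $r=2M$ with a positive constant proportional to the surface gravity $\kappa=(4M)^{-1}$; this produces (i) a coercive bulk integral over $\Donetwo^{\leq r_0}$ controlling $\abs{Y\Phipns}^2$, $\mu\abs{V\Phipns}^2$, $\abs{\edthR\Phipns}^2$, $\abs{\edthR'\Phipns}^2$, and $\abs{\Phipns}^2$, (ii) a positive horizon flux on $\Horizononetwo$, and (iii) a positive hyperboloidal flux on $\Sigmatwo^{\leq r_0}$. The lower-order first-order terms $\pm(r-3M)$ in \eqref{eq:TME:varphis:exf} are bounded at $r=2M$ and, upon choosing $r_1-2M$ small and performing Cauchy--Schwarz with a small constant, are absorbed into the coercive bulk. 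The $\chi'$-error and the past-boundary contribution on $\Sigmaone^{\leq r_1}$ are supported in the collar $\{r_0 \le r \le r_1\}$ and are precisely the two terms on the right-hand side of \eqref{eq:redshift:psipns}.

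For the higher-order statement, I would iteratively commute the wave system with the operators in $\RDeri=\{\Lxi,Y,\edthR,\edthR'\}$. By \eqref{eq:commutators} the Killing $\Lxi$ and the angular edth operators commute exactly with the Teukolsky wave operators, so they contribute no error. The sole nontrivial case is commutation with $Y$: a direct computation shows that at $r=2M$ the principal part of $[Y,\TMEOps]$ is $-2\kappa r^2 YV$ modulo tangential derivatives — this is the red-shift commutator property. Consequently the inhomogeneity produced at the $j$-th commutation level is controllable by a small-constant Cauchy--Schwarz against the coercive bulk of the same-level red-shift estimate, plus absorption into the level-$(j-1)$ energy that has been established in the previous step of the induction. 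Iterating for $j=1,\ldots,k+1$ yields the full estimate at regularity order $k+1$. The translation between the rescaled scalars $\Phipns$ and the physical scalars $\psipns$ is harmless on the compact region $\{r\leq r_1\}$ via $\Phips=\psips$, $\Phins=\Delta^{\half}\psins$ and the first-order Dirac equations \eqref{eq:Dirac:sym}, which convert any missing $Y$- or $V$-derivative into a tangential derivative of the opposite-spin component.

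The main obstacle is the off-diagonal coupling term $\pm\Delta^{-\half}(r-3M)\TAO_s^{\half}\phi_{\mp\sfrak}$ on the right-hand side of \eqref{eq:DTMEP:SH}--\eqref{eq:DTMEN:SH} (equivalently, the source in \eqref{eq:TME:varphis:mode}): the weight $\Delta^{-\half}$ is singular at the horizon, and a naive Cauchy--Schwarz against $\overline{N\Phipns}$ generates an inabsorbable $(r-3M)^2\Delta^{-1}$ factor. The resolution is to exploit the antisymmetric algebraic structure of the coupled system, exactly as in the proof of Proposition \ref{prop:conservationlaw:symhypwavesys}: after summing the two spin identities and integrating by parts on the sphere via Lemma \ref{lem:IBPonSphere}, the two cross terms combine into a divergence of a quantity proportional to $(r-3M)\Delta^{-\half}\Re(\edthR\phins\,\overline{\phips})$, which contributes only to boundary integrals; the factor $(r-3M)\Delta^{-\half}$ combined with $\phins=\mu^{\half}\psins$ and $\phips=r^{-1}\psips$ is uniformly bounded on $\Horizon$. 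This cancellation is precisely why the two spin components must be treated simultaneously in a single energy identity, and it is what allows the red-shift estimate to close without loss of derivatives or weights.
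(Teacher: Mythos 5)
Your strategy of applying a single red-shift multiplier to the coupled symmetric hyperbolic system and relying on the antisymmetric cancellation has a gap precisely at the step you claim closes it. After summing the two spin identities and integrating by parts on the sphere via Lemma~\ref{lem:IBPonSphere}, the coupling cross-terms do recombine into the total $N$-derivative
\begin{equation*}
\chi^2(r-3M)\Delta^{-\half}\int_{\Sphere}N\bigl[\Re(\edthR'\phips\,\overline{\phins})\bigr]\,\di^2\mu,
\end{equation*}
but because $N=Y+\alpha\Lxi$ has a nonzero $\partial_r$ component, pulling the coefficient inside $N$ by the product rule leaves a \emph{bulk} remainder
\begin{equation*}
\partial_r\bigl[\chi^2(r-3M)\Delta^{-\half}\bigr]\,\Re(\edthR'\phips\,\overline{\phins}),
\end{equation*}
and it is this piece, not the flux, that is dangerous. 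Near $r=2M$ the coefficient behaves like $(r-3M)(r-M)\Delta^{-3/2}$, while $\Re(\edthR'\phips\,\overline{\phins})=r^{-2}\Delta^{\half}\Re(\edthR'\psips\,\overline{\psins})$ supplies only one compensating factor of $\Delta^{\half}$; the product is $O\bigl((r-2M)^{-1}\bigr)$, and since $\psipns$ are generically nonvanishing on $\Horizon$, the spacetime integral over $\Donetwo^{\leq r_0}$ diverges. In Proposition~\ref{prop:conservationlaw:symhypwavesys} the multiplier is $\partial_t$ (no $\partial_r$ part), so no such bulk remainder is produced; in the Morawetz estimate of Section~\ref{sect:mora:Schw} the multiplier $f\partial_r+q$ is chosen to vanish at the horizon, taming the singularity. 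Neither escape is compatible with a red-shift multiplier, which must be nondegenerate at $\Horizon$ by design.

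The paper's proof bypasses the coupled system entirely for this estimate and proceeds sequentially: it first applies a standard $fY$-multiplier with $f=\chi_0^2(1+A\mu)$ to the \emph{decoupled} TME of $\Psinst=r^2\psins$, obtaining a coercive bulk near the horizon for the spin $-\sfrak$ component. It then applies the same multiplier to the decoupled equation \eqref{eq:TME:varphis:posi:exf} for $\Phips$. The problematic $\abs{Y\Phips}^2$ term there appears with an indefinite coefficient and is not absorbed by coercivity; instead the first-order Dirac relation $Y\Phips=\edthR\psins$ rewrites it as an angular derivative of the opposite-spin component, which is already controlled by the first step. A third multiplier step with $\Lxi$ and the Dirac equations fill in the remaining derivatives of $\psins$. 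This sequential exploitation of the first-order Dirac system — rather than a simultaneous antisymmetric energy identity — is the mechanism that closes the red-shift estimate, and it is genuinely different from the one used in the uniform energy and Morawetz arguments; it is what your proposal is missing.
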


\begin{proof}
We first consider spin $-\half$ component.
Let $\Psinst=r^2\psins$. Equation \eqref{eq:TME} satisfied by $\psins$ can be reformulated in terms of $\Psinst$ as
\begin{align}\label{eq:TME:nega:rewrite}
&-r^2 YV\Psinst+ \edthR'\edthR\Psinst
-(1+2Mr^{-1}-2\mu)\Psinst-[(r-M)-2\mu r] Y\Psinst ={}0.
\end{align}
Multiplying this equation by $-2r^{-2}fY\overline{\Psinst}$, taking the real part, and integrating over $\Donetwo^{\leq r_1}$, this yields
\begin{align}
&\int_{\Donetwo^{\leq r_1}}\Big[\VR(\mu f\abs{Y\Psinst}^2)
+Y(f r^{-2}(\abs{\edthR \Psinst}^2+ f (1+2Mr^{-1}-2\mu)\abs{\Psinst}^2))\notag\\
&\qquad
+  (-\mu \partial_r f+2fr^{-1}(1-2\mu ))\abs{Y\Psinst}^2
+\partial_r (f r^{-2})\abs{\edthR \Psinst}^2
+\partial_r (f (1+2Mr^{-1}-2\mu))\abs{\Psinst}^2\Big]\di^4\mu\notag\\
&\lesssim {}\norm{\Psinst}^2_{W_{0}^{1}(\Donetwo^{r_0,r_1})}.
\end{align}
We choose $f=\chi_0^2 (1+A\mu)$, with $\chi_0=\chi_0(r)$ being a smooth cutoff function which equals to $1$ for $r\leq r_0$ and vanishes identically in $[r_1,\infty)$, and $A$ large enough such that the coefficients of both the $\abs{\edthR\Psinst}^2$ term and the $\abs{\Psinst}^2$ term are bigger than a positive universal constant $c$ for $r\leq r_0$ with $r_0$ sufficiently close to $2M$. It is manifest that the coefficient of $\abs{Y\Psinst}^2$ term in the second line is also positive in $[2M, r_0]$ for $r_0$ close to $2M$. Therefore, there exist two positive universal constants $c$ and $C$ such that
\begin{align}\label{eq:redshift:psins:t2}
&\int_{\Donetwo^{\leq r_1}}\Big[\VR(\mu f\abs{Y\Psinst}^2)
+Y(f r^{-2}(\abs{\edthR \Psinst}^2+ f (1+2Mr^{-1}-2\mu)\abs{\Psinst}^2))\notag\\
&\qquad\quad
+  c(\abs{Y\Psinst}^2
+\abs{\edthR \Psinst}^2
+\abs{\Psinst}^2)\Big]\di^4\mu\notag\\
&\leq {}C\norm{\Psinst}^2_{W_{0}^{1}(\Donetwo^{r_0,r_1})}.
\end{align}

Consider then the spin $\half$ component. Recall the form of equation \eqref{eq:TME:varphis:posi:exf}
\begin{align}
\curlLds  \curlLs   \Phips
-r^2 Y V\Phips -(r-3M)Y\Phips={}&0
\end{align}
By multiplying this equation by $-2r^{-2}fY\overline{\Phips}$, taking the real part, and integrating over $\Donetwo$ with reference volume element $\di^4\mu$, this yields
\begin{align}
0\equiv{}&\int_{\Donetwo}\Big[\VR(\mu f\abs{Y\Phips}^2)
+Y(f r^{-2}\abs{\edthR' \Phips}^2)
\notag\\
&\qquad \qquad
+(-\mu \partial_r f + 2fM r^{-2}+2r^{-2}(r-3M)f)\abs{Y\Phips}^2
+\partial_r (f r^{-2})\abs{\edthR' \Phips}^2\Big]\di^4\mu.
\end{align}
Similarly we choose $f=\chi_0^2 (1+A\mu)$ with $A$ large enough such that the coefficient $\partial_r (fr^{-2})$ of $\abs{\edthR' \Phips}^2$ term is bigger than a positive universal constant $c$ in $r\leq r_0$ for $r_0$ sufficiently close to $2M$. In view of the estimate \eqref{eq:redshift:psins:t2} which bounds over spacetime integral of $\edthR\Psinst$ and the Dirac equations \eqref{eq:Dirac:TMEscalar} which says $Y\Phips=\edthR\psins$, one can bound the spacetime integral of $\abs{Y\Phips}^2$ over $\Donetwo^{\leq r_0}$ by the RHS of \eqref{eq:redshift:psins:t2}. As a result, there exist two positive universal constants $c$ and $C$ such that
\begin{align}\label{eq:redshift:Phips:t2}
&\int_{\Donetwo^{\leq r_1}}\Big[\VR(\mu f\abs{Y\Phips}^2)
+Y(f r^{-2}(\abs{\edthR\Phips}^2)
+  c(\abs{Y\Phips}^2
+\abs{\edthR \Phips}^2
+\abs{\Phips}^2)\Big]\di^4\mu\notag\\
&\leq {}C\Big(\norm{\Psinst}^2_{W_{0}^{1}(\Donetwo^{r_0,r_1})}
+\norm{\Phips}^2_{W_{0}^{1}(\Donetwo^{r_0,r_1})}\Big).
\end{align}
One can multiply equation \eqref{eq:TME} satisfied by the spin $\half$ component $\psips$ by $-2\chi_0^2\partial_t \overline{\psips}$, take the real part and integrate over $\Donetwo$, and this allows us in particular to bound $\int_{\Donetwo^{\leq r_0}} \abs{\partial_t \psips}^2 \di^4\mu$ by $C\big(\int_{\Donetwo^{\leq r_1}} \abs{Y \psips}^2 \di^4\mu + \norm{\Phips}^2_{W_{0}^{1}(\Sigmaone^{r_1})}
+\norm{\Phips}^2_{W_{0}^{1}(\Donetwo^{r_0,r_1})}\big)$. Together with the estimate \eqref{eq:redshift:Phips:t2}, this implies that there exist two constants $2M<r_0<r_1$ such that for any $\tb_2>\tb_1\geq \tb_0$,
\begin{align}\label{eq:redshift:psips}
\norm{\Phips}^2_{W_{0}^{1}(\Sigmatwo^{\leq r_0})}
+\norm{\Phips}^2_{W_{0}^{1}(\Donetwo^{\leq r_0})}
\lesssim {}\norm{\Phips}^2_{W_{0}^{1}(\Sigmaone^{r_1})}
+\norm{\Phips}^2_{W_{0}^{1}(\Donetwo^{r_0,r_1})}
+\norm{\Psinst}^2_{W_{0}^{1}(\Donetwo^{r_0,r_1})}.
\end{align}
In view of the above estimate and the Dirac equations \eqref{eq:Dirac:TMEscalar},  the RHS of \eqref{eq:redshift:psips} also dominates over some integrals of $\Psinst$:
\begin{align}\label{eq:redshift:psins:t1}
\hspace{4ex}&\hspace{-4ex}
\norm{V\Psinst}^2_{W_{0}^{0}(\Sigmatwo^{\leq r_0})}
+\norm{\edthR\Psinst}^2_{W_{0}^{0}(\Sigmatwo^{\leq r_0})}
+\norm{\edthR\Psinst}^2_{W_{0}^{0}(\Donetwo^{\leq r_0})}
+\norm{V\Psinst}^2_{W_{0}^{0}(\Donetwo^{\leq r_0})}\notag\\
\lesssim {}&\norm{\Phips}^2_{W_{0}^{1}(\Sigmaone^{r_1})}
+\norm{\Phips}^2_{W_{0}^{1}(\Donetwo^{r_0,r_1})}
+\norm{\Psinst}^2_{W_{0}^{1}(\Donetwo^{r_0,r_1})}.
\end{align}
The estimates \eqref{eq:redshift:Phips:t2}, \eqref{eq:redshift:psips} and \eqref{eq:redshift:psins:t1} together them yields the $\reg=0$ case of the estimate \eqref{eq:redshift:psipns}.
The general $\reg\geq 0$ case follow in a standard way by commuting with $\Lxi$ and $Y$ and making use of elliptic estimates (since $\Lxi$ and $Y$ span a timelike direction everywhere in $\Dzeroinfty$).
\end{proof}

By utilizing the above red-shift estimates near horizon for the spin $\pm \half$ components of Dirac field and the conservation laws in Propositions \ref{prop:conservationlaw:Highorder} and \ref{prop:conservationlaw:symhypwavesys}, we deduce the following uniform bound of a nondegenerate, positive definite energy.

\begin{thm}
\label{thm:uniformenergybd}\textbf{(Uniform energy boundedness).}
It holds true on a Schwarzschild background that for any $\tb_0\leq \tb_1<\tb_2$ and any $\reg\in \mathbb{N}$,
\begin{align}
\label{eq:uniformenergybdness}
\hspace{4ex}&\hspace{-4ex}
\norm{\psips}_{\tilde{W}_{0}^{\reg+1}(\Sigmatwo)}^2
+\norm{\psins}_{\tilde{W}_{0}^{\reg+1}(\Sigmatwo)}^2
+\sum\limits_{\abs{\mathbf{a}}\leq \reg+1}\bigg(\int_{\Horizononetwo}|
\Phips ^{(\mathbf{a})}|^2\di v\di^2\mu
+\int_{\Scrionetwo}|\Phipns ^{(\mathbf{a})}|^2\di u\di^2\mu\bigg)\notag\\
\lesssim{}&\norm{\psips}_{\tilde{W}_{0}^{\reg+1}(\Sigmaone)}^2
+\norm{\psins}_{\tilde{W}_{0}^{\reg+1}(\Sigmaone)}^2.
\end{align}
\end{thm}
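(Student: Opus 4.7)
The strategy combines three tools already at hand: the first-order conservation law of Proposition~\ref{prop:conservationlaw:Highorder}, the Dirac system \eqref{eq:Dirac:sym}, and the red-shift estimate \eqref{eq:redshift:psipns}. The first yields a conserved but horizon-degenerate energy together with the outgoing fluxes at $\Horizon$ and $\Scri$; the second converts angular derivatives into $Y$- and $V$-derivatives via the Dirac equations; the third supplies the only missing non-degenerate control, namely the $Y$-derivatives of $\psins$ at the horizon.

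First I commute \eqref{eq:current:highorder} with every multi-index $\mathbf{a}$ of order $\leq \reg+1$ built from $\{\curlLs,\curlLds,\Lxi,\Leta\}$ and sum. Using $\mu^{-1}r^{-2}|\Phins^{(\mathbf{a})}|^2=|\psins^{(\mathbf{a})}|^2$ and the uniform two-sided bound on $\partial_r h$, the left-hand side defines a conserved energy $E^T(\tb)$ which controls all $\{\Lxi,\Leta,\edthR,\edthR'\}$-derivatives of $\psipns$ on each $\Sigmatb$ up to order $\reg+1$, together with the outgoing fluxes $\int_{\Horizononetwo}|\Phips^{(\mathbf{a})}|^2$ and $\int_{\Scrionetwo}|\Phins^{(\mathbf{a})}|^2$. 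The only direction in $\RDeri$ not yet captured is $Y$. I insert it using the Dirac equations: \eqref{eq:Dirac:sym:nega} rearranges to the identity $Y\psips=\edthR\psins$, valid throughout the DOC, so every $Y$-derivative of $\psips$ is pointwise bounded by an already-controlled angular derivative of $\psins$; and for $\psins$, writing $Y=2\mu^{-1}\Lxi-\mu^{-1}V$ and using \eqref{eq:Dirac:sym:posi} to replace $V\Phins$ by $\edthR'\psips$ shows that $Y\psins$ is controlled by $E^T$ on any region $\{r\geq r_0\}$ where $\mu^{-1}$ is bounded. Applied asymptotically towards $\Scri$, the same Dirac identities also upgrade the conserved $\Scrionetwo$-flux of $\Phins^{(\mathbf{a})}$ to the full flux of $\Phipns^{(\mathbf{a})}$ required by the theorem.

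For the near-horizon piece $\Sigmatwo^{\leq r_0}$ and the ingoing horizon flux, I apply the red-shift estimate \eqref{eq:redshift:psipns} on the sub-slab $\Omega_{\tb^*,\tb_2}$ with $\tb^*:=\max\{\tb_1,\tb_2-1\}$. The collar $\Omega_{\tb^*,\tb_2}^{r_0,r_1}$ lies in a compact $r$-range strictly bounded away from both horizon and null infinity, so on each time slice therein the $W_0^{\reg+1}$-norm is equivalent to the restriction of the $\tilde{W}_0^{\reg+1}$-norm and is uniformly controlled by $E^T(\tb_0)$ via the first two steps. Since the sub-slab has length at most one, both the collar spacetime integral and the initial-slice contribution $\|\psipns\|^2_{W_0^{\reg+1}(\Sigma_{\tb^*}^{r_1})}$ (which sits in the away-from-horizon region) are uniformly bounded by $E^T(\tb_0)$. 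This yields control on $\|\psipns\|^2_{\tilde{W}_0^{\reg+1}(\Sigmatwo^{\leq r_0})}$ and on the full horizon flux of both components, uniformly in $\tb_2$. Combining this with the $\Sigmatwo^{\geq r_0}$-bound and the fluxes obtained in the first two steps produces \eqref{eq:uniformenergybdness}.

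The hardest part is precisely this closing step: a naive application of \eqref{eq:redshift:psipns} on the full slab $[\tb_1,\tb_2]$ would produce a collar bulk term growing linearly in $\tb_2-\tb_1$; using a unit-length sub-slab defeats this growth, and is made possible only because the uniform pointwise-in-$\tb$ conservation of $E^T$ already bounds the collar data away from the horizon. This is essentially the sub-slab version of the Dafermos--Rodnianski red-shift argument adapted to the Dirac setting, the novelty being that the degenerate energy $E^T$ is extracted from the coupled first-order Dirac system rather than from a Killing multiplier applied to a scalar wave equation.
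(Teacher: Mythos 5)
Your broad architecture—the conserved degenerate energy from Proposition~\ref{prop:conservationlaw:Highorder}, the Dirac system \eqref{eq:Dirac:sym} to trade $Y$-derivatives for angular ones, and the red-shift estimate \eqref{eq:redshift:psipns} near the horizon—does track the paper's, but the closing step has a genuine gap. The right-hand side of \eqref{eq:redshift:psipns} contains the term $\norm{\psipns}^2_{W_{0}^{\reg+1}(\Sigma_{\tb^*}^{r_1})}$, and because the cutoff in the red-shift multiplier is supported on $\{r\leq r_1\}$ with $2M<r_0<r_1<2.1M$, this term lives in the \emph{near-horizon} region, not ``in the away-from-horizon region'' as you assert. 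On that region the conserved energy $E^T$ is degenerate in exactly the sense of Remark~\ref{rem:conservationlaw:degeneracy}: it omits $Y$-derivatives of $\psins$, and the Dirac substitution $Y\psins=\mu^{-1}(2\Lxi\psins-V\psins)$ costs a factor of $\mu^{-1}$ that blows up there. So whenever $\tau^*=\tau_2-1>\tau_1$, the initial-slice term on $\Sigma_{\tau^*}$ is not controlled by $E^T(\tb_0)$; you would need the very quantity you are trying to bound, and iterating the unit sub-slab estimate compounds the implicit constant, so the argument does not close. The paper's fix is different: it adds the degenerate bulk $\int_{\tb_1}^{\tb_2}\tilde f_{\reg+1,\Sigmatb}\,\di\tb$ to both sides of the combined conservation-plus-red-shift estimate \eqref{eq:uniformenerbd:t1}, absorbs the collar bulk into that added term, uses the pointwise-in-$\tb$ monotonicity $\tilde f_{\reg+1,\Sigmatb}\leq\tilde f_{\reg+1,\Sigmaone}$ supplied by \eqref{eq:current:highorder}, and then invokes the Gr\"onwall-type Lemma~\ref{lem:AAGlemma}. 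That is the mechanism your plan is missing.

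There is a second, smaller gap. Proposition~\ref{prop:conservationlaw:Highorder} yields only the $\Scrionetwo$-flux of $\Phins^{(\mathbf{a})}$ (and the horizon flux of $\Phips^{(\mathbf{a})}$). You claim the Dirac identities ``upgrade'' this to the full $\Phipns^{(\mathbf{a})}$ flux, but the Dirac system relates \emph{principal null derivatives} of one component to \emph{angular derivatives} of the other, $\curlLds\Phins=\Delta^{\Half}Y\Phips$ and $\curlLs\Phips=\Delta^{\Half}\VR\Phins$; there is no way to recover $L^2_u$ control of $\Phips^{(\mathbf{a})}$ along $\Scrionetwo$ from $L^2_u$ control of $\Phins^{(\mathbf{a})}$ there without integrating a transverse derivative from data you do not have. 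The paper obtains this flux by adding an independent conservation law, namely the one extracted from the symmetric hyperbolic wave system in Proposition~\ref{prop:conservationlaw:symhypwavesys}, i.e.\ \eqref{eq:EnerEsti:(4)}. That ingredient is absent from your outline and should be included.
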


\begin{proof}
We add the estimate \eqref{eq:current:highorder} for all $\abs{\mathbf{a}}\leq \reg+1$ to the estimate \eqref{eq:redshift:psipns} to obtain
\begin{align}
\label{eq:uniformenerbd:t1}
\norm{\psipns}_{\tilde{W}_{0}^{\reg+1}(\Sigmatwo)}^2
+
\norm{\psipns}_{\tilde{W}_{0}^{\reg+1}(\Donetwo^{\leq r_0})}^2
\lesssim{}&\norm{\psipns}_{\tilde{W}_{0}^{\reg+1}(\Sigmaone)}^2
+
\norm{\psipns}_{\tilde{W}_{0}^{\reg+1}(\Donetwo^{r_0,r_1})}^2.
\end{align}
Here we have used a simple fact that
\begin{align}
\sum\limits_{\abs{\mathbf{a}}\leq \reg+1}\int_{\Sigmatb}
(\partial_r h |\Phips ^{(\mathbf{a})}|^2
+(2\mu^{-1}-\partial_rh)|\Phins ^{(\mathbf{a})}|^2
)+\norm{\psipns}^2_{W_{0}^{\reg+1}(\Sigmatb^{\leq r_0})}
\sim{}& \norm{\psipns}_{\tilde{W}_{0}^{\reg+1}(\Sigmatb)}^2
.
\end{align}
Denote
\begin{align}
\label{def:fkandtildefk}
f_{\reg, \Sigmatb} =\norm{\psipns}_{\tilde{W}_{0}^{\reg}(\Sigmatb)}^2,
\quad \tilde{f}_{\reg,\Sigmatb}=\sum\limits_{\abs{\mathbf{a}}\leq \reg}\int_{\Sigmatb}
(\partial_r h |\Phips ^{(\mathbf{a})}|^2
+(2\mu^{-1}-\partial_rh)|\Phins ^{(\mathbf{a})}|^2
).
\end{align}
We can add $\int_{\tb_1}^{\tb_2}\tilde{f}_{\reg+1,\Sigmatb} \di \tb$ to both sides of \eqref{eq:uniformenerbd:t1} such that the last two spacetime integrals of \eqref{eq:uniformenerbd:t1} are absorbed by LHS, leading to
\begin{align}
\label{eq:uniformenerbd:t2}
f_{\reg+1, \Sigmatwo}
+\int_{\tb_1}^{\tb_2}f_{\reg+1, \Sigmatb} \di \tb
\lesssim{}&f_{\reg+1, \Sigmaone}
+\int_{\tb_1}^{\tb_2}\tilde{f}_{\reg+1,\Sigmatb} \di \tb.
\end{align}
From Proposition \ref{prop:conservationlaw:Highorder}, one has $\tilde{f}_{\reg+1,\Sigmatb}\leq \tilde{f}_{\reg+1,\Sigmaone}$ for any $\tb\geq \tb_1$, which implies the last term of \eqref{eq:uniformenerbd:t2} is bounded by $(\tb_2-\tb_1)\tilde{f}_{\reg+1,\Sigmaone}$, and is further bounded by $(\tb_2-\tb_1){f}_{\reg+1,\Sigmaone}$. An application of Lemma \ref{lem:AAGlemma} then yields
$f_{\reg+1, \Sigmatwo}
\lesssim f_{\reg+1, \Sigmaone}$. Together with the estimate \eqref{eq:current:highorder}, we have
\begin{align}
\label{eq:uniformenergybdness:v1}
f_{\reg+1, \Sigmatwo}
+\sum_{\abs{\mathbf{a}}\leq \reg+1}
\bigg(\int_{\Horizononetwo}|
\Phips ^{(\mathbf{a})}|^2 \di v\di^2 \mu
+\int_{\Scrionetwo}|\Phins ^{(\mathbf{a})}|^2\di u\di^2 \mu\bigg)
\lesssim{}&f_{\reg+1, \Sigmaone}.
\end{align}
In the end, we add to this inequality the
estimate \eqref{eq:EnerEsti:(4)}, and this allows us in addition to bound $\int_{\Scrionetwo}|\Phips ^{(\mathbf{a})}|^2\di u\di^2 \mu$ by the RHS of \eqref{eq:uniformenergybdness:v1}. Thus, we achieve the estimate \eqref{eq:uniformenergybdness}.
\end{proof}

%%%%%%%%%%%%%%%%%
\section{Integrated local energy decay estimates}
\label{sect:mora:Schw}
%%%%%%%%%%%%%%%%%%

In this section, we aim to prove the integrated local energy decay, or Morawetz, estimates for the Dirac field. Our main statement is the following high order
basic energy and Morawetz (BEAM) estimates, which are a combination of the energy estimate proven in the previous section and the Morawetz estimate. A new and fundamental feature in treating the Dirac field which differentiates itself from the scalar field case is the indispensable coupling of the spin $\pm \half$ components in both the energy estimate, as is shown in Theorem \ref{thm:uniformenergybd}, and the Morawetz estimate which will be shown in the following theorem. We will come back to this point in Remark \ref{remLessentialcouplinginBEAM}.

\begin{thm}
\label{thm:BEAM:Dirac:sect4}\textbf{(High order BEAM estimates).}
Consider the Dirac field on a Schwarzschild spacetime. For any $\reg\in \mathbb{N}$, any $2M<R'<\infty$ and any $\tb_0\leq \tb_1<\tb_2$,
\begin{align}\label{eq:BEAM:v1}
\hspace{8ex}&\hspace{-8ex}
\norm{\psipns}_{\tilde{W}_{0}^{\reg+1}(\Sigmatwo)}^2
+\sum_{\abs{\mathbf{a}}\leq \reg+1}\bigg(\int_{\Horizononetwo}\abs{\psipns^{(\mathbf{a})}}^2 \di v\di^2\mu
+\int_{\Scrionetwo}|\Phipns ^{(\mathbf{a})}|^2\di u\di^2\mu\bigg)
+
\norm{\psipns}^2_{W_{0}^{\reg}(\Donetwo^{\leq R'})}
\notag\\
\lesssim_{\reg, R'}{}& \norm{\psipns}_{\tilde{W}_{0}^{\reg+1}(\Sigmaone)}^2.
\end{align}
\end{thm}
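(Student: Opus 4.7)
The plan is to upgrade the uniform energy bound of Theorem \ref{thm:uniformenergybd} to the full BEAM estimate by adjoining an integrated local energy decay (Morawetz) term on the bounded radial slab $\Donetwo^{\leq R'}$. Everything on the left-hand side of \eqref{eq:BEAM:v1} except the bulk norm $\norm{\psipns}^2_{W_0^\reg(\Donetwo^{\leq R'})}$ is already controlled by \eqref{eq:uniformenergybdness}, so the sole task is to produce such a Morawetz estimate for the Dirac system and then combine.

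First I would work with the symmetric hyperbolic wave system \eqref{eq:Dirac:TME:phis:SH} rather than with the TME of a single component. Applying a Morawetz multiplier of the form $(f(r)\partial_{r^*} + g(r))\overline{\phi_{\pm\sfrak}}$ to each equation of \eqref{eq:Dirac:TME:phis:SH}, taking real parts, integrating over $\Donetwo^{\leq R'}$ and summing, with $f$ vanishing linearly at the photon sphere $r=3M$ and $g$ tuned as in the classical Dafermos--Rodnianski scheme for the scalar wave, yields a spacetime identity whose principal part controls $\abs{\partial_{r^*}\phi_{\pm\sfrak}}^2$, a photon-sphere-degenerate combination $\abs{\edthR'\phi_{\sfrak}}^2+\abs{\edthR\phi_{-\sfrak}}^2+\abs{\partial_t\phi_{\pm\sfrak}}^2$, and a positive zeroth-order bulk in $\abs{\phi_{\pm\sfrak}}^2$. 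Boundary contributions at $\Sigmatwo$, $\Sigmaone$, $\Horizononetwo$ and $\Scrionetwo$ are dominated by the output of Theorem \ref{thm:uniformenergybd}.

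The delicate point is the coupling source $\pm \Delta^{-\half}(r-3M)\edthR^{(\prime)}\phi_{\mp\sfrak}$ on the right-hand side of \eqref{eq:Dirac:TME:phis:SH}: its coefficient blows up at $\Horizon$ but vanishes linearly at $r=3M$, matching exactly the degeneracy of the Morawetz bulk at the trapping radius. After integrating by parts on $\Sphere$ via Lemma \ref{lem:IBPonSphere} and applying Cauchy--Schwarz with a small absorption parameter, the cross-terms away from horizon are absorbed into the positive bulk; in a fixed neighbourhood of $\Horizon$ their singularity is dominated by the red-shift estimates \eqref{eq:redshift:psipns}. To pass to the higher-order norm on the LHS and to recover the photon-sphere-degenerate derivatives, I would commute \eqref{eq:Dirac:TME:phis:SH} with $\Lxi$, $Y$, $\edthR$ and $\edthR'$ using the commutators \eqref{eq:commutators}, repeat the Morawetz step at each commuted level, and use the commuted wave equations themselves as elliptic relations on constant-$\tb$ slices bounded away from $2M$ to trade a single $\Lxi$ derivative for angular and radial ones; this accounts for the single derivative loss from $\reg+1$ on the RHS to $\reg$ on the LHS. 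Combining with \eqref{eq:uniformenergybdness} then yields \eqref{eq:BEAM:v1}.

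The main obstacle is the quantitative handling of the coupling term uniformly in $r\in(2M,R']$. The coupling of the spin $\pm\sfrak$ components is essential: no Chandrasekhar-type transformation decouples the Dirac system, so one cannot reduce the analysis to a single scalar wave equation. The algebraic coincidence that the coupling coefficient $\Delta^{-\half}(r-3M)$ vanishes precisely at the trapping radius is what permits the Morawetz estimate to close at all; verifying the absorption of the cross-terms through the interplay of the multiplier's vanishing order at $r=3M$, the red-shift near $\Horizon$, and a small absorption constant is the central difficulty of the argument.
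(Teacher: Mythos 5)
Your overall framework---build a Morawetz identity for the symmetric hyperbolic system, sum the two multiplier identities, and glue the output to the energy bound of Theorem~\ref{thm:uniformenergybd} and the red-shift estimate~\eqref{eq:redshift:psipns}---matches the paper's structure, and you correctly flag the coupling term as the crux. But the mechanism you propose for handling that coupling term, integration by parts on the sphere followed by Cauchy--Schwarz with a small absorption parameter, does not close, and this is exactly the obstruction the paper highlights in Remark~\ref{remLessentialcouplinginBEAM}.

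Concretely: the cross term has the schematic form $r^{-2}(r-3M)\Delta^{-\half}\Re\bigl(X\overline{\Phipns}\cdot\mathring{\eth}^{(\prime)}\Phi_{\mp\sfrak}\bigr)$, while the Morawetz bulk degenerates exactly quadratically in $(r-3M)$ on its angular part and carries no extra positive $r$-weight to spare near infinity. Thus a Cauchy--Schwarz split at any weight produces a partner term that either (i) sits at the \emph{same} order of vanishing in $(r-3M)$ as the bulk, so the $\veps^{-1}$ factor is not dominated, or (ii) carries an unabsorbable extra power of $r$ as $r\to\infty$ (e.g. $\veps^{-1}r^3\abs{\partial_r\phips}^2$ against a bulk of order $\abs{\partial_r\phips}^2$). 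Your remark that the blow-up near $\Horizon$ is cured by the degeneracy of $\phins$ is partly right, but it does not rescue the trapping region or the far region, and it is in those two regions that absorption genuinely fails.

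The missing idea is an exact algebraic cancellation, not absorption. When you sum the error contributions from the two coupled equations, they combine into the antisymmetric quantity $\Re\bigl(\overline{\Phips}\curlLds\Phins-\overline{\Phins}\curlLs\Phips\bigr)$ (up to a total $\partial_r$-derivative coming from the $f\partial_r$ part of the multiplier). The first-order Dirac system~\eqref{eq:Dirac:sym} then rewrites this as
\begin{align*}
\Re\bigl(\overline{\Phips}\curlLds\Phins-\overline{\Phins}\curlLs\Phips\bigr)
= \tfrac{\Delta^{\half}}{2\mu}\partial_t\bigl(\abs{\Phips}^2-\abs{\Phins}^2\bigr)
-\tfrac{1}{2}\Delta^{\half}\partial_r\bigl(\abs{\Phips}^2+\abs{\Phins}^2\bigr),
\end{align*}
i.e.\ a pure spacetime divergence. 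Integrating by parts once more converts the entire coupling contribution into boundary fluxes (controlled by the conserved energy) plus a zeroth-order bulk in $\abs{\Phipns}^2$ with an explicitly computable coefficient, which is then checked to be dominated by the Morawetz bulk. This is the substitute for the absent Chandrasekhar transformation, and without it the cross terms cannot be controlled pointwise in $r$. Your proposal, as written, would leave those cross terms unbounded in the trapping and far regions.

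A secondary, smaller issue: you invoke elliptic trading on constant-$\tb$ slices to explain the derivative count $\reg+1\to\reg$, but the derivative loss in the paper simply reflects that the first-order bulk terms of the Morawetz identity involve one fewer derivative than the energy flux terms, and the higher-order version is obtained by commuting with $\Lxi,\mathring\eth,\mathring\eth'$ (plus $Y$ near the horizon), not by a separate elliptic argument. This is not a gap, but the elliptic step you describe is not needed here.
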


\begin{proof}
Recall the symmetric hyperbolic wave system \eqref{eq:TME:varphis:mode}. We put both equations in \eqref{eq:TME:varphis:mode} into the following form 
\begin{align}
-\TAO_{s}   \varphi
+\Delta^{-1}r^4(\partial_t +\mu \partial_r)(\partial_t -\mu \partial_r)\varphi +G={}&0,
\end{align}
Here, $\varphi=\Phips$  and $G=G_+=\Delta^{-\half}(r-3M)\curlLds\Phins$ for $s=\half$,  and $\varphi=\Phins$ and $G=G_-=-\Delta^{-\half}(r-3M)\curlLs\Phips$ for $s=-\half$. Define $V_{P}=-2M r^{-1}$. By multiplying these two subequations by
\begin{equation}
r^{-2}X\varphi=r^{-2}(f\partial_r \varphi +q\varphi),
\end{equation}
one obtains
\begin{align}\label{eq:MoraEstidegSchw}
\hspace{4ex}&\hspace{-4ex}
\partial_t\left(\Re\left(\mu^{-1}X(\varphi)
\partial_t\bar{\varphi}\right)\right)
+\tfrac{1}{2}\partial_r\left(r^{-2}f\left[|\TAO_{s}^{\half}\varphi|^2
-\mu^{-1}|\partial_t\varphi|^2-\Delta |\partial_r(r^{-1}\varphi)|^2+V_{P}\abs{\varphi}^2\right]\right)\notag\\
\hspace{4ex}&\hspace{-4ex}
+\tfrac{1}{2}\partial_r\left(r^{-2}
\left[\Delta\partial_r(q+r^{-1}f)|\varphi|^2-2\Delta (q+r^{-1}f)\Re(\bar{\varphi}r\partial_r(r^{-1}\varphi))
-r^{-1}B^r|\varphi|^2\right]\right)\notag\\
\hspace{4ex}&\hspace{-4ex}
+r^{-2}B(\varphi)+r^{-2}\Re(X\overline{\varphi}{G})\equiv{}0.
\end{align}
Here, the bulk term
\begin{align}
B(\varphi)=B^t|\partial_t\varphi|^2+B^r|\partial_r\varphi|^2
+B^{a}|\TAO_{s}^{\half}\varphi|^2+B^0|\varphi|^2,
\end{align}
with
\begin{align}
\label{eq:Bterms:general}
B^t={}&\tfrac{1}{2}\partial_r
(r^4\Delta^{-1}{f})
-(q+r^{-1}f)r^4\Delta^{-1},\notag\\
B^r={}&\tfrac{1}{2}\partial_r(\Delta{f})
-2{f}(r-M)+\Delta (q+r^{-1}f), \notag\\
B^{a}={}&-\tfrac{1}{2}\partial_r{f}+(q+r^{-1}f),\notag\\
B^0={}&\partial_r((q+r^{-1}f)(r-M))
-\tfrac{1}{2}\partial_{r}^2(\Delta (q+r^{-1}f))+r^2(\partial_r (r^{-3}B^r(r))+ r^{-4}B^r(r))\notag\\
&+V_{P}(q+r^{-1}f)-\tfrac{1}{2} \partial_r (V_{P}f)\notag\\
={}&-\tfrac{1}{2}\partial_{r}(\Delta \partial_r(q+r^{-1}f))
+r^2(\partial_r (r^{-3}B^r)+ r^{-4}B^r)+V_{P}(q+r^{-1}f)-\tfrac{1}{2} \partial_r (V_{P}f).
\end{align}

We follow \cite{larsblue15hidden,Ma17spin2Kerr} by taking
\begin{align}\label{eq:multiplier:Mora:Schw}
f=\frac{2(r-2M)(r-3M)}{r^2},\quad q=\mu \partial_r \left(\half \mu^{-1}f\right)=\frac{3M\Delta}{r^4}
\end{align}
and calculate using the expressions \eqref{eq:Bterms:general} that
\begin{align}
B^t=0,\quad
B^r=6M \mu^2, \quad
B^{a}=2r^{-3}(r-3M)^2,\quad
B^0=-3M r^{-4}(3r^2 -20Mr +30M^2).
\end{align}
We sum over the terms from the source terms $G_{\pm}$:
\begin{align}
\label{eq:MoraSchw:sourceterm:1}
\hspace{4ex}&\hspace{-4ex}
r^{-2}\Re(X\overline{\Phips}{G_{+}})
+r^{-2}\Re(X\overline{\Phins}{G_{-}})\notag\\
={}&r^{-2} {(r-3M)}{\Delta^{-\half}}\Re\left((f\partial_r \overline{\Phips} +q\overline{\Phips})\curlLds\Phins
-(f\partial_r \overline{\Phins} +q\overline{\Phins})\curlLs\Phips\right)\notag\\
\equiv{}&\partial_r \left(\frac{r-3M}{2r^2\Delta^{\half}}f
\left[\overline{\Phips}\curlLds\Phins- \overline{\Phins}\curlLs\Phips\right]\right)\notag\\
&+\left\{-\partial_r \left(\frac{r-3M}{2r^2\Delta^{\half}}f\right)
+\frac{r-3M}{r^2\Delta^{\half}}q\right\}\Re\left(\overline{\Phips}\curlLds\Phins- \overline{\Phins}\curlLs\Phips\right).
\end{align}
Using the first-order Dirac equations \eqref{eq:Dirac:sym}, we have 
\begin{align*}
\Re\Big(\overline{\Phips}\curlLds\Phins- \overline{\Phins}\curlLs\Phips\Big)={}&
\Delta^{\half}\Re\Big(\big(\overline{\Phips}(\mu^{-1}\partial_t -\partial_r )\Phips- \overline{\Phins}(\mu^{-1}\partial_t +\partial_r)\Phins\big)\Big)\notag\\
={}&\frac{\Delta^{\half}}{2\mu}\partial_t (\abs{\Phips}^2-\abs{\Phins}^2)
-\half\Delta^{\half}\partial_r (\abs{\Phips}^2+\abs{\Phins}^2),
\end{align*}
and substituting this into the last term of \eqref{eq:MoraSchw:sourceterm:1}, we infer
\begin{align}
\label{eq:MoraSchw:sourceterm:2}
\hspace{4ex}&\hspace{-4ex}
\left\{-\partial_r \left(\frac{r-3M}{2r^2\Delta^{\half}}f\right)
+\frac{r-3M}{r^2\Delta^{\half}}q\right\}\Re\left(\overline{\Phips}\curlLds\Phins- \overline{\Phins}\curlLs\Phips\right)\notag\\
={}&\partial_t \left(-\frac{\Delta^{\half}}{2\mu}\left[\partial_r
\left(\frac{f(r-3M)}{2 r^2 \Delta^{\half}}\right)
-\frac{r-3M}{r^2\Delta^{\half}}q\right]
(\abs{\Phips}^2-\abs{\Phins}^2)\right)\notag\\
&
+\partial_r \left(\frac{\Delta^{\half}}{2}\left[\partial_r
\left(\frac{f(r-3M)}{2 r^2 \Delta^{\half}}\right)
-\frac{r-3M}{r^2\Delta^{\half}}q\right]
(\abs{\Phips}^2+\abs{\Phins}^2)\right)\notag\\
&-\partial_r \left(\frac{\Delta^{\half}}{2}\left[\partial_r
\left(\frac{f(r-3M)}{2 r^2 \Delta^{\half}}\right)
-\frac{r-3M}{r^2\Delta^{\half}}q\right]\right)
(\abs{\Phips}^2+\abs{\Phins}^2).
\end{align}
Note that by making use of the second order symmetric hyperbolic wave system, we have transformed the  error term $r^{-2}\Re(X\overline{\Phips}{G_{+}})
+r^{-2}\Re(X\overline{\Phins}{G_{-}})$ coming from the source terms $G_{\pm}$ into total derivative terms plus the last line of \eqref{eq:MoraSchw:sourceterm:2}.
Combining the estimates \eqref{eq:MoraEstidegSchw}, \eqref{eq:MoraSchw:sourceterm:1} and \eqref{eq:MoraSchw:sourceterm:2} together, we arrive at an estimate of the form 
\begin{align}
\label{eq:MoraSchw:enerMoraeq}
\partial_t \mathbf{F}_t
+\partial_r \mathbf{F}_r
+ \mathbf{B}\equiv 0,
\end{align}
where $\mathbf{F}_t$ and $\mathbf{F}_r$ are given by
\begin{subequations}
\begin{align}
\mathbf{F}_t={}&\sum_{\varphi=\Phips,\Phins}\Re\left(\mu^{-1}X(\varphi)
\partial_t\bar{\varphi}\right)
-\frac{\Delta^{\half}}{2\mu}\left[\partial_r
\left(\frac{f(r-3M)}{2 r^2 \Delta^{\half}}\right)
-\frac{r-3M}{r^2\Delta^{\half}}q\right]
(\abs{\Phips}^2-\abs{\Phins}^2),\\
\mathbf{F}_r={}&\sum_{\varphi=\Phips,\Phins}\half r^{-2}f\left[|\TAO_{s}^{\half}\varphi|^2
-\mu^{-1}|\partial_t\varphi|^2-\Delta |\partial_r(r^{-1}\varphi)|^2+V_{P}\abs{\varphi}^2\right] \notag\\
&+\sum_{\varphi=\Phips,\Phins}\half r^{-2}
\left[\Delta\partial_r(q+r^{-1}f)|\varphi|^2-2\Delta (q+r^{-1}f)\Re(\bar{\varphi}r\partial_r(r^{-1}\varphi))
-r^{-1}B^r|\varphi|^2\right]\notag\\
&+\frac{r-3M}{2r^2\Delta^{\half}}f
\left[\overline{\Phips}\curlLds\Phins- \overline{\Phins}\curlLs\Phips\right] \notag\\
&+\frac{\Delta^{\half}}{2}\left[\partial_r
\left(\frac{f(r-3M)}{2 r^2 \Delta^{\half}}\right)
-\frac{r-3M}{r^2\Delta^{\half}}q\right]
(\abs{\Phips}^2+\abs{\Phins}^2),
\end{align}
\end{subequations}
and the bulk term $\mathbf{B}$ equals
\begin{align}
\mathbf{B}={}&r^{-2}B^r(|\partial_r\Phips|^2
+|\partial_r\Phins|^2)
+r^{-2}B^{a}(|\TAO_{\sfrak}^{\half}\Phips|^2
+|\TAO_{-\sfrak}^{\half}\Phins|^2)\notag\\
&
+\left[r^{-2}B^0-\partial_r \left(\frac{\Delta^{\half}}{2}\left[\partial_r
\left(\frac{f(r-3M)}{2 r^2 \Delta^{\half}}\right)
-\frac{r-3M}{r^2\Delta^{\half}}q\right]\right)\right]
(|\Phips|^2
+|\Phins|^2).
\end{align}

By integrating over $\Donetwo$, one obtains
\begin{align}
\int_{\Donetwo}\mathbf{B}\di^4\mu
={}&-\int_{\Horizononetwo}\left[
\mu \mathbf{F}_t
-\mathbf{F}_r\right]\di v\di^2\mu
-\int_{\Scrionetwo}\left[
\mathbf{F}_t
+\mu^{-1}\mathbf{F}_r\right]\di u\di^2\mu\notag\\
&
-\int_{\Sigmatwo}\left[\mathbf{F}_t
+(\mu^{-1}-\partial_r h)\mathbf{F}_r\right]\di^3\mu
+\int_{\Sigmaone}\left[\mathbf{F}_t
+(\mu^{-1}-\partial_r h)\mathbf{F}_r\right]\di^3\mu.
\end{align}
From Remark \ref{rem:TAOshalfandTAOsellip} and equation \eqref{eq:ellipestis:TAO}, we have $\int_{\Sphere}\abs{\TAO_{s}^{\half}\Phis}^2\geq \int_{\Sphere}\abs{\Phis}^2$,  therefore, it suffices to check the following relation outside the black hole
\begin{align}
r^{-2}(B^{a} +B^0)
-\partial_r \left(\frac{\Delta^{\half}}{2}\left[\partial_r
\left(\frac{f(r-3M)}{2 r^2 \Delta^{\half}}\right)
-\frac{r-3M}{r^2\Delta^{\half}}q\right]\right)>0
\end{align}
such that the bulk term is nonnegative.
In fact, a simple direct calculation gives the LHS is equal to
\begin{align}
\frac{3}{2} M r^{-6} (6 r^2 - 32 Mr +45 M^2)>0.
\end{align}
On the other hand, by using the fact that $\psins$ (or equivalently $\mu^{-\half}\Phins$) is regular and nondegenerate at $\Horizon$, the integrals of flux terms are bounded by the LHS of \eqref{eq:uniformenergybdness} with $\reg=0$, hence by $Cf_{1, \Sigmaone}$ from Theorem \ref{thm:uniformenergybd}, where
$f_{\reg, \Sigmatb}$ for any $\reg\in \mathbb{N}$ is defined as in \eqref{def:fkandtildefk}.
In total, we arrive at
\begin{align}
\label{eq:MoraSchw:finalform:1}
\hspace{4ex}&\hspace{-4ex}
\int_{\Donetwo}\mu^2 r^{-2}(|\partial_r\Phips|^2
+|\partial_r\Phins|^2)
+r^{-6}(r-3M)^2(|\edthR'\Phips|^2
+|\edthR\Phins|^2) +r^{-4}(|\Phips|^2
+|\Phins|^2)\di^4\mu\notag\\
\lesssim{}&f_{1, \Sigmaone}.
\end{align}

Instead, if we choose $f=0$, $q=-M\Delta(r-3M)^2r^{-6}$, one finds that $B^t=M(r-3M)^2 r^{-2}$,
\begin{align}
\abs{B^r}\lesssim M\mu^2,\quad  \abs{B^{a}}\lesssim M\mu (r-3M)^2r^{-4}, \quad \abs{B^0}\lesssim{}Mr^{-2},
\end{align}
and
\begin{align}
\left|-\partial_r \left(\frac{\Delta^{\half}}{2}\left[\partial_r
\left(\frac{f(r-3M)}{2 r^2 \Delta^{\half}}\right)
-\frac{r-3M}{r^2\Delta^{\half}}q\right]\right)
(\abs{\Phips}^2+\abs{\Phins}^2)\right|\lesssim{}Mr^{-4}(\abs{\Phips}^2+\abs{\Phins}^2).
\end{align}
This gives an upper bound for the integral $\int_{\Donetwo} (r-3M)^2 r^{-4}(|\partial_t\Phips|^2
+|\partial_t\Phins|^2)\di^4 \mu$ in terms of the LHS of inequality \eqref{eq:MoraSchw:finalform:1} plus the corresponding boundary flux terms which are bounded by $Cf_{1, \Sigmaone}$. Together with inequality \eqref{eq:MoraSchw:finalform:1}, we eventually conclude
\begin{align}
\label{eq:MoraSchw:finalform:2}
\hspace{4ex}&\hspace{-4ex}
\int_{\Donetwo}
\Big[(r-3M)^2 r^{-4}(|\partial_t\Phips|^2
+|\partial_t\Phins|^2)
+\mu^2 r^{-2}(|\partial_r\Phips|^2
+|\partial_r\Phins|^2)\notag\\
\hspace{4ex}&\hspace{-4ex}
\qquad \quad+r^{-6}(r-3M)^2(|\edthR'\Phips|^2
+|\edthR\Phins|^2) +r^{-4}(|\Phips|^2
+|\Phins|^2)\Big]\di^4\mu\notag\\
\lesssim{}& f_{1, \Sigmaone}.
\end{align}
By commuting with $\Lxi$, $\edthR$ and $\edthR'$, we can obtain a higher order regularity version of Morawetz estimate: for any $\reg\in \mathbb{N}$, any $2M<r'<R'<\infty$ and any $\tb_0\leq \tb_1<\tb_2$,
\begin{align}
\label{eq:MoraSchw:finalform:highorder:2}
\norm{\psips}^2_{W_{0}^{\reg}(\Donetwo^{r',R'})}
+\norm{\psins}^2_{W_{0}^{\reg}(\Donetwo^{r',R'})}
\lesssim_{\reg, r',R'}{}& f_{\reg+1, \Sigmaone}.
\end{align}
We combine this estimate with the uniform energy boundedness estimate \eqref{eq:uniformenergybdness} and the red-shift estimate \eqref{eq:redshift:psipns}
to conclude the BEAM estimate \eqref{eq:BEAM:v1}.
\end{proof}

\begin{remark}
\label{remLessentialcouplinginBEAM}
If we instead start with the TME for one component, say, the spin $+\half$ component without of loss of generality, we have from \eqref{eq:TME:varphis:posi:exf} that
\begin{align}
-\TAO_{\sfrak} \Phips
+\Delta^{-1}r^4(\partial_t +\mu \partial_r)(\partial_t -\mu \partial_r)\Phips
+G_{+}={}&0,
\end{align}
with $G_+=(r-3M)(\mu^{-1}\partial_t -\partial_r)\Phips$. By running the above proof, the error term coming from the source $G_+$ equals
$
r^{-2}\Re(X\overline{\Phips}{G_{+}})=\mu^{-1} r^{-2}(r-3M) \Re(X\overline{\Phips}{(\partial_t \Phips -\mu \partial_r \Phips)})$. With the choice of the multiplier $X\Phips$ as in the above proof, it is impossible to control this error term by the other bulk term $r^{-2}B(\Phips)$ neither near horizon (because of the extra $\mu^{-1}$ factor) nor in a large $r$ region (in view of the $r$ weight near infinity). Therefore, a Morawetz estimate for the Dirac field can only be achieved from the wave system of both spin $\pm \half$ components, not one single TME of one component. Meanwhile,
by making use of the symmetric hyperbolic wave system, the integral of the error terms from the source terms $G_{\pm}$ is transformed into pure error flux terms plus one single spacetime error integral, all of which have dependence only on $\Phipns$ and first order angular derivatives of $\Phipns$. These error flux terms are bounded by the nondegenerate, positive definite energy constructed in the previous section and the spacetime error integral is absorbed by the integral of $r^{-2}(B(\Phips)+B(\Phins))$ by a simple elliptic estimate over sphere for non-zero spin-weighted scalars.
In summary, by making use of the coupled symmetric hyperbolic wave system,  the analysis has been reduced to some extent to a scalar-wave-like equation analysis. This can be compared to the Chandrasekhar's transformation \cite{chandrasekhar1975linearstabSchw} which transforms the TME \eqref{eq:TME} for integer spin fields into a scalar-wave-like equation.
\end{remark}

%%%%%%%%%%%%%%%%%
\section{Almost sharp decay estimates}
\label{sect:almostsharpdecayestimates}
%%%%%%%%%%%%%%%%%

In this section, we prove the almost sharp energy and pointwise decay estimates based on the BEAM estimates in Theorem \ref{thm:BEAM:Dirac:sect4}. 

%%%%%%%%%%
\subsection{Decay of basic energy}
%%%%%%%%%%%%%%%%%

We first upgrade the BEAM estimates \eqref{eq:BEAM:v1}  to some basic energy decay estimates.

It is convenient to use the wave equation \eqref{eq:TME:varphis:posi:exf} for $\Phips$ and equation \eqref{eq:TME:varphis:mode:nega} for $\Phins$, which read
\begin{subequations}
\label{eq:TME:rp}
\begin{align}
\label{eq:TME:rp:posi}
\curlLds  \curlLs   \Phips
-r^2YV\Phips
 ={}&(r-3M)Y  \Phips,\\
\label{eq:TME:rp:nega}
\curlLs   \curlLds  \Phins
-r^2YV\Phins ={}&-\Delta^{-\half}(r-3M)\curlLs   \Phips,
\end{align}
\end{subequations}
respectively.
By defining
\begin{align}
\label{def:hatPhips}
\hatPhips=\Delta^{-\half}r^2\Phips
\end{align}
and using the commutator \eqref{comm:curlLscurlLds},
 the equations of the scalars $\hatPhips$ and $\Phins$ are\begin{subequations}
\label{eq:TME:rp1}
\begin{align}
\label{eq:TME:rp1:posi}
&-r^2YV\hatPhips
+\curlLds  \curlLs   \hatPhips
-(r-3M)\VR \hatPhips
+(1-6Mr^{-2})\hatPhips ={}0,\\
\label{eq:TME:rp1:nega}
&-r^2YV\Phins+ \curlLds   \curlLs  \Phins- \Phins
 ={}-(r-3M)r^{-2}\curlLs   \hatPhips.
\end{align}
\end{subequations}
The way of defining $\hatPhips$ as in \eqref{def:hatPhips} is such that the only first order derivative  appearing in its equation \eqref{eq:TME:rp1:posi} is $\VR$ derivative, a property which is quite helpful in proving the $r^p$ estimates, and the reason of using equation \eqref{eq:TME:varphis:mode:nega} for $\Phins$ instead of the decoupled TME \eqref{eq:TME:varphis:nega:exf} is that one can couple this equation with \eqref{eq:TME:rp1:posi} to achieve the $r^p$ estimates for $p\in[0,2]$, while proving an $r^p$ estimate for $p\in [0,2)$ is impossible for the TME \eqref{eq:TME:varphis:nega:exf} since one would obtain a negative spacetime integral of $\abs{V\Phins}^2$. All of these will be manifest in the proof of Lemma \ref{lem:BED:0to2:v1}.

The idea of proving basic energy decay estimates is to put each of the coupled equations \eqref{eq:TME:rp1} into the form of \eqref{eq:wave:rp} and apply the $r^p$ estimates in Proposition \ref{prop:wave:rp}.

We define the scalar $\Psipns$ which are actually the radiation fields of the components $\psipns$ and a $p$-weighted energy. 

\begin{definition}
\label{def:Fenergies:0to2}
Let
\begin{align}
\label{def:PsipsPsins}
\Psips=r\psips, \qquad \Psins=r\psins.
 \end{align}
Let $\varphi$ be a spin-weight $\pm \half$ scalar. Define for $0\leq p\leq 2$ that
\begin{align}
F(\reg,p,\tb,\varphi)={}&\norm{rV\varphi}^2_{W_{p-2}^{\reg-1}(\Sigmatb)}
+\norm{\varphi}^2_{W_{-2}^{\reg}(\Sigmatb)},
\end{align}
for $-1<p<0$ that $F(\reg,p,\tb,\varphi)=0$, and for $p=-1$ that
$F(\reg,p,\tb,\varphi)=\norm{\varphi}^2_{W_{-3}^{\reg}(\Sigmatb)}$.
Define moreover
\begin{align}
F(\reg,p,\tb,\Psipns)={}F(\reg,p,\tb,\Psips)+F(\reg,p,\tb,\Psins).
\end{align}
\end{definition}

The $r^p$ estimates, as well as some basic energy and pointwise decay estimates, are as follow.

\begin{lemma}
\label{lem:BED:0to2:v1}
Given the BEAM estimates \eqref{eq:BEAM:v1} on a Schwarzschild spacetime, then for any $j\in \mathbb{N}$, there exists a $\reg'(j,\reg)$ such that for any $p\in [0,2]$, it holds for any $\tb\geq \tb_0$ that
\begin{align}
\label{eq:BED2:Lxi:v1}
F(\reg,p,\tb,\Lxi^j\Psipns)
+\int_{\tb}^\infty F(\reg,p-1,\tb',\Lxi^j\Psipns) \di\tb'
\lesssim{}&\tb^{p-2-2j}
F(\reg+\regl(j,\reg),2,\tb_0,\Psipns)
\end{align}
and
\begin{align}
\label{eq:ptwdecay:basic:v1}
\absCDeri{\Lxi^j\psips}{\reg}+
\absCDeri{\Lxi^j\psins}{\reg}\lesssim_{j,\reg}  {} v^{-1}\tb^{-\half-j} (F(\reg+\regl(j,\reg),2,\tb_0,\Psipns))^{\half} .
\end{align}
\end{lemma}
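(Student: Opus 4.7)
The argument proceeds in three stages: establish an $r^p$ hierarchy for $p\in[0,2]$ from \eqref{eq:TME:rp1}; feed it together with \eqref{eq:BEAM:v1} into Lemma \ref{lem:hierarchyImpliesDecay} to obtain \eqref{eq:BED2:Lxi:v1}; and convert the energy decay to the pointwise bound \eqref{eq:ptwdecay:basic:v1} via the Sobolev estimate \eqref{eq:Sobolev:2}.

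For the first stage, I would cast each subequation of \eqref{eq:TME:rp1} into the template \eqref{eq:wave:rp} and invoke Proposition \ref{prop:wave:rp}. For \eqref{eq:TME:rp1:posi} one reads off $b_V=(r-3M)\mu^{-1}=r+O(1)$ (so $b_{V,-1}=1>0$) and $b_0=-(1-6Mr^{-2})$ (so $b_{0,0}=-1$); since spin-$\sfrak$ scalars obey $|\edthR'\varphi|^2\geq|\varphi|^2$ on the sphere by \eqref{eq:ellipestis}, one has $\Lambda_{\sfrak}=1$ and $b_{0,0}+\Lambda_{\sfrak}=0\geq 0$, so points (1)--(3) of Proposition \ref{prop:wave:rp} apply to $\hatPhips$ for all $p\in[0,2]$. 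For \eqref{eq:TME:rp1:nega} one has $b_V=0$, $b_{0,0}=1$, $\Lambda_{-\sfrak}=0$, so $b_{0,0}+\Lambda_{-\sfrak}=1\geq 0$; the $p=0$ case uses point (4) of Proposition \ref{prop:wave:rp} because $b_{V,-1}=0$, and its auxiliary $\|rV\Phins\|_{W_{-3}^\reg}^2$ term is controlled via the first-order Dirac identity $\Delta^{\half}V\Phins=\edthR'\Phips$ from \eqref{eq:Dirac:sym:posi}, reducing it to an angular-derivative integral absorbed by the bulk from the $\hatPhips$ estimate. The source $\vartheta=-(r-3M)r^{-2}\edthR'\hatPhips$ in \eqref{eq:TME:rp1:nega} is likewise absorbed, since $\|\vartheta\|_{W_{p-3}^\reg}^2\lesssim\int r^{p-5}|\edthR'\hatPhips|^2$ is subleading against $\|\hatPhips\|_{W_{p-3}^{\reg+1}}^2$. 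Summing the two estimates over $\{r\geq R_0\}$ and closing the interior $\{r\leq R_0\}$ via \eqref{eq:BEAM:v1}, and using that $\Psipns$ and $(\hatPhips,\Phins)$ agree up to bounded factors away from $\Horizon$, I obtain the coupled hierarchy
\begin{align*}
F(\reg,p,\tb_2,\Psipns)+\int_{\tb_1}^{\tb_2}F(\reg,p-1,t,\Psipns)\,\di t\lesssim F(\reg+\regl,p,\tb_1,\Psipns),\qquad p\in[0,2].
\end{align*}

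For the second and third stages, since $\Lxi=\partial_t$ commutes with every operator appearing in \eqref{eq:TME:rp1} by \eqref{comm:edthRedthR'LxiLeta}--\eqref{comm:edthRedthR'YVR}, the same hierarchy holds for $\Lxi^\iPigeonReg\Psipns$ for each $\iPigeonReg\in\{0,\ldots,j\}$, modulo added regularity. Monotonicity and interpolation in the weight $\alpha\in[0,2]$ being immediate, the evolution inequality \eqref{eq:Rev:HierarchyToDecayReal:EvolutionHypothesis} of Lemma \ref{lem:hierarchyImpliesDecay} (with $D=0$, $\gamma=0$) is supplied by the coupled hierarchy after $\Lxi$-commutation, and its conclusion produces \eqref{eq:BED2:Lxi:v1}---the factor $\tb^{-2j}$ arising from the iterated pigeonholing that trades each $\alpha$-unit for an $\iPigeonReg$-unit. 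The pointwise bound is then a direct application of \eqref{eq:Sobolev:2} with $\alpha=1$:
\begin{align*}
\sup_{\Sigmatb}|\Lxi^j\Psipns|^2\lesssim F(\reg+\regl,0,\tb,\Lxi^j\Psipns)^{\half}F(\reg+\regl,2,\tb,\Lxi^j\Psipns)^{\half}\lesssim \tb^{-1-2j}F(\reg+\regl',2,\tb_0,\Psipns),
\end{align*}
whereupon writing $\psipns=r^{-1}\Psipns$ and using $r^{-1}\lesssim v^{-1}$ throughout $\Dzeroinfty$ (since $v\gtrsim\tb+\rb\gtrsim r$ by the choice of $\tb_0$) delivers \eqref{eq:ptwdecay:basic:v1}.

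The chief technical obstacle is the first stage: equations \eqref{eq:TME:rp1} do not close individually under Proposition \ref{prop:wave:rp} because $\Phins$ satisfies $b_{V,-1}=0$ and is forced by an angular derivative of $\hatPhips$. The remedy is to treat $(\hatPhips,\Phins)$ as a coupled pair and transfer the bulk output of the $\hatPhips$ hierarchy to both the source term and the auxiliary $rV\Phins$ contribution in the $\Phins$ hierarchy via the first-order Dirac relations \eqref{eq:Dirac:sym}---the $r^p$-level analogue of the symmetric-hyperbolic coupling already essential in Section \ref{sect:mora:Schw}.
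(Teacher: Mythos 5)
Your first stage is essentially the paper's own argument, and the identifications $b_V=(r-3M)\mu^{-1}$, $b_{0,0}=-1$, $\Lambda_{\sfrak}=1$ for \eqref{eq:TME:rp1:posi} and $b_V=0$, $b_{0,0}=1$, $\Lambda_{-\sfrak}=0$ for \eqref{eq:TME:rp1:nega} are all correct; the treatment of the $\Phins$ source and of the auxiliary $\|rV\Phins\|^2_{W^{\reg}_{-3}}$ term at $p=0$ via the first-order Dirac relation is exactly what the paper does. The problems are in stages two and three.

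In stage two you assert that the factor $\tb^{-2j}$ follows from ``iterated pigeonholing that trades each $\alpha$-unit for an $\iPigeonReg$-unit.'' This is not what Lemma \ref{lem:hierarchyImpliesDecay} gives you. Applying that lemma with the hierarchy for $\Lxi^j\Psipns$ yields $F(\reg,p,\tb,\Lxi^j\Psipns)\lesssim\tb^{p-2}F(\reg+\regl,2,\tb_0,\Lxi^j\Psipns)$, with the \emph{initial data of $\Lxi^j\Psipns$}, not of $\Psipns$, on the right, and with no extra $\tb^{-2j}$. The $\iPigeonReg$-index in the lemma is a regularity index, and the hypothesis \eqref{eq:Rev:HierarchyToDecayReal:EvolutionHypothesis} drops regularity and weight together; commuting with $\Lxi$ does not enlarge the weight range, so the pigeonholing cannot by itself convert $\Lxi$'s into inverse powers of $\tb$. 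The missing step is the wave-equation conversion $F(\reg,2,\tb,\Lxi\Psipns)\lesssim F(\reg+\regl,0,\tb,\Psipns)$: one rewrites $r^2V\Lxi\hatPhips$ (and likewise for $\Phins$) as an $O(1)$-coefficient combination of $(rV)^2$, $\edthR\edthR'$, $\Lxi^2$, $\Lxi$, $r^{-1}\cdot$ and the coupling term, using \eqref{eq:TME:rp1} together with $Y=\mu^{-1}(2\Lxi-V)$. Only then does iterating the $p\in[0,2]$ decay across $j$ time derivatives produce the $\tb^{-2j}$.

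In stage three you claim $r^{-1}\lesssim v^{-1}$ throughout $\Dzeroinfty$, which is false: near the horizon $r\sim 2M$ while $v\to\infty$, so the inequality runs the wrong way (the correct one, $v\gtrsim\tb+\rb$, gives $v^{-1}\lesssim r^{-1}$). Your Sobolev bound from \eqref{eq:Sobolev:2} only yields $|\Lxi^j\psipns|\lesssim r^{-1}\tb^{-\half-j}(\cdot)^{\half}$, which near the horizon is merely $O(\tb^{-\half-j})$, weaker than the claimed $v^{-1}\tb^{-\half-j}\sim\tb^{-\frac{3}{2}-j}$. To close this you need a second, complementary pointwise bound: applying the spacetime Sobolev estimate \eqref{eq:Sobolev:3} together with $\int_{\tb}^{\infty}F(\reg,-1,\tb',\Psipns)\,\di\tb'\lesssim F(\reg+\regl,0,\tb,\Psipns)$ gives $|\Lxi^j\psipns|\lesssim\tb^{-\frac{3}{2}-j}(\cdot)^{\half}$ everywhere; combining the two using $v\sim\tb$ for $r\lesssim\tb$ and $v\sim r$ for $r\gtrsim\tb$ then yields \eqref{eq:ptwdecay:basic:v1}.
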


\begin{proof}
Each subequation of the system \eqref{eq:TME:rp1} can be put into the form of \eqref{eq:wave:rp}, and the assumptions in Proposition \ref{prop:wave:rp} are all satisfied with $b_{0,0}(\Phins)+\Lambda_{-\sfrak}=1+0=1$, $\vartheta(\Phins)=(r-3M)r^{-2}\curlLs   \hatPhips$, $b_{0,0}(\hatPhips)+\Lambda_{\sfrak}=-1+1=0$ and $\vartheta(\hatPhips)=0$. Therefore, we can apply the estimate \eqref{eq:rp:less2:2} with $p\in (0,2)$ to both subequations of the system \eqref{eq:TME:rp1}, the estimate \eqref{eq:rp:p=2:2} with $p=2$ to both subequations of the system \eqref{eq:TME:rp1}, respectively. This gives that there exists a constant $\hat{R}_0=\hat{R}_0(p)$ such that for all $R_0\geq \hat{R}_0$ and $\tb_2>\tb_1\geq \tb_0$,
\begin{enumerate}
\item for $p\in (0, 2)$,
\begin{subequations}\label{eq:rp:less2}
    \begin{align}\label{eq:rp:less2:posi}
\hspace{4ex}&\hspace{-4ex}
\norm{rV\hatPhips}^2_{W_{p-2}^\reg(\Sigmatwo^{R_0})}
+\norm{\hatPhips}^2_{W_{-2}^{\reg+1}(\Sigmatwo^{R_0})}
+\norm{\hatPhips}^2_{W_{p-3}^{\reg+1}(\Donetwo^{R_0})}
+\norm{Y\hatPhips}^2_{W_{-1-\delta}^{\reg}(\Donetwo^{R_0})}
\notag\\
&\lesssim_{[R_0-M, R_0]} {}\norm{rV\hatPhips}^2_{W_{p-2}^\reg(\Sigmaone^{R_0})}
+\norm{\hatPhips}^2_{W_{-2}^{\reg+1}(\Sigmaone^{R_0})},\\
\label{eq:rp:less2:nega}
\hspace{4ex}&\hspace{-4ex}
\norm{rV\Phins}^2_{W_{p-2}^\reg(\Sigmatwo^{R_0})}
+\norm{\Phins}^2_{W_{-2}^{\reg+1}(\Sigmatwo^{R_0})}
+\norm{\Phins}^2_{W_{p-3}^{\reg+1}(\Donetwo^{R_0})}
+\norm{Y\Phins}^2_{W_{-1-\delta}^{\reg}(\Donetwo^{R_0})}
\notag\\
&\lesssim_{[R_0-M, R_0]} {}\norm{rV\Phins}^2_{W_{p-2}^\reg(\Sigmaone^{R_0})}
+\norm{\Phins}^2_{W_{-2}^{\reg+1}(\Sigmaone^{R_0})}
+\norm{\curlLs   \hatPhips}^2_{W_{p-5}^{\reg}(\Donetwo^{R_0})};
\end{align}
\end{subequations}
\item\label{pt:2:prop:wave:rp} for $p=2$,
\begin{subequations}\label{eq:rp:p=2}
\begin{align}\label{eq:rp:p=2:posi}
\hspace{4ex}&\hspace{-4ex}
\norm{rV\hatPhips}^2_{W_{0}^\reg(\Sigmatwo^{R_0})}
+\norm{\hatPhips}^2_{W_{-2}^{\reg+1}(\Sigmatwo^{R_0})}
+\norm{\hatPhips}^2_{W_{-1-\delta}^{\reg+1}(\Donetwo^{R_0})}
+\norm{rV\hatPhips}^2_{W_{-1}^{\reg}(\Donetwo^{R_0})}
\notag\\
&\lesssim_{[R_0-M, R_0]} {}\norm{rV\hatPhips}^2_{W_{0}^\reg(\Sigmaone^{R_0})}
+\norm{\hatPhips}^2_{W_{-2}^{\reg+1}(\Sigmaone^{R_0})},\\
\label{eq:rp:p=2:nega}
\hspace{4ex}&\hspace{-4ex}
\norm{rV\Phins}^2_{W_{0}^\reg(\Sigmatwo^{R_0})}
+\norm{\Phins}^2_{W_{-2}^{\reg+1}(\Sigmatwo^{R_0})}
+\norm{\Phins}^2_{W_{-1-\delta}^{\reg+1}(\Donetwo^{R_0})}
+\norm{rV\Phins}^2_{W_{-1}^{\reg}(\Donetwo^{R_0})}
\notag\\
&\lesssim_{[R_0-M, R_0]} {}\norm{rV\Phins}^2_{W_{0}^\reg(\Sigmaone^{R_0})}
+\norm{\Phins}^2_{W_{-2}^{\reg+1}(\Sigmaone^{R_0})}
+\norm{\curlLs   \hatPhips}^2_{W_{-3}^{\reg}(\Donetwo^{R_0})}.
\end{align}
\end{subequations}
\end{enumerate}
Adding these estimates together, and plugging in the BEAM estimates \eqref{eq:BEAM:v1} to absorb the terms which are implicit in $\lesssim_{[R_0-M, R_0]}$ and supported on $[R_0-M,R_0]$, one can thus obtain for $p\in (0,2)$ that
\begin{subequations}
\begin{align}
F(\reg,p,\tb_2,\Psipns)
+\int_{\tb_1}^{\tb_2}F(\reg-\reg',p-1,\tb,\Psipns)\di\tb
\lesssim{}&F(\reg,p,\tb_1,\Psipns),
\end{align}
and for $p=2$,
\begin{align}
F(\reg,2,\tb_2,\Psipns)
+\int_{\tb_1}^{\tb_2}F(\reg-\reg',1,\tb,\Psipns)\di\tb
\lesssim{}&F(\reg,2,\tb_1,\Psipns).
\end{align}
We remark that $\regl$ here is a general parameter of derivative loss, although it can be chosen explicitly to be $1$.
For $p=0$, we can apply the estimate \eqref{eq:rp:p=0} to the subequation \eqref{eq:TME:rp1:posi} and the estimate \eqref{eq:rp:p=0:v2} to the subequation \eqref{eq:TME:rp1:nega} respectively. We note from the Dirac equations \eqref{eq:Dirac:TMEscalar} that $r^{-1}\Delta\edthR'\hatPhips=rV\Phins$, hence the last two terms in \eqref{eq:TME:rp1:nega} for $\varphi=\Phins$ are bounded by the RHS of the estimate \eqref{eq:rp:p=0:v2} for $\varphi=\hatPhips$. Therefore,
\begin{align}
F(\reg,0,\tb_2,\Psipns)
+\int_{\tb_1}^{\tb_2}F(\reg-1,-1,\tb,\Psipns)\di\tb
\lesssim{}&F(\reg,0,\tb_1,\Psipns).
\end{align}
\end{subequations}
In total, one has for any $p\in[0,2]$ and $\tb_2>\tb_1\geq \tb_0$ that
\begin{align}
F(\reg,p,\tb_2,\Psipns)
+\int_{\tb_1}^{\tb_2}F(\reg-\reg',p-1,\tb,\Psipns)\di\tb
\lesssim{}&F(\reg,p,\tb_1,\Psipns).
\end{align}
An application of Lemma \ref{lem:hierarchyImpliesDecay}  then implies for any $p\in[0,2]$ and $\tb\geq 2\tb_0$ that
\begin{align}
\label{eq:rp:bothspin}
F(\reg,p,\tb,\Psipns)
\lesssim{}&\tb^{-2+p}F(\reg+\regl,2,\tb/2,\Psipns)
\lesssim{}\tb^{-2+p}F(\reg+\regl,2,\tb_0,\Psipns).
\end{align}

To show better decay for $\Lxi$ derivative, one just needs to note that  away from horizon to rewrite $r^2V\Lxi \Phins$ as a weighted sum of $(rV)^2 \Phins$, $\edthR'\edthR\Phins$,
$\Lxi^2\Phins$,  $\Lxi\Phins$, $r^{-1}\Phins$ and $r^{-1}\curlLs   \hatPhips$ all with $O(1)$ coefficients using the wave equation \eqref{eq:TME:rp1:nega} and $Y=\mu^{-1}(2\Lxi-V)$. Similarly, away from horizon, one can express $r^2V\Lxi \hatPhips$ as a weighted sum of $(rV)^2 \hatPhips$, $\edthR\edthR'\hatPhips$,
$\Lxi^2\hatPhips$, $rV\hatPhips$, $\Lxi\hatPhips$ and $r^{-1}\hatPhips$ all with $O(1)$ coefficients using the wave equation \eqref{eq:TME:rp1:posi} and $Y=\mu^{-1}(2\Lxi-V)$. As a result,
\begin{subequations}
\begin{align}
F(\reg,2,\tb,\Lxi\Psips)={}&
\norm{rV\Lxi\Psips}^2_{W_{0}^{\reg-1}(\Sigmatb)}
+\norm{\Lxi\Psips}^2_{W_{-2}^{\reg}(\Sigmatb)}\notag\\
\lesssim{}& \norm{rV\Psips}^2_{W_{-2}^{\reg+\regl-1}(\Sigmatb)}
+\norm{\Psips}^2_{W_{-2}^{\reg+\regl}(\Sigmatb)}\notag\\
\lesssim{}& F(\reg+\regl,0,\tb,\Psips),\\
F(\reg,2,\tb,\Lxi\Psins)={}&
\norm{rV\Lxi\Psins}^2_{W_{0}^{\reg-1}(\Sigmatb)}
+\norm{\Lxi\Psins}^2_{W_{-2}^{\reg}(\Sigmatb)}\notag\\
\lesssim{}& \norm{rV\Psins}^2_{W_{-2}^{\reg+\regl-1}(\Sigmatb)}
+\norm{\Psins}^2_{W_{-2}^{\reg+\regl}(\Sigmatb)}
+\norm{\Psips}^2_{W_{-4}^{\reg+\regl}(\Sigmatb)}\notag\\
\lesssim{}& F(\reg+\regl,0,\tb,\Psipns),
\end{align}
\end{subequations}
which together give $F(\reg,2,\tb,\Lxi\Psipns)\lesssim F(\reg+\regl,0,\tb,\Psipns)$. Substituting this back into \eqref{eq:rp:bothspin}, we then have for any $p\in [0,2]$ and $\tb\geq 4\tb_0$ that
\begin{align}
F(\reg,p,\tb,\Lxi\Psipns)
\lesssim{}&\tb^{-2+p}F(\reg+\regl,2,\tb/2,\Lxi\Psipns)
\lesssim{}\tb^{-4+p}F(\reg+\regl,2,\tb/4,\Psipns)\notag\\
\lesssim{}&\tb^{-4+p}F(\reg+\regl,2,\tb_0,\Psipns).
\end{align}
Repeating the above discussions then proves the general $j\in \mathbb{N}$ cases of the estimate \eqref{eq:BED2:Lxi:v1}.

Turn in the end to the pointwise estimates \eqref{eq:ptwdecay:basic:v1}. Applying the inequality \eqref{eq:Sobolev:2} with  $p=1-\alpha$ and $p=1+\alpha$ of the energy decay estimate \eqref{eq:BED2:Lxi:v1} gives the  decay estimate \eqref{eq:ptwdecay:basic:v1} but with decay rate $r^{-1}\tb^{-\half-j}$. In addition, one can make use of the inequality \eqref{eq:Sobolev:3},  the energy decay estimate  \eqref{eq:BED2:Lxi:v1} with $p=0$, and the fact that $\int_{\tb}^{\infty}F(\reg-\reg',-1,\tb,\Psipns)\di\tb
\lesssim F(\reg,0,\tb,\Psipns)$ to achieve the decay estimate \eqref{eq:ptwdecay:basic:v1} with decay rate $\tb^{-\frac{3}{2}-j}$. These two estimates together $v\sim_R \tau$ as $r\leq R$ and $v\sim_R r$ as $r\geq R$  prove \eqref{eq:ptwdecay:basic:v1}.
\end{proof}

It is convenient in the latter discussions to utilize instead the following slightly different basic and pointwise energy decay estimate, in particular in deriving the properties of Newman--Penrose constants in Section \ref{sect:N-Pconst}.

\begin{lemma}
\label{lem:BED:0to2}
 Let $\PhinsHigh{1}=r^2\VR \Phins$ be defined as in Definition \ref{def:tildePhiplusandminusHigh}, and let
$F^{(1)}(\reg,p,\tb,\Psipns)$, for any $p\in [-1,2]$, be defined as in Definition \ref{def:Fenergies:big2}.
Given the BEAM estimates \eqref{eq:BEAM:v1} on a Schwarzschild spacetime, then for any $j\in \mathbb{N}$, there exists a $\reg'(j,\reg)$ such that for any $p\in [0,2]$, it holds for any $\tb\geq \tb_0$ that
\begin{align}
\label{eq:BED2:Lxi}
F^{(1)}(\reg,p,\tb,\Lxi^j\Psipns)
+\int_{\tb}^\infty F^{(1)}(\reg,p-1,\tb',\Lxi^j\Psipns) \di\tb'
\lesssim{}&\tb^{p-2-2j}
F^{(1)}(\reg+\regl(j,\reg),2,\tb_0,\Psipns)
\end{align}
and
\begin{align}
\label{eq:ptwdecay:basic}
\absCDeri{\Lxi^j\psips}{\reg}+
\absCDeri{\Lxi^j\psins}{\reg}
+\absCDeri{\Lxi^j(rV(r\psins))}{\reg}
\lesssim_{j,\reg}  {} v^{-1}\tb^{-\half-j} (F^{(1)}(\reg+\regl(j,\reg),2,\tb_0,\Psipns))^{\half}.
\end{align}
\end{lemma}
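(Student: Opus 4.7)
The strategy is to mirror the proof of Lemma \ref{lem:BED:0to2:v1} but working with the first-level hierarchy $\{\hatPhips, \PhinsHigh{1}\}$ in place of $\{\hatPhips, \Phins\}$. The key observation is that the energy $F^{(1)}$ is structurally the same as $F$ but with $\Phins$ replaced by $\PhinsHigh{1}=\curlVR\Phins = r^2V\Phins$, while keeping $\hatPhips=\Phips^{(1)}$ for the positive spin. Since the equation \eqref{eq:TME:rp1:posi} for $\hatPhips$ is already in the right form, the new input needed is a wave equation for $\PhinsHigh{1}$.

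First I would commute the operator $\curlVR$ into \eqref{eq:TME:rp1:nega}. Using $[\edthR,V]=[\edthR',V]=0$ from \eqref{comm:edthRedthR'YVR} and $[\curlVR,-r^2YV]$, a direct computation produces a wave equation of the schematic form
\begin{align}
-r^2YV\PhinsHigh{1}+\edthR'\edthR\PhinsHigh{1}-b_V^{(1)}V\PhinsHigh{1}-b_0^{(1)}\PhinsHigh{1}
={}&\vartheta^{(1)},
\end{align}
where $b_V^{(1)}=b_{V,-1}^{(1)}r+O(1)$ with $b_{V,-1}^{(1)}\geq 0$, $b_0^{(1)}=b_{0,0}^{(1)}+O(r^{-1})$ with $b_{0,0}^{(1)}+\Lambda_{-\sfrak}\geq 0$, and $\vartheta^{(1)}$ is a weighted combination of $\curlLs\hatPhips$, $V\Phins$, and $\Phins$ arising from the commutator and from differentiating the original source. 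The crucial point is that the commutation bumps the effective radial weight at infinity up by one order, which is precisely what makes the $r^p$ hierarchy for $\PhinsHigh{1}$ line up with the weights encoded in $F^{(1)}$. I would verify the hypotheses of Proposition \ref{prop:wave:rp} for this equation and then apply the $r^p$ estimates \eqref{eq:rp:less2:2}, \eqref{eq:rp:p=2:2}, \eqref{eq:rp:p=0} and \eqref{eq:rp:p=0:v2} over $p\in[0,2]$. Terms in $\vartheta^{(1)}$ of the form $\curlLs \hatPhips$ get controlled by the $r^p$ estimate for $\hatPhips$ exactly as in the proof of Lemma \ref{lem:BED:0to2:v1} (using $r^{-1}\Delta \edthR'\hatPhips=rV\Phins$ from the Dirac equations), while terms in $V\Phins$ and $\Phins$ are absorbed using the BEAM estimate \eqref{eq:BEAM:v1} together with Hardy-type bounds \eqref{eq:HardyIneqLHS}--\eqref{eq:HardyIneqRHS} applied on the hyperboloidal slices; the spacetime slabs supported on compact $r$-regions in the implicit $\lesssim_{[R_0-M,R_0]}$ constants of Proposition \ref{prop:wave:rp} are handled using \eqref{eq:BEAM:v1}.

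Combining the $r^p$ estimates for $\hatPhips$ and $\PhinsHigh{1}$ and telescoping over $p\in[0,2]$ yields, for every $\tb_2>\tb_1\geq\tb_0$,
\begin{align}
F^{(1)}(\reg,p,\tb_2,\Psipns)+\int_{\tb_1}^{\tb_2}F^{(1)}(\reg-\regl,p-1,\tb,\Psipns)\di\tb
\lesssim{}& F^{(1)}(\reg,p,\tb_1,\Psipns),\qquad p\in[0,2].
\end{align}
An application of Lemma \ref{lem:hierarchyImpliesDecay} with $\alpha_2=2$, $\alpha_1=0$ then gives the $j=0$ case of \eqref{eq:BED2:Lxi}. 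To upgrade to $\Lxi^j$ derivatives, I would use the wave equations \eqref{eq:TME:rp1:posi} and the commuted equation for $\PhinsHigh{1}$ to express $V\Lxi\hatPhips$ and $V\Lxi\PhinsHigh{1}$ away from the horizon as weighted combinations of $(rV)^2$, $\edthR'\edthR$, $\Lxi^2$, $\Lxi$ and $r^{-1}$ terms with $O(1)$ coefficients, using $Y=\mu^{-1}(2\Lxi-V)$. This produces the trade $F^{(1)}(\reg,2,\tb,\Lxi\Psipns)\lesssim F^{(1)}(\reg+\regl,0,\tb,\Psipns)$, and iterating this with \eqref{eq:BED2:Lxi} for $j=0$ proves the general $j$ case.

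Finally, for the pointwise decay \eqref{eq:ptwdecay:basic}, the two new pieces are the scalars $\psipns$ themselves and $rV(r\psins)$. The first two follow from \eqref{eq:BED2:Lxi} via the Sobolev inequalities \eqref{eq:Sobolev:2} (at $p=1\pm\alpha$) and \eqref{eq:Sobolev:3} exactly as in the previous lemma, noting $\hatPhips\sim\Psips$ and that $F^{(1)}$ dominates $F$ in the relevant regions. For $rV(r\psins)=rV\Psins$, the point is that $\PhinsHigh{1}=r^2V\Phins=r^2V(r\mu^{1/2}r^{-1}\Psins)$ equals $rV\Psins$ up to lower-order corrections that are already controlled by $F^{(1)}$; applying \eqref{eq:Sobolev:2} and \eqref{eq:Sobolev:3} to $rV\Psins$ with the bound \eqref{eq:BED2:Lxi} then yields the stated rate $v^{-1}\tb^{-1/2-j}$. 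The principal obstacle I anticipate is step two, namely deriving the commuted wave equation for $\PhinsHigh{1}$ in a form that fits the structural assumptions of Proposition \ref{prop:wave:rp} (correct signs of $b_{V,-1}^{(1)}$ and $b_{0,0}^{(1)}+\Lambda_{-\sfrak}$) and verifying that every piece of $\vartheta^{(1)}$ can be absorbed either by the $\hatPhips$ hierarchy or by the BEAM estimate without any loss of weight.
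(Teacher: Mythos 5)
Your plan is aimed in the right direction---commute $\curlVR$ into the $\Phins$ equation and run the general $r^p$ machinery of Proposition~\ref{prop:wave:rp}---but it misses the structural observation that makes the paper's proof a two-liner, and as a result it misdescribes the source term. When you apply $\curlVR$ to \eqref{eq:TME:rp1:nega} (equivalently to \eqref{eq:TME:varphis:nega:exf}), the ``source'' $-(r-3M)r^{-2}\curlLs\hatPhips$ is, by the Dirac relation $\curlLs\hatPhips=\edthR'\hatPhips=\curlVR\Phins=\PhinsHigh{1}$, already a term in $\PhinsHigh{1}$ itself. After commuting, this together with $[\curlVR,-r^2YV]$ from \eqref{commutato:curlVRandwave} reorganizes entirely into coefficients multiplying $\PhinsHigh{1}$ and $\curlVR\PhinsHigh{1}$, and the resulting equation (Proposition~\ref{prop:eqstildePhipnsHigh}, equation \eqref{eq:PhinsHigh1}) is \emph{homogeneous} and identical in form to the $\hatPhips$ equation \eqref{eq:PhipsHigh1}. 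In other words $\vartheta^{(1)}=0$, and the paper simply re-runs the argument of Lemma~\ref{lem:BED:0to2:v1} with $\PhinsHigh{1}$ in place of $\hatPhips$. Your proposed $\vartheta^{(1)}$ ``weighted combination of $\curlLs\hatPhips$, $V\Phins$, $\Phins$'' is therefore not what the commutation produces: after $\curlVR$ the leading contribution from the old source would be $O(r^{-1})\curlVR\curlLs\hatPhips=O(r^{-1})\curlVR\PhinsHigh{1}$, which is a principal-order term in $\PhinsHigh{1}$ rather than something to feed into the $\vartheta$-slot of Proposition~\ref{prop:wave:rp}; keeping it as a source would cost an extra derivative of $\PhipsHigh{2}$ and needless regularity bookkeeping. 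Once you notice the equation for $\PhinsHigh{1}$ is the same homogeneous equation, the ``principal obstacle'' you flagged at the end evaporates. The remaining parts of your proposal---the $\Lxi^j$ upgrade by rewriting $V\Lxi$ away from the horizon, and the pointwise decay via \eqref{eq:Sobolev:2}--\eqref{eq:Sobolev:3} applied to $\hatPhips$ and $\PhinsHigh{1}$---agree with the paper.
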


\begin{remark}
The main difference of these estimates from the ones in Lemma \ref{lem:BED:0to2:v1} is that the energy and pointwise decay estimates for $\PhinsHigh{1}=r^2\VR \Phins$ or $r^2V(r\psins)$ are improved.
\end{remark}

\begin{proof}
We note from Proposition \ref{prop:eqstildePhipnsHigh} that $\PhinsHigh{1}$ satisfies the same equation as $\PhipsHigh{1}$, therefore, the $r^p$ estimates of $\PhipsHigh{1}$ in the proof of Lemma \ref{lem:BED:0to2:v1} hold for $\PhinsHigh{1}$ as well. The same way of arguing therein applies and yields the energy decay \eqref{eq:BED2:Lxi} and a pointwise decay estimate
\begin{align}
\label{eq:ptwdecay:basic:PhinsHigh1}
\absCDeri{\Lxi^j(\mu^{\half}r^{-1}\PhinsHigh{1})}{\reg}
\lesssim_{j,\reg}  {} v^{-1}\tb^{-\half-j} (F^{(1)}(\reg+\regl(j,\reg),2,\tb_0,\Psipns))^{\half}.
\end{align}
The pointwise estimates for both $\psips$ and $\psins$ in \eqref{eq:ptwdecay:basic} are immediate from \eqref{eq:ptwdecay:basic:v1}, and together with the above estimate \eqref{eq:ptwdecay:basic:PhinsHigh1}, these prove the estimate of $rV(r\psins)$ in \eqref{eq:ptwdecay:basic}.
\end{proof}

%%%%%%%%%%%%%%%%%%%%5
\subsection{Newman--Penrose constants}
\label{sect:N-Pconst}
%%%%%%%%%%%%%%%%%%%%%

In this subsection, we introduce the (conserved) Newman--Penrose constants for any mode of the spin $\pm \half$ components.

Recall from \eqref{def:curlVR} that $\curlVR=r^2\VR$. We define a few scalars constructed from the spin $\pm \half$ components and then derive their governing equations in Proposition \ref{prop:eqstildePhipnsHigh}. 

\begin{definition}
\label{def:tildePhiplusandminusHigh}
For any $i\in \mathbb{N}^+$, let $f_{i,1}=i^2$, $f_{i,2}=-2i-1$, $g_i=6 \sum\limits_{j=0}^i f_{j,1}= i(i-1)(2i-1)$, $x_{i+1,i}=\frac{g_{i+1}}{f_{i+1,1} -f_{i,1}}=i(i+1)$, and $x_{i+1,j}=-\frac{g_{i+1}x_{i,j}}{f_{i+1,1}-f_{j,1}}$ for $1\leq j\leq i-1$. Define 
\begin{align}
\PhipsHigh{i}={}&\curlVR^{i-1}\hatPhips, &\PhinsHigh{i}={}&\curlVR^i\Phins,
\end{align}
and
\begin{subequations}
\begin{align}
\tildePhipsHigh{1}={}&\PhipsHigh{1},& \tildePhipsHigh{i+1}={}&\PhipsHigh{i+1}+\sum_{j=1}^i x_{i+1,j}M^{i+1-j}\tildePhipsHigh{j},\\
\tildePhinsHigh{1}={}&\PhinsHigh{1},& \tildePhinsHigh{i+1}={}&\PhinsHigh{i+1}+\sum_{j=1}^i x_{i+1,j}M^{i+1-j}\tildePhinsHigh{j}.
\end{align}
\end{subequations}
\end{definition}

\begin{prop}
\label{prop:eqstildePhipnsHigh}
Let $i\in \mathbb{N}^+$.
\begin{enumerate}
  \item The equation of $\hatPhips$ is
  \begin{subequations}
\begin{align}
\label{eq:PhipsHigh1}
-2\pu\curlVR \hatPhips +(\edthR\edthR'+1)\hatPhips-{3(r-3M)r^{-2}}\curlVR\hatPhips -{6M}{r^{-1}}\hatPhips={}&0,
\end{align}
 the equation of $\PhipsHigh{i}$ is
\begin{align}
\label{eq:Phipshighi}
&-2\pu \curlVR  \PhipsHigh{i} +(\edthR\edthR'+f_{i,1})\PhipsHigh{i}
+{f_{i,2}(r-3M)r^{-2}}\curlVR\PhipsHigh{i}
-6f_{i,1}Mr^{-1}\PhipsHigh{i}
+g_i M\PhipsHigh{i-1}={}0,
\end{align}
and the equation of $\tildePhipsHigh{i}$ is
\begin{align}
\label{eq:tildePhipshighi}
&-2\pu \curlVR  \tildePhipsHigh{i} +(\edthR\edthR'+f_{i,1})\tildePhipsHigh{i}
+{f_{i,2}(r-3M)r^{-2}}\curlVR\tildePhipsHigh{i}
-6f_{i,1}Mr^{-1}\tildePhipsHigh{i}
+\sum_{j=1}^{i}h_{i,j} \PhipsHigh{j}={}0,
\end{align}
\end{subequations}
with $h_{i,j}=O(r^{-1})$ for all $j\in \{1,2,\ldots, i\}$.
  \item The equation of $\PhinsHigh{1}$ is
  \begin{subequations}
\begin{align}
\label{eq:PhinsHigh1}
-2\pu \curlVR\PhinsHigh{1}+(\edthR'\edthR+1)\PhinsHigh{1}
-3(r-3M)r^{-2}
{\curlVR\PhinsHigh{1}}-6Mr^{-1}\PhinsHigh{1}
={}0,
\end{align}
and the equation of $\PhinsHigh{i}$  is
\begin{align}
\label{eq:PhinsHighi}
&-2\pu \curlVR  \PhinsHigh{i} +(\edthR'\edthR+f_{i,1})\PhinsHigh{i}
+{f_{i,2}(r-3M)r^{-2}}\curlVR\PhinsHigh{i}
-6f_{i,1}Mr^{-1}\PhinsHigh{i}
+g_i M\PhinsHigh{i-1}={}0,
\end{align}
and the equation of $\tildePhinsHigh{i}$ is
\begin{align}
\label{eq:tildePhinsHighi}
&-2\pu \curlVR  \tildePhinsHigh{i} +(\edthR\edthR'+f_{i,1})\tildePhinsHigh{i}
+{f_{i,2}(r-3M)r^{-2}}\curlVR\tildePhinsHigh{i}
-6f_{i,1}Mr^{-1}\tildePhinsHigh{i}
+\sum_{j=1}^{i}h_{i,j} \PhinsHigh{j}={}0,
\end{align}
\end{subequations}
with $h_{i,j}$ being the same as the ones in \eqref{eq:tildePhipshighi} and satisfying $h_{i,j}=O(r^{-1})$ for all $j\in \{1,2,\ldots, i\}$.
\end{enumerate}
\end{prop}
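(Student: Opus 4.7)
The plan is to derive the equations in three stages: establish the base cases \eqref{eq:PhipsHigh1} and \eqref{eq:PhinsHigh1}, propagate them upward in $i$ by induction using commutators with $\curlVR$, and finally exploit the recursive definition of $\tildePhipnsHigh{i}$ to eliminate the $O(1)$ coupling between levels.

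First I would convert \eqref{eq:TME:rp1:posi} into double-null form. Using $Y = 2\mu^{-1}\pu$ together with Leibniz and $Y\mu = -2Mr^{-2}$, one obtains the identity $r^2 Y V = 2\pu\curlVR + 2(r-3M)\VR$, and substituting it into \eqref{eq:TME:rp1:posi} yields \eqref{eq:PhipsHigh1} after regrouping. For \eqref{eq:PhinsHigh1}, since the equation given in the paper is for $\Phins$ rather than for $\PhinsHigh{1} = \curlVR\Phins$, I would apply $\curlVR$ to \eqref{eq:TME:rp1:nega} and use the Dirac relation $\curlLs\hatPhips = \PhinsHigh{1}$ from \eqref{eq:Dirac:sym:nega} to convert the source into an algebraic term in $\PhinsHigh{1}$, then again re-express $r^2 Y V$ in terms of $\pu$ and $\curlVR$. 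The only nontrivial commutator throughout is
\begin{equation*}
[\curlVR, \pu] = (r-3M)r^{-2}\curlVR,
\end{equation*}
which follows from $\pu r = -\mu/2$ in double-null coordinates.

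For \eqref{eq:Phipshighi} and \eqref{eq:PhinsHighi} I would induct on $i$, applying $\curlVR$ to the level-$(i-1)$ equation and collecting the commutators $[\curlVR, \pu]$, $[\curlVR, (r-3M)r^{-2}\curlVR]$, and $[\curlVR, r^{-1}]$, the latter two computed from $\VR r^{-1} = -r^{-2}$ and $\curlVR r^{-1} = -1$. Each application of $\curlVR$ shifts the zero-order coefficient $f_{i,1}$ by $2i+1$ (reflecting $(i+1)^2 - i^2$), shifts the $(r-3M)r^{-2}\curlVR$ coefficient $f_{i,2}$ by $-2$, and generates a new $O(1)M\PhipsHigh{i-1}$ contribution whose cumulative size $g_i = i(i-1)(2i-1)$ is recovered from the telescoping identity $g_{i+1} - g_i = 6 f_{i,1}$. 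The spin $-\sfrak$ analogue is strictly parallel since $\curlVR$ and the coefficient functions are spin-weight blind, and only the angular operator $\edthR\edthR'$ is replaced by $\edthR'\edthR$.

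Finally, for \eqref{eq:tildePhipshighi} and \eqref{eq:tildePhinsHighi}, I would substitute the recursive definition of $\tildePhipsHigh{i+1}$ into \eqref{eq:Phipshighi}. The key observation is that on a solution of the level-$j$ equation, the principal part of the level-$(i+1)$ equation applied to $M^{i+1-j}\tildePhipsHigh{j}$ differs from the principal part of the level-$j$ equation only by the $O(1)$ scalar mismatch $(f_{i+1,1}-f_{j,1})\tildePhipsHigh{j}$. The choice $x_{i+1,i} = g_{i+1}/(f_{i+1,1}-f_{i,1}) = i(i+1)$ then cancels the leading $g_{i+1}M\PhipsHigh{i}$ term, and the recursion $x_{i+1,j} = -g_{i+1}x_{i,j}/(f_{i+1,1}-f_{j,1})$ propagates the cancellation down the hierarchy. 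The surviving coefficients $h_{i,j}$ originate solely from the $O(r^{-1})$ parts of commutators with $(r-3M)r^{-2}$ and $Mr^{-1}$ and are therefore $O(r^{-1})$ as claimed. The main obstacle is the bookkeeping: tracking which commutator contributions feed into the $O(1)$ polynomial coefficients $f_{i,\cdot}, g_i$ as opposed to the $O(r^{-1})$ remainders $h_{i,j}$, which is cleanest to organize by decomposing each coefficient as a constant-in-$r$ part plus an explicit $O(r^{-1})$ correction and verifying the linear recursion on the constant parts alone.
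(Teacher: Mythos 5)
Your proposal is correct and takes essentially the same route as the paper: you rewrite $-r^2YV$ in the double-null form $-2\pu\curlVR - 2(r-3M)r^{-2}\curlVR$ to get the base cases, induct on $i$ using commutators with $\curlVR$ (your commutator $[\curlVR,\pu]=(r-3M)r^{-2}\curlVR$ is equivalent to the paper's $[\curlVR,-r^2YV]$), and then substitute the recursive definition of $\tildePhipnsHigh{i+1}$ into the level-$(i+1)$ equation, choosing $x_{i+1,j}$ exactly as the paper does so that the $O(1)$ mismatch coefficients vanish while the $O(r^{-1})$ remainders feed $h_{i,j}$. The only cosmetic slip is the reference to the Dirac relation $\curlLs\hatPhips=\PhinsHigh{1}$: it comes from \eqref{eq:Dirac:sym:posi}, not \eqref{eq:Dirac:sym:nega}.
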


\begin{remark}
The definition of the above scalars $\PhipsHigh{i}$ and $\PhinsHigh{i}$ and the derivation of their equations are generalized from the work \cite{Ma20almost} of the first author for the Maxwell field. It is shown therein that these scalars are fundamental in both defining the Newman--Penrose constants for an arbitrary mode and extending the hierarchy of $r^p$ estimates to $p\in [4,5)$. See Section \ref{sect:improveddecbasicenerg} for the extension of the $r^p$ hierarchy.
\end{remark}

\begin{proof}
The wave equation \eqref{eq:PhipsHigh1} is manifestly equation \eqref{eq:TME:rp1:posi}. Equations of $\PhipsHigh{i}$ $(i\in \mathbb{N}^+)$ follow from induction together with a commutator
\begin{align}
\label{commutato:curlVRandwave}
[\curlVR, -r^2YV]\varphi={}&-\curlVR\Big(\tfrac{2(r-3M)}{r^{2}} \curlVR\varphi\Big)
=-\tfrac{2(r-3M)}{r^2}\curlVR^2\varphi
+(2-12Mr^{-1})\curlVR\varphi.
\end{align}
We prove equation \eqref{eq:tildePhipshighi} by induction. Assume it holds for $\tildePhipsHigh{i'}$ for all $1\leq i'\leq i$, we prove the equation for $\tildePhipsHigh{i+1}$. We add $x_{i+1,j} M^{i+1-j}$ multiple of equation \eqref{eq:tildePhipshighi} of $\tildePhipsHigh{j}$ for all $j=1,2,\ldots, i$ to equation \eqref{eq:Phipshighi} of $\PhipsHigh{i+1}$, rearrange the terms on the LHS, and arrive at an equation
\begin{align}
\label{eq:tildePhipsHigh:choices}
&-2\pu \curlVR  \tildePhipsHigh{i+1} +(\edthR\edthR'+f_{i+1,1})\tildePhipsHigh{i+1}
+{f_{i+1,2}(r-3M)r^{-2}}\curlVR\tildePhipsHigh{i+1}
-6f_{i+1,1}Mr^{-1}\tildePhipsHigh{i+1}\notag\\
&-\sum_{j=1}^i x_{i+1,j} M^{i+1-j}(f_{i+1,1}- f_{j,1})\tildePhipsHigh{j}
+g_{i+1}M\PhipsHigh{i}\notag\\
&+6Mr^{-1}\sum_{j=1}^i x_{i+1,j} M^{i+1-j}(f_{i+1,1}- f_{j,1})\tildePhipsHigh{j}\notag\\
&
-(r-3M) r^{-2}\sum_{j=1}^i (f_{i+1,2}-f_{j,2})x_{i+1,j}M^{i+1-j}  \curlVR\tildePhipsHigh{j}
+\sum_{j=1}^i x_{i+1,j} M^{i+1-j}\sum_{j'=0}^{j}h_{j,j'} \PhipsHigh{j'}={}0.
\end{align}
By substituting $\PhipsHigh{i}=\tildePhipsHigh{i} -\sum\limits_{j=1}^{i-1}x_{i,j}M^{i-j}\tildePhipsHigh{j}$ into the last term of the second line, one finds the second line equals $\sum\limits_{j=1}^id_{i+1,j}M^{i+1-j}\tildePhipsHigh{j}$, with
\begin{subequations}
\begin{align}
d_{i+1,i}={}&-x_{i+1,i}(f_{i+1,1}-f_{i,1}) +g_{i+1},\\
d_{i+1,j}={}&-x_{i+1,j}(f_{i+1,1}-f_{j,1}) -g_{i+1}x_{i,j}, \quad \text{for} \quad 1\leq j\leq i-1.
\end{align}
\end{subequations}
Note that the values of $\{x_{i+1,j}\}|_{j=0,\ldots, i}$ in Definition \ref{def:tildePhiplusandminusHigh} are exactly the ones such that all $\{d_{i+1,j}\}_{j=1,2,\ldots,i}$ vanish. So far, the second line of equation \eqref{eq:tildePhipsHigh:choices} vanishes, and, by using Definition \ref{def:tildePhiplusandminusHigh} to write $\curlVR\tildePhipsHigh{j}$ as a weighted sum of $\{\tildePhipsHigh{j'}\}|_{j'=1,\ldots, j+1}$ with all coefficients being $O(1)$, the last two lines of the LHS of \eqref{eq:tildePhipsHigh:choices} are manifestly in the form of $\sum\limits_{j=1}^{i+1}h_{i+1,j} \PhipsHigh{j}$ with $h_{i+1,j}=O(r^{-1})$ for all $j\in \{1,2,\ldots, i+1\}$, hence proving equation \eqref{eq:tildePhipshighi} for $\tildePhipsHigh{i+1}$.

The wave equation \eqref{eq:TME:varphis:nega}
of $\Phins$ is
\begin{align}
\curlLs   \curlLds  \Phins
-r^2YV\Phins={}&-(r-3M)r^{-2}\curlVR\Phins.
\end{align}
We utilize the commutator \eqref{commutato:curlVRandwave}
and thus obtain an equation for $\PhinsHigh{1}$:
\begin{align}
\label{eq:PhinsHigh1:1}
\curlLs   \curlLds  \PhinsHigh{1}
-r^2YV\PhinsHigh{1}
-(r-3M)r^{-2}\curlVR\PhinsHigh{1}
+(1-6Mr^{-1})\PhinsHigh{1}={}0.
\end{align}
This is exactly equation \eqref{eq:PhinsHigh1}.
We simply note that equation \eqref{eq:PhinsHigh1} is of the same form as equation \eqref{eq:PhipsHigh1}, hence the above discussions for the spin $\half$ component apply and yield the equations \eqref{eq:PhinsHighi} and \eqref{eq:tildePhinsHighi}.
\end{proof}

We are now ready to define from the scalars $\PhipsHigh{i}$ and $\PhinsHigh{i}$ a crucial notation: the Newman--Penrose (N--P) constants.

\begin{definition}
\label{def:NPCs}
Let $i\in \mathbb{N}^+$. Assume the spin $\pm \half$ components are supported on $\ell=i$ mode. Define the $i$-th N--P constants of these spin $\half$ and $-\half$ components to be $\NPCP{i}(\theta,\pb)=\lim\limits_{\rb\to\infty}
\curlVR\tildePhipsHigh{i}$ and $\NPCN{i}(\theta,\pb)=\lim\limits_{\rb\to\infty}\curlVR
\tildePhinsHigh{i}$, respectively.
\end{definition}

\begin{remark}
As will be shown in Proposition \ref{prop:NPCsindepentonu} below, these N--P constants are independent of $\tb$ under very general conditions, hence they are only dependent on $\theta$ and $\pb$. In particular, for an $(\ell,m)$ mode, the above N--P constants are in fact constants, independent of the coordinates $(\tb,\rb, \theta,\phi)$.
\end{remark}

Further, the N--P constants for the spin $\half$ and $-\half$ components are related to each other. 

\begin{lemma}
\label{lem:relationoftwoNPconsts}
On Schwarzschild, it holds true that
$\NPCN{i}=\edthR'\NPCP{i}$ for $i\in \mathbb{N}^+$. In particular, if $\NPCN{i}$ vanishes, then $\NPCP{i}$ vanishes, and vice versa.
\end{lemma}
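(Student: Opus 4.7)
The plan is to exploit the first-order Dirac system \eqref{eq:Dirac:sym} to show that the entire hierarchy of ``plus'' scalars $\PhipsHigh{i}$ is tied to the ``minus'' hierarchy $\PhinsHigh{i}$ by a single application of $\edthR'$, and then to pass this identity through the triangular change of variables defining $\tildePhipsHigh{i}$ and $\tildePhinsHigh{i}$, followed by the limit $\rb\to\infty$.

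The first step is the base case. From \eqref{eq:Dirac:sym:posi} one has $\edthR'\Phips=\Delta^{1/2}\VR\Phins$. Multiplying by $r^2\Delta^{-1/2}$ and using that $\edthR'$ involves only angular derivatives (hence commutes with scalar multiplication by any function of $r$), one gets
\begin{equation}
\PhinsHigh{1}=\curlVR \Phins=r^2\VR\Phins=\edthR'\big(\Delta^{-1/2}r^2\Phips\big)=\edthR'\hatPhips=\edthR'\PhipsHigh{1}.
\end{equation}
Then, invoking the commutator $[\edthR',\VR]=0$ from \eqref{comm:edthRedthR'YVR} (hence $[\edthR',\curlVR]=0$), one may apply $\curlVR^{i-1}$ to both sides to obtain $\PhinsHigh{i}=\edthR'\PhipsHigh{i}$ for every $i\in\mathbb{N}^+$.

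The second step is to verify by induction on $i$ that $\tildePhinsHigh{i}=\edthR'\tildePhipsHigh{i}$. The base case $i=1$ is immediate from the previous step. For the inductive step, the two scalars are built from $\PhipsHigh{i+1}$ and $\PhinsHigh{i+1}$ by the \emph{same} linear combination coefficients $x_{i+1,j}M^{i+1-j}$ (cf.\ Definition \ref{def:tildePhiplusandminusHigh}), so the identity $\PhinsHigh{i+1}=\edthR'\PhipsHigh{i+1}$ together with the inductive hypothesis $\tildePhinsHigh{j}=\edthR'\tildePhipsHigh{j}$ for $j\leq i$ yields the claim. Commuting $\curlVR$ with $\edthR'$ once more and taking the limit $\rb\to\infty$, and noting that $\edthR'$ is a tangential operator on the sphere (so the limit may be exchanged with it), gives the first claim of the lemma:
\begin{equation}
\NPCN{i}=\lim_{\rb\to\infty}\curlVR\tildePhinsHigh{i}=\edthR'\lim_{\rb\to\infty}\curlVR\tildePhipsHigh{i}=\edthR'\NPCP{i}.
\end{equation}

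For the final assertion, note that $\NPCP{i}$ is a spin-weight $\sfrak$ scalar supported on the $\ell=i$ mode, so by \eqref{eq:l=l0mode:eigenvalue} with $s=\sfrak$ we have $\edthR\edthR'\NPCP{i}=-i^2\NPCP{i}$. Integration by parts \eqref{eq:IBPonsphere:1} gives $\int_{\Sphere}\abs{\edthR'\NPCP{i}}^2\di^2\mu=i^2\int_{\Sphere}\abs{\NPCP{i}}^2\di^2\mu$, so $\NPCN{i}=\edthR'\NPCP{i}=0$ forces $\NPCP{i}=0$ since $i\geq 1$; the converse direction is immediate from $\NPCN{i}=\edthR'\NPCP{i}$. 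No step here looks genuinely hard; the only subtlety to verify carefully is that the limits $\rb\to\infty$ defining $\NPCP{i}$ and $\NPCN{i}$ commute with $\edthR'$, which is simply uniform convergence in $(\theta,\phi)$ of angular derivatives and follows from the regularity assumptions already used to define the N--P constants.
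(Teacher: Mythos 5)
Your proof is correct and follows essentially the same approach as the paper's (quite terse) proof: the base identity $\edthR'\hatPhips=\PhinsHigh{1}$ from the first-order Dirac system, the commutation $[\edthR',\curlVR]=0$ propagated through the $\PhipnsHigh{i}$ and $\tildePhipnsHigh{i}$ hierarchies, and the trivial kernel of $\edthR'$ on spin-weight $\sfrak$ scalars. You have simply spelled out the induction and the integration-by-parts justification for the kernel fact that the paper states without proof.
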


\begin{proof}
Equation \eqref{eq:Dirac:TMEscalar} is $\edthR'\Phips=\Delta^{\half}\VR \Phins$, or, $\edthR'\hatPhips=\curlVR \Phins=\PhinsHigh{1}$. Thus, by definition, $\NPCN{i}=\edthR'\NPCP{i}$ for any $i \in \mathbb{N}^+$.  The other statement follows from the fact that $\edthR'$ has trivial kernel when acting on spin-weight $\half$ scalar.
\end{proof}

The following three propositions are to utilize the equations derived in Proposition \ref{prop:eqstildePhipnsHigh} to conduct some basic asymptotic analysis. This is in the spirit of \cite[Section 3]{angelopoulos2018vector}.

\begin{prop}
\label{prop:nullinfBeha:PhiplusiandPhinsHighi} Let $i\in \mathbb{N}^+$ and $\reg\in \mathbb{N}$.  Let $\regl=\regl(i)>0$ be suitably large. Assume $\sum\limits_{j=1}^iF^{(i)}(\reg+\regl,0,\tb_0,\Psipns)<\infty$
as defined in Definition \ref{def:Fenergies:big2}.
\begin{enumerate}
  \item[$(i)$] If
     $\lim\limits_{r\to\infty}\sum\limits_{j=1}^i\absCDeri{
     \PhipsHigh{j}}{\reg}\vert_{\Sigmazero}
      <\infty$, then for any $\tb\geq \tb_0$, $\lim\limits_{r\to\infty}\sum\limits_{j=1}^i
      \absCDeri{\PhipsHigh{j}}{\reg}\vert_{\Sigmatb}<\infty$. The same statement holds if one replaces all $\PhipsHigh{j}$ by $\tildePhipsHigh{j}$;
  \item[$(ii)$]  If $\lim\limits_{r\to\infty}\Big(\sum\limits_{j=1}^i\absCDeri{
      \edthR\edthR'\PhipsHigh{j}}{\reg}\vert_{\Sigmazero}
      +r^{-\alpha}\absCDeri{\PhipsHigh{i+1}}{\reg}\vert_{\Sigmazero}\Big)
      <\infty$ for some $\alpha\in [0,2]$, then for any $\tb\geq \tb_0$, $\lim\limits_{r\to\infty}
      \Big(\sum\limits_{j=1}^i
      \absCDeri{\edthR\edthR'\PhipsHigh{j}}{\reg}\vert_{\Sigmatb}
      +r^{-\alpha}\absCDeri{\PhipsHigh{i+1}}{\reg}\vert_{\Sigmatb}\Big)<\infty$.
      The same statement holds if one replaces all $\PhipsHigh{j}$ by $\tildePhipsHigh{j}$;
  \item[$(iii)$] If $\lim\limits_{r\to\infty}\sum\limits_{j=1}^{i}\absCDeri{
     \PhinsHigh{j}}{\reg}\vert_{\Sigmazero}
      <\infty$, then for any $\tb\geq \tb_0$, $\lim\limits_{r\to\infty}\sum\limits_{j=1}^{i}
      \absCDeri{\PhinsHigh{j}}{\reg}\vert_{\Sigmatb}<\infty$.
      The same statement holds if one replaces all $\PhinsHigh{j}$ by $\tildePhinsHigh{j}$;
  \item[$(iv)$] If $\lim\limits_{r\to\infty}\Big(\sum\limits_{j=1}^{i}
      \absCDeri{
      \edthR'\edthR\PhinsHigh{j}}{\reg}\vert_{\Sigmazero}
      +r^{-\alpha}\absCDeri{
      \PhinsHigh{i+1}}{\reg}\vert_{\Sigmazero}\Big)
      <\infty$  for some $\alpha\in [0,2]$, then for any $\tb\geq \tb_0$, $\lim\limits_{r\to\infty}\Big(\sum\limits_{j=1}^{i}
      \absCDeri{\edthR'\edthR\PhinsHigh{j}}{\reg}\vert_{\Sigmatb}
      +r^{-\alpha}\absCDeri{\PhinsHigh{i+1}}{\reg}\vert_{\Sigmatb}\Big)<\infty$.
      The same statement holds if one replaces all $\PhinsHigh{j}$ by $\tildePhinsHigh{j}$.
\end{enumerate}
\end{prop}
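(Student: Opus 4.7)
The plan is to integrate the wave equations of Proposition \ref{prop:eqstildePhipnsHigh} along outgoing null rays $v=\mathrm{const}$, propagating the asymptotic boundedness from $\Sigmazero$ to $\Sigmatb$. Since $\pu=\tfrac12\mu Y$, equation \eqref{eq:Phipshighi} at index $j$ rearranges into the transport equation
\[
2\pu \PhipsHigh{j+1} = (\edthR\edthR'+f_{j,1})\PhipsHigh{j} + f_{j,2}(r-3M)r^{-2}\PhipsHigh{j+1} - 6f_{j,1}Mr^{-1}\PhipsHigh{j} + g_j M\PhipsHigh{j-1},
\]
(with $g_1=0$, so the $\PhipsHigh{0}$-term is absent at $j=1$), and analogously for $\PhinsHigh{j+1}$ via \eqref{eq:PhinsHighi}. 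A ray $v=c$ meets $\Sigmazero$ at $u=u_0(c)$ and $\Sigmatb$ at $u=u_\tb(c)>u_0(c)$; as $c\to\infty$, the $u$-range is asymptotically $[\tb_0,\tb]$ and $r\approx(v-u)/2$ stays large throughout. Integrating the transport identity in $u$ on this interval and applying Grönwall to absorb the $r^{-1}\PhipsHigh{j+1}$ self-coupling term (the Grönwall factor is $\exp(O(\tb/v))$ and hence uniformly bounded) expresses the $r\to\infty$ limit of $\PhipsHigh{j+1}$ on $\Sigmatb$ in terms of its limit on $\Sigmazero$ plus integrals of the lower-order sources.

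For points $(i)$ and $(iii)$ we induct on $j\in\{1,\ldots,i\}$. Given bounds on $\absCDeri{\PhipsHigh{j'}}{\reg}|_{\Sigmatb}$ for $j'<j$, the transport identity at index $j-1$ propagates $\PhipsHigh{j}$ provided the angular source $\edthR\edthR'\PhipsHigh{j-1}$ is controlled along the ray; since $\edthR,\edthR'$ commute with $\pu$ and $\VR$ by \eqref{comm:edthRedthR'YVR}, the same argument applied to $\edthR$- and $\edthR'$-differentiated quantities yields this, the regularity loss being absorbed into $\regl$ built into the $F^{(i)}$-energy hypothesis. The base case $j=1$ is handled directly from \eqref{eq:PhipsHigh1} by writing $\pu=\Lxi-\tfrac12 V$ and using the almost-sharp decay of $\Lxi\hatPhips$ from Lemma \ref{lem:BED:0to2}, together with the first-order Dirac relation $\edthR\hatPhips=\PhinsHigh{1}$ which couples the two spin systems; the minus-spin base case is symmetric. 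The tilde versions follow via the structurally identical transport equations \eqref{eq:tildePhipshighi} and \eqref{eq:tildePhinsHighi}, whose extra sources $h_{i,j}\PhipsHigh{j}=O(r^{-1})\PhipsHigh{j}$ contribute convergent integrals in view of the step already established.

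For points $(ii)$ and $(iv)$, the bound on $r^{-\alpha}\PhipsHigh{i+1}$ comes from applying the same transport identity at index $j=i$ multiplied by $r^{-\alpha}$: the commutator $[\pu,r^{-\alpha}]=\tfrac12\alpha\mu r^{-\alpha-1}$ contributes only a lower-order self-interaction that the Grönwall step still absorbs, and the angular source $r^{-\alpha}(\edthR\edthR'+f_{i,1})\PhipsHigh{i}$ is controlled by the assumed bound on $\edthR\edthR'\PhipsHigh{j}|_{\Sigmazero}$ for $j\leq i$, which itself is propagated by commuting $\edthR\edthR'$ through the above arguments.

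The principal obstacle is ensuring that the Grönwall constants remain uniform as $v\to\infty$, which reduces to the geometric estimate $\int_{u_0(v)}^{u_\tb(v)} r^{-1}\,du = O(\tb/v)\to 0$ along $v=c$; this follows from $r\sim(v-u)/2$ at infinity and $u_\tb(v)-u_0(v)=\tb-\tb_0+O(v^{-1})$, the latter a consequence of the hyperboloidal asymptotics $\lim_{r\to\infty} r^2(\partial_r h-2\mu^{-1})|_{\Sigmatb}$ being finite. A secondary technical point is the coupling between the $+\sfrak$ and $-\sfrak$ systems at the base of the induction, which is handled uniformly through the Dirac identity $\edthR\hatPhips=\PhinsHigh{1}$ and its adjoint.
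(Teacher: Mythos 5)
Your approach — integrating the first–order transport identities obtained from Proposition~\ref{prop:eqstildePhipnsHigh} along $v=\mathrm{const}$ rays from $\Sigmazero$ to $\Sigmatb$ — is indeed the AAG-style argument the paper appeals to when it cites \cite[Propositions~3.4 and~3.5]{angelopoulos2018vector}, and the geometric observations (that the $u$-interval of integration converges to $[\tb_0,\tb]$ with $r\gtrsim v$ throughout) are correct. The Grönwall argument you use, however, only disposes of the term with a genuinely decaying coefficient, namely ${f_{j,2}(r-3M)r^{-2}}\PhipsHigh{j+1}=O(r^{-1})\PhipsHigh{j+1}$. The two remaining sources in the transport for $\PhipsHigh{j+1}$, namely $(\edthR\edthR'+f_{j,1})\PhipsHigh{j}$ and $g_jM\PhipsHigh{j-1}$, have $O(1)$ coefficients in $r$, and your proposal does not explain why their $u$-integrals converge. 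The only a priori bound that follows from the energy hypothesis $F^{(i)}(\reg+\regl,0,\tb_0,\Psipns)<\infty$ (via \eqref{eq:NPconstantProp:energy}, \eqref{eq:NPconstantProp:ptw} and Sobolev embedding on the sphere) is $\absCDeri{\PhipnsHigh{j'}}{\reg'}\lesssim r^{1/2}$, which, inserted into an $O(1)$-coefficient source, produces an integral $\lesssim v^{1/2}(\tb-\tb_0)$ that diverges as $v\to\infty$. What is actually needed for these terms is the $\tb$-decay of Lemma~\ref{lem:BED:0to2} (or \ref{lem:BED:0to2:v1}), e.g.\ $\absCDeri{\Lxi^m\psips}{\reg'}\lesssim v^{-1}\tb^{-1/2-m}$, which yields a $v$-uniform, $u$-integrable dominating function for the sources; you invoke this decay for the base case $j=1$ but not for the inductive step, where it is equally indispensable.

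A secondary issue is the derivative bookkeeping. Controlling $\absCDeri{\PhipsHigh{j}}{\reg}$ via the transport at index $j-1$ requires bounding $\edthR\edthR'$ applied to the angular source, i.e.\ quantities up to order $\reg+2$ in $\CDeri$-derivatives of $\PhipsHigh{j-1}$, whereas the propagated hypothesis and the conclusion are both at fixed order $\reg$. Your remark that the loss is ``absorbed into $\regl$'' only addresses the energy hypothesis (which is indeed at order $\reg+\regl$); the pointwise initial-data hypothesis $\lim_{r\to\infty}\absCDeri{\PhipsHigh{j}}{\reg}|_{\Sigmazero}<\infty$ is at fixed $\reg$, and the conclusion is at the same $\reg$, so this derivative loss is not actually absorbed anywhere and has to be dealt with by a more careful accounting (or by using the decay estimate, whose regularity index is governed by the $\reg+\regl$-energy, rather than the propagated hypothesis itself, to dominate the higher-angular-derivative contributions). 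Finally, a typo: the first-order Dirac relation used in the base case is $\edthR'\hatPhips=\PhinsHigh{1}$, not $\edthR\hatPhips=\PhinsHigh{1}$; $\edthR$ raises spin weight and would produce a spin-$3/2$ object, which $\PhinsHigh{1}$ is not.
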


\begin{proof}
The assumption $\sum\limits_{j=1}^iF^{(i)}(\reg+\regl,0,\tb_0,\Psipns)<\infty$ in particular yields that for any $\tb\geq \tb_0$ and any $1\leq j\leq i$,
\begin{align}
\label{eq:NPconstantProp:energy}
\norm{\Psipns}^2_{W_{-2}^{\reg+\regl}(\Sigmatb)}
+\sum_{j=1}^i\norm{\PhipnsHigh{j}}^2_{W_{-2}^{\reg+\regl}(\Sigmatb^{\geq 4M})}
<\infty
\end{align}
and
\begin{align}
\label{eq:NPconstantProp:ptw}
\sup_{\Sigmatb}\int_{\Sphere} r^{-1}\absCDeri{\Psipns}{\reg+\regl}^2\di^2\mu
+\sup_{\Sigmatb\cap\{\rb\geq 4M\}}\sum_{j=1}^i\int_{\Sphere} r^{-1}\absCDeri{\PhipnsHigh{j}}{\reg+\regl}^2\di^2\mu <\infty.
\end{align}
Note that the first estimate \eqref{eq:NPconstantProp:energy} is contained in the proof of Proposition \ref{eq:NPconstantProp:energy} and the second estimate \eqref{eq:NPconstantProp:ptw} follows from the Sobolev-type estimate \eqref{eq:Sobolev:1} together with the estimate \eqref{eq:NPconstantProp:energy}.
The rest of the proof is similar to the one of \cite[Propositions 3.4 and 3.5]{angelopoulos2018vector} and we omit it.
\end{proof}

\begin{prop}
\label{prop:NPCsindepentonu}
Let $\ell\in \mathbb{N}$ and let $\regl=\regl(\ell,i)>0$ be suitably large. Assume $F^{(\ell)}(\reg+\regl,0,\tb_0,\Psipns)<\infty$ as defined in Definition \ref{def:Fenergies:big2}.
\begin{enumerate}
  \item Let the spin $\pm \half$ components of Dirac field be supported on $\ell=\ell_0=1$ mode.
      \begin{itemize}
      \item Assume $\lim\limits_{r\to\infty}\big(\abs{
      \PhipsHigh{1}}
      +\abs{
      \curlVR\PhipsHigh{1}}\big)\vert_{\Sigmazero}
      <\infty$, then the first N--P constant $\NPCP{1}$ is finite and independent of $\tb$;
      \item Assume $\lim\limits_{r\to\infty}\big(\abs{
      \PhinsHigh{1}}
      +\abs{
      \curlVR\PhinsHigh{1}}\big)\vert_{\Sigmazero}
      <\infty$, then the first N--P constant $\NPCN{1}$ is finite and independent of $\tb$.
      \end{itemize}
  \item Let  the  spin $\pm \half$ components of Dirac field be supported on $\ell=\ell_0$ $(\ell_0\geq 2)$ mode.
      \begin{itemize}
      \item Assume $\lim\limits_{r\to\infty}
          \sum\limits_{j=1}^{\ell_0}
          (\abs{
      \tildePhipsHigh{j}}+\abs{
      \curlVR\tildePhipsHigh{j}})\vert_{\Sigmazero}
      <\infty$, then the $\ell_0$-th N--P constant $\NPCP{\ell_0}$ is finite and independent of $\tb$;
       \item Assume $\lim\limits_{r\to\infty}\sum\limits_{j=1}^{\ell_0}
          (\abs{
      \tildePhinsHigh{j}}+\abs{
      \curlVR\tildePhinsHigh{j}})\vert_{\Sigmazero}
      <\infty$, then the $\ell_0$-th N--P constant $\NPCN{\ell_0}$ is finite and independent of $\tb$.
      \end{itemize}
\end{enumerate}
\end{prop}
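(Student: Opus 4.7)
The plan is to combine the mode-reduced form of the wave equations from Proposition \ref{prop:eqstildePhipnsHigh} with the propagation of pointwise boundedness across each $\Sigmatb$ supplied by Proposition \ref{prop:nullinfBeha:PhiplusiandPhinsHighi}. The key algebraic observation is that on an $\ell = \ell_0$ mode the spin-weight $+\half$ angular operator $\edthR\edthR'$ has eigenvalue $-\ell_0^2 = -f_{\ell_0,1}$ by \eqref{eq:l=l0mode:eigenvalue}, so the zeroth-order term $(\edthR\edthR' + f_{\ell_0,1})\tildePhipsHigh{\ell_0}$ in equation \eqref{eq:tildePhipshighi} vanishes identically on this mode; an analogous cancellation occurs for $\tildePhinsHigh{\ell_0}$ via $\edthR'\edthR$ on spin-weight $-\half$.

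For finiteness of $\NPCP{\ell_0}(\tb)$ on each $\Sigmatb$, I first rewrite the hypothesis through the triangular change of variables in Definition \ref{def:tildePhiplusandminusHigh}: pointwise boundedness at $\rho\to\infty$ on $\Sigmazero$ of $\{|\tildePhipsHigh{j}|, |\curlVR\tildePhipsHigh{j}|\}_{j=1,\ldots,\ell_0}$ is equivalent to boundedness of $\{|\PhipsHigh{j}|\}_{j=1,\ldots,\ell_0+1}$, using $\PhipsHigh{\ell_0+1} = \curlVR\tildePhipsHigh{\ell_0} - \sum_{j<\ell_0} x_{\ell_0,j}M^{\ell_0-j}\curlVR\tildePhipsHigh{j}$. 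On the $\ell = \ell_0$ mode each $\edthR\edthR'\PhipsHigh{j}$ is a constant multiple of $\PhipsHigh{j}$, so the hypothesis of Proposition \ref{prop:nullinfBeha:PhiplusiandPhinsHighi}(ii) is met with $i = \ell_0$ and $\alpha = 0$, and its conclusion yields finite pointwise limits on every $\Sigmatb$, hence finiteness of $\NPCP{\ell_0}(\tb)$. The case $\ell_0 = 1$ is the direct specialization with $\tildePhipsHigh{1} = \hatPhips$, and the spin $-\half$ analysis runs in parallel through parts (iii)-(iv) of the same proposition.

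For $\tb$-independence I specialize equation \eqref{eq:tildePhipshighi} at $i = \ell_0$ on the $\ell = \ell_0$ mode to obtain
\begin{align*}
2\pu(\curlVR \tildePhipsHigh{\ell_0}) = f_{\ell_0,2}\frac{r-3M}{r^2}\curlVR\tildePhipsHigh{\ell_0} - 6f_{\ell_0,1}\frac{M}{r}\tildePhipsHigh{\ell_0} + \sum_{j=1}^{\ell_0}h_{\ell_0,j}\PhipsHigh{j},
\end{align*}
whose right-hand side is uniformly $O(r^{-1})$ in the asymptotic region by the pointwise bounds just propagated. Integrating this identity in $u$ along a constant-$v$ null segment at each angular point from one slice to the other, and then sending $v \to \infty$, the integrated right-hand side vanishes while the left-hand side converges to $\NPCP{\ell_0}(\tb) - \NPCP{\ell_0}(\tb_0)$, using that along $\Sigmatb$ the null coordinate $u$ tends to $\tb$ (up to an asymptotically bounded shift) at $\Scri^+$. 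The spin $-\half$ case proceeds identically from \eqref{eq:tildePhinsHighi} and is consistent with the identity $\NPCN{\ell_0} = \edthR'\NPCP{\ell_0}$ of Lemma \ref{lem:relationoftwoNPconsts}. The main technical point is ensuring the integrand is bounded uniformly in the auxiliary parameter $v$ over the compact $u$-interval, so that the limit $v \to \infty$ may pass through the integral; this requires tracking the bounds from Proposition \ref{prop:nullinfBeha:PhiplusiandPhinsHighi} uniformly over the $\tb$-interval rather than only slice-by-slice, which follows because the propagation constants in that proposition depend only on the (finite) initial data controlled by the hypothesis on $F^{(\ell_0)}(\reg+\regl,0,\tb_0,\Psipns)$.
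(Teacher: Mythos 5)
Your proof is correct and rests on exactly the same two pillars as the paper's argument: the mode-reduced transport equation for $\curlVR\tildePhipnsHigh{\ell_0}$ from Proposition \ref{prop:eqstildePhipnsHigh} (after the algebraic cancellation of $(\edthR\edthR'+f_{\ell_0,1})$ on the $\ell=\ell_0$ mode), and the propagation of pointwise boundedness at $\Scri^+$ from Proposition \ref{prop:nullinfBeha:PhiplusiandPhinsHighi}. The only variation is the final step: the paper takes $\lim_{r\to\infty}\pu(\curlVR\tildePhipsHigh{\ell_0})|_{\Sigmatb}=0$ and invokes the bounded convergence theorem to interchange the $u$-derivative with the $r$-limit, whereas you integrate the same identity in $u$ at fixed $v$ and then pass $v\to\infty$ through the integral. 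These are the integral and differential versions of the same argument and rely on the same uniform-boundedness input; you correctly flag that a uniform-in-$v$ (equivalently, uniform-in-$\tb$ on a compact interval) bound is needed to justify the interchange, which the paper uses implicitly. No gap, and no genuinely different route.
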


\begin{proof}
If the field is supported on $\ell=1$ mode, then from Proposition \ref{prop:eqstildePhipnsHigh}, $\Psi=\PhipsHigh{1}$ or $\Psi=\PhinsHigh{1}$ solves
\begin{align}
-2\pu\curlVR \Psi -{3(r-3M)r^{-2}}\curlVR\Psi -{6M}{r^{-1}}\Psi={}&0.
\end{align}
The results in Proposition \ref{prop:nullinfBeha:PhiplusiandPhinsHighi} implies $\lim\limits_{r\to \infty}\big(\abs{\Psi}+\abs{\curlVR\Psi}\big)\vert_{\Sigmatb}<\infty$
for any $\tb\geq \tb_0$, which thus implies
$\lim\limits_{r\to \infty}\pu (\curlVR \Psi)\vert_{\Sigmatb}=0$ for any $\tb\geq \tb_0$.  The conclusion follows  from the bounded convergence theorem.

Instead, if the field is supported on $\ell=\ell_0$ mode for some $\ell_0\geq 2$, equations
 for $\tildePhipsHigh{\ell_0}$ and for $\tildePhinsHigh{\ell_0}$ become
\begin{align}
\label{eq:268}
&-2\pu \curlVR \tildePhipsHigh{\ell_0}
-{(2\ell_0+1)(r-3M)r^{-2}}\curlVR\tildePhipsHigh{\ell_0}
+\sum_{j=1}^{\ell_0}O(r^{-1})\tildePhipsHigh{j}={}0,\\
&-2\pu \curlVR \tildePhinsHigh{\ell_0}
-{(2\ell_0+1)(r-3M)r^{-2}}\curlVR\tildePhinsHigh{\ell_0}
+\sum_{j=1}^{\ell_0}O(r^{-1})\tildePhinsHigh{j}={}0.
\end{align}
One also obtains $\lim\limits_{r\to \infty}\pu (\curlVR \tildePhipsHigh{\ell_0})\vert_{\Sigmatb}=\lim\limits_{r\to \infty}\pu (\curlVR\tildePhinsHigh{\ell_0})\vert_{\Sigmatb}=0$ from Proposition \ref{prop:nullinfBeha:PhiplusiandPhinsHighi}, and by the same way of arguing, the statement follows.
\end{proof}

\begin{prop}
\label{prop:vanishingNPC:betterasymnearscri}
Let the  spin $\pm \half$ components of Dirac field be supported on an $\ell= \ell_0$ mode with $\ell_0\geq 1$. Let $\alpha\in [0,1]$ be arbitrary and let $\reg\in \mathbb{N}$. Assume the $\ell_0$-th N--P constant $\NPCN{\ell_0}$  vanishes.
\begin{itemize}
\item There exists a $\regl=\regl(\ell_0)$ such that if $F^{(\ell_0)}(\reg+\regl,0,\tb_0,\Psipns)
     +\lim\limits_{r\to\infty}\absCDeri{
      r^{1-\alpha}\curlVR\tildePhinsHigh{\ell_0}}{\reg}\vert_{\Sigmazero}
    +\lim\limits_{r\to\infty}\sum\limits_{j=1}^{\ell_0}
    \absCDeri{
      \tildePhinsHigh{j}}{\reg}\vert_{\Sigmazero}
      <\infty$,
      then there is a  constant $C_{\ell_0}(\tb,\theta,\pb)<\infty$ such that for any $\tb\geq \tb_0$, $\lim\limits_{r\to\infty}
\absCDeri{r^{1-\alpha}\curlVR\tildePhinsHigh{\ell_0}}{\reg}
\vert_{\Sigmatb}<C_{\ell_0}(\tb,\theta,\pb)$.
In particular, if $\alpha>0$, then $\lim\limits_{r\to\infty}\absCDeri{r^{1-\alpha}
\curlVR\tildePhinsHigh{\ell_0}}{\reg}
\vert_{\Sigmatb}$ is independent of $\tb$;
\item There exists a $\regl=\regl(\ell_0)$ such that if $F^{(\ell_0)}(\reg+\regl,0,\tb_0,\Psipns)
    +\lim\limits_{r\to\infty}r^{1-\alpha}
          \absCDeri{
      \curlVR\tildePhipsHigh{\ell_0}}{\reg}\vert_{\Sigmazero}
    +\lim\limits_{r\to\infty}\sum\limits_{j=1}^{\ell_0}
          \absCDeri{
     \tildePhipsHigh{j}}{\reg}\vert_{\Sigmazero}
      <\infty$, then there is a constant $C_{\ell_0}(\tb,\theta,\pb)<\infty$ such that for any $\tb\geq \tb_0$, $\lim\limits_{r\to\infty}
\absCDeri{r^{1-\alpha}
\curlVR\tildePhipsHigh{\ell_0}}{\reg}
\vert_{\Sigmatb}<C_{\ell_0}(\tb,\theta,\pb)$. In particular, if $\alpha>0$, then $\lim\limits_{r\to\infty}\absCDeri{r^{1-\alpha}
\curlVR\tildePhipsHigh{\ell_0}}{\reg}
\vert_{\Sigmatb}$ is independent of $\tb$.
\end{itemize}
\end{prop}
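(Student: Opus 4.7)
The plan is to reduce the proposition to a transport equation along the ingoing null direction and integrate it all the way to null infinity. I will handle the spin $-\sfrak$ statement in detail; the spin $+\sfrak$ case is symmetric. First, I restrict equation \eqref{eq:tildePhinsHighi} to the $\ell=\ell_0$ mode, where by \eqref{eq:l=l0mode:eigenvalue} the angular operator combines with $f_{\ell_0,1}=\ell_0^2$ to annihilate the leading $\tildePhinsHigh{\ell_0}$ contribution, leaving a reduced equation
\begin{align*}
-2\pu \curlVR \tildePhinsHigh{\ell_0}
-\frac{(2\ell_0+1)(r-3M)}{r^{2}}\curlVR \tildePhinsHigh{\ell_0}
+\sum_{j=1}^{\ell_0}O(r^{-1})\tildePhinsHigh{j}=0,
\end{align*}
exactly as already exploited in the proof of Proposition \ref{prop:NPCsindepentonu}. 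Setting $\Xi_{\alpha}:=r^{1-\alpha}\curlVR \tildePhinsHigh{\ell_0}$ and using $\pu r=-\mu/2$ at fixed $v$, this will recast as the linear transport equation
\begin{align*}
\pu \Xi_{\alpha}+\frac{(1-\alpha)\mu+(2\ell_0+1)(1-3Mr^{-1})}{2r}\,\Xi_{\alpha}
=r^{-\alpha}\sum_{j=1}^{\ell_0}O(1)\,\tildePhinsHigh{j},
\end{align*}
whose first-order coefficient is $(2\ell_0+2-\alpha+O(r^{-1}))/(2r)>0$ for all sufficiently large $r$.

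Next I would integrate this transport equation along each ingoing null ray $v=v_0$ from its intersection $(u_0,v_0)$ with $\Sigmazero$ to its intersection $(u_{\tb},v_0)$ with $\Sigmatb$. Because the hyperboloidal foliation is asymptotic to $\Scri$, both endpoints have $r\to\infty$ as $v_0\to\infty$, while their $r$-separation stays uniformly bounded by $C(\tb-\tb_0)$ in that limit; therefore the integrating factor $\exp\bigl(\int (A/2r)\di u'\bigr)$ tends to $1$. Under the standing energy assumption $F^{(\ell_0)}(\reg+\regl,0,\tb_0,\Psipns)<\infty$ combined with part (iii) of Proposition \ref{prop:nullinfBeha:PhiplusiandPhinsHighi}, the scalars $\tildePhinsHigh{j}$ with $j\leq\ell_0$ remain bounded along each $\Sigmatb$ uniformly up to $\Scri$. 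The Duhamel contribution from the source will then be dominated by $\int_{r_{\tb}}^{r_{0}} r^{-\alpha}\,\di r\lesssim r_{\tb}^{-\alpha}(\tb-\tb_0)$, which vanishes as $r_{\tb}\to\infty$ precisely when $\alpha>0$ and is at worst $O(\tb-\tb_0)$ when $\alpha=0$. Passing to the limit $v_0\to\infty$ on $\Sigmatb$ thus yields
\begin{align*}
\lim_{r\to\infty}\Xi_{\alpha}\big|_{\Sigmatb}=\lim_{r\to\infty}\Xi_{\alpha}\big|_{\Sigmazero}\qquad(\alpha>0),
\end{align*}
and a bound by a $\tb$-dependent constant $C_{\ell_0}(\tb,\theta,\pb)$ when $\alpha=0$, establishing the $\reg=0$ case.

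To upgrade to $\reg\geq 1$ I would commute the reduced equation with operators from $\CDeri=\{Y,rV,\edthR,\edthR'\}$: by the commutators in \eqref{eq:commutators} the commuted quantities satisfy a system of the same schematic form, so the identical transport argument applies after absorbing a finite derivative loss into $\regl(\ell_0)$. The spin $+\sfrak$ statement follows in the same way, noting that on the $\ell=\ell_0$ mode the angular term in \eqref{eq:tildePhipshighi} also annihilates $\tildePhipsHigh{\ell_0}$ by \eqref{eq:l=l0mode:eigenvalue} with $s=\sfrak$, and that by Lemma \ref{lem:relationoftwoNPconsts} the vanishing of $\NPCN{\ell_0}$ is equivalent to that of $\NPCP{\ell_0}$, so both sets of hypotheses are consistent. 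The main obstacle I foresee is ensuring the uniform boundedness of the lower-order quantities $\tildePhinsHigh{j}$ (respectively $\tildePhipsHigh{j}$) for $j<\ell_0$ all the way out to $\Scri$ on every hyperboloidal slice: this is precisely what Proposition \ref{prop:nullinfBeha:PhiplusiandPhinsHighi} supplies, but invoking it requires careful bookkeeping of the recursive definitions in Definition \ref{def:tildePhiplusandminusHigh} together with the sphere Sobolev embedding \eqref{eq:Sobolev:1}, which is ultimately what drives the derivative loss $\regl=\regl(\ell_0)$.
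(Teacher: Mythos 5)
Your proposal is correct and follows essentially the same route as the paper's proof: restrict the transport equation \eqref{eq:tildePhinsHighi} (resp.\ \eqref{eq:tildePhipshighi}) to the fixed $\ell=\ell_0$ mode so the angular potential cancels, rescale by $r^{1-\alpha}$, integrate along ingoing null rays between $\Sigmazero$ and $\Sigmatb$, and invoke Proposition~\ref{prop:nullinfBeha:PhiplusiandPhinsHighi} to keep the lower-order scalars $\tildePhipnsHigh{j}$ uniformly bounded out to $\Scri$. Your integrating-factor/Duhamel presentation is just a slightly more explicit spelling of the paper's ``$\pu(r^{1-\alpha}\curlVR\tildePhipnsHigh{\ell_0})\to 0$ plus bounded convergence'' step, and the vanishing-for-$\alpha>0$ versus $\tb$-dependent-bound-for-$\alpha=0$ dichotomy is handled identically in both.
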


\begin{proof}
We show it only for the spin $\half$ component, the proof of the spin $-\half$ component being the same. Consider first the $\ell=\ell_0$ mode $\Psips^{\ell=\ell_0}$. The scalar $\tildePhipsHigh{\ell_0}$  satisfies equation \eqref{eq:268}, and hence performing a rescaling gives
\begin{align}
-\pu (r^{1-\alpha}\curlVR \tildePhipsHigh{\ell_0})
={}O(r^{-\alpha})\curlVR\tildePhipsHigh{\ell_0}
+\sum_{j=1}^{\ell_0}O(r^{-\alpha})\tildePhipsHigh{j}.
\end{align}
By Proposition \ref{prop:nullinfBeha:PhiplusiandPhinsHighi} and the assumption of vanishing $\ell_0$-th N--P constant, in the case that $\alpha>0$, this yields $\lim\limits_{r\to \infty}\absCDeri{r^{-\alpha}
\curlVR\tildePhipsHigh{\ell_0}}{\reg}
\vert_{\Sigmatb}=0$, and one obtains
$\lim\limits_{r\to \infty}\pu (r^{1-\alpha}\curlVR \tildePhipsHigh{\ell_0})\vert_{\Sigmatb})=0$ for any $\tb\geq \tb_0$. The conclusion for $\alpha>0$ follows  from the bounded convergence theorem. For $\alpha=0$, the RHS is bounded by a $\tb$-dependent constant, hence $\lim\limits_{r\to\infty}
\absCDeri{r
\curlVR\tildePhipsHigh{\ell_0}}{\reg}
\vert_{\Sigmatb}<C(\tb)$.
\end{proof}

%%%%%%%%%%%%%%%%%%%%%%%%
\subsection{Improved decay of basic energy}
\label{sect:improveddecbasicenerg}
%%%%%%%%%%%%%%%%%%

Following Definition \ref{def:Fenergies:0to2}, we further define the following $p$-weighted energies for the purpose of obtaining $r^p$ estimates for a larger range of $p$, hence also improved energy and pointwise decay estimates.

\begin{definition}
\label{def:Fenergies:big2}
Let $i\in \mathbb{N}^+$. Define
\begin{subequations}
\begin{align}
F^{(i)}(\reg,-1,\tb,\Psips)=
F(\reg,-1,\tb,\Psips)
+\norm{\PhipsHigh{i}}^2_{W_{-3}^{\reg-i+1}(\Sigmatb^{\geq 4M})},\\
F^{(i)}(\reg,-1,\tb,\Psins)=
F(\reg,-1,\tb,\Psins)
+\norm{\PhinsHigh{i}}^2_{W_{-3}^{\reg-i+1}(\Sigmatb^{\geq 4M})},
\end{align}
\end{subequations}
for any $-1< p<0$ that
\begin{align}
F^{(i)}(\reg,p,\tb,\Psips)=F^{(i)}(\reg,p,\tb,\Psins)={}0,
\end{align}
for any $0\leq p\leq 2$ that
\begin{subequations}
\begin{align}
F^{(i)}(\reg,p,\tb,\Psips)={}&F(\reg,0,\tb,\Psips)
+\norm{rV\PhipsHigh{i}}^2_{W_{p-2}^{\reg-i}(\Sigmatb^{\geq 4M})}
+\norm{\PhipsHigh{i}}^2_{W_{-2}^{\reg-i+1}(\Sigmatb^{\geq 4M})},\\
F^{(i)}(\reg,p,\tb,\Psins)={}&F(\reg,0,\tb,\Psins)
+\norm{rV\PhinsHigh{i}}^2_{W_{p-2}^{\reg-i}(\Sigmatb^{\geq 4M})}
+\norm{\PhinsHigh{i}}^2_{W_{-2}^{\reg-i+1}(\Sigmatb^{\geq 4M})},
\end{align}
\end{subequations}
and for any $2< p<5$ that
\begin{subequations}
\begin{align}
F^{(i)}(\reg,p,\tb,\Psips)={}&F(\reg,0,\tb,\Psips)
+\norm{rV\tildePhipsHigh{i}}^2_{W_{p-2}^{\reg-i}(\Sigmatb^{\geq 4M})}
+\norm{\tildePhipsHigh{i}}^2_{W_{-2}^{\reg-i+1}(\Sigmatb^{\geq 4M})},\\
F^{(i)}(\reg,p,\tb,\Psins)={}&F(\reg,0,\tb,\Psins)
+\norm{rV\tildePhinsHigh{i}}^2_{W_{p-2}^{\reg-i}(\Sigmatb^{\geq 4M})}
+\norm{\tildePhinsHigh{i}}^2_{W_{-2}^{\reg-i+1}(\Sigmatb^{\geq 4M})}.
\end{align}
\end{subequations}
Define in the end for any $p\in [-1,5)$ that
\begin{align}
F^{(i)}(\reg,p,\tb,\Psipns)={}
F^{(i)}(\reg,p,\tb,\Psips)+F^{(i)}(\reg,p,\tb,\Psins).
\end{align}
\end{definition}

The main statement in this subsection is as follows.

\begin{prop}
\label{prop:BED:Psiplus:l=1}
Given $\ell\in \mathbb{N}^+$. Let $j\in \mathbb{N}$ and let $\reg\in \mathbb{N}$. Let $\Psips$ and $\Psins$ be supported on an $\ell$ mode.
Then,
\begin{enumerate}
\item if the $\ell$-th N-P constant $\NPCP{\ell}$ does not vanish,
there is a constant $\regl(j,\ell)$ such that for any small $\delta>0$, any $p\in (1,3-\delta]$, any $p'\in [0,\min\{p,2\}]$ and any $\tb\geq\tb_0$,
\begin{align}
\label{eq:BEDC:Phiplus:l=1:0to3}
\hspace{8ex}&\hspace{-8ex}
F^{(1)}(\reg,p',\tb,\Lxi^j\Psipns)
+\int_{\tb}^{\infty}F^{(1)}(\reg,p',\tb',\Lxi^j\Psipns)\di\tb'\notag\\
\lesssim_{\delta,j,\reg,\ell} {}&\tb^{-2(\ell-1)-2j+p'-p}F^{(\ell)}(\reg+\regl(j,\ell),p,\tb_0,\Psipns)
\end{align}
and for any $p\in (1,3-\delta]$,
\begin{align}
\label{ptw:BEDC:Phiplus:l=1:0to3}
\hspace{8ex}&\hspace{-8ex}
\absCDeri{\Lxi^j\psips}{\reg}+
\absCDeri{\Lxi^j\psins}{\reg} +\absCDeri{\Lxi^j(\mu^{\half}r^{-1}\PhinsHigh{1})}{\reg}\notag\\
\lesssim_{\delta,j,\reg,\ell}{} & v^{-1}\tb^{-\ell-j+\frac{3-p}{2}} (F(\reg+\regl(j,\reg),p,\tb_0,\Psipns))^{\half};
\end{align}
\item
if the $\ell$-th N-P constant $\NPCP{\ell}$ vanishes, there is a constant $\regl(j,\ell)$ such that for any small $\delta>0$, any $p\in (1, 5-\delta]$, any $p'\in [0,\min\{p,2\}]$ and any $\tb\geq\tb_0$,
\begin{align}
\label{eq:BEDC:Phiplus:l=1:0to5}
\hspace{8ex}&\hspace{-8ex}
F^{(1)}(\reg,p',\tb,\Lxi^j\Psipns)
+\int_{\tb}^{\infty}F^{(1)}(\reg,p',\tb',\Lxi^j\Psipns)\di\tb'\notag\\ \lesssim_{\delta,j,\reg,\ell} {}&\tb^{-2(\ell-1)-2j+p'-p}
F^{(\ell)}(\reg+\regl(j),p,\tb_0,\Psipns)
\end{align}
and for any $p\in (1,5-\delta]$,
\begin{align}
\label{ptw:BEDC:Phiplus:l=1:0to5}
\hspace{8ex}&\hspace{-8ex}
\absCDeri{\Lxi^j\psips}{\reg}+
\absCDeri{\Lxi^j\psins}{\reg}
+\absCDeri{\Lxi^j(\mu^{\half}r^{-1}\PhinsHigh{1})}{\reg}\notag\\
\lesssim_{\delta,j,\reg,\ell} {} & v^{-1}\tb^{-\ell-j+\frac{3-p}{2}} (F(\reg+\regl(j,\reg),p,\tb_0,\Psipns))^{\half} .
\end{align}
\end{enumerate}
\end{prop}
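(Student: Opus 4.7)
The strategy is to set up, for each $\ell$-mode, a finite hierarchy of $r^p$ estimates on the scalars $\{\PhipnsHigh{j}\}_{j=1,\ldots,\ell}$ (and their tildes $\{\tildePhipnsHigh{j}\}$) and then feed this hierarchy into Lemma~\ref{lem:hierarchyImpliesDecay}, exactly as in \cite{Ma20almost}. Since the field is supported on the single mode $\ell=\ell_0$ and each $\PhipsHigh{j}$, $\PhinsHigh{j}$ has spin-weight $\pm\sfrak$, the crucial algebraic observation is that
\begin{align*}
(\edthR\edthR'+f_{j,1})\PhipsHigh{j}={}0,\qquad (\edthR'\edthR+f_{j,1})\PhinsHigh{j}={}0\qquad\text{when }\ell=j,
\end{align*}
via the eigenvalue formula \eqref{eq:l=l0mode:eigenvalue} applied with $f_{j,1}=j^2$; this same identity also kills the leading $\edthR\edthR'$-term in the $\tilde{\Phi}$-equations when $j=\ell$. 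Geometrically, the top-level equation in \eqref{eq:Phipshighi}/\eqref{eq:PhinsHighi} loses its $O(1)\PhipnsHigh{j}$ mass term, and the one in \eqref{eq:tildePhipshighi}/\eqref{eq:tildePhinsHighi} loses it \emph{and} its $O(1)\PhipnsHigh{j-1}$ coupling, keeping only $O(r^{-1})$ tails.

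First, for each fixed $j\in\{1,\dots,\ell\}$, I put \eqref{eq:Phipshighi} and \eqref{eq:PhinsHighi} into the form \eqref{eq:wave:rp} of Proposition~\ref{prop:wave:rp}, with $b_{V,-1}=-f_{j,2}>0$ for $j\ge 1$ and with source $\vartheta=-g_j M\PhipnsHigh{j-1}$, using the fact that $-r^2 YV=-2\pu\curlVR+O(1)\partial_r+O(r^{-1})$. Since the $\vartheta$-source appears only from the $(j-1)$-th level, Proposition~\ref{prop:wave:rp} yields, for $p\in[0,2]$ and $j=1,\dots,\ell$,
\begin{align*}
F_j(\reg,p,\tb_2) + \int_{\tb_1}^{\tb_2} F_j(\reg-\regl,p-1,\tb)\,\di\tb \lesssim F_j(\reg,p,\tb_1) + \int_{\tb_1}^{\tb_2} F_{j+1}(\reg-\regl,p-1,\tb)\,\di\tb,
\end{align*}
where $F_j$ denotes the natural $r^p$-weighted energy of $\{\PhipsHigh{j},\PhinsHigh{j}\}$ (with the convention $F_{\ell+1}\equiv 0$ for the top level). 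Iterating the monotonicity/interpolation/evolution hypotheses of Lemma~\ref{lem:hierarchyImpliesDecay} starting at level $\ell$ and cascading down to level $1$ gives the base decay $F_1(\reg,0,\tb)\lesssim \tb^{-2\ell}F_\ell(\reg+\regl,2,\tb_0)$.

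The heart of the argument is extending the range of $p$ on the top level. For the top equation of $\PhipsHigh{\ell}$ (and $\PhinsHigh{\ell}$), the vanishing $(\edthR\edthR'+f_{\ell,1})$-term means the hypothesis $b_{0,0}+\Lambda_s\ge 0$ of Proposition~\ref{prop:wave:rp} remains satisfied in a weaker but sufficient form allowing $p\in[0,3)$ (the enemy term in the multiplier computation which previously forced $p\le 2$ is now absent). This yields, via a repeat of the cascade but with the initial weight replaced by $p=3-\delta$,
\begin{align*}
F_1(\reg,p',\tb) \lesssim \tb^{-2(\ell-1)-(3-\delta)+p'}F^{(\ell)}(\reg+\regl,3-\delta,\tb_0,\Psipns),
\end{align*}
which is \eqref{eq:BEDC:Phiplus:l=1:0to3} for $j=0$. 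If in addition the $\ell$-th N--P constant vanishes, I repeat the same argument on \eqref{eq:tildePhipshighi}/\eqref{eq:tildePhinsHighi} for $\tildePhipnsHigh{\ell}$: the sole remaining sources are $O(r^{-1})\PhipnsHigh{j}$, $j\le\ell$, whose $r^{p-3}$ integrals can be absorbed into the $F_j$, $j<\ell$, cascade already controlled by the previous step. This lets me push $p$ to $[0,5)$ and gives \eqref{eq:BEDC:Phiplus:l=1:0to5}.

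Higher $j\ge 1$ cases in $\Lxi^j$ follow by commuting with $\Lxi$ (which commutes with every operator in sight by \eqref{comm:edthRedthR'LxiLeta}--\eqref{comm:edthRedthR'YVR}), using the wave equation away from horizon to trade $rV\Lxi$-derivatives for two $\CDeri$-derivatives plus $O(r^{-1})$ terms, exactly as in the last paragraph of the proof of Lemma~\ref{lem:BED:0to2:v1}. Finally, the pointwise bounds \eqref{ptw:BEDC:Phiplus:l=1:0to3} and \eqref{ptw:BEDC:Phiplus:l=1:0to5} follow from the Sobolev estimates \eqref{eq:Sobolev:2} (with $\alpha=\tfrac12$, interpolating the already-obtained energies) and \eqref{eq:Sobolev:3} applied to $\Lxi^j\Psipns$ and $\Lxi^j\PhinsHigh{1}$, together with $v\sim_R\tb$ in $r\le R$ and $v\sim_R r$ in $r\ge R$. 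The main obstacle is bookkeeping the derivative losses $\regl(j,\ell)$ during the iterated cascade and ensuring the hypothesis $b_{0,0}+\Lambda_s\geq 0$ in Proposition~\ref{prop:wave:rp} is genuinely satisfied up to $p<5$ for the $\tildePhipnsHigh{\ell}$-equation, where one must verify that the $O(r^{-1})$ coefficients in \eqref{eq:tildePhipshighi}/\eqref{eq:tildePhinsHighi} do not generate an uncontrolled boundary term at spatial infinity.
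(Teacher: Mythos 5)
Your overall strategy --- a cascade of $r^p$ estimates on $\{\PhipnsHigh{j}\}$ and $\{\tildePhipnsHigh{j}\}$ fed into Lemma~\ref{lem:hierarchyImpliesDecay}, using the eigenvalue identity to kill the top-level $\edthR\edthR'$-term --- matches the paper's approach in Sections~\ref{sect:almostsharpenergy:l=1} and \ref{sect:almostsharpenergy:lbigger1}. However, there is a genuine gap in your argument for extending the $p$-range from $[0,4)$ to $[0,5)$. You claim the $O(r^{-1})\PhipnsHigh{j}$ error terms from the $\tildePhipnsHigh{\ell}$-equation ``can be absorbed into the $F_j$ cascade already controlled by the previous step.'' This is where the argument breaks: for $p\in[3,4)$ one can Cauchy--Schwarz the cross term $r^{p-3}\Re\big(V\overline{\tildePhipsHigh{\ell}}\,h_{\ell j}\PhipsHigh{j}\big)$ to produce $\veps^{-1}\int r^{p-5}|\PhipsHigh{j}|^2$ and absorb it via the one-dimensional Hardy inequality, which uses $\lim_{r\to\infty} r^{p-4}|\tildePhipsHigh{\ell}|^2=0$. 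For $p\geq 4$ this limit no longer vanishes (one only knows $|\tildePhipsHigh{\ell}|\lesssim 1$ as $r\to\infty$ once the $\ell$-th N--P constant vanishes), so Hardy fails and the spacetime error integral cannot be closed by this route.

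The paper's proof instead performs a $\tau$-weighted Cauchy--Schwarz at this stage, bounding the bad cross term by
\[
\veps\int_{\Donetwo} r^p \tb^{-1-\delta}|V\tildePhipsHigh{\ell}|^2\,\di^4\mu
+\veps^{-1}\sum_{j=1}^{\ell}\int_{\tb_1}^{\tb_2}\tb^{1+\delta}\norm{\PhipsHigh{j}}^2_{W^{\reg}_{p-6}(\Sigmatb)}\,\di\tb,
\]
and then crucially uses the time decay already proved in the $p\in[0,4)$ step, $F^{(\ell)}(\reg,p',\tb)\lesssim\tb^{p'-4+\delta}F^{(\ell)}(\reg+\regl,4-\delta,\tb_0)$, to make the $\tau^{1+\delta}$-weighted integral converge. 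Your plan never invokes this mechanism, and in fact the obstacle you flag at the end (``do not generate an uncontrolled boundary term at spatial infinity'') is not the actual obstruction: the issue is a divergent spacetime error integral, not a boundary flux. Without the $\tau$-weighted step the hierarchy cannot be extended past $p=4$, so the decay rates in \eqref{eq:BEDC:Phiplus:l=1:0to5} and \eqref{ptw:BEDC:Phiplus:l=1:0to5} would degrade to what \eqref{eq:BEDC:Phiplus:l=1:0to3} gives.

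A secondary imprecision: you write that for the top-level $\PhipnsHigh{\ell}$-equation ``the hypothesis $b_{0,0}+\Lambda_s\geq 0$ of Proposition~\ref{prop:wave:rp} remains satisfied in a weaker but sufficient form allowing $p\in[0,3)$.'' Proposition~\ref{prop:wave:rp} is only stated for $p\in[0,2]$, so the extension to $p\in[0,3)$ requires a direct multiplier computation (as in \eqref{eq:multiplierintegral:generalp:l=1:Schw}), exploiting that the remaining mass term is only $O(r^{-1})$. This distinction matters because it clarifies precisely why the range stops at $3$ for $\PhipnsHigh{\ell}$ and why passing to $\tildePhipnsHigh{\ell}$ is needed to go beyond.
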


In the following discussions, we prove the above proposition for $\ell=1$ case and $\ell\geq 2$ case in Sections \ref{sect:almostsharpenergy:l=1} and \ref{sect:almostsharpenergy:lbigger1}, respectively. The $r^p$ estimates for the spin $\pm \half$ components are proven by putting the derived equations in Proposition \ref{prop:eqstildePhipnsHigh} into the form \eqref{eq:wave:rp} and applying the general $r^p$ estimates in Proposition \ref{prop:wave:rp}. Moreover, we consider only the border case that $p=3-\delta$ or $p=5-\delta$ as the other cases where $p\in (1,3-\delta)$ or $p\in (1,5-\delta)$ are much easier and can be proven in an exactly same way.

\begin{remark}
If the $\ell$-th N--P constant $\NPCP{\ell}$ does not vanish, the energy $F^{(\ell)}(\reg,3,\tb_0,\Psipns)$ for any $\reg\geq 1$ is infinite. Thus, in this respect, one can only extend the upper bound of the $p$ range up to $3-$ in this case, and the energy decay estimate \eqref{eq:BEDC:Phiplus:l=1:0to3} with $p=3-\delta$ is sharp. It is also from this respect that we separate out the vanishing $\ell$-th N--P constant case and the nonvanishing $\ell$-th N--P constant case.
\end{remark}

\begin{remark}
The viability of extending the $r^p$ hierarchy from $[0,2]$ to $[0,5)$ is first realized in \cite{angelopoulos2018vector} in treating the spherically symmetric $\ell=0$ mode of the scalar field. We here generalize it to any mode of the Dirac field.
\end{remark}

%%%%%%%%%%%%
\subsubsection{$\ell=1$ mode}
\label{sect:almostsharpenergy:l=1}
%%%%%%%%%%%%

We shall remind the readers that the discussions for this $\ell=1$ mode in this subsection can be put into the context of Section \ref{sect:almostsharpenergy:lbigger1} in which an $\ell=\ell_0\geq 2$ mode is considered, as Section \ref{sect:almostsharpenergy:lbigger1} in fact provides a general approach of proving energy decay estimates for any $\ell=\ell_0\geq 1$ mode. The reason that we separate the discussions for the $\ell=1$ mode from the ones for the $\ell=\ell_0\geq 2$ mode is twofold: one is that the decay of the $\ell=1$ mode, which is much slower than any $\ell=\ell_0\geq 2$ mode as shown in Proposition \ref{prop:BED:Psiplus:l=1},  dominates the decay of the entire Dirac field and hence it is worth estimating this mode alone; the other one is that the proof is simpler than an $\ell=\ell_0\geq 2$ mode but at the same time encodes already the basic idea in treating a general higher mode.

We start with equation \eqref{eq:PhipsHigh1} of $\hatPhips$. For the $\ell=1$ mode, we use \eqref{eq:l=l0mode:eigenvalue} and find that it simplifies to
\begin{align}
\label{eq:PhipsHigh1:l=1}
-r^2YV\hatPhips -{(r-3M)}\VR\hatPhips -{6M}{r^{-1}}\hatPhips={}&0.
\end{align}
We multiply this equation by $-2r^{p-2}\chi^2 V\overline{\hatPhips}$ and take the real part, then the LHS becomes 
\begin{align*}
\hspace{2.5ex}&\hspace{-2.5ex}
r^{p}\chi^2 Y(\abs{V\hatPhips}^2)+2\mu^{-1}(r-3M)r^{p-2}\chi^2 \abs{V\hatPhips}^2
+12Mr^{p-3}\chi^2 \Re(V\overline{\hatPhips} \hatPhips)\notag\\
={}&Y(r^{p}\chi^2 \abs{V\hatPhips}^2)
+\big(\partial_r (r^{p}\chi^2 )+2\Delta^{-1}(r-3M)r^{p}\chi^2 \big)\abs{V\hatPhips}^2
+12Mr^{p-3}\chi^2 \Re(V\overline{\hatPhips} \hatPhips)\notag\\
={}&Y(r^{p}\chi^2 \abs{V\hatPhips}^2)+
\big((p+r\partial_r  )\chi^2 r^{p-1}+2r^{p}\Delta^{-1} (r-3M)\chi^2\big) \abs{V\hatPhips}^2
+12Mr^{p-3}\chi^2 \Re(V\overline{\hatPhips} \hatPhips).
\end{align*}
We then integrate over $\Donetwo$ with a measure $\di^4 \mu$, arriving at
\begin{align}
\label{eq:multiplierintegral:generalp:l=1:Schw}
\int_{\Donetwo}\Big(&Y(r^{p}\chi^2 \abs{V\hatPhips}^2)+12Mr^{p-3} \chi^2\Re(V\overline{\hatPhips} \hatPhips)\notag\\
&+\big((p+r\partial_r  )\chi^2 r^{p-1}+2r^{p}\Delta^{-1} (r-3M)\chi^2\big) \abs{V\hatPhips}^2\Big)\di^4\mu
={}0.
\end{align}

For $0\leq p< 3$, the second line on the LHS is bounded by a bulk integral $\int_{\Donetwo^{R_0-M}}\chi^2 r^{p-1}\abs{V\hatPhips}^2\di^4\mu$ from below, and one applies an integration by parts to the second term on the LHS to obtain both positive fluxes at $\Sigmatwo$ and a positive spacetime integral.  By adding this to the BEAM estimate, this gives for any $p\in [0,3)$ and $\reg\geq 1$
\begin{align}
\label{eq:rphierachyintermsofF:Psiminus}
F^{(1)}(\reg,p,\tb_2,\Psips)
+\int_{\tb_1}^{\tb_2}F^{(1)}(\reg,p-1,\tb,\Psips)\di\tb
\lesssim_{p,\reg}{} F^{(1)}(\reg+\regl,p,\tb_1,\Psips),
\end{align}
where the $\reg\geq 2$ cases follow in the same way as in Proposition \ref{prop:wave:rp}.  Since the equation of $\PhinsHigh{1}$ is the same as equation \eqref{eq:PhipsHigh1:l=1}, one can obtain \eqref{eq:rphierachyintermsofF:Psiminus} as well for the spin $-\half$ component. Thus, for any $p\in [0,3)$ and $\reg\geq 1$,
\begin{align}
\label{eq:rphierachyintermsofF:Psipns:0to3}
F^{(1)}(\reg,p,\tb_2,\Psipns)
+\int_{\tb_1}^{\tb_2}F^{(1)}(\reg,p-1,\tb,\Psipns)\di\tb
\lesssim_{p,\reg}{} F^{(1)}(\reg+\regl,p,\tb_1,\Psipns),
\end{align}
and this gives an extended $r^p$ hierarchy for $p\in [0,3)$, which then implies the estimate \eqref{eq:BEDC:Phiplus:l=1:0to3} with $\ell=1$ and $j=0$ by using Lemma \ref{lem:hierarchyImpliesDecay}.
To show the general $j\in \mathbb{N}$ case, one can follow the discussions after equation \eqref{eq:rp:bothspin} by using the wave equation of $\PhipsHigh{1}$ to rewrite $r^2 V\Lxi\PhipsHigh{1}$. Similarly, we have
\begin{align}
F^{(1)}(\reg,2,\tb,\Lxi\Psipns)\lesssim F^{(1)}(\reg+\regl,0,\tb,\Psipns).
\end{align}
One can then obtain
\begin{align}
F^{(1)}(\reg,p,\tb,\Lxi\Psipns)&\lesssim {} \tb^{-2+p}F^{(1)}(\reg+\regl,2,\tb/2,\Lxi\Psipns)
\lesssim \tb^{-2+p}F^{(1)}(\reg+\regl,0,\tb/2,\Psipns)\notag\\
&\lesssim_{\delta,\reg} {}\tb^{-5+\delta+p}F^{(1)}(\reg+\regl,3-\delta,\tb_0,\Psipns).
\end{align}
This proves $j=1$ case, and the above procedures can be applied to prove the general $j\in \mathbb{N}$ case of the estimate \eqref{eq:BEDC:Phiplus:l=1:0to3}.

Consider then the case that the first N-P constant $\NPCP{1}$ vanishes. We can extend the upper bound of the $p$ range beyond $3$ via the essentially same idea. The second term in the first line of equation \eqref{eq:multiplierintegral:generalp:l=1:Schw} can be bounded using the Cauchy--Schwarz inequality by
\begin{align}
\label{eq:rp:3-4:posi:CS}
\hspace{4ex}&\hspace{-4ex}
\bigg|\int_{\Donetwo}12Mr^{p-3} \chi^2\Re(V\overline\hatPhips \hatPhips)\di^4\mu\bigg|\notag\\
\lesssim{}&\veps\int_{\Donetwo^{R_0-M}}r^{p-1} \chi^2 \abs{V\hatPhips}^2\di^4\mu
+\veps^{-1}\int_{\Donetwo^{R_0-M}}r^{p-5} \chi^2 \abs{\hatPhips}^2\di^4\mu,
\end{align}
and this last term is  bounded via the Hardy's inequality \eqref{eq:HardyIneqLHSRHS} by $\veps^{-1}\big(\int_{\Donetwo^{R_0-M}}r^{p-3} \abs{\prb  \hatPhips}^2\di^4\mu+\int_{\Donetwo^{R_0-M, R_0}}r^{p-5} \abs{ \hatPhips}^2\di^4\mu\big)$ since $\lim\limits_{r\to \infty} r^{p-4}\abs{\hatPhips}^2=0$. Combined with the BEAM estimates, these terms can be easily absorbed by choosing $\veps$ small and $R_0$ sufficiently large, and this proves the estimate \eqref{eq:rphierachyintermsofF:Psiminus} for $p\in [3,4)$. We have similar estimates for the spin $-\half$ component since it satisfies the same equation as the spin $\half$ component.  In summary, we have thus obtained an $r^p$ hierarchy for $p\in [0,4)$, i.e., the estimate \eqref{eq:rphierachyintermsofF:Psipns:0to3} holds for $p\in [0,4)$. The above discussions applied here then yield that
there is a constant $\regl(j)$ such that for any small $\delta>0$, any $p\in [0,4-\delta]$ and any $\tb\geq2\tb_0$,
\begin{align}
\label{eq:BEDC:Phiplus:l=1:0to4}
F^{(1)}(\reg,p,\tb,\Lxi^j\Psipns)
\lesssim_{\delta,j,\reg} {}&\tb^{-4+\delta-2j+p}
F^{(1)}(\reg+\regl(j),4-\delta,\tb/2,\Psipns).
\end{align}

Further, we can extend the hierarchy to $p\in [0,5)$.
For $4\leq p\leq 5-\delta$ where $\delta>0$ is small and arbitrary, the above way of applying the Cauchy--Schwarz inequality does not work any more since the last term in \eqref{eq:rp:3-4:posi:CS} can not be bounded anymore. Instead, we estimate the second term on the LHS  of \eqref{eq:multiplierintegral:generalp:l=1:Schw} by a different way of applying the Cauchy--Schwarz inequality 
\begin{align}
\hspace{4ex}&\hspace{-4ex}
\bigg|\int_{\Donetwo}
12Mr^{p-3} \chi^2_R\Re(V\overline\hatPhips \hatPhips )\di^4\mu\bigg|\notag\\
\lesssim{}&\veps\int_{\Donetwo}r^{p} \tb^{-1-\delta}\chi^2_R \abs{V\hatPhips }^2 \di^4\mu +\veps^{-1}\int_{\Donetwo}r^{p-6}\tb^{1+\delta} \chi^2_R \abs{\hatPhips }^2 \di^4\mu.
\end{align}
The first term on the RHS can be absorbed by choosing $\veps$ small, and the second term is bounded using the estimate \eqref{eq:BEDC:Phiplus:l=1:0to4} by
$\int_{\tb_1}^{\tb_2}\tb^{1+\delta}F^{(1)}(1,p-4,\tb,\Psips)\di\tb
\lesssim_{\delta}  \tb_1^{-6+2\delta+p}F^{(1)}(\regl,4-\delta,\tb_0,\Psips)$. Therefore, one obtains for any $p\in [4,5-\delta]$ and $\tb_2>\tb_1\geq \tb_0$,
\begin{align}
\hspace{8ex}&\hspace{-8ex}
F^{(1)}(\reg,p,\tb_2,\Psipns)
+\int_{\tb_1}^{\tb_2}F^{(1)}(\reg,p-1,\tb,\Psipns)\di\tb
\notag\\
\lesssim_{\delta,\reg} {} &F^{(1)}(\reg+\reg',p,\tb_1,\Psipns)
+\tb_1^{-6+2\delta+p}F^{(1)}(\reg+\regl,4-\delta,\tb_1,\Psipns).
\end{align}
Thus, for any $p\in [4,5-\delta)$, Lemma \ref{lem:hierarchyImpliesDecay} implies
\begin{align}
F^{(1)}(\reg,p,\tb,\Psipns)\lesssim{}
\tb^{-5+\delta+p}F^{(1)}(\reg+\regl,5-\delta,\tb/2,\Psipns).
\end{align}
The estimate \eqref{eq:BEDC:Phiplus:l=1:0to5} for $\ell=1$ then follows from this estimate combined with the estimate \eqref{eq:BEDC:Phiplus:l=1:0to4}. \qed

%%%%%%%%%%%%%
\subsubsection{$\ell=\ell_0\geq 2$ mode}
\label{sect:almostsharpenergy:lbigger1}
%%%%%%%%%%%%

The wave equation \eqref{eq:Phipshighi} now takes the form of
\begin{align}
\label{eq:Phipshighi:lbig2}
&-r^2 Y V  \PhipsHigh{i} -(\ell_0^2-i^2)\PhipsHigh{i}
-(2i-1)(r-3M)r^{-2}\curlVR\PhipsHigh{i}
-6i^2Mr^{-1}\PhipsHigh{i}
+g_i M\PhipsHigh{i-1}={}0.
\end{align}
For any $1\leq i\leq \ell_0-1$, this equation can be put into the form of equation \eqref{eq:wave:rp}, and the assumptions in Proposition \ref{prop:wave:rp} are all satisfied with $b_{0,0}(\PhipsHigh{i})+\ell_0^2=\ell_0^2-i^2>0$, $\vartheta(\PhipsHigh{i})=-g_{i}M \PhipsHigh{i-1}$; for $i=\ell_0$, this can also be put into the form of equation \eqref{eq:wave:rp},  and the assumptions in Proposition \ref{prop:wave:rp} are satisfied with $b_{0,0}(\PhipsHigh{\ell_0})+\ell_0^2=0$ and $\vartheta(\PhipsHigh{\ell_0})=-g_{\ell_0}M \PhipsHigh{\ell_0-1}$.
The estimates in Proposition \ref{prop:wave:rp} then applies: for any $p\in [0,2)$, the error terms arising from $\{\vartheta(\PhipsHigh{i})\}_{i=2,\ldots,\ell_0}$ are bounded by  the corresponding estimate of $\PhipsHigh{i-1}$.

For any $1\leq i\leq \ell_0$ and $p\in [0,2]$, let
\begin{align}
\label{def:Ffts:PhiplusHigh:0to2}
\tilde{F}^{(i)}(\reg,p,\tb,\Psipns)={}&{F}(\reg,p,\tb,\Psipns)
+\sum_{j=1}^i\Big(\norm{rV\PhipnsHigh{j}}^2_{W_{p-2}^{\reg-1-j}(\Sigmatb^{4M})}
+\norm{\PhipnsHigh{j}}^2_{W_{-2}^{\reg-j}(\Sigmatb^{4M})}\Big);
\end{align}
for any $1\leq i\leq \ell_0$ and $p\in (-1,0)$, let $\tilde{F}^{(i)}(\reg,p,\tb,\Psipns)=0$; and for any $1\leq i\leq \ell_0$ and $p=-1$, let $\tilde{F}^{(i)}(\reg,-1,\tb,\Psipns)
=\norm{\PhipnsHigh{1}}^2_{W_{-3}^{\reg-1}(\Sigmatb)}
+\sum\limits_{m=1}^i\norm{\PhipnsHigh{m}}^2_{W_{-3}^{\reg-m}(\Sigmatb^{4M})}$. Then
it holds for any $1\leq i\leq \ell_0$, $p\in [0,2)$ and $\tb_2>\tb_1\geq \tb_0$,
\begin{align}
\label{eq:EnerDecay:Impro:Psiplus:l0:0to2:v2:Integralform}
\tilde{F}^{(i)}(\reg,p,\tb_2,\Psips)
+\int_{\tb_1}^{\tb_2}\tilde{F}^{(i)}(\reg,p-1,\tb,\Psips)\di\tb
\lesssim_{p,\reg} {}&\tilde{F}^{(i)}(\reg+\regl,p,\tb_1,\Psips).
\end{align}
This yields by using Lemma \ref{lem:hierarchyImpliesDecay}  that for  any $1\leq i\leq \ell_0$, $\delta\in (0,\half)$ and $p\in [0,2-\delta]$,
\begin{align}
\tilde{F}^{(i)}(\reg,p,\tb,\Psips)\lesssim{}&
\tb^{-2+\delta+p}\tilde{F}^{(i)}(\reg,2-\delta,\tb/2,\Psips).
\end{align}
Together with the fact that the relation $\tilde{F}^{(i+1)}(\reg,0,\tb,\Psips)\lesssim \tilde{F}^{(i)}(\reg+\regl,2,\tb,\Psips)\lesssim \tilde{F}^{(i+1)}(\reg,0,\tb,\Psips)$ holds true for any $i\in \mathbb{N}$ since one can always rewrite $rV \PhipnsHigh{j}=\mu r^{-1}\PhipnsHigh{j+1}$ by Definition \ref{def:tildePhiplusandminusHigh},  we conclude that for any $p\in [0,2)$ and any $1\leq i\leq \ell_0$, there exists a constant $\regl(j,\ell_0-i)$ such that
\begin{align}
\label{eq:EnerDecay:Impro:Psiplus:l0:0to2:v2}
\tilde{F}^{(i)}(\reg,p,\tb,\Lxi^j\Psips)\lesssim_{\delta,j,\ell_0,\reg}{}&
\tb^{-2(\ell_0-i)-2j-2+p+C\delta}
\tilde{F}^{(\ell_0)}(\reg+\regl(j,\ell_0-i),2-\delta,\tb_0,\Psips).
\end{align}

To apply the estimate \eqref{eq:rp:p=2:2} to equation \eqref{eq:Phipshighi:lbig2}, we replace the error term by \eqref{eq:rp:p=2:2:errorterm} and find the error term \eqref{eq:rp:p=2:2:errorterm} arising from the last term on the LHS of \eqref{eq:Phipshighi:lbig2} is bounded using a Cauchy--Schwarz inequality by
\begin{align}
\hspace{4ex}&\hspace{-4ex}
\veps\int_{\tb_1}^{\tb_2}\frac{1}{\tb^{1+\delta}}
\Big(\norm{rV\PhipsHigh{i}}^2_{W_{0}^{\reg}(\Sigmatb^{R_0})}
+\norm{\PhipsHigh{i}}^2_{W_{-2}^{\reg}(\Sigmatb^{R_0})}\Big)\di\tb\notag\\
&
+\frac{C}{\veps}\int_{\tb_1}^{\tb_2}
\tb^{1+\delta}\norm{\PhipsHigh{i-1}}^2_{W_{-2}^{\reg}(\Sigmatb^{R_0})}
\di\tb
+C\norm{\PhipsHigh{i-1}}^2_{W_{-1-\delta}^{\reg}(\Donetwo^{R_0})}.
\end{align}
The first line is absorbed by choosing $\veps$ small and the second line is bounded from the estimates \eqref{eq:EnerDecay:Impro:Psiplus:l0:0to2:v2} by $C\tb_1^{-2(\ell_0-i)+C\delta}\tilde{F}^{(\ell_0)}
(\reg+\regl,2-\delta,\tb_0,\Psipns)$.  The treatment for the spin $-\half$ component is the same.
One can apply again the above argument and eventually obtains
for any $p\in [0,2]$ and any $1\leq i\leq \ell_0$,
\begin{subequations}
\begin{align}
\label{eq:EnerDecay:Impro:Psiplus:l0:0to2:anyi}
F^{(i)}(\reg,p,\tb,\Lxi^j\Psipns)&\lesssim_{j,\ell_0,i,\reg} {}\tb^{-2(\ell_0-i)-2j-2+p}
F^{(\ell_0)}(\reg+\regl(j,\ell_0-i),2,\tb_0,\Psipns),\\
\label{eq:EnerDecay:Impro:Psiplus:l0:0to2}
F^{(1)}(\reg,p,\tb,\Lxi^j\Psipns)&\lesssim_{j,\ell_0,\reg} {}\tb^{-2(\ell_0-1)-2j-2+p}
F^{(\ell_0)}(\reg+\regl(j,\ell_0),2,\tb/2,\Psipns)\notag\\
&\lesssim_{j,\ell_0,\reg} {}\tb^{-2(\ell_0-1)-2j-2+p}
F^{(\ell_0)}(\reg+\regl(j,\ell_0),2,\tb_0,\Psipns).
\end{align}
\end{subequations}
Here, we have utilized
\begin{align}
F^{(i)}(\reg,p,\tb,\Psipns)\lesssim \tilde{F}^{(i)}(\reg+\regl,p,\tb,\Psipns)\lesssim F^{(i)}(\reg+\regl,p,\tb,\Psipns),
\end{align}
which holds true by the Hardy's inequality \eqref{eq:HardyIneqLHSRHS} and rewriting $rV \PhipnsHigh{j}=\mu r^{-1}\PhipnsHigh{j+1}$ by Definition \ref{def:tildePhiplusandminusHigh}.

We then turn to equation \eqref{eq:tildePhipshighi} of $\tildePhipsHigh{\ell_0}$, which is
\begin{align}
\label{eq:tildePhipshighi:l0}
&-r^2 Y V  \tildePhipsHigh{\ell_0}
-(2\ell_0-1)(r-3M)r^{-2}\curlVR\tildePhipsHigh{\ell_0}
-6\ell_0^2Mr^{-1}\tildePhipsHigh{\ell_0}
+\sum_{j=1}^{\ell_0}h_{\ell_0j} \PhipsHigh{j}={}0,
\end{align}
and prove the $r^p$ estimate for this equation for $p\in (2,4)$. We only need to bound $W_{p-3}^{\reg-1}(\Donetwo^{R_0-M})$ norm square of the last term on the LHS. In view that all $h_{\ell_0j}$ are $O(r^{-1})$ functions, one can use a Hardy's inequality and find that this is in turn bounded by $\sum\limits_{j=1}^{\ell_0}
\norm{rV\PhipsHigh{j}}^2_{W_{p-5}^{\reg-1}(\Donetwo^{R_0-M})}
+\norm{\PhipsHigh{j}}^2_{W_{p-7}^{\reg}(\Donetwo^{R_0-M})}$.
Thus, we can take $R_0$ large enough such that these terms are absorbed by the LHS of the $r^p$ estimate, leading to
\begin{align}
\label{eq:EnerDecay:Impro:Psiplus:l0:2to4:Integralform}
\tilde{F}^{(\ell_0)}(\reg,p,\tb_2,\Psipns)
+\int_{\tb_1}^{\tb_2}\tilde{F}^{(\ell_0)}(\reg,p-1,\tb,\Psipns)\di\tb
\lesssim_{p,\reg} {}&\tilde{F}^{(\ell_0)}(\reg+\regl,p,\tb_1,\Psipns)
\end{align}
for any $p\in (2,4)$. With an application of Lemma \ref{lem:hierarchyImpliesDecay}, this yields that for $p\in [2,3-\delta)$,
\begin{align}
\label{eq:Integralformdecay:ell0:3-}
\tilde{F}^{(\ell_0)}(\reg,p,\tb_2,\Psipns)\lesssim{}&
\tb^{-3+\delta+p}\tilde{F}^{(\ell_0)}(\reg+\regl,3-\delta,\tb/2,\Psipns)
\notag\\
\lesssim {}& \tb^{-3+\delta+p}\tilde{F}^{(\ell_0)}(\reg+\regl,3-\delta,\tb_0,\Psipns),
\end{align}
and for $p\in [2,4-\delta]$,
\begin{align}
\label{eq:Integralformdecay:ell0:4-}
\tilde{F}^{(\ell_0)}(\reg,p,\tb_2,\Psipns)\lesssim{}&
\tb^{-4+\delta+p}\tilde{F}^{(\ell_0)}(\reg+\regl,4-\delta,\tb/2,\Psipns)
\notag\\
\lesssim {}& \tb^{-4+\delta+p}\tilde{F}^{(\ell_0)}(\reg+\regl,4-\delta,\tb_0,\Psipns),
\end{align}
The estimate \eqref{eq:Integralformdecay:ell0:3-} together with \eqref{eq:EnerDecay:Impro:Psiplus:l0:0to2} proves the estimate \eqref{eq:BEDC:Phiplus:l=1:0to3}.

In the end, we consider the $r^p$ estimates for $p\in [4,5)$.
The error term from the last term on the LHS of \eqref{eq:tildePhipshighi:l0} is bounded via the Cauchy--Schwarz inequality by
\begin{align}
\veps\int_{\tb_1}^{\tb_2}\frac{1}{\tb^{1+\delta}}
\norm{rV\tildePhipsHigh{\ell_0}}^2_{W_{p-2}^{\reg-1}(\Sigmatb^{R_0-M})}
\di\tb
+\frac{C}{\veps}\sum_{j=1}^{\ell_0}\int_{\tb_1}^{\tb_2}
\tb^{1+\delta}\norm{\PhipsHigh{j}}^2_{W_{p-6}^{\reg-1}(\Sigmatb^{R_0-M})}
\di\tb.
\end{align}
Again, the first part is absorbed after taking $\veps$ small enough and the second term is bounded by
$C\tb_1^{-6+2\delta+p}F^{(\ell_0)}
(\reg+\regl,4-\delta,\tb_1/2,\Psipns)$.  Thus,
\begin{align}
F^{(\ell_0)}(\reg+\regl(j,\ell_0),2,\tb,\Psipns)\lesssim{}&
\tb^{-3+\delta}F^{(\ell_0)}(\reg+\regl(j,\ell_0),5-\delta,\tb/2,\Psipns).
\end{align}
Finally, combining this with the estimate \eqref{eq:EnerDecay:Impro:Psiplus:l0:0to2} proves the estimate \eqref{eq:BEDC:Phiplus:l=1:0to5}.

The pointwise decay estimates \eqref{ptw:BEDC:Phiplus:l=1:0to3} and \eqref{ptw:BEDC:Phiplus:l=1:0to5} can be analogously obtained as proving the estimate \eqref{eq:ptwdecay:basic} in
Lemma \ref{lem:BED:0to2}. \qed

%%%%%%%%%%%%%%%
\subsection{Further energy decay and almost Price's law for $\ell\geq 2$ modes}
%%%%%%%%%%%%%%%

In this and the next subsections, we prove for the $\ell\geq 2$ modes and the $\ell=1$ mode respectively suitable elliptic-type estimates on $\Sigmatb$ to derive improved energy and pointwise decay than the ones in Proposition \ref{prop:BED:Psiplus:l=1}.  This idea can be traced back to the work \cite{angelopoulos2018vector} where the authors obtained the almost sharp decay for the scalar field by employing a degenerate type of elliptic estimate for the spherically symmetric, $\ell=0$ lowest mode.
We will explain the underlying reason why we consider the $\ell\geq 2$ modes and $\ell=1$ mode separately in the next subsection, cf. Remark \ref{rem:ell=1andellhigh:diff:ellip}. 

To state the elliptic-type estimates, we first define $\CDerit=\{\Lxi, \Delta^{\half}\prb, \edthR,\edthR'\}$.

\begin{prop}
\label{prop:degellip:Phips:ellbig2}
Assume the spin $\pm \half$ components are supported on $\ell\geq 2$ modes. Then,
\begin{align}
\label{eq:degellip:Phipns:ellbig2}
\hspace{4ex}&\hspace{-4ex}
\int_{\Sigmatb}r^{-3}\Big[\mu^{-\half}\big(
\abs{\edthR\edthR'\Phips}^2
+\abs{\Delta^{\half}\prb(\Delta^{\half}\prb\Phips )}^2
+\abs{\Delta^{\half}\prb\edthR'\Phips}^2)\notag\\
&\qquad \qquad
+\big(\abs{\edthR'\edthR\Psins}^2
+\abs{\Delta^{\half}\prb(\Delta^{\half}\prb\Psins )}^2
+\abs{\Delta^{\half}\prb\edthR\Psins}^2\big)
\Big]\di^3\mu\notag\\
\lesssim{}&\int_{\Sigmatb}\Big[r^{-3}\mu^{-\half}
(\abs{\Lxi^2\Phips}^2
+\abs{r^2\Lxi\prb\Phips}^2
+\abs{r\Lxi\Phips}^2)\notag\\
&\qquad +r^{-3}
(\abs{\Lxi^2\Psins}^2
+\abs{r^2\Lxi\prb\Psins}^2
+\abs{\Lxi\Psins}^2)\Big]\di^3\mu,
\end{align}
and for any $\reg\in \mathbb{N}$,
\begin{align}
\label{eq:degellip:Phipns:ellbig2:Highorder}
\hspace{6ex}&\hspace{-6ex}
\int_{\Sigmatb} r^{-3}\Big(
\absCDerit{\Psins}{\reg}^2
+\mu^{-\half}\absCDerit{\Phips}{\reg}^2
\Big)\di^3\mu\notag\\
\lesssim_{\reg}{} &
\norm{\Lxi\Psips}^2_{W_{-3}^{\reg+2}(\Sigmatb)}
+\norm{\Lxi\PhinsHigh{1}}^2_{W_{-3}^{\reg}(\Sigmatb^{\geq 4M})}
+\norm{\Lxi\Psins}^2_{W_{-3}^{\reg+1}(\Sigmatb)}.
\end{align}
\end{prop}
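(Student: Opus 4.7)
The plan is to derive from the wave equations of $\Phips$ and of $\Psins = r\psins$ spatial elliptic equations on the slice $\Sigmatb$, isolating all $\Lxi$-derivatives on the right-hand side, and then to close an $H^2$-type elliptic estimate in which the lower-order terms are absorbed using angular coercivity on $\ell\geq 2$ modes. Using $YV = \mu^{-1}\partial_t^2 - \partial_r(\mu\partial_r)$ and $r^2\partial_r(\mu\partial_r) = \partial_r(\Delta\partial_r) - 2(r-2M)\partial_r$, equation \eqref{eq:TME:varphis:posi:exf} rearranges on $\Sigmatb$ to
$$
\partial_r(\Delta\partial_r\Phips) + \edthR\edthR'\Phips - (r-M)\partial_r\Phips
= \mu^{-1}\bigl[r^2\Lxi^2\Phips + (r-3M)\Lxi\Phips\bigr] .
$$
A similar manipulation of the TME \eqref{eq:TME} for $\psins$, combined with $\Psins=r\psins$ via \eqref{eq:r2YVexpansion}, yields a spatial elliptic equation for $\Psins$ whose right-hand side carries \emph{no} $\mu^{-1}$ factor. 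Converting the Boyer--Lindquist $\partial_r$ on the left to the hyperboloidal $\prb$ via $\partial_r = \prb + (\mu^{-1} - \partial_r h)\Lxi$ generates only additional $\Lxi\prb$ and $\Lxi$ cross-terms of precisely the form appearing on the right-hand side of \eqref{eq:degellip:Phipns:ellbig2}. This asymmetry in the $\mu^{-1}$ weight between the $\Phips$ and $\Psins$ equations is exactly what accounts for the extra $\mu^{-1/2}$ weight carried by the $\Phips$-part of \eqref{eq:degellip:Phipns:ellbig2} and absent from the $\Psins$-part.

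For the base estimate, I would pair the spatial elliptic equation for $\Phips$ against its conjugate multiplied by the weight $r^{-3}\mu^{-1/2}$ and integrate over $\Sigmatb$, applying Lemma \ref{lem:IBPonSphere} and the commutator \eqref{comm:curlLscurlLds} on the sphere and integrating by parts once in $\rho$ to convert each second-order block into a positive-definite quadratic form. The outcome is the advertised integrand $r^{-3}\mu^{-1/2}\bigl(|\edthR\edthR'\Phips|^2 + |\Delta^{\half}\prb(\Delta^{\half}\prb\Phips)|^2 + |\Delta^{\half}\prb\edthR'\Phips|^2\bigr)$ plus l.o.t.\ errors; the matched weight $r^{-3}\mu^{-1/2}$ on both sides permits a Cauchy--Schwarz bound by the $\Phips$-contributions on the right-hand side of \eqref{eq:degellip:Phipns:ellbig2}. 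Boundary terms at $\Horizon$ vanish because of the $\Delta^{\half}$ factors and at spacelike infinity because of the decay implied by Propositions \ref{prop:BED:Psiplus:l=1} and \ref{prop:nullinfBeha:PhiplusiandPhinsHighi}. The residual lower-order errors are zeroth and first-order in the scalar, and here the hypothesis $\ell\geq 2$ is used critically: the Poincar\'e-type bound \eqref{eq:ellip:highermodes:TAO} gives $\int_{\Sphere} -\Re(\TAO_{\sfrak}\Phips\overline{\Phips})\di^2\mu \geq 4 \int_{\Sphere}|\Phips|^2\di^2\mu$ (since $\ell_0^2-(\abs{s}-\half)^2 = 4-0$), so a small fraction of the $|\edthR\edthR'\Phips|^2$ term on the left absorbs these errors, closing the estimate. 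An entirely analogous argument applied to $\Psins$, now with plain weight $r^{-3}$ and using \eqref{eq:ellip:highermodes:TAO} for spin $-\sfrak$, produces the corresponding $\Psins$-terms.

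The higher-order estimate \eqref{eq:degellip:Phipns:ellbig2:Highorder} then follows by commuting the spatial elliptic equations with operators in $\CDerit=\{\Lxi,\Delta^{\half}\prb,\edthR,\edthR'\}$ and iterating the above scheme, using \eqref{comm:edthRedthR'LxiLeta}--\eqref{comm:edthRedthR'YVR} to control commutator errors. The appearance of the $\Lxi\PhinsHigh{1}$ term on the right-hand side is forced by the Dirac relation \eqref{eq:Dirac:sym:posi}, which reads $\edthR'\Phips = \Delta^{\half}\VR\Phins = r^{-2}\PhinsHigh{1}$: any mixed angular-radial derivative of $\Phips$ picked up through the commutator scheme is best controlled through $\PhinsHigh{1}$ rather than through further derivatives of $\Phins$, which is degenerate at $\Horizon$. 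The principal technical obstacle is precisely this horizon degeneracy: the spatial operator $\partial_r(\Delta\partial_r)$ vanishes on $\Horizon$, which is why one is forced into the $\mu^{-1/2}$-weighted $L^2$ setting on the $\Phips$-side and into $\Delta^{\half}\prb$-type derivatives on the left-hand side, and one must verify that the integration by parts produces the positive-definite bulk terms of \eqref{eq:degellip:Phipns:ellbig2} with no loss beyond this weight, while the cross-terms from the BL-to-hyperboloidal conversion reassemble into exactly the $\Lxi^2$, $r^2\Lxi\prb$, and $r\Lxi$ norms on the right-hand side rather than uncontrolled residuals.
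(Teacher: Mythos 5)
Your proposal takes essentially the same approach as the paper's proof: cast each wave equation as a spatial elliptic equation $\edthR\edthR'\Phips+\Delta^{\half}\prb(\Delta^{\half}\prb\Phips)=H_{\sfrak}(\Phips)$ (and similarly for $\Psins$) with only $\Lxi$-type terms on the right, square it against the weight $r^{-3}\mu^{-\half}$ (resp.\ $r^{-3}$), integrate by parts once in $\rho$ to produce $\mu\abs{r\prb\edthR'\Phips}^2$ from the cross-term, absorb the residual error via the $\ell\geq 2$ spherical coercivity, and commute with $\CDerit$ for the higher-order version. One small slip worth noting: the Dirac relation reads $\edthR'\Phips=\Delta^{\half}\VR\Phins=\mu^{\half}r^{-1}\PhinsHigh{1}$, not $r^{-2}\PhinsHigh{1}$, and the paper introduces the $\Lxi\PhinsHigh{1}$ term not through this relation but by rewriting the $r^2\Lxi\prb\Psins$ norm via Definition \ref{def:tildePhiplusandminusHigh}; neither point affects the validity of the argument.
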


\begin{proof}
Let $H=2\mu^{-1}-\partial_r h(r)$, then
 one can express  $Y$ and $\VR$ as
\begin{align}
\label{def:vectorVRintermsofprb}
Y={}-\prb+(2\mu^{-1}-H)\Lxi,\quad
\VR={}\partial_{\rb}+H \Lxi.
\end{align}
By the choice of the hyperboloidal coordinates, there exist positive constants $c_0$ and $c_1$ such that
\begin{align}
\label{eq:propertyofHfunction}
\lim_{r\to \infty}r^2 H=c_0, \quad \text{and} \quad \abs{ H-2\mu^{-1}-c_1}\lesssim \mu  \quad
\text{as } r\to r_+.
\end{align}
The wave equation \eqref{eq:TME:varphis:posi} can thus be rewritten as
\begin{align}
\label{eq:ellip:varphis:posi}
\edthR\edthR'   \Phips
+\Delta^{\half}\prb(\Delta^{\half}\prb\Phips )={}&H_{\sfrak}(\Phips),
\end{align}
where
\begin{align}
H_{\sfrak}={}&
\Delta(2\mu^{-1}-H)H\Lxi^2
+2\Delta (\mu^{-1} -H) \Lxi\prb
+\Delta^{\half}\partial_r (\Delta^{\half}(2\mu^{-1}-H))
\Lxi.
\end{align}

Multiplying equation \eqref{eq:ellip:varphis:posi} by $-f_2 \overline{\Phips}$ and taking the real part gives
\begin{align}
\label{eq:ellip:f2trial}
\hspace{4ex}&\hspace{-4ex}
\prb(\Re(-f_2\overline{\Phips}\Delta\prb\Phips ))
+f_2\abs{\edthR' \Phips}^2+f_2 \Delta \abs{\prb\Phips}^2
+\partial_r (f_2 \Delta^{\half})
\Re(\overline{\Phips}\Delta^{\half}\prb\Phips)\notag\\
\equiv{}&-f_2\Re(H_{\sfrak}(\Phips)\overline{\Phips}).
\end{align}
We take $f_2=\mu^{-\half}r^{-3}$ and the above equation \eqref{eq:ellip:f2trial} becomes
\begin{align}
\label{eq:ellip:f2trial:1:1}
\hspace{4ex}&\hspace{-4ex}
\prb(-r^{-2}\Delta^{\half}\Re(\overline{\Phips}\prb\Phips) )
+r^{-2}(\Delta^{-\half}\abs{\edthR' \Phips}^2+\Delta^{\half} \abs{\prb\Phips}^2)
-2r^{-3}\Delta^{\half}\Re(\overline{\Phips}\prb\Phips)\notag\\
\equiv{}&-r^{-2}\Delta^{-\half}
\Re(H_{\sfrak}(\Phips)\overline{\Phips}).
\end{align}

If the spin $\pm \half$ components are supported on $\ell\geq 2$ modes, then we have from equation \eqref{eq:ellip:highermodes}
 that
\begin{align}
\label{eq:ellipsphe:ellgeq2:deg}
\int_{S^2}\abs{\edthR' \Phips}^2\di^2\mu\geq 4\int_{S^2}\abs{ \Phips}^2\di^2\mu,
\end{align}
and the last term in the first line of equality \eqref{eq:ellip:f2trial:1:1} is dominated by the middle term in the first line by Cauchy--Schwarz inequality. As a result, by integrating over $\Sigmatb$, this yields
\begin{align}
\int_{\Sigmatb}r^{-3}\mu^{-\half}(\abs{\edthR' \Phips}^2+\mu \abs{r\prb\Phips}^2)\di^3\mu
\lesssim{}&\int_{\Sigmatb}\big|r^{-3}\mu^{-\half}
\Re(H_{\sfrak}(\Phips)\overline{\Phips})\big|\di^3\mu,
\end{align}
and hence,
\begin{align}\label{eq:degellip:Phips:ellbig2:1}
\int_{\Sigmatb}r^{-3}\mu^{-\half}(\abs{\edthR' \Phips}^2+\mu \abs{r\prb\Phips}^2)\di^3\mu
\lesssim{}&\int_{\Sigmatb}r^{-3}\mu^{-\half}\abs{H_{\sfrak}(\Phips)}^2\di^3\mu.
\end{align}

We take a square of both sides of \eqref{eq:ellip:varphis:posi}, multiply by $\mu^{-\half}r^{-3}$, integrate over $\Sigmatb$ and arrive at
\begin{align}
\hspace{4ex}&\hspace{-4ex}
\int_{\Sigmatb}r^{-3}\mu^{-\half}\left(\abs{\edthR\edthR'\Phips}^2
+\abs{\Delta^{\half}\prb(\Delta^{\half}\prb\Phips )}^2
+2\Re(\Delta^{\half}\prb(\Delta^{\half}\prb\overline{\Phips} )\edthR\edthR'\Phips)\right)\di^3\mu\notag\\
={}&\int_{\Sigmatb}r^{-3}\mu^{-\half}\abs{H_{\sfrak}(\Phips)}^2\di^3\mu.
\end{align}
For the third term on the LHS, it equals after applying integration by parts \begin{align}
\label{eq:balabala:ellbig2}
\hspace{4ex}&\hspace{-4ex}
\int_{\Sigmatb}r^{-3}\mu^{-\half} 2\Re(\Delta^{\half}\prb(\Delta^{\half}\prb\overline{\Phips} )\edthR\edthR'\Phips)\di^3\mu\notag\\
={}&
\int_{\Sigmatb}\left[\prb\left(-2r^{-2}\Delta^{\half}
\Re\Big(\prb\edthR'\Phips\overline{\edthR'\Phips}\Big)\right)
+2r^{-3}\mu^{\half}\abs{r\prb\edthR'\Phips}^2
-4r^{-3}\mu^{\half} \Re\Big(r\prb\edthR'\Phips\overline{\edthR'\Phips}\Big)\right]\di^3\mu.
\end{align}
The integral of the total derivative $\prb$ part vanishes, and we combine the above two equalities together. Since $\edthR'\Phips$ has spin-weight $-\half$, we have from equation \eqref{eq:ellip:highermodes} that $\int_{\Sphere}\abs{\edthR\edthR'\Phips}^2\di^2\mu\geq \int_{\Sphere}4\abs{\edthR'\Phips}^2\di^2\mu$, hence the last term on the RHS of \eqref{eq:balabala:ellbig2} can be dominated by the other terms, and we obtain
\begin{align}
\label{eq:degellip:Phips:ellbig2:2}
\int_{\Sigmatb}r^{-3}\mu^{-\half}\left(\abs{\edthR\edthR'\Phips}^2
+\abs{\Delta^{\half}\prb(\Delta^{\half}\prb\Phips )}^2
+\mu\abs{r\prb\edthR'\Phips}^2
\right)\di^3\mu
\lesssim{}&\int_{\Sigmatb}r^{-3}\mu^{-\half}\abs{H_{\sfrak}(\Phips)}^2\di^3\mu.
\end{align}
Combining inequalities \eqref{eq:degellip:Phips:ellbig2:1} and \eqref{eq:degellip:Phips:ellbig2:2} together and taking into account of the following estimate
\begin{align}
\int_{\Sigmatb}r^{-3}\mu^{-\half}\abs{H_{\sfrak}(\Phips)}^2\di^3\mu
\lesssim{}&\int_{\Sigmatb}r^{-3}\mu^{-\half}(\abs{\Lxi^2\Phips}^2
+\abs{r^2\Lxi\prb\Phips}^2
+\abs{r\Lxi\Phips}^2)\di^3\mu,
\end{align}
we conclude an estimate
\begin{align}
\label{eq:degellip:Phips:ellbig2}
\hspace{4ex}&\hspace{-4ex}
\int_{\Sigmatb}r^{-3}\mu^{-\half}\Big(\abs{\edthR' \Phips}^2+\mu \abs{r\prb\Phips}^2
+\abs{\edthR\edthR'\Phips}^2
+\abs{\Delta^{\half}\prb(\Delta^{\half}\prb\Phips )}^2
+\mu\abs{r\prb\edthR'\Phips}^2
\Big)\di^3\mu\notag\\
\lesssim{}&\int_{\Sigmatb}r^{-3}\mu^{-\half}(\abs{\Lxi^2\Phips}^2
+\abs{r^2\Lxi\prb\Phips}^2
+\abs{r\Lxi\Phips}^2)\di^3\mu.
\end{align}

For the spin $-\half$ component, equation \eqref{eq:TME:varphis:nega} can be written as
\begin{align}
\label{eq:ellip:varphis:nega}
\edthR'\edthR   \Phins
+\Delta^{\half}\prb(\Delta^{\half}\prb\Phins )={}&H_{-\sfrak}(\Phins),
\end{align}
where
\begin{align}
H_{-\sfrak}={}&
\Delta(2\mu^{-1}-H)H\Lxi^2
+2\Delta (\mu^{-1} -H) \Lxi\prb
-\Delta^{\half}\partial_r (\Delta^{\half}H)
\Lxi.
\end{align}
In particular,  in terms of the regular scalar $\Psins$, one finds
\begin{align}
\label{expression:Hns}
\mu^{-\half}H_{-\sfrak}(\Phins)={}&
\Delta (2\mu^{-1}-H)H \Lxi^2\Psins
+2r^2 (1-\mu H) \Lxi \prb\Psins\notag\\
&
+[M(2\mu^{-1}-H) -r \mu H
-\partial_r (\Delta H)]\Lxi\Psins.
\end{align}
Equation \eqref{eq:ellip:varphis:nega} is exactly in the same form as equation \eqref{eq:ellip:varphis:posi}, hence the same form of  \eqref{eq:ellip:f2trial} holds.
Then,  by taking $f_2=-r^{-3}\mu^{-1}$ and writing down all $\Phi_{-\sfrak}$ terms in terms of $\Psins$  using $\Phi_{-\sfrak}=\mu^{\half}\Psi_{-\sfrak}$,  we obtain
\begin{align}
\label{eq:ellip:f2trial:1:1:nega}
\hspace{4ex}&\hspace{-4ex}
r^{-3}(\abs{\edthR \Psins}^2+r \abs{\prb(\mu^{\half}\Psins)}^2)
+\partial_r(r^{-2}\mu^{-\half})
\mu r \Re(\overline{\Psins}\prb(\mu^{\half}\Psins))\notag\\
&+\prb(-r^{-1}\mu^{\half}\Re(\overline{\Psins}\prb(\mu^{\half}\Psins)) )\equiv-r^{-3}\mu^{-\half}
\Re(H_{-\sfrak}(\Phins)\overline{\Psins}).
\end{align}
Expanding out the LHS of \eqref{eq:ellip:f2trial:1:1:nega}, one finds the first line equals
\begin{align}
\hspace{4ex}&\hspace{-4ex}
r^{-3}(\abs{\edthR \Psins}^2-2\mu \abs{\Psins}^2)
+r^{-1} \abs{\mu^{\half}\prb\Psins}^2
-\prb(r^{-2}\mu \abs{\Psins}^2)
-r^{-1}\mu^{\half} \partial_r (\mu^{\half})\Re(\prb\Psins\overline{\Psins})\notag\\
={}&\frac{1}{r^3}\Big(\abs{\edthR \Psins}^2-\Big(\frac{3M}{2r}+2\mu\Big) \abs{\Psins}^2\Big)
+\frac{1}{r} \abs{\mu^{\half}\prb\Psins}^2
-\prb\Big(\half r^{-1}\mu^{\half}\partial_r(\mu^{\half})\abs{\Psins}^2
+r^{-2}\mu \abs{\Psins}^2\Big),
\end{align}
and the second line on the LHS is
\begin{align}
\prb(-r^{-1}\mu^{\half}\partial_r(\mu^{\half})\abs{\Psins}^2)
+\prb(-r^{-1}\mu \Re(\overline{\Psins}\prb\Psins))
\end{align}
Therefore, equation \eqref{eq:ellip:f2trial:1:1:nega} becomes
\begin{align}
&{}-\partial_\rho\Big(\frac{3M}{2r^3}|\Psins|^2+r^{-2}\mu|\Psins|^2\Big)
+r^{-3}\Big(|\edthR \Psins|^2
-2\mu|\Psins|^2-\frac{3M}{2r}|\Psins|^2
+|\Delta^{\half}\partial_\rho\Psins|^2\Big)\notag\\
&\equiv-r^{-3}\mu^{-\half}
\Re(H_{-\sfrak}(\Phins)\overline{\Psins}).
\end{align}
By integrating over $\Sigmatb$, the total derivative part equals $\frac{3}{2} Mr^{-3}\abs{\Psins}^2\vert_{r=2M}$, and this yields
\begin{align}\label{eq:degellip:Phins:ellbig2:1}
\int_{\Sigmatb}[r^{-3}(\abs{\edthR' \Psins}^2+ \abs{\Delta^{\half}\prb\Psins}^2)
]\di^3\mu
\lesssim{}&\int_{\Sigmatb}r^{-3}\abs{\mu^{-\half}H_{-\sfrak}(\Phins)}^2\di^3\mu.
\end{align}

In addition, we can take a square of both sides of \eqref{eq:ellip:varphis:nega}, multiply by $r^{-3}\mu^{-1}$, integrate over $\Sigmatb$, and arrive at
\begin{align}\label{integal:equ:Phins}
\hspace{4ex}&\hspace{-4ex}
\int_{\Sigmatb}r^{-3}\mu^{-1}\left(\abs{\edthR'\edthR\Phins}^2
+\abs{\Delta^{\half}\prb(\Delta^{\half}\prb\Phins )}^2
+2\Re(\Delta^{\half}\prb(\Delta^{\half}\prb\overline{\Phins} )\edthR'\edthR\Phins)\right)\di^3\mu\notag\\
={}&\int_{\Sigmatb}r^{-3}\abs{\mu^{-\half}H_{-\sfrak}(\Phins)}^2\di^3\mu.
\end{align}
For the third term on the LHS, it equals after applying integration by parts \begin{align}
\label{eq:balabala:ellbig2:new}
\hspace{4ex}&\hspace{-4ex}
\int_{\Sigmatb}r^{-3}\mu^{-1} 2\Re(\Delta^{\half}\prb(\Delta^{\half}\prb\overline{\Phins} )\edthR'\edthR\Phins)\di^3\mu\notag\\
={}&\int_{\Sigmatb}2r^{-2} \Re(\prb(\mu r\prb\overline{\Psins} +{M}{r}^{-1}\overline{\Psins})\edthR'\edthR\Psins)\di^3\mu\notag\\
={}&\int_{\Sigmatb}\Big[-\prb(2r^{-1}\mu\Re(\prb\edthR\Psins\overline{\edthR\Psins})
+Mr^{-3}|\edthR\Psins|^2)\notag\\
&\qquad+
2\mu r^{-1}|\prb\edthR\Psins|^2
-Mr^{-4}|\edthR\Psins|^2
-\frac{4\mu}{r^2}\Re(\prb\edthR\Psins\overline{\edthR\Psins})\Big]\di^3\mu.
\end{align}
The integral of the total derivative $\prb$ part in above equation is equals to $Mr^{-3}|\edthR\Psins|^2|_{r=2M}$, and we combine the above two equalities together. Note that $\int_{\Sphere}\abs{\edthR'\edthR\Phins}^2\di^2\mu\geq \int_{\Sphere}4\abs{\edthR\Phins}^2\di^2\mu$, hence the last term on the RHS of \eqref{eq:balabala:ellbig2:new} can be dominated by the other terms, and we obtain a similar estimate as \eqref{eq:degellip:Phips:ellbig2:2}:
\begin{align}
\label{eq:degellip:Phins:ellbig2:2:v1}
\int_{\Sigmatb}\frac{1}{r^3}\left(\abs{\edthR'\edthR\Psins}^2
+\abs{r\prb(\Delta^{\half}\prb\Phins )}^2
+\mu\abs{r\prb\edthR\Psins}^2
\right)\di^3\mu
\lesssim{}&\int_{\Sigmatb}\frac{1}{r^3}\abs{\mu^{-\half}H_{-\sfrak}(\Phins)}^2\di^3\mu.
\end{align}
For the second term on the LHS of \eqref{eq:degellip:Phins:ellbig2:2:v1}, one can expand it out, apply integration by parts for the product term,  and obtain
\begin{align}
\hspace{2ex}&\hspace{-2ex}
\int_{\Sigmatb}\frac{1}{r^3}\abs{r\prb(\Delta^{\half}\prb\Phins )}^2\di^3\mu\notag\\
={}&\int_{\Sigmatb}\Big[
\frac{1}{r^{3}}|\Delta^{\half}\prb(\Delta^{\half}\prb\Psins)|^2+\frac{8M}{r^4}|\Delta^{\half}
\prb\Psins|^2+
\frac{4M^2}{r^3}|\prb\Psins|^2\notag\\
&\qquad -\prb(M^2r^{-4}|\Psins|^2)-(3M^2r^{-5}-\prb(3M\mu r^{-3}))|\Psins|^2\Big]\di^3\mu.
\end{align}
The above two estimates together with \eqref{eq:degellip:Phins:ellbig2:1} yield
\begin{align}
\label{eq:degellip:Phins:ellbig2:2}
\hspace{4ex}&\hspace{-4ex}\int_{\Sigmatb}\frac{1}{r^3}\left(\abs{\edthR'\edthR\Psins}^2
+\abs{\Delta^{\half}\prb(\Delta^{\half}\prb\Psins )}^2
+\mu\abs{r\prb\edthR\Psins}^2
+\abs{\prb\Psins}^2
\right)\di^3\mu\notag\\
\lesssim{}&\int_{\Sigmatb}\frac{1}{r^3}\abs{\mu^{-\half}H_{-\sfrak}(\Phins)}^2\di^3\mu.
\end{align}
From \eqref{expression:Hns}, we have
\begin{align}
\int_{\Sigmatb}r^{-3}\abs{\mu^{-\half}H_{-\sfrak}(\Phins)}^2\di^3\mu
\lesssim{}&\int_{\Sigmatb}r^{-3}(\abs{\Lxi^2\Psins}^2
+\abs{r^2\Lxi\prb\Psins}^2
+\abs{\Lxi\Psins}^2)\di^3\mu,
\end{align}
thus
it holds that
\begin{align}
\label{eq:degellip:Phins:ellbig2}
\hspace{4ex}&\hspace{-4ex}
\int_{\Sigmatb}r^{-3}\Big(
\abs{\edthR'\edthR\Psins}^2
+\abs{\Delta^{\half}\prb(\Delta^{\half}\prb\Psins )}^2
+\mu\abs{r\prb\edthR\Psins}^2
+\abs{r\prb\Psins}^2
\Big)\di^3\mu\notag\\
\lesssim{}&\int_{\Sigmatb}r^{-3}(\abs{\Lxi^2\Psins}^2
+\abs{r^2\Lxi\prb\Psins}^2
+\abs{\Lxi\Psins}^2)\di^3\mu.
\end{align}
This estimate and the inequality \eqref{eq:degellip:Phips:ellbig2} together prove the estimate \eqref{eq:degellip:Phipns:ellbig2}. Moreover, by using the expression of $\PhinsHigh{1}$ in Definition \ref{def:tildePhiplusandminusHigh}, the RHS of \eqref{eq:degellip:Phins:ellbig2} is further bounded by $\norm{\Lxi\PhinsHigh{1}}^2_{W_{-3}^{0}(\Sigmatb^{\geq 4M})}
+\norm{\Lxi\Psins}^2_{W_{-3}^{1}(\Sigmatb)}$.

By commuting with $\Lxi$, $\edthR$, $\edthR'$ and $\Delta^{\half}\prb$, and by the above process for the spin $-\half$ component, one can obtain
\begin{align}
\label{eq:degellip:Phipns:ellbig2:Highorder:v1}
\hspace{6ex}&\hspace{-6ex}
\int_{\Sigmatb} r^{-3}\Big[\big(
\absCDerit{\Delta^{\half}\prb (\Delta^{\half}\prb\Psins)}{\reg}^2
+\absCDerit{\edthR'\edthR\Psins}{\reg}^2
+\mu\absCDerit{r\prb\edthR\Psins}{\reg}^2\big)\notag\\
\hspace{4ex}&\hspace{-4ex}\qquad \qquad
+\mu^{-\half}\big(\absCDerit{\Delta^{\half}\prb (\Delta^{\half}\prb \Phips)}{\reg}^2
+\absCDerit{\edthR\edthR'\Phips}{\reg}^2
+\mu\absCDerit{r\prb\edthR'\Phips}{\reg}^2\big)
\Big]\di^3\mu\notag\\
\lesssim_{\reg}{} &
\norm{\Lxi\Psips}^2_{W_{-3}^{\reg+2}(\Sigmatb)}
+\norm{\Lxi\PhinsHigh{1}}^2_{W_{-3}^{\reg}(\Sigmatb^{\geq 4M})}
+\norm{\Lxi\Psins}^2_{W_{-3}^{\reg+1}(\Sigmatb)}.
\end{align}
Note that on the RHS of \eqref{eq:degellip:Phipns:ellbig2:Highorder:v1}, one more regularity is needed for the $\Phips$ term compared to the $\Phins$ term since a Hardy's inequality is utilized to control the $\mu^{-\half}$ factor on the RHS of \eqref{eq:degellip:Phipns:ellbig2}. The estimate \eqref{eq:degellip:Phipns:ellbig2:Highorder} is manifest from \eqref{eq:degellip:Phipns:ellbig2:Highorder:v1}.
\end{proof}

\begin{remark}
\label{rem:elliphyper:ellgeq2:interior}
In this proof, we crucially rely on the elliptic estimate \eqref{eq:ellipsphe:ellgeq2:deg} on the sphere to dominate the last term by the other terms in the first line of equation \eqref{eq:ellip:f2trial}. For a higher $\ell$ mode, the estimate \eqref{eq:ellipsphe:ellgeq2:deg} can be improved to $\int_{S^2}\abs{\edthR' \Phips}^2\di^2\mu\geq \ell^2\int_{S^2}\abs{ \Phips}^2\di^2\mu$, and this yields that function $f_2$ can be chosen to decay even faster as $r\to +\infty$. With this observation, one can in fact derive faster decay in $\tau$ for a higher $\ell$ mode by allowing for an $r$ growth in the interior region $\{r\leq \tau\}$ and achieve almost sharp decay in the interior region. We will however not discuss this issue in depth in this work.
\end{remark}

Let us consider now the case that the $\ell_0$-th N--P constant of the $\ell_0$ mode does not vanish, with $\ell_0$ being the lowest mode of the spin $\pm \half$  components which does not vanish. By integrating the inequality \eqref{eq:degellip:Phipns:ellbig2:Highorder} over $[\tb,\infty)$, the RHS is bounded by $F^{(1)}(\reg+\regl,0,\tb,\Lxi\Psipns)$, which is in turn bounded by $\tb^{-3-2\ell_0+\delta}F^{(\ell_0)}(\reg+\regl(\ell_0),3-\delta,\tb_0,\Psipns)$ from Proposition \ref{prop:BED:Psiplus:l=1}. Hence, by making use of the inequality \eqref{eq:Sobolev:3}, we obtain for any $r\geq 2M$,
\begin{align}
\absCDerit{r^{-1}\Phips}{\reg}
+\absCDerit{r^{-1}\Psins}{\reg}
\lesssim_{\reg,\ell_0,\delta}{}&\tb^{-2-\ell_0+\delta/2}(F^{(\ell_0)}(\reg+\regl(\ell_0),3-\delta,\tb_0,\Psipns))^{\half}.
\end{align}
For the other case that the $\ell_0$-th N--P constant of the $\ell_0$ mode vanishes, one can argue in a similar way.
In the end, we combine these pointwise estimates with the ones in Proposition \ref{prop:BED:Psiplus:l=1} and conclude the following pointwise decay estimates.
\begin{prop}
\label{prop:almostpricelaw:ellgeq2}
Let the lowest mode of the spin $\pm \half$ components be the $\ell_0$ mode with $\ell_0\geq 2$.
\begin{itemize}
  \item If the $\ell_0$-th N--P constant of the $\ell_0$ mode does not vanish, then  for any $\reg\in \mathbb{N}$ and $j\in\mathbb{N}$, there exists $\regl=\regl(j,\ell_0)>0$ such that for any $\tb\geq\tb_0$ and any $1< p\leq 3-\delta$,
      \begin{subequations}
\begin{align}
\absCDerit{ r^{-1}\Lxi^j\Phips}{\reg}
\lesssim_{\reg, j,p,\ell_0}{}&v^{-2}\tb^{-(1+p)/2-(\ell_0-2)-j}
(F^{(\ell_0)}(\reg+\regl,p,\tb_0,\Psipns))^{\half},\\
\absCDerit{r^{-1}\Lxi^j\Psins}{\reg}
\lesssim_{\reg, j,p,\ell_0}{}&v^{-1}\tb^{-(1+p)/2-(\ell_0-1)-j}
(F^{(\ell_0)}(\reg+\regl,p,\tb_0,\Psipns))^{\half};
\end{align}
\end{subequations}
  \item If the $\ell_0$-th N--P constant of the $\ell_0$ mode vanishes, then  for any $\reg\in \mathbb{N}$ and $j\in\mathbb{N}$, there exists $\regl=\regl(j,\reg)>0$ such that for any $\tb\geq\tb_0$ and any $1< p\leq 5-\delta$,
 \begin{subequations}
\begin{align}
\absCDerit{ r^{-1}\Lxi^j\Phips}{\reg}
\lesssim_{\reg, j,p}{}&v^{-2}\tb^{-(1+p)/2-(\ell_0-2)-j}
(F^{(\ell_0)}(\reg+\regl,p,\tb_0,\Psipns))^{\half},\\
\absCDerit{r^{-1}\Lxi^j\Psins}{\reg}
\lesssim_{\reg, j,p}{}&v^{-1}\tb^{-(1+p)/2-(\ell_0-1)-j}
(F^{(\ell_0)}(\reg+\regl,p,\tb_0,\Psipns))^{\half}.
\end{align}
\end{subequations}
\end{itemize}
\end{prop}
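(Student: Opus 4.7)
The plan is to combine the hyperboloidal elliptic estimate~\eqref{eq:degellip:Phipns:ellbig2:Highorder} with the almost sharp energy decay of Proposition~\ref{prop:BED:Psiplus:l=1} and the Sobolev inequality~\eqref{eq:Sobolev:3}, in the spirit of the brief argument sketched in the paragraph immediately preceding the proposition statement. Since $\Lxi$ is Killing and preserves the $\ell_0$ mode, I reduce to $j=0$ throughout at the cost of applying the energy-decay estimate to $\Lxi^{j+1}\Psipns$ and $\Lxi^{j+2}\Psipns$ in place of $\Psipns$.

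The first step is to apply~\eqref{eq:degellip:Phipns:ellbig2:Highorder} slicewise to $\Lxi^j\Phips$ and $\Lxi^j\Psins$, and integrate the resulting inequality in $\tb$ over $[\tb,\infty)$, yielding a spacetime bound
\begin{align*}
\norm{\Lxi^j\Phips}_{W_{-3}^{\reg}(\DOC_{\tb,\infty})}^2 + \norm{\Lxi^j\Psins}_{W_{-3}^{\reg}(\DOC_{\tb,\infty})}^2 \lesssim F^{(1)}(\reg+\regl, 0, \tb, \Lxi^{j+1}\Psipns),
\end{align*}
together with the analogue with one extra $\Lxi$ on both sides. Proposition~\ref{prop:BED:Psiplus:l=1} applied to $\Lxi^{j+1}\Psipns$ (resp.\ $\Lxi^{j+2}\Psipns$) with $p'=0$ bounds the RHS by $\tb^{-2\ell_0-2j-p}$ (resp.\ $\tb^{-2\ell_0-2j-p-2}$) times $F^{(\ell_0)}(\reg+\regl',p,\tb_0,\Psipns)$, for any admissible $p$ ($p\in(1,3-\delta]$ in the nonvanishing N--P case, $p\in(1,5-\delta]$ in the vanishing case). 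Substituting into Sobolev~\eqref{eq:Sobolev:3} then yields the uniform pointwise bound
\begin{align*}
\absCDerit{r^{-1}\Lxi^j\Phips}{\reg} + \absCDerit{r^{-1}\Lxi^j\Psins}{\reg} \lesssim \tb^{-\ell_0-j-(p+1)/2}\,(F^{(\ell_0)}(\ldots,p,\tb_0,\Psipns))^{1/2},
\end{align*}
which already matches the proposition's claimed rate in the interior region $\{r \leq \tb\}$ where $v \sim \tb$.

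To supplement this with the correct $r$-weights in the exterior $\{r \geq \tb\}$, I would use the almost sharp pointwise peeling estimates~\eqref{ptw:BEDC:Phiplus:l=1:0to3}/\eqref{ptw:BEDC:Phiplus:l=1:0to5}. For $r^{-1}\Lxi^j\Phips = \Lxi^j\chi_0$, the peeling bound $|\Lxi^j\psips| \lesssim v^{-1}\tb^{-\ell_0-j+(3-p)/2}$ together with $r \sim v$ yields immediately $|r^{-1}\Lxi^j\Phips| \lesssim v^{-2}\tb^{-\ell_0-j+(3-p)/2}$ in the exterior, matching the claim modulo the $\delta/2$ loss. For $r^{-1}\Lxi^j\Psins = \Lxi^j\psins$, the additional factor of $\tb^{-1}$ beyond peeling is obtained by integrating $V\Phins = \mu^{-1/2}r^{-2}\PhinsHigh{1}$ along $u = \text{const}$ from null infinity: the peeling estimate for $\mu^{1/2}r^{-1}\PhinsHigh{1}$ contained in~\eqref{ptw:BEDC:Phiplus:l=1:0to3} gives $|V\Lxi^j\Phins| \lesssim r^{-1}v^{-1}\tb^{-\ell_0-j+(3-p)/2}$ away from the horizon, and integration from $v = \infty$ (with $\Phins$ vanishing at null infinity, as ensured by Theorem~\ref{thm:uniformenergybd}) produces $|\Lxi^j\Phins(u,v)| \lesssim v^{-\ell_0-1+(3-p)/2}$, hence $|\Lxi^j\psins| \lesssim v^{-\ell_0-2+(3-p)/2}$ in the exterior, which is majorized by the claim $v^{-1}\tb^{-\ell_0-j-(p-1)/2}$ because $v \geq \tb$. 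The interior and exterior regimes are glued using $v \sim \max(\tb,r)$, and the higher-order bounds in the $\CDerit$ derivatives follow by commuting $\Lxi$, $\edthR$, $\edthR'$, $\Delta^{1/2}\prb$ through and applying the same scheme to the commuted scalars.

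The main obstacle I foresee is not the central argument, which is a fairly routine application of the elliptic-plus-energy-decay-plus-Sobolev chain supplemented by the peeling-based exterior bounds, but rather: (i) the degeneration of the weight $\mu^{-1/2}$ in~\eqref{eq:degellip:Phipns:ellbig2:Highorder} on the $\Phips$ side near the event horizon, which forces~\eqref{eq:Sobolev:3} to be applied only in the region $\{r \geq (r_+ + 2M)/2\}$ via~\eqref{eq:Sobolev:4} and to be complemented by the red-shift estimate~\eqref{eq:redshift:psipns} in a bounded neighborhood of $\Horizon$ in order to obtain a pointwise bound valid up to and including the horizon; and (ii) careful bookkeeping of the $\delta$ losses and of the various derivative losses $\regl$ that accumulate through the commutations above, together with matching the interior and exterior bounds at $r \sim \tb$ with the correct $\delta$-exponent.
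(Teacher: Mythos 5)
Your overall scheme — elliptic estimate on the hyperboloidal slices, fed by the $F^{(1)}(\cdot,0,\cdot,\Lxi^{j+1}\Psipns)$ decay from Proposition~\ref{prop:BED:Psiplus:l=1} and converted to a pointwise bound via~\eqref{eq:Sobolev:3}, then glued to the peeling estimates~\eqref{ptw:BEDC:Phiplus:l=1:0to3}/\eqref{ptw:BEDC:Phiplus:l=1:0to5} in the exterior — is the same approach the paper sketches, and the $\Phips$ part goes through exactly as you say: the peeling estimate for $\psips=\Phips$ already carries the $v^{-1}$ radial weight, so dividing by $r\sim v$ gives $v^{-2}\tb^{-\ell_0-j+(3-p)/2}$, which matches the claimed rate term by term. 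However, your treatment of the exterior bound for $r^{-1}\Lxi^j\Psins=\Lxi^j\psins$ has a genuine gap, and a secondary algebraic slip.

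The gap is the statement that $\Phins$ vanishes at null infinity. It does not: $\Phins=\Delta^{1/2}\psins\sim r\psins$, and the peeling bound $|\psins|\lesssim v^{-1}\tb^{-\ell_0-j+(3-p)/2}$ only shows that $\Phins$ remains bounded as $r\to\infty$ along $u=\text{const}$; the limit is precisely the radiation field, which is generically nonzero. Theorem~\ref{thm:uniformenergybd} gives a finite flux $\int_{\Scrionetwo}|\Phins^{(\mathbf{a})}|^2\,\di u\,\di^2\mu$, an $L^2_u$ statement at $\Scri$, not pointwise vanishing. Consequently integrating $V\Phins$ from $v=\infty$ does not bound $|\Phins(u,v)|$ at all — you would need $|\Phins(u,\infty)|$ as a boundary term, which is exactly what you are trying to control. (You also drop the $\tb$ factor in the conclusion of this integration, replacing $\tb^{-\ell_0-j+(3-p)/2}$ by $v^{-\ell_0-j+(3-p)/2}$ without justification; along a fixed $u$ line one has $\tb\sim u$, so the $\tb$ weight persists.) The correct integration is outward from the timelike boundary $\{r\sim\tb\}$, not inward from $\Scri$: take as boundary value the interior/elliptic bound $|\Psins|\lesssim\tb\cdot\tb^{-\ell_0-j-(p+1)/2-1}$ at $r\sim\tb$, then integrate $|V\Psins|\lesssim r^{-1}v^{-1}\tb^{-\ell_0-j+(3-p)/2}$ (from the peeling bound on $\mu^{1/2}r^{-1}\PhinsHigh{1}$) along $u=\text{const}$ from $v_0\sim 2\tb$ to $v$; the integral $\int_{v_0}^{v}(v'-u)^{-1}(v')^{-1}\,\di v'\lesssim\tb^{-1}$ supplies precisely the missing $\tb^{-1}$, giving $|\Psins|\lesssim\tb^{-\ell_0-j-(p-1)/2}$ and hence $|\psins|\lesssim v^{-1}\tb^{-\ell_0-j-(p-1)/2}$ in the exterior. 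This is the pattern of the more detailed $\ell=1$ computation in Section~\ref{sect:almostPricelaw:ell=1}, which you should mirror.

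The secondary slip: $V\Phins=\mu\VR\Phins=\mu r^{-2}\PhinsHigh{1}$ (not $\mu^{-1/2}r^{-2}\PhinsHigh{1}$). Equivalently, since $\edthR'\Phips=\Delta^{1/2}\VR\Phins$, one has $\mu^{1/2}r^{-1}\PhinsHigh{1}=\edthR'\Phips$, so the peeling bound on $\mu^{1/2}r^{-1}\PhinsHigh{1}$ you invoke is the same as the one on $\psips$; either form is fine to use, but the coefficient of $\mu$ matters when you want the $V\Psins$ bound near infinity.
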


%%%%%%%%%%%%%%%
\subsection{Further energy decay and almost Price's law for $\ell=1$ mode}
\label{sect:almostPricelaw:ell=1}
%%%%%%%%%%%%%%%%%%

In a similar way, we prove an elliptic-type energy estimate for the $\ell=1$ mode. 

\begin{prop}
\label{prop:degellip:Phips:ell=2:new}
Assume the spin $\pm \half$ components are supported on $\ell=1$ mode. Then
for any $\reg\in\mathbb{N}$,
\begin{align}
\label{eq:degellip:Phipns:ell=1:Highorder}
\hspace{6ex}&\hspace{-6ex}
\int_{\Sigmatb} r^{-2}\Big(
\absCDerit{\Psins}{\reg+1}^2
+\mu^{-\half}\absCDerit{\Phips}{\reg+1}^2
\Big)\di^3\mu\notag\\
\lesssim_{\reg}{} &
\norm{\Lxi\Psips}^2_{W_{-2}^{\reg+2}(\Sigmatb)}
+\norm{\Lxi\PhinsHigh{1}}^2_{W_{-2}^{\reg}(\Sigmatb^{\geq 4M})}
+\norm{\Lxi\Psins}^2_{W_{-2}^{\reg+1}(\Sigmatb)}.
\end{align}
\end{prop}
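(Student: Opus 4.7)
The plan is to mimic the strategy used for the $\ell\geq 2$ modes in Proposition \ref{prop:degellip:Phips:ellbig2}, adjusting the choice of radial multiplier to account for the fact that the spherical gap is now only $1$ instead of $4$: for $\ell=1$, equation \eqref{eq:l=l0mode:eigenvalue} gives $\int_{\Sphere}|\edthR'\Phips|^2\di^2\mu=\int_{\Sphere}|\Phips|^2\di^2\mu$ and $\int_{\Sphere}|\edthR\Phins|^2\di^2\mu=\int_{\Sphere}|\Phins|^2\di^2\mu$, so the comparison between the angular term and the zero-order term that drove the argument for $\ell\geq 2$ now carries no slack. I would compensate this loss by using a less negative $r$-weight, namely $r^{-2}$ in place of $r^{-3}$, and by exploiting $\mu\leq 1$ throughout $\DOC$ to restore the needed positivity.

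First I would rewrite the wave equations \eqref{eq:TME:varphis} in the elliptic form already used above: identity \eqref{def:vectorVRintermsofprb} gives \eqref{eq:ellip:varphis:posi} and \eqref{eq:ellip:varphis:nega}, in which all time derivatives are collected into the source terms $H_{\pm\sfrak}$. For the spin $\sfrak$ equation, I would multiply \eqref{eq:ellip:varphis:posi} by $-\mu^{-\Half}r^{-2}\overline{\Phips}$, take the real part and integrate over $\Sigmatb$; following the derivation of \eqref{eq:ellip:f2trial} verbatim with the new weight $f_2=\mu^{-\Half}r^{-2}$ produces on the left the positive terms $\mu^{-\Half}r^{-2}|\edthR'\Phips|^2+\mu^{\Half}|\prb\Phips|^2$, a boundary total derivative in $\prb$, and a cross term of the schematic form $r^{-1}\mu^{\Half}\Re(\overline{\Phips}\prb\Phips)$. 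A Cauchy--Schwarz split of this cross term with a small constant $\epsilon$ bounds the $|\prb\Phips|^2$ part by the coercive $\mu^{\Half}|\prb\Phips|^2$ above, while the zero-order part becomes $\epsilon^{-1}\mu^{\Half}r^{-2}|\Phips|^2$; sphere integration for $\ell=1$ then replaces the $|\edthR'\Phips|^2$ term by $|\Phips|^2$ with weight $\mu^{-\Half}r^{-2}$, and the missing factor is precisely $\mu\leq 1$, so $\epsilon$ can be chosen to absorb the defect. This yields the first-order estimate
\begin{align*}
\int_{\Sigmatb}r^{-2}\mu^{-\Half}\big(|\edthR'\Phips|^2+\mu|r\prb\Phips|^2\big)\di^3\mu
\lesssim{}&\int_{\Sigmatb}r^{-2}\mu^{-\Half}|H_{\sfrak}(\Phips)|^2\di^3\mu,
\end{align*}
with the right-hand side controlled, by the explicit form of $H_{\sfrak}$, by $\|\Lxi\Psips\|^2_{W^{2}_{-2}(\Sigmatb)}$. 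The spin $-\sfrak$ component is handled identically from \eqref{eq:ellip:varphis:nega} with the analogue $f_2=-r^{-2}\mu^{-1}$ used in Proposition \ref{prop:degellip:Phips:ellbig2}, giving the same kind of bound in terms of $\|\Lxi\Psins\|^2_{W^{1}_{-2}(\Sigmatb)}+\|\Lxi\PhinsHigh{1}\|^2_{W^{0}_{-2}(\Sigmatb^{\geq 4M})}$ after writing $V\Psins$ in terms of $\PhinsHigh{1}$.

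Next, to reach two radial/angular derivatives on the left I would square \eqref{eq:ellip:varphis:posi}, weight by $\mu^{-\Half}r^{-2}$, integrate over $\Sigmatb$, and handle the cross term $\Re(\Delta^{\Half}\prb(\Delta^{\Half}\prb\overline{\Phips})\edthR\edthR'\Phips)$ by the same integration by parts used for \eqref{eq:balabala:ellbig2}. This produces $\mu^{-\Half}\mu r^{-1}|\prb\edthR'\Phips|^2$ as the main positive bulk contribution together with a zero-order remainder in $|\edthR'\Phips|^2$; since $\edthR'\Phips$ is a spin-weight $-\sfrak$ scalar supported on $\ell=1$, we again have $\int_{\Sphere}|\edthR\edthR'\Phips|^2\di^2\mu=\int_{\Sphere}|\edthR'\Phips|^2\di^2\mu$ and the same $\mu\leq 1$ trick handles the $\ell=1$ deficit. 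The corresponding second-order estimate for $\Psins$ proceeds exactly as in the proof of \eqref{eq:degellip:Phins:ellbig2}, with the weight adjusted from $r^{-3}$ to $r^{-2}$. Finally, I would obtain the stated high-order estimate \eqref{eq:degellip:Phipns:ell=1:Highorder} by commuting the equations with the spin-weight-preserving operators $\Lxi,\edthR,\edthR',\Delta^{\Half}\prb$ (all of which preserve the $\ell=1$ support) and iterating the above argument; the need for two extra radial regularities on the $\Phips$ side, as opposed to one for $\Psins$ and zero for $\PhinsHigh{1}$, is inherited from the extra $\mu^{-\Half}$ weight and is controlled through a Hardy estimate \eqref{eq:HardyIneqLHSRHS} at the horizon, exactly as in \eqref{eq:degellip:Phipns:ellbig2:Highorder:v1}.

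The main obstacle is the tightness of the $\mu\leq 1$ absorption: unlike the $\ell\geq 2$ case, where a factor of $4$ leaves room to spare, here the estimate closes only because $\mu<1$ and because the radial weight has been lowered to $r^{-2}$; any attempt to push the weight back to $r^{-3}$ (which would be needed to mimic Proposition \ref{prop:degellip:Phips:ellbig2} verbatim) fails at null infinity where $\mu\to 1$. This is precisely why the $\ell=1$ mode is estimated separately, and why the almost-sharp decay obtained for it in the subsequent subsections is strictly slower than the decay of the higher modes.
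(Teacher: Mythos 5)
Your proposal is correct and takes essentially the same approach as the paper: the same multiplier $f_2=\mu^{-\half}r^{-2}$ (and $-\mu^{-1}r^{-2}$ for the spin $-\sfrak$ side), the same absorption of the cross term by Cauchy--Schwarz using the $\ell=1$ eigenvalue, the same squaring-plus-integration-by-parts step for two radial derivatives, the same commutation scheme for higher order, and the same reason (spelled out in the paper's Remark~\ref{rem:ell=1andellhigh:diff:ellip}) why the $r^{-3}$ weight fails near null infinity. One small caution: the Cauchy--Schwarz parameter $\epsilon$ cannot be taken arbitrarily small — you need $\mu/2<\epsilon<2$, so a fixed choice such as $\epsilon=1$ works uniformly; the statement that the zero-order remainder is $\epsilon^{-1}\mu^{\half}r^{-2}|\Phips|^2$ with $\epsilon$ ``small'' would make that term uncontrollable in the region $\mu\sim 1$, so the absorption really comes from $\mu<4$ rather than a delicate $\mu\leq1$ borderline.
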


\begin{proof}
If the spin $\pm \half$ components are supported on $\ell=1$ mode, then equality \eqref{eq:ellip:f2trial} becomes
\begin{align}
\label{eq:ellip:f2trial:ell=1}
\hspace{4ex}&\hspace{-4ex}
\prb(\Re(-f_2\overline{\Phips}\Delta\prb\Phips ))
+f_2\abs{ \Phips}^2+f_2 \Delta \abs{\prb\Phips}^2
+\partial_r (f_2 \Delta^{\half})
\Re(\overline{\Phips}\Delta^{\half}\prb\Phips)\notag\\
\equiv{}&-f_2\Re(H_{\sfrak}(\Phips)\overline{\Phips}).
\end{align}
By take $f_2=\mu^{-\half} r^{-2}$, the above equation \eqref{eq:ellip:f2trial:ell=1} becomes
\begin{align}
\label{eq:ellip:f2trial:1}
\hspace{4ex}&\hspace{-4ex}
\prb(-r^{-1}\Delta^{\half}\Re(\overline{\Phips}\prb\Phips) )
+r^{-1}(\Delta^{-\half}\abs{ \Phips}^2+\Delta^{\half} \abs{\prb\Phips}^2)
-r^{-2}\Delta^{\half}\Re(\overline{\Phips}\prb\Phips)\notag\\
\equiv{}&-r^{-1}\Delta^{-\half}
\Re(H_{\sfrak}(\Phips)\overline{\Phips}).
\end{align}
After integrating over $\Sigmatb$, the first term on the LHS vanishes, and on the LHS, the middle term dominates over the third term by Cauchy--Schwarz inequality, thus we arrive at
\begin{align}
\label{eq:ellip:l=1:ps:1}
\int_{\Sigmatb}r^{-2}(\mu^{-\half}\abs{ \Phips}^2+\mu^{\half} \abs{r\prb\Phips}^2)\di^3 \mu
\lesssim{}&\int_{\Sigmatb} \abs{r^{-2}\mu^{-\half}
\Re(H_{\sfrak}(\Phips)\overline{\Phips})}\di^3\mu.
\end{align}
By using the Cauchy--Schwarz inequality for the RHS of this estimate and in view of equation \eqref{eq:ellip:varphis:posi},  one achieves
\begin{align}
\label{eq:degellip:Phips:ell=1}
\hspace{4ex}&\hspace{-4ex}
\int_{\Sigmatb}r^{-2}\mu^{-\half}\left(\abs{\Phips}^2
+\abs{\Delta^{\half}\prb(\Delta^{\half}\prb\Phips )}^2
+\mu\abs{r\prb\Phips}^2\right)\di^3\mu\notag\\
\lesssim{}&\int_{\Sigmatb}r^{-2}\mu^{-\half}\abs{H_{\sfrak}(\Phips)}^2\di^3\mu\notag\\
\lesssim{}&\int_{\Sigmatb}r^{-2}\mu^{-\half}(\abs{\Lxi^2\Phips}^2
+\abs{r^2\Lxi\prb\Phips}^2
+\abs{r\Lxi\Phips}^2)\di^3\mu.
\end{align}
Similarly as in the proof of Proposition \ref{prop:degellip:Phips:ellbig2}, we have for the spin $-\half$ component that
\begin{align}
\label{eq:degellip:Phins:ell=1}
\hspace{4ex}&\hspace{-4ex}
\int_{\Sigmatb}r^{-2}\Big(
\abs{\Psins}^2
+\abs{\Delta^{\half}\prb(\Delta^{\half}\prb\Psins )}^2
+\mu\abs{r\prb\Psins}^2
\Big)\di^3\mu\notag\\
\lesssim{}&\int_{\Sigmatb}r^{-2}\mu^{-1}\abs{H_{-\sfrak}(\Phins)}^2\di^3\mu\notag\\
\lesssim{}&\int_{\Sigmatb}r^{-2}(\abs{\Lxi^2\Psins}^2
+\abs{r^2\Lxi\prb\Psins}^2
+\abs{r^{-1}\Lxi\Psins}^2)\di^3\mu,
\end{align}
where in the last step we have used the expression \eqref{expression:Hns}.
The essentially same proof of Proposition \ref{prop:degellip:Phips:ellbig2} but with $f_2=-\mu^{-1}r^{-2}$ then yields
\begin{align}
\label{eq:degellip:Phipns:ell=1}
\hspace{4ex}&\hspace{-4ex}
\int_{\Sigmatb}r^{-2}\Big[\mu^{-\half}\big(\abs{ \Phips}^2
+\mu \abs{r\prb\Phips}^2
+\abs{\Delta^{\half}\prb(\Delta^{\half}\prb\Phips )}^2)
\notag\\
&\qquad \qquad
+\big(\abs{\Psins}^2
+\abs{\Delta^{\half}\prb(\Delta^{\half}\prb\Psins )}^2
+\mu\abs{r\prb\Psins}^2
\big)\Big]\di^3\mu\notag\\
\lesssim{}&\int_{\Sigmatb}r^{-2}\Big[\mu^{-\half}(\abs{\Lxi^2\Phips}^2
+\abs{r^2\Lxi\prb\Phips}^2
+\abs{r\Lxi\Phips}^2)\notag\\
&\qquad\qquad
+(\abs{\Lxi^2\Psins}^2
+\abs{r^2\Lxi\prb\Psins}^2
+\abs{r^{-1}\Lxi\Psins}^2)\Big]\di^3\mu,
\end{align}
and thus the estimate \eqref{eq:degellip:Phipns:ell=1:Highorder} for any $\reg\in \mathbb{N}$.
\end{proof}

\begin{remark}
\label{rem:ell=1andellhigh:diff:ellip}
If we choose $f_2=\mu^{-\half}r^{-3}$ in equation \eqref{eq:ellip:f2trial:ell=1} as in the $\ell\geq 2$ case, the last three terms in the first line equal
$$
\mu^{-\half}r^{-3}\abs{ \Phips}^2+\mu^{\half}r^{-3}\abs{r\prb\Phips}^2
-2r^{-3}
\Re(\overline{\Phips}\Delta^{\half}\prb\Phips)=\mu^{-\half}r^{-3}(1-\mu)
\abs{ \Phips}^2+\mu^{\half}r\abs{\prb (r^{-1}\Phips)}^2.
$$
After integrating over $\Sigmatb$, we obtain an analogous estimate to the estimate \eqref{eq:ellip:l=1:ps:1}
\begin{align}
\int_{\Sigmatb}\Big(\mu^{-\half}r^{-3}(1-\mu)
\abs{ \Phips}^2+\mu^{\half}r\abs{\prb (r^{-1}\Phips)}^2\Big)\di^3 \mu
\lesssim{}&\int_{\Sigmatb} \abs{r^{-3}\mu^{-\half}
\Re(H_{\sfrak}(\Phips)\overline{\Phips})}\di^3\mu.
\end{align}
By applying the Cauchy--Schwarz to the RHS, this does not provide a desired elliptic estimate because of the degenerate factor $1-\mu$ near infinity, thus the same choice of function $f_2$ as in the $\ell\geq 2$ case does not work anymore in the $\ell=1$ case, and this is the main reason that we treat the $\ell=1$ case and the $\ell\geq 2$ case separately.
\end{remark}

Let us consider the case that the first N--P constant of the $\ell=1$ mode does not vanish. The RHS of \eqref{eq:degellip:Phipns:ell=1:Highorder} is bounded by $F^{(1)}(\reg+\regl,0,\tb,\Lxi\Psipns)$, which is in turn bounded by $\tb^{-5+\delta}F^{(1)}(\reg+\regl,3-\delta,\tb_0,\Psipns)$ from Proposition \ref{prop:BED:Psiplus:l=1}. Hence, by making use of the inequality \eqref{eq:Sobolev:1},  we obtain for any $r\geq r_+$,
\begin{align}
\label{almostPrice:posi:weak:NPnotvanish}
r^{\half}(\absCDerit{r^{-1}\Phips}{\reg}
+\absCDerit{ r^{-1}\Psins}{\reg})
\lesssim_{\delta,\reg}{}&\tb^{-5/2+\delta/2}(F^{(1)}(\reg+\regl,3-\delta,\tb_0,\Psipns))^{\half}.
\end{align}
For the other case that the first N--P constant of the $\ell=1$ mode vanishes, we can similarly obtain
\begin{align}
\absCDerit{ r^{-1}\Lxi^j\Phips}{\reg}
+\absCDerit{r^{-1}\Lxi^j\Psins}{\reg}
\lesssim_{\reg, j,\delta}{}&r^{-\half}\tb^{-7/2-j+\delta/2}(F^{(1)}(\reg+\regl,5-\delta,\tb_0,\Psipns))^{\half}.
\end{align}

We shall now further improve these pointwise estimates. We consider first the case that the first N--P constant does not vanish. Let us focus on the interior region where $\{\rb\leq\tb\}$. In this case, the following pointwise decay estimates hold
\begin{align}
\absCDerit{ r^{-1}\Lxi^j\Phips}{\reg}\lesssim {} v^{-2}r^{-\half}\tb^{-\half-j+\delta/2}
(F^{(1)}(\reg+\regl,3-\delta,\tb_0,\Psipns))^{\half}.
\end{align}
The wave equation \eqref{eq:ellip:varphis:posi} simplifies to
\begin{align}
-  \Phips
+\Delta^{\half}\prb(\Delta^{\half}\prb\Phips )={}&H_{\sfrak}(\Phips).
\end{align}
For $\varphi_{\sfrak}=(r-M)^{-1} \Phips$, the above equation reduces to
\begin{align}
\label{eq:prbprbPhins:ell=1:impro}
(r-M)^{-1}\Delta^{\half}\prb((r-M)^2\Delta^{\half}\prb\varphi_{\sfrak})
=H_{\sfrak}(\Phips).
\end{align}
Since
\begin{align}
\label{eq:sourceterm:weak:ell1:improve}
\abs{H_{\sfrak}(\Phips)}
+\abs{\rho\prb(H_{\sfrak}(\Phips))}\lesssim_{\delta} r^2 v^{-2}r^{-\half}\tb^{-\frac{3}{2}+\delta/2}
(F^{(1)}(\regl,3-\delta,\tb_0,\Psipns))^{\half},
\end{align}
one can integrate the above equation from horizon to obtain
\begin{align}
\abs{(r-M)^2\Delta^{\half}\prb\varphi_{\sfrak}}
\lesssim_{\delta} {}&\Delta^{\half} r^2 v^{-2}r^{-\half}\tb^{-\frac{3}{2}+\delta/2}
(F^{(1)}(\regl,3-\delta,\tb_0,\Psipns))^{\half},
\end{align}
that is,
\begin{align}
\abs{\prb\varphi_{\sfrak}}\lesssim_{\delta} {}&
v^{-2}r^{-\half}\tb^{-\frac{3}{2}+\delta/2}
(F^{(1)}(\regl,3-\delta,\tb_0,\Psipns))^{\half},
\end{align}
Integrating now from $\rb=\tb$ then gives that in the interior region
\begin{align}
\abs{\Lxi^j \varphi_{\sfrak}}\lesssim_{\delta}  {} v^{-2}\tb^{-1-j+\delta/2}
(F^{(1)}(\regl,3-\delta,\tb_0,\Psipns))^{\half}.
\end{align}
One substitutes this back into \eqref{eq:prbprbPhins:ell=1:impro} and finds $\abs{H_{\sfrak}(\Phips)}\lesssim_{\delta}  r^2 v^{-2}\tb^{-2+\delta/2}
(F^{(1)}(\regl,3-\delta,\tb_0,\Psipns))^{\half}$, thus applying the above discussions again gives an improved estimate for the radial derivative
\begin{align}
\abs{(r-M)^2\Delta^{\half}\prb\varphi_{\sfrak}}
\lesssim_{\delta} {}&\Delta^{\half} r^2 v^{-2}\tb^{-2+\delta/2}
(F^{(1)}(\regl,3-\delta,\tb_0,\Psipns))^{\half}.
\end{align}
This is equivalent to $\abs{\prb\varphi_{\sfrak}}\lesssim_{\delta}  v^{-2}\tb^{-2+\delta/2}
(F^{(1)}(\regl,3-\delta,\tb_0,\Psipns))^{\half}$, and applying $\Lxi$ gives extra $\tb^{-1}$ decay, i.e.
\begin{align}
\abs{\Lxi^j\prb\varphi_{\sfrak}}\lesssim_{\delta} {}&v^{-2}\tb^{-2-j+\delta/2}
(F^{(1)}(\regl,3-\delta,\tb_0,\Psipns))^{\half}.
\end{align}
Applying $(\Delta^{\half}\prb)^i$ to equation \eqref{eq:ellip:varphis:posi} and since $\Delta^{\half}\prb$ commutes with the LHS of \eqref{eq:ellip:varphis:posi}, one obtains
\begin{align}
\label{eq:highorder:prbprbPhins:ell=1:impro}
\edthR\edthR'   ((\Delta^{\half}\prb)^i\Phips)
+\Delta^{\half}\prb(\Delta^{\half}
\prb((\Delta^{\half}\prb)^i\Phips ))={}&(\Delta^{\half}\prb)^i (H_{\sfrak}(\Phips)).
\end{align}
Thus, one arrives at the equation \eqref{eq:prbprbPhins:ell=1:impro} but with $(r-M)^{-1}(\Delta^{\half}\prb)^i \Phips$ and $(\Delta^{\half}\prb )^i(H_{\sfrak}(\Phips))$ in place of $\varphi_{\sfrak}$ and $H_{\sfrak}(\Phips)$ respectively. In particular, one has a similar estimate as \eqref{eq:sourceterm:weak:ell1:improve} for the RHS of \eqref{eq:highorder:prbprbPhins:ell=1:impro}. The above discussions for $i=0$ go through here for general $i\in \mathbb{N}$, and we obtain
\begin{align}
\abs{\Lxi^j ((r-M)^{-1}(\Delta^{\half}\prb)^i\Phips)}\lesssim_{\delta,j}  {} v^{-2}\tb^{-1-j+\delta/2}
(F^{(1)}(\regl+i,3-\delta,\tb_0,\Psipns))^{\half}.
\end{align}
As a result, one has
\begin{align}
\absCDerit{ \Lxi^j\varphi_{\sfrak}}{\reg}
\lesssim_{\reg, j,\delta}{}&v^{-2}\tb^{-1-j+\delta/2}
(F^{(1)}(\reg+\regl,3-\delta,\tb_0,\Psipns))^{\half}.
\end{align}

We can similarly treat the case that the first N--P constant vanishes and obtain
\begin{subequations}
\label{eq:almostPrice:psips:1}
\begin{align}
\abs{\Lxi^j (r^{-1}\Phips)}\lesssim_{\delta,j}  {} & v^{-2}\tb^{-2-j+\delta/2}(F^{(1)}(\regl,5-\delta,\tb_0,\Psipns))^{\half}, \\ \abs{\Lxi^j\prb\varphi_{\sfrak}}\lesssim_{\delta,j}{}&v^{-3}\tb^{-2-j+\delta/2}
(F^{(1)}(\regl,5-\delta,\tb_0,\Psipns))^{\half}.
\end{align}
\end{subequations}

Turn to the spin $-\half$ component. Consider the case that the first N--P constant does not vanish. Similarly, we consider only the interior region $\{\rb\leq \tb\}$. Equation \eqref{eq:ellip:varphis:nega} then simplifies to
\begin{align}
\label{eq:simpleformofeqofpsins}
\prb(\mu^{\frac{3}{2}}r^3\prb\psins)=H_{-\sfrak}(\Phins).
\end{align}
From the estimates \eqref{ptw:BEDC:Phiplus:l=1:0to3} and \eqref{almostPrice:posi:weak:NPnotvanish},
\begin{align}
\abs{H_{-\sfrak}(\Phins)}\lesssim_{\delta} {}& \mu^{\half}(\abs{\Lxi^2\Psins}
+\abs{\mu^{-\half}\Lxi\PhinsHigh{1}}
+\abs{\Lxi\Psins})
\notag\\
\lesssim_{\delta} {}& \mu^{\half}(r^{\half}v^{-1}\tb^{-\frac{5}{2}+\frac{\delta}{2}}
+rv^{-1}\tb^{-2+\frac{\delta}{2}})
(F^{(1)}(\regl,3-\delta,\tb_0,\Psipns))^{\half}.
\end{align}
Thus, integrating equation \eqref{eq:simpleformofeqofpsins} from horizon $\rb=2M$ gives
\begin{align}
\abs{r\prb\psins}\lesssim_{\delta}{}&
(r^{-\half}v^{-1}
\tb^{-\frac{5}{2}+\frac{\delta}{2}}
+v^{-1}\tb^{-2+\frac{\delta}{2}})
(F^{(1)}(\regl,3-\delta,\tb_0,\Psipns))^{\half}\notag\\
\lesssim_{\delta} {}&v^{-1}\tb^{-2+\frac{\delta}{2}}
(F^{(1)}(\regl,3-\delta,\tb_0,\Psipns))^{\half}.
\end{align}
We substitute this back to estimate $\abs{H_{-\sfrak}(\Phins)}$:
\begin{align}
\abs{H_{-\sfrak}(\Phins)}\lesssim_{\delta} {}& \mu^{\half}(\abs{\Lxi^2\Psins}
+r^2\abs{\Lxi(r\prb\psins)}
+\abs{r\Lxi\Psins})\notag\\
\lesssim_{\delta} {}& \mu^{\half}(r^{\frac{3}{2}}v^{-1}\tb^{-\frac{5}{2}+\frac{\delta}{2}}
+r^2v^{-1}\tb^{-3+\frac{\delta}{2}})
(F^{(1)}(\regl,3-\delta,\tb_0,\Psipns))^{\half}.
\end{align}
Integrating equation \eqref{eq:simpleformofeqofpsins} again from horizon $\rb=2M$ gives
\begin{align}
\abs{\prb\psins}\lesssim_{\delta}{}&
(r^{-\half}v^{-1}
\tb^{-\frac{5}{2}+\frac{\delta}{2}}
+v^{-1}\tb^{-3+\frac{\delta}{2}})
(F^{(1)}(\regl,3-\delta,\tb_0,\Psipns))^{\half}\notag\\
\lesssim_{\delta} {}& r^{-\half}v^{-1}
\tb^{-\frac{5}{2}+\frac{\delta}{2}}
(F^{(1)}(\regl,3-\delta,\tb_0,\Psipns))^{\half}.
\end{align}
Integrating along $\Sigmatb$ from the hypersurface $\{\rb=\tb\}$ thus gives
\begin{align}
\abs{\Lxi^j\psins}\lesssim_{\delta} v^{-1}\tb^{-2-j+\frac{\delta}{2}}
(F^{(1)}(\regl,3-\delta,\tb_0,\Psipns))^{\half}.
\end{align}
We plug these two estimates back to estimate $H_{-\sfrak}(\Phins)$:
\begin{align}
\abs{H_{-\sfrak}(\Phins)}\lesssim_{\delta} {}& \mu^{\half}(\abs{\Lxi^2\Psins}
+r^2\abs{\Lxi(r\prb\psins)}
+\abs{r\Lxi\Psins})\notag\\
\lesssim_{\delta} {}& \mu^{\half}r^2v^{-1}\tb^{-3+\frac{\delta}{2}}
(F^{(1)}(\regl,3-\delta,\tb_0,\Psipns))^{\half}.
\end{align}
Integrating equation \eqref{eq:simpleformofeqofpsins} from horizon $\rb=2M$ gives
\begin{align}
\abs{\prb\psins}\lesssim_{\delta}{}v^{-1}\tb^{-3+\frac{\delta}{2}}
(F^{(1)}(\regl,3-\delta,\tb_0,\Psipns))^{\half}.
\end{align}
In the same fashion as for the spin $\half$ component, one can obtain decay estimates for higher order pointwise norm:
\begin{align}
\absCDerit{\Lxi^j\psins}{\reg}\lesssim_{\delta,j,\reg}{}
v^{-1}\tb^{-2-j+\frac{\delta}{2}}
(F^{(1)}(\reg+\regl,3-\delta,\tb_0,\Psipns))^{\half}.
\end{align}

In the case that the first N--P constant vanishes, a similar treatment gives that
\begin{subequations}
\begin{align}
\absCDerit{\Lxi^j\psins}{\reg}\lesssim_{\delta,j,\reg}{}&v^{-1}\tb^{-3+\frac{\delta}{2}}
(F^{(1)}(\reg+\regl,5-\delta,\tb_0,\Psipns))^{\half},\\
\abs{\prb\Lxi^j\psins}\lesssim_{\delta}{}&v^{-1}\tb^{-4-j+\frac{\delta}{2}}
(F^{(1)}(\regl,5-\delta,\tb_0,\Psipns))^{\half}.
\end{align}
\end{subequations}

%%%%%%%%%%%%%%%%%%
\subsection{Almost Price's law decay}
\label{subsect:almostPricelaw:allmodes}
%%%%%%%%%%%%%%%

We collect the main statement about almost Price's law decay in the theorem below.
\begin{thm}
\label{thm:almostPrice}
Consider a Dirac field on a Schwarzschild black hole spacetime.
\begin{enumerate}
  \item Let  the spin $\pm \half$ components be supported on $\ell\geq \ell_0$ modes for an $\ell_0\geq 2$. If the $\ell_0$-th Newman--Penrose constant of the $\ell_0$ mode does not vanish, we have
      \begin{subequations}
\begin{align}
\absCDerit{\Lxi^j \varphi_{\sfrak}}{\reg}\lesssim_{\delta,j,\reg,\ell_0} {} & v^{-2}\tb^{-\ell_0-j+\delta/2}
\Big[(F^{(\ell_0)}(\reg+\regl,3-\delta,\tb_0,(\Psipns)^{\ell=\ell_0}))^{\half}\notag\\
&\qquad\qquad\qquad\quad
+(F^{(\ell_0+1)}(\reg+\regl,1-\delta,\tb_0,(\Psipns)^{\ell\geq\ell_0+1}))^{\half}\Big],\\
\absCDerit{\Lxi^j \psins}{\reg}\lesssim_{\delta,j,\reg,\ell_0} {} & v^{-1}\tb^{-1-\ell_0-j+\delta/2}\Big[(F^{(\ell_0)}(\reg+\regl,3-\delta,\tb_0,(\Psipns)^{\ell=\ell_0}))^{\half}\notag\\
&\qquad\qquad\qquad\qquad
+(F^{(\ell_0+1)}(\reg+\regl,1-\delta,\tb_0,(\Psipns)^{\ell\geq\ell_0+1}))^{\half}\Big].
\end{align}
\end{subequations}
And if the $\ell_0$-th Newman--Penrose constant of the $\ell_0$ mode vanishes,
the $\tb$ power of the above pointwise decay estimates is decreased by $1$, and the terms in the square brackets are replaced by
$(F^{(\ell_0)}(\reg+\regl,5-\delta,\tb_0,(\Psipns)^{\ell=\ell_0}))^{\half}
+(F^{(\ell_0+1)}(\reg+\regl,3-\delta,\tb_0,(\Psipns)^{\ell\geq\ell_0+1}))^{\half}$.
  \item Let the spin $\pm \half$ components be supported on $\ell=1$ mode. Then, if the first N--P constant does not vanish, we have for the spin $\half$ component that
\begin{subequations}
\label{dec:almostPrice:posi}
\begin{align}
\absCDerit{\Lxi^j\varphi_{\sfrak}}{\reg}\lesssim_{\delta,j,\reg}
{}&v^{-2}\tb^{-1+\frac{\delta}{2}}
(F^{(1)}(\reg+\regl,3-\delta,\tb_0,\Psipns))^{\half},\\
\abs{\prb\Lxi^j\varphi_{\sfrak}}\lesssim_{\delta,j}
{}&v^{-2}\tb^{-2-j+\frac{\delta}{2}}
(F^{(1)}(\regl,3-\delta,\tb_0,\Psipns))^{\half}
\end{align}
\end{subequations}
and the spin $-\half$ component that
\begin{subequations}
\label{dec:almostPrice:nega}
\begin{align}
\absCDerit{\Lxi^j\psins}{\reg}\lesssim_{\delta,j,\reg}{}&v^{-1}\tb^{-2+\frac{\delta}{2}}
(F^{(1)}(\reg+\regl,3-\delta,\tb_0,\Psipns))^{\half},\\
\abs{\prb\Lxi^j\psins}\lesssim_{\delta,j}{}&v^{-1}\tb^{-3-j+\frac{\delta}{2}}
(F^{(1)}(\regl,3-\delta,\tb_0,\Psipns))^{\half}.
\end{align}
\end{subequations}
Moreover, if the first Newman--Penrose constant vanishes,
the $\tb$  power of the above pointwise decay estimates is decreased by $1$ and the argument $3-\delta$ is replaced by $5-\delta$.
\end{enumerate}
\end{thm}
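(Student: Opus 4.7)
The plan is to assemble the pointwise bounds from three ingredients already established in the excerpt: (a) the $r^p$-hierarchy energy decay from Proposition \ref{prop:BED:Psiplus:l=1}, (b) the degenerate elliptic-type estimates on hyperboloidal slices from Propositions \ref{prop:degellip:Phips:ellbig2} (for $\ell\geq 2$) and \ref{prop:degellip:Phips:ell=2:new} (for $\ell=1$), and (c) the Sobolev embeddings of Lemma \ref{lem:Sobolev}. I would first project the Dirac field onto its $\ell=\ell_0$ part and its $\ell\geq \ell_0+1$ part using spin-weighted spherical harmonics, estimate each piece separately, and then combine.

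For Part 1 with $\ell_0\geq 2$, the $\ell=\ell_0$ piece is essentially Proposition \ref{prop:almostpricelaw:ellgeq2}: Proposition \ref{prop:BED:Psiplus:l=1} applied to $\Lxi^{j+1}\Psipns$ gives energy decay of order $\tb^{-2(\ell_0-1)-2(j+1)-2+p}$, and feeding this into the elliptic estimate \eqref{eq:degellip:Phipns:ellbig2:Highorder} converts one factor of $\Lxi$ into an improved $r^{-3}$-weighted spatial energy; an application of the Sobolev inequality \eqref{eq:Sobolev:3} then yields a pointwise bound of order $r^{-1}\tb^{-\ell_0-j+\delta/2}$, which translates into $v^{-2}\tb^{-\ell_0-j+\delta/2}$ for $\varphi_{\sfrak}$ and $v^{-1}\tb^{-1-\ell_0-j+\delta/2}$ for $\psins$ after converting $\Phips = (r-M)\varphi_{\sfrak}$ and $\Psins = r\psins$. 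For the $\ell\geq \ell_0+1$ tail, the same scheme starting one step up in the $r^p$ hierarchy produces a rate faster by one power of $\tb$, controlled by $F^{(\ell_0+1)}(\regl,1-\delta,\tb_0,\cdot)$ (or $3-\delta$ in the vanishing-NP case), which accounts for the second summand in the energy bracket.

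For Part 2 with $\ell=1$, the same procedure but with Proposition \ref{prop:degellip:Phips:ell=2:new} in place of \ref{prop:degellip:Phips:ellbig2} provides a first pointwise estimate with an $r^{-1/2}$ loss, exactly as in the preliminary bound \eqref{almostPrice:posi:weak:NPnotvanish}. I would then close the argument by the bootstrap already carried out in Section \ref{sect:almostPricelaw:ell=1}: substitute the weak pointwise bound into the spatial equation \eqref{eq:prbprbPhins:ell=1:impro} for $\varphi_{\sfrak}$ and integrate radially from $\Horizon$, and similarly integrate \eqref{eq:simpleformofeqofpsins} for $\psins$, bisecting the hyperboloid at $\{\rb = \tb\}$ to patch interior and asymptotic regions. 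Each round of the bootstrap trades a unit of pointwise $r$-growth for a unit of $\tb$-decay by updating the source $H_{\pm\sfrak}$; two rounds yield \eqref{dec:almostPrice:posi}--\eqref{dec:almostPrice:nega}, and a further pass produces the sharper $\prb\Lxi^j\psi$ bounds.

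The main obstacle is bookkeeping of weights in the transition region $r\sim \tb$, where neither a pure $v^{-\frac{3}{2}-s}$ null-infinity weight nor a pure $\tb^{-k}$ timelike-infinity weight is automatic. In the $\ell=1$ case this is handled by the bootstrap, which must be iterated carefully since the sources $H_{\pm\sfrak}$ couple $\Lxi^2$ and $\Lxi\prb$ derivatives of the field back into the same field; one must verify at each pass that the newly improved bound is strong enough to feed back consistently without loss. For the $\ell\geq \ell_0+1$ tail in Part 1, the subtlety is that the elliptic weight $r^{-3}$ in \eqref{eq:degellip:Phipns:ellbig2:Highorder} fixes which $F^{(\ell_0+1)}$ initial energy can close the chain, and one must check that the number of commutators $\regl=\regl(j,\ell_0)$ generated by the combined use of Proposition \ref{prop:BED:Psiplus:l=1} and the elliptic step remains compatible with the regularity admitted on the initial slice.
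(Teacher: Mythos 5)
Your proposal is correct and matches the paper's own approach at each step. For Part 1 with $\ell_0\geq 2$, the paper's proof is exactly the two-line observation you reconstruct: decompose into the $\ell=\ell_0$ piece and the $\ell\geq\ell_0+1$ tail, apply Proposition \ref{prop:almostpricelaw:ellgeq2} to each (which itself is assembled from Proposition \ref{prop:BED:Psiplus:l=1}, the degenerate elliptic estimate \eqref{eq:degellip:Phipns:ellbig2:Highorder} on $\Sigmatb$, and the Sobolev estimate \eqref{eq:Sobolev:3}), and add; for Part 2 it cites the bootstrap in Section \ref{sect:almostPricelaw:ell=1}, which is precisely the elliptic-then-radial-integration-from-$\Horizon$ scheme you describe, bisected at $\{\rb=\tb\}$ and iterated on the equations \eqref{eq:prbprbPhins:ell=1:impro} and \eqref{eq:simpleformofeqofpsins}. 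Two small bookkeeping slips do not affect the argument: the exponent you wrote for the $\Lxi^{j+1}$-commuted energy should read $-2(\ell_0-1)-2(j+1)+p'-p$ (with $p'=0$), not $\cdots-2+p$; and what the Sobolev step yields directly is a pure $\tb$-decay bound on $\abs{r^{-1}\Phips}$ in the interior $\{r\lesssim\tb\}$, the additional $v^{-2}$ weight for $\varphi_{\sfrak}$ coming from the separate pointwise estimate \eqref{ptw:BEDC:Phiplus:l=1:0to3} of Proposition \ref{prop:BED:Psiplus:l=1} used in the exterior $\{r\gtrsim\tb\}$, not from the Sobolev estimate alone.
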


\begin{proof}
In the first case that the spin $\pm \half$ components are supported on $\ell\geq \ell_0$ modes for an $\ell_0\geq 2$, we utilize the estimates in Proposition \ref{prop:almostpricelaw:ellgeq2} for $\ell=\ell_0$ mode and $\ell\geq \ell_0+1$ modes respectively and add them together to achieve the desired estimates.
The estimates of $\ell=1$ mode are from Section \ref{sect:almostPricelaw:ell=1}.
\end{proof}
\begin{remark}
In the case that the components are supported on $\ell=1$ mode, the above decay estimates for both $\varphi_{\sfrak}$ and $\psins$ and for the radial tangential derivative of both $\varphi_{\sfrak}$ and $\psins$ are almost sharp.
\end{remark}

%%%%%%%%%%%%%%%%%%%%%
\section{Price's law decay for nonvanishing first Newman--Penrose constant}
\label{sect:Pricelaw:nonzeroNPconst}
%%%%%%%%%%%%%%%%%%%

The aim of this section is to derive the precise asymptotic behaviours of the spin $\pm \half$ components on a Schwarzschild spacetime  in the case that the first Newman--Penrose constant of $\ell=1$ mode is nonzero.

We shall focus  only on the $\ell=1$ mode of the spin $\pm \half$ components, i.e. $(\psips)^{\ell=1}$ and $(\psins)^{\ell=1}$, since the higher modes, $\ell\geq 2$, have faster decay as shown in Theorem \ref{thm:almostPrice}. As discussed in Section \ref{sect:decompIntoModes}, this mode for each component can further be expanded in terms of the spin-weighted spherical harmonics:
\begin{subequations}
\label{eq:azimuthalmodes:decomp}
\begin{align}
(\psips)^{\ell=1}(\tb,\rb,\theta,\pb)
={}&\sum\limits_{m=\pm\half}(\psips)_{m,\ell=1}
(\tb,\rb)Y_{m,1}^{\sfrak}(\cos\theta)e^{im\pb},\\
(\psins)^{\ell=1}(\tb,\rb,\theta,\pb)
={}&\sum\limits_{m=\pm\half}(\psins)_{m,\ell=1}
(\tb,\rb)Y_{m,1}^{\sfrak}(\cos\theta)e^{im\pb}.
\end{align}
\end{subequations}
Each $(m,\ell=1)$ mode can be treated in the same way, thus we shall simply  drop the subscript $m,\ell=1$ and allow them  to share the same notation with the spin $\pm \half$ components. For each separate $(m,\ell=1)$ mode of either of the spin $\pm \half$ components,  its corresponding N--P constant $\NPCP{1}$ (and $\NPCN{1}$ which is equal to $\NPCP{1}$ by Lemma \ref{lem:relationoftwoNPconsts}) as defined in Definition \ref{def:NPCs} is a constant independent of $\tb$, $\rb$, $\theta$, and $\pb$.

For any $\delta>0$, we denote
\begin{align}
\FB^{\reg}_{\delta}={}&(F^{(1)}(\reg,3-\delta,\tb_0,\Psipns))^{\half},
\end{align}
where the RHS is defined as in Definition \ref{def:Fenergies:big2}. The regularity parameter $\reg$, which depends only on $j$, may always be suppressed, and we simply write $\FB_{\delta}$ for $\FB^{\reg}_{\delta}$.
For simplicity, we shall also denote
\begin{align}
\FBT=\FB_{\delta}+\FB_{\delta'}+\abs{\NPCP{1}}+ D_0,
\end{align}
where $\delta$ and $\delta'$ are to be fixed in the proof, and $D_0$ is a constant appearing in the assumptions below.
%%%%%%%%%%%%%%%%%
\subsection{Spin $\half$ component}
\label{sect:Pricelaw:ell=1:posi}
%%%%%%%%%%%%%%%%%%%%%%%%%%

Consider a $(m, \ell=1)$ mode of the spin $ \half$ component. In this case, the spin $\half$ component satisfies equation \eqref{eq:PhipsHigh1:l=1}, which can also be written as
\begin{align}
\label{eq:wave:PhipsHigh1:simple}
-r^2Y(\mu^{\half} r^{-1}V\hatPhips) -{6M}{r^{-1}}(\mu^{\half} r^{-1}\hatPhips) ={}&0,
\end{align}
or equivalently, 
\begin{align}
\label{eq:PhipsHigh1:l=1:v1}
\pu(\mu^{\half} r^{-1}V\hatPhips) ={}-{3M}\mu^{\frac{3}{2}} r^{-4}\hatPhips.
\end{align}

We will frequently use also the double-null coordinates $(u,v,\theta,\pb)$, and the DOC will be divided into different regions as in Figure \ref{fig:4}. The following lemma lists all relations and estimates among $u$, $v$, $r$, and $\tb$ that will be utilized in these different regions.

%%%%%%%%%%%%%%%%%%%%%%%%%%%%%%%%%%
\begin{figure}[htbp]
\begin{center}
\begin{tikzpicture}[scale=0.8]
\draw[thin]
(0,0)--(2.45,2.45);
\draw[thin]
(0,0)--(-0.25,-0.25);
\draw[very thin]
(2.5,2.5) circle (0.05);
\coordinate [label=90:$i_+$] (a) at (2.5,2.5);
\draw[dashed]
(2.55,2.45)--(5.5,-0.5);
\node at (0.68,1.02) [rotate=45] {\small $\mathcal{H}^+$};
\node at (5.3,0) [rotate=-45] {\small $\mathcal{I}^+$};
\draw[thin]
(2.5,2.45) arc (20:-70:0.3 and 3);
\node at  (2.2,-0.5) [rotate=85] {\small $r=R$};
\draw[thin]
(2.52,2.45) arc (50:2:2.2 and 5);
\draw[thin]
(2.53,2.46) arc (50:18:6 and 8);
\node at (3.6,-1.2)  {\small $\gamma_{\alpha}$};
\node at (4.45,-0.4) [rotate=-67] {\scriptsize $\{v-u=\half v\}$};
\end{tikzpicture}
\end{center}
\caption{For $v$ large enough, there are some useful curves in spacetime, where $\gamma_\alpha=\{v-u=v^\alpha\}$ for an $\alpha\in (0,1)$.}
\label{fig:4}
\end{figure}
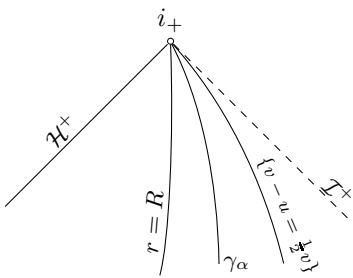
%%%%%%%%%%%%%%%%%%%%%%%%%%%%%%%%%%%%%%%%%%%%%%

\begin{lemma}
For any $\alpha\in (\half, 1)$, let $\gamma_{\alpha}=\{v-u=v^\alpha\}$. For any $u$ and $v$, let $u_{\gamma_\alpha}(v)$ and $v_{\gamma_{\alpha}}(u)$ be such that $(u_{\gamma_\alpha}(v),v), (u, v_{\gamma_{\alpha}}(u))\in \gamma_{\alpha}$.
In the region $v-u\geq v^{\alpha}$,
\begin{subequations}
\begin{align}
\label{eq:rela:a}
&r\gtrsim{} v^{\alpha}+u^{\alpha},\\
\label{eq:rela:b}
&\abs{u-v_{\gamma_{\alpha}}(u)}\lesssim{}u^{\alpha},\\
\label{eq:rela:c}
&\abs{2r-(v-u)}\leq C\log (r-2M);
\end{align}
in the region $\{v-u\geq v^\alpha\}\cap\{v-u\geq\frac{v}{2}\}$,
\begin{align}
\label{eq:rela:d}
&v+u\lesssim r\lesssim v;
\end{align}
in the region $\{v-u\geq v^{\alpha}\}\cap\{v-u\leq\frac{v}{2}\}$,
\begin{align}
\label{eq:rela:e}
&u \sim v, \quad r\gtrsim v^{\alpha};
\end{align}
in the region $\{r\geq R\}\cap\{v-u\leq v^{\alpha}\}$,
\begin{align}
\label{eq:rela:f}
&r\lesssim\min\{v^{\alpha}, u^{\alpha}\};
\end{align}
in the region $\{2M\leq r\leq R\}$, there is a constant $C_R$ depending on $R$ such that
\begin{align}
\label{eq:rela:g}
\abs{v|_{\Sigma_\tau}(R)-v|_{\Sigma_\tau}(r)}+\abs{v-\tb}\leq C_R.
\end{align}
\end{subequations}
On $\Sigmazero$, for $r$ large,
\begin{align}\label{asymp-factor}
\abs{r^{-1}v- 2-4Mr^{-1}\log(r-2M)}\lesssim r^{-1}.
\end{align}
\end{lemma}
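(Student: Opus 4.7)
The plan is to reduce every claim to an elementary calculation from the two basic identities $v-u = 2r^*$ and $v+u = 2t$, together with the explicit formula
\[
r^*(r) = r - 3M + 2M\log\Big(\frac{r-2M}{M}\Big),
\]
obtained by integrating $\di r^* = \mu^{-1}\di r$ from $r=3M$. First I would record the immediate consequence $r - r^* = 3M - 2M\log((r-2M)/M)$, so that for $r$ bounded away from $2M$ (say $r\geq 3M$) one has $|r-r^*| \leq C\log(r-2M)$ once the constants are absorbed by the growing log. Assertion (c) then follows directly: in the region $v-u\geq v^\alpha > 0$ one has $r^*\geq 0$, hence $r\geq 3M$, and $|2r - (v-u)| = 2|r-r^*| \leq C\log(r-2M)$.

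For (a), (d), (e), (f), I would case-split and repeatedly use $r\sim r^*$ (modulo the logarithmic correction above) in the large-$r$ regime, which is forced whenever $r^*$ is bounded below by a large positive quantity. In (d), the condition $v-u \geq v/2$ gives $r^* \geq v/4$, hence $r\gtrsim v$; conversely $r \leq r^* + O(\log r) \leq v/2 + O(\log v)\lesssim v$; and $v+u\leq 3v/2 \lesssim r$. Region (e) uses $v-u\leq v/2$ to get $u\geq v/2$ and $u\leq v$, so $u\sim v$, while $v-u \geq v^\alpha$ gives $r^*\geq v^\alpha/2$, hence $r\gtrsim v^\alpha$. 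Region (f) is symmetric: $r\geq R$ forces $r\sim r^*$, so $r\lesssim v^\alpha$; and $v-u\leq v^\alpha$ with $v$ large gives $u\sim v$, so $u^\alpha \sim v^\alpha$. Assertion (a) then follows by combining the lower bounds from (d) and (e).

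For (b), let $w = v_{\gamma_\alpha}(u) - u$, so that $w = (u+w)^\alpha$ by the defining property of $\gamma_\alpha$. A simple bootstrap argument gives $w\leq u$ for $u$ large (since $\alpha<1$), whence $w\leq (2u)^\alpha \lesssim u^\alpha$, proving the estimate. Assertion (g) is immediate from the paper's hyperboloidal convention $\tb = v - h(r)$: on $\Sigma_\tau$ where $\tb$ is fixed, $v-\tb = h(r)$ is a smooth function of $r$ alone, hence bounded on the compact range $[2M,R]$ by a constant $C_R$, and likewise for $|h(R)-h(r)|$.

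For the asymptotic formula on $\Sigmazero$, I will use the asymptotic behaviour of $h$ recorded in the introduction: $\partial_r h = 2\mu^{-1} + O(r^{-2})$ as $r\to\infty$. Integrating, $h(r) = 2r^* + C + O(r^{-1}) = 2r + 4M\log(r-2M) + C' + O(r^{-1})$ for some constants $C,C'$. Since $v = \tb_0 + h(r)$ on $\Sigmazero$, dividing by $r$ and absorbing constants yields $|v/r - 2 - 4M\log(r-2M)/r| \lesssim r^{-1}$. The whole lemma is thus a careful bookkeeping exercise, with the only mild subtlety being the case split between large and moderate $r$ and the sign/size of the logarithmic correction; there is no conceptual obstacle.
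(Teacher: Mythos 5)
The paper states this lemma without proof, so there is nothing to compare against directly; it is treated as an elementary bookkeeping fact. Your proof is correct and is the natural way to fill in the omission: everything reduces to $v-u=2r^*$, the explicit integral $r^* = r-3M+2M\log((r-2M)/M)$, and the asymptotics of $h$, and each item follows by the case analysis you describe.

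Two small remarks for precision. In item (c) the inequality $|2r-(v-u)|\le C\log(r-2M)$ should be read as holding for $r$ sufficiently large (equivalently for $v$ beyond some fixed threshold), since near $r=2M+1$ the right-hand side vanishes while the left does not; this causes no harm in the paper because the lemma is only invoked in the regime $v\to\infty$, but it is worth flagging when you write ``once the constants are absorbed by the growing log.'' Similarly, in the step deducing $v^\alpha+u^\alpha\lesssim v+u$ from $\alpha<1$ you are implicitly using that $u$ and $v$ are bounded below by the $\tau_0$-dependent constants of the hyperboloidal setup; for $u$ in a bounded range $u^\alpha$ may exceed $u$, but again the implicit constant absorbs this. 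With these caveats understood, the bootstrap for (b), the asymptotic matching $h(r)=2r^*+\mathrm{const}+O(r^{-1})$ from $\partial_r h-2\mu^{-1}=O(r^{-2})$, and the region-by-region estimates in (d)--(f) are all sound.
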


Let us emphasis that most of the rest of the discussions in this subsection is in the spirit of \cite{angelopoulos2018late} where the authors derived the asymptotics for the $\ell=0$ mode of the scalar field. 

%%%%%%%%%%%
\subsubsection{Asymptotics of $\varphi_{\sfrak}$}
\label{sect:asymofvarphi+}
%%%%%%%%%%

We first obtain the asymptotics for $\VR\hatPhips$ in the region $\{v-u\geq v^{\alpha}\}$.

\begin{lemma}
 Assume on $\Sigma_{\tau_0}$ that there is a constant $\beta\in (0,\half)$ and a constant $D_0$ such that
\begin{align}
\label{assump:initialdata:zeroorder:NVNP}
\bigg|v^2\VR\hatPhips(\tau_0,v)-4{\NPCP{1}}\bigg|\lesssim D_0v^{-\beta},
\end{align}
then for any $v-u\geq v^\alpha$ with $\half<\alpha<1$,
\begin{align}
\label{eq:VPhinsHigh1:awayregion}
|\mu^{\frac{3}{2}} r^{-1}v^3\VR\hatPhips(u,v)-8\NPCP{1}|
\lesssim (v^{-\beta}+v^{-\eta})\FBT,
\end{align}
where $\eta=2\alpha-1-\delta$.
\end{lemma}

\begin{proof}
In double null coordinates $(u,v)$, we integrate \eqref{eq:PhipsHigh1:l=1:v1} along $v=const$ from $(u_{\Sigma_{\tau_0}}(v),v)\in \Sigma_{\tau_0}$ as in Figure \ref{fig:5}
%%%%%%%%%%%%%%%%%%%%%%%%%%%%%%%%%%%%%%%%%
\begin{figure}[htbp]
\begin{center}
\begin{tikzpicture}[scale=0.8]
\draw[thin]
(0,0)--(2.45,2.45);
\draw[thin]
(0,0)--(-0.25,-0.25);
\draw[very thin]
(2.5,2.5) circle (0.05);
\coordinate [label=90:$i_+$] (a) at (2.5,2.5);
\draw[dashed]
(2.55,2.45)--(5.5,-0.5);
\node at (0.68,1.02) [rotate=45] {\small $\mathcal{H}^+$};
\node at (5.5,-0.2) [rotate=-45] {\small $\mathcal{I}^+$};
\draw[thin]
(2.5,2.45) arc (20:-70:0.3 and 3);
\node at  (2.3,0.2) [rotate=86] {\small $r=R$};
\draw[thin]
(2.52,2.45) arc (50:2:2.2 and 5);
\node at (3.5,-1.4)  {\small $\gamma_{\alpha}$};
\draw[thin]
(0,0) arc (206:337:2.7 and 1.5);
\node at (1.4,-1) {\small $\Sigma_{\tau_0}$};
\draw[thick]
(4.3,-0.4)--(3.5,0.4);
\draw[fill] (4.3,-0.425) circle (1.5pt);
\draw[fill] (3.5,0.4) circle (1.5pt);
\node at (4.6,-0.8) {\scriptsize $(u_{\Sigma_{\tau_0}}(v),v)$};
\node at (3.7,0.6) {\scriptsize $(u,v)$};
\end{tikzpicture}
\end{center}
\caption{For any point $(u,v)$ in $\{r\geq R\}\cap\{v-u\geq v^\alpha\}$, i.e. $u_{\Sigma_{\tau_0}}(v)\leq u\leq u_{\gamma_\alpha}(v)$, integrate along $v=const$ from $(u_{\Sigma_{\tau_0}}(v),v)\in \Sigma_{\tau_0}$.}
\label{fig:5}
\end{figure}
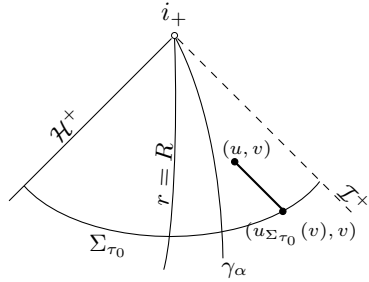
%%%%%%%%%%%%%%%%%%%%%%%%%%%%%%%%%%%%%%%%%%%%%%
and obtain
\begin{align}
\mu^{\frac{3}{2}} r^{-1}v^3\VR\hatPhips(u,v)-\mu^\frac{3}{2} r^{-1}v^3\VR\hatPhips(u_{\Sigma_{\tau_0}}(v),v)=-3Mv^3\int_{u_{\Sigma_{\tau_0}}(v)}^u
\mu^{\frac{3}{2}}r^{-4}\hatPhips(u',v)\di u'.
\end{align}
Here, we have assumed $u_{\Sigma_{\tau_0}}(v)\geq 1$ for all $v$ larger than a fixed constant without loss of generality.
We can now estimate the RHS of the above equality by Theorem \ref{thm:almostPrice} and \eqref{eq:rela:a}:
\begin{align}\begin{split}
\bigg|v^3\int_{u_{\Sigma_{\tau_0}}(v)}^u
\mu^{\frac{3}{2}}r^{-4}\hatPhips(u',v)\di u'\bigg|
&\lesssim v\int_{u_{\Sigma_{\tau_0}}(v)}^u
\mu^{\frac{3}{2}}\Delta^{-\half}r^{-1}\tau^{-1+\frac{\delta}{2}}(u',v)\di u' \FB_{\delta} \\
&=v^{-\eta}\int_{u_{\Sigma_{\tau_0}}(v)}^u
\mu r^{-2}v^{1+\eta}\tau^{-1+\frac{\delta}{2}}(u',v)\di u' \FB_{\delta} \\
&\lesssim v^{-\eta}\int_{u_{\Sigma_{\tau_0}}(v)}^u
\mu r^{-2}v^{1+\eta}(u')^{-1+\frac{\delta}{2}}(u',v)\di u' \FB_{\delta} \\
&\lesssim v^{-\eta}\int_{u_{\Sigma_{\tau_0}}(v)}^u
\mu r^{-2+\frac{1+\eta}{\alpha}}(u')^{-1+\frac{\delta}{2}}(u',v)du' \FB_{\delta} \\
&\lesssim v^{-\eta}\int_{u_{\Sigma_{\tau_0}}(v)}^u
(u')^{-2\alpha+\eta+\frac{\delta}{2}}(u',v)\di u' \FB_{\delta} \\
&\lesssim v^{-\eta} \FB_{\delta} ,
\end{split}\end{align}
where we have used $\eta-2\alpha+\frac{\delta}{2}=-1-\frac{\delta}{2}<-1$.
This yields
\begin{align}
\label{eq:VPhinsHigh1:awayregion:1}
\abs{\mu^{\frac{3}{2}} r^{-1}v^3\VR\hatPhips(u,v)-\mu^\frac{3}{2} r^{-1}v^3\VR\hatPhips(u_{\Sigma_{\tau_0}}(v),v)}\lesssim{}v^{-\eta} \FB_{\delta} .
\end{align}
On the other hand, we have from the assumption \eqref{assump:initialdata:zeroorder:NVNP} that on $\Sigma_{\tau_0}$, for $r$ large,
\begin{align*}
|\mu^\frac{3}{2} r^{-1}v^3\VR\hatPhips(u_{\Sigma_{\tau_0}}(v),v)-8\NPCP{1}|
\lesssim \mu^\frac{3}{2} r^{-1}vv^{-\beta}D_0  +|\mu^{\frac{3}{2}}r^{-1}v -2|\abs{\NPCP{1}}.
\end{align*}
Meanwhile, on $\Sigma_{\tau_0}$, we utilize  $r=r_{\Sigma_{\tau_0}}(v)\lesssim v$, $v^{\alpha}\lesssim r(u,v)\leq r_{\Sigma_{\tau_0}}(v)$ and the estimate \eqref{asymp-factor} to derive
\begin{align*}
|\mu^{\frac{3}{2}}r^{-1}v -2|\lesssim |\mu^{\frac{3}{2}}(r^{-1}v -2)|+\abs{1-\mu^{\frac{3}{2}}}\lesssim r^{-1}+r^{-1}\log (r-2M)\lesssim v^{-\alpha}\log v.
\end{align*}
This thus enables us to conclude 
\begin{align}
\label{eq:VPhinsHigh1:awayregion:2}
|\mu^\frac{3}{2} r^{-1}v^3\VR\hatPhips(u_{\Sigma_{\tau_0}}(v),v)-8\NPCP{1}|
\lesssim v^{-\beta}D_0  +v^{-\alpha}\log v \abs{\NPCP{1}}.
\end{align}
The estimate \eqref{eq:VPhinsHigh1:awayregion} then follows from the estimates \eqref{eq:VPhinsHigh1:awayregion:1} and \eqref{eq:VPhinsHigh1:awayregion:2}.
\end{proof}

Now we estimate $\hatPhips$ in $v-u\geq v^\alpha$. The estimate \eqref{eq:VPhinsHigh1:awayregion} yields
\begin{align}
\label{eq:VPhinsHigh1:awayregion:v1}
|V\hatPhips(u,v)-8rv^{-3}\NPCP{1}|\lesssim
rv^{-3}(v^{-\beta}+v^{-\eta})\FBT+v^{-3}\abs{\NPCP{1}}.
\end{align}
 We integrate along $u=const$ as in Figure \ref{fig:6}
 %%%%%%%%%%%%%%%%%%%%%%%%%%%%
\begin{figure}[htbp]
\begin{center}
\begin{tikzpicture}[scale=0.8]
\draw[thin]
(0,0)--(2.45,2.45);
\draw[thin]
(0,0)--(-0.25,-0.25);
\draw[very thin]
(2.5,2.5) circle (0.05);
\coordinate [label=90:$i_+$] (a) at (2.5,2.5);
\draw[dashed]
(2.55,2.45)--(5.5,-0.5);
\node at (0.68,1.02) [rotate=45] {\small $\mathcal{H}^+$};
\node at (5.5,-0.2) [rotate=-45] {\small $\mathcal{I}^+$};
\draw[thin]
(2.5,2.45) arc (20:-70:0.3 and 3);
\node at  (2.3,0.2) [rotate=86] {\small $r=R$};
\draw[thin]
(2.52,2.45) arc (50:2:2.2 and 5);
\node at (3.3,-1.4)  {\small $\gamma_{\alpha}$};
\draw[thick]
(3.275,-0.625)--(3.95,0.1);
\draw[fill] (3.275,-0.625) circle (1.5pt);
\draw[fill] (3.95,0.1) circle (1.5pt);
\node at (4.2,-0.75) {\scriptsize $(u,v_{\gamma_\alpha}(u))$};
\node at (3.9,0.3) {\scriptsize $(u,v)$};
\end{tikzpicture}
\end{center}
\caption{For any point $(u,v)$ in $\{r\geq R\}\cap\{v-u\geq v^\alpha\}$, i.e.  $v\geq v_{\gamma_\alpha}(u)$, integrate along $u=const$ from $(u, v_{\gamma_\alpha}(u))\in \gamma_\alpha$.}
\label{fig:6}
\end{figure}
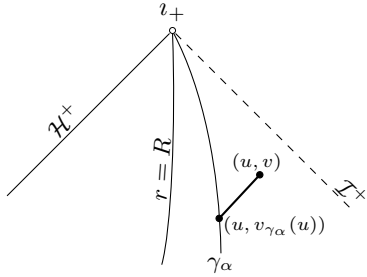
%%%%%%%%%%%%%%%%%%%%%%%%
 to obtain
\begin{align}\label{eq-v-hatPhips}
(r^{-2}\hatPhips)(u,v)=(r(u,v))^ {-2}\hatPhips(u,v_{\gamma_\alpha}(u))
+\half (r(u,v))^ {-2}\int_{v_{\gamma_\alpha}(u)}^{v}
 V\hatPhips (u,v')\di v'.
 \end{align}
We utilize  \eqref{eq:rela:b}, \eqref{eq:rela:c} and \eqref{eq:VPhinsHigh1:awayregion:v1} to estimate the last term of \eqref{eq-v-hatPhips}:
\begin{align}
\hspace{2ex}&\hspace{-2ex}
\frac{1}{2}\bigg|\int_{v_{\gamma_\alpha}(u)}^{v}
 \bigg(V\hatPhips (u,v')
 -8rv^{-3}\NPCP{1}\bigg)\di v'\bigg|\notag\\
 &\lesssim \int_{v_{\gamma_\alpha}(u)}^{v}
 \big(rv^{-3}(v^{-\beta}+v^{-\eta})\FBT +v^{-3}\abs{\NPCP{1}}\big)\di v'\notag\\
 &\lesssim \big((v_{\gamma_\alpha}(u))^{-1-\beta}-v^{-1-\beta}
 +(v_{\gamma_\alpha}(u))^{-1-\eta}-v^{-1-\eta} \big)\FBT +\big((v_{\gamma_\alpha}(u))^{-2}-v^{-2}\big)\abs{\NPCP{1}}\notag\\
 &\lesssim \big(u^{-1-\beta}
 +u^{-1-\eta}\big)\FBT   +u^{-2}\abs{\NPCP{1}}
\end{align}
and
\begin{align}
\hspace{2ex}&\hspace{-2ex}
\half \int_{v_{\gamma_\alpha}(u)}^{v}
8rv^{-3}\NPCP{1}\di v'\notag\\
&=\int_{v_{\gamma_\alpha}(u)}^{v}
[(2v^{-2}-2u v^{-3})+2v^{-3}(2r-(v-u))]\NPCP{1}\di v'\notag\\
&=\NPCP{1}[(uv^{-2}-2v^{-1})
-(uv^{-2}-2v^{-1})(u,v_{\gamma_\alpha}(u))] +\NPCP{1}\int_{v_{\gamma_\alpha}(u)}^{v}2v^{-3}(2r-(v-u))\di v'\notag\\
&=\NPCP{1}\bigg[u^{-1}v^{-2}(v-u)^2
+\frac{(v_{\gamma_\alpha}(u))^2-u^2}{u(v_{\gamma_\alpha}(u))^2}
+\frac{2(u-v_{\gamma_\alpha}(u))}{uv_{\gamma_\alpha}(u)}\bigg]\notag\\
&\qquad +\NPCP{1}\int_{v_{\gamma_\alpha}(u)}^{v}2v^{-3}(2r-(v-u))\di v'.
\end{align}
Note that we can use \eqref{eq:rela:b} and \eqref{eq:rela:c} to achieve
\begin{subequations}
\begin{align}
\abs{u^{-1}v^{-2}(v-u)^2-4r^2u^{-1}v^{-2}}\lesssim {}& u^{-1}v^{-2}r\log r,\\
\bigg|\frac{(v_{\gamma_\alpha}(u))^2-u^2}{u(v_{\gamma_\alpha}(u))^2}
+\frac{2(u-v_{\gamma_\alpha}(u))}{uv_{\gamma_\alpha}(u)}\bigg|
\lesssim{}& u^{-2+\alpha},\\
\bigg|\int_{v_{\gamma_\alpha}(u)}^{v}2v^{-3}(2r-(v-u))\bigg|
\lesssim{}& u^{-2}(\log u)^2.
\end{align}
\end{subequations}
As a result,
\begin{align}\begin{split}
\hspace{4ex}&\hspace{-4ex}
\bigg|\half r^{-2}\int_{v_{\gamma_\alpha}(u)}^{v}
V\hatPhips (u,v')\di v'-4u^{-1}v^{-2}\NPCP{1}\bigg|\\
\lesssim{}&
r^{-2}\big[\big(u^{-1-\beta}
 +u^{-1-\eta}\big)\FBT
 +\big(u^{-2}
 +u^{-1}v^{-2}r\log r +u^{-2+\alpha}
 +u^{-2}(\log u)^2\big)\abs{\NPCP{1}}\big]
\\
\lesssim{}& r^{-2}\big[\big(u^{-1-\beta}
+u^{-2\alpha+\delta}\big)\FBT +u^{-2+\alpha}\abs{\NPCP{1}}\big].
 \end{split}
\end{align}
For the first term on the RHS of \eqref{eq-v-hatPhips}, one uses \eqref{dec:almostPrice:posi} and \eqref{eq:rela:a} to obtain
\begin{align}\begin{split}
|(r(u,v))^ {-2}\hatPhips(u,v_{\gamma_\alpha}(u))|&\lesssim \mu^{-\half}(r(u,v))^ {-2} (r(u,v_{\gamma_\alpha}(u)))^ {2}\cdot (v^{-2}\tau^{-1+\frac{\delta}{2}})
(u,v_{\gamma_\alpha}(u))\FB_{\delta}\\
&\lesssim (r(u,v))^ {-2}u^{-3+2\alpha+\frac{\delta}{2}} \FB_{\delta}.
\end{split}\end{align}
Hence, we conclude the following estimates for $\varphi_{\sfrak}$ in the region $\{v-u\geq v^{\alpha}\}$.

\begin{lemma}
For $v-u\geq v^\alpha$ with $\half<\alpha<1$, we have
\begin{align}\label{asymp-hatPhips-near-nullinfinity}
\big|\varphi_{\sfrak}-4u^{-1}v^{-2}\NPCP{1}\big|
\lesssim {}&r^{-2}\big(u^{-1-\beta}
+u^{-2\alpha+\delta}
+u^{-3+2\alpha+\frac{\delta}{2}}+u^{-2+\alpha}\big)\FBT  .
\end{align}
\end{lemma}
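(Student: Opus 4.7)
The proof assembles the asymptotic for $V\hatPhips$ already derived in \eqref{eq:VPhinsHigh1:awayregion:v1} into a pointwise asymptotic for $\varphi_\sfrak$ by integrating along $u=\mathrm{const}$ hypersurfaces from $\gamma_\alpha$, and then converting $r^{-2}\hatPhips$ into $\varphi_\sfrak$. First, starting from \eqref{eq-v-hatPhips}, I would split
\[
\tfrac{1}{2}\,r^{-2}\!\int_{v_{\gamma_\alpha}(u)}^{v}V\hatPhips(u,v')\,\mathrm{d}v'
=\tfrac{1}{2}\,r^{-2}\!\int_{v_{\gamma_\alpha}(u)}^{v}\!\bigl(V\hatPhips-8r(v')^{-3}\NPCP{1}\bigr)\mathrm{d}v'
+\tfrac{1}{2}\,r^{-2}\!\int_{v_{\gamma_\alpha}(u)}^{v}8r(v')^{-3}\NPCP{1}\,\mathrm{d}v'.
\]
The first integral is absorbed as an error using \eqref{eq:VPhinsHigh1:awayregion:v1} and the width estimates \eqref{eq:rela:b}, contributing $r^{-2}(u^{-1-\beta}+u^{-1-\eta})\FBT + r^{-2}u^{-2}|\NPCP{1}|$.

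Next, the leading integral must be computed to extract the profile $4u^{-1}v^{-2}\NPCP{1}$. For this I use the identity $2r(v')^{-3} = (v')^{-2} - u(v')^{-3} + (v')^{-3}(2r-(v'-u))$ and integrate exactly in $v'$, producing $(u(v')^{-2}-2(v')^{-1})\big|_{v_{\gamma_\alpha}(u)}^v$ plus an integral whose integrand is of size $(\log r)(v')^{-3}$ by \eqref{eq:rela:c}. The boundary evaluation at $v'=v$ gives $u^{-1}v^{-2}(v-u)^2$, which equals $4r^2u^{-1}v^{-2}$ modulo errors of size $u^{-1}v^{-2}r\log r$, again via \eqref{eq:rela:c}; after dividing by $r^2$ this yields exactly $4u^{-1}v^{-2}\NPCP{1}$ plus admissible errors. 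The boundary terms at $v'=v_{\gamma_\alpha}(u)$ are $O(u^{-2+\alpha})$ after using \eqref{eq:rela:b}, and the log-remainder integral is $O(u^{-2}(\log u)^2)$, both bounded by $r^{-2}u^{-2+\alpha}|\NPCP{1}|$.

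The remaining ingredient is the boundary contribution $r(u,v)^{-2}\hatPhips(u,v_{\gamma_\alpha}(u))$ in \eqref{eq-v-hatPhips}. Applying the almost Price's law estimate \eqref{dec:almostPrice:posi} together with $r(u,v_{\gamma_\alpha}(u))\sim u^\alpha$ from \eqref{eq:rela:e}, $r(u,v)\geq v^\alpha$ from \eqref{eq:rela:a}, and $\tau\sim u$ along $\gamma_\alpha$, I obtain the bound $r^{-2}u^{-3+2\alpha+\delta/2}\FB_\delta$. Collecting all error terms yields the right-hand side of the claimed estimate, but expressed for $r^{-2}\hatPhips=\mu^{-\half}r^{-1}\psi_\sfrak$. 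To convert to $\varphi_\sfrak=(r-M)^{-1}\psi_\sfrak$, I write $r^{-2}\hatPhips - \varphi_\sfrak = \big(\mu^{-\half}r^{-1}-(r-M)^{-1}\big)\psi_\sfrak$; the coefficient is $O(M r^{-2})$ in the region $r\gtrsim v^\alpha$, so combined with $|\psi_\sfrak|\lesssim rv^{-2}\tau^{-1+\delta/2}\FB_\delta$ from \eqref{ptw:BEDC:Phiplus:l=1:0to3} this contributes at most $r^{-2}u^{-1+\delta/2}\FB_\delta$, which is absorbed by the existing error terms.

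\textbf{Main obstacle.} The delicate step is extracting the precise constant $4$ in the leading profile: this requires tracking the $(v-u)^2/(r^2)=4+O(r^{-1}\log r)$ cancellation from \eqref{eq:rela:c}, and simultaneously controlling three different boundary layers (on $\Sigma_{\tau_0}$, on $\gamma_\alpha$, and at $v=\mathrm{const}$) without losing the $\beta$, $\eta=2\alpha-1-\delta$, and almost Price's law decay rates, which must all conspire to produce an error that is integrable in $v'$ and genuinely smaller than $u^{-1}v^{-2}$.
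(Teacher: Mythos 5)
Your proof follows the same route as the paper: split $\tfrac12 r^{-2}\int_{v_{\gamma_\alpha}(u)}^v V\hatPhips$ into an error piece controlled by \eqref{eq:VPhinsHigh1:awayregion:v1}, explicitly integrate the leading term $8r(v')^{-3}\NPCP{1}$ using $2r(v')^{-3}=(v')^{-2}-u(v')^{-3}+(v')^{-3}(2r-(v'-u))$ and \eqref{eq:rela:b}--\eqref{eq:rela:c}, bound the boundary contribution at $\gamma_\alpha$ by \eqref{dec:almostPrice:posi} together with \eqref{eq:rela:a}, and finally pass from $r^{-2}\hatPhips$ to $\varphi_\sfrak$. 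The first three steps are handled exactly as in the paper.

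There is, however, a genuine gap in your last step as written. You estimate the discrepancy $(\mu^{-1/2}r^{-1}-(r-M)^{-1})\psi_\sfrak$ by the product of the coefficient and $|\psi_\sfrak|\lesssim rv^{-2}\tau^{-1+\delta/2}\FBT$, then bound the result by $r^{-2}u^{-1+\delta/2}\FBT$ and assert this is absorbed by the error terms already present. It is not: since $\alpha<1$ one has $u^{-1+\delta/2}\gg u^{-2+\alpha}$, and likewise $u^{-1+\delta/2}\gg u^{-1-\beta}$, so $r^{-2}u^{-1+\delta/2}$ dominates every term on the right-hand side of \eqref{asymp-hatPhips-near-nullinfinity}. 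The loss occurs when you throw away the $v^{-2}$ by using only $rv^{-2}\lesssim 1$: keeping $v\geq u$ and $r\lesssim v$ gives $rv^{-2}\lesssim u^{-1}$, hence the discrepancy is in fact $\lesssim r^{-2}u^{-2+\delta/2}\FBT\lesssim r^{-2}u^{-2+\alpha}\FBT$, which does close. (Two smaller notational points: the coefficient $\mu^{-1/2}r^{-1}-(r-M)^{-1}$ is $O(M^2r^{-3})$, not $O(Mr^{-2})$ --- equivalently $|r(r-M)^{-1}\mu^{1/2}-1|\lesssim M^2 r^{-2}$, which is what the paper records --- and the bound $|\psi_\sfrak|\lesssim rv^{-2}\tau^{-1+\delta/2}\FBT$ is not what \eqref{ptw:BEDC:Phiplus:l=1:0to3} gives; it follows instead from $|\varphi_\sfrak|\lesssim v^{-2}\tau^{-1+\delta/2}\FBT$ in \eqref{dec:almostPrice:posi} together with $\psi_\sfrak=(r-M)\varphi_\sfrak$.)
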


\begin{proof}
The above discussions imply that the estimate \eqref{asymp-hatPhips-near-nullinfinity} holds with $\varphi_{\sfrak}$ replaced by $r^{-2}\PhipsHigh{1}$. It remains to estimate the difference between these two scalars.
By definition, $\varphi_{\sfrak}=r(r-M)^{-1}\mu^{\half}r^{-2}\PhipsHigh{1}$, and $\abs{r(r-M)^{-1}\mu^{\half}-1}\lesssim M^2r^{-2}$. Thus the estimate \eqref{asymp-hatPhips-near-nullinfinity} follows in view of Theorem \ref{thm:almostPrice}.
\end{proof}

We are now ready to consider the asymptotics for $\varphi_{\sfrak}$ in the entire $\Dzeroinfty$.
For $\{v-u\geq v^\alpha\}\cap\{v-u\geq\frac{v}{2}\}$, the RHS of \eqref{asymp-hatPhips-near-nullinfinity} is bounded using \eqref{eq:rela:d} by
\begin{align}
\label{eq:pricelaw:errorterm:veryfarregion}
Cv^{-2}\big(u^{-1-\beta}
+u^{-2\alpha+\delta}
+u^{-3+2\alpha+\frac{\delta}{2}}+u^{-2+\alpha}\big)\FBT  .
\end{align}
For $\{v-u\geq v^{\alpha'}\}\cap\{v-u\leq\frac{v}{2}\}$, where $\alpha'\in (\alpha,1)$, one can utilize \eqref{eq:rela:e} to bound the RHS of \eqref{asymp-hatPhips-near-nullinfinity} by
\begin{align}
\label{eq:pricelaw:errorterm:farregion}
Cv^{-2\alpha'}\big(v^{-1-\beta}+v^{-2\alpha+\delta} +v^{-3+2\alpha+\frac{\delta}{2}}+v^{-2+\alpha}\big)\FBT  .
\end{align}
By taking $0<\delta\leq \min\{0.4, 2\beta\}$, $\alpha=1-\frac{3\delta}{8}$ and $\alpha'=1-\frac{\delta}{16}$, the expression \eqref{eq:pricelaw:errorterm:veryfarregion} is bounded by $Cv^{-2}u^{-1-\frac{\delta}{8}}\FBT $, and the expression \eqref{eq:pricelaw:errorterm:farregion} is bounded by $Cv^{-2+\frac{\delta}{8}}(v^{-1-\beta}
+v^{-2+\frac{7\delta}{4}}+v^{-1-\frac{\delta}{4}}
+v^{-1-\frac{3\delta}{8}})\FBT
\leq Cv^{-3-\frac{\delta}{8}}\FBT$. Additionally, in the region $\{v-u\geq v^{\alpha'}\}$, one has $\abs{u-\tb}\lesssim 1$, Thus, these discussions together with the estimate \eqref{asymp-hatPhips-near-nullinfinity} yield that in $\{v-u\geq v^{\alpha'}\}$,
\begin{align}
\abs{\varphi_{\sfrak}-4\tb^{-1}v^{-2}\NPCP{1}}
\lesssim {}& v^{-2}\tb^{-1-\frac{\delta}{8}}\FBT.
\end{align}

For $\{r\geq R\}\cap\{v-u\leq v^{\alpha'}\}$, we integrate along $\Sigma_\tau$ from a point $(\tb,r_{\gamma_{\alpha'}}(\tb))\in \gamma_{\alpha'}=\{v-u=v^{\alpha'}\}$, (See Figure \ref{fig:7}.)
%%%%%%%%%%%%%%%%%%%%%%%%%%
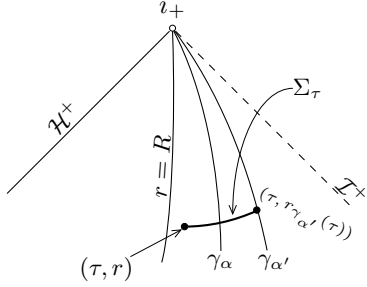
\begin{figure}[htbp]
\begin{center}
\begin{tikzpicture}[scale=0.8]
\draw[thin]
(0,0)--(2.45,2.45);
\draw[thin]
(0,0)--(-0.25,-0.25);
\draw[very thin]
(2.5,2.5) circle (0.05);
\coordinate [label=90:$i_+$] (a) at (2.5,2.5);
\draw[dashed]
(2.55,2.45)--(5.5,-0.5);
\node at (0.68,1.02) [rotate=45] {\small $\mathcal{H}^+$};
\node at (5.5,-0.2) [rotate=-45] {\small $\mathcal{I}^+$};
\draw[thin]
(2.5,2.45) arc (20:-70:0.3 and 3);
\node at  (2.3,0.2) [rotate=86] {\small $r=R$};
\draw[thin]
(2.52,2.45) arc (50:2:2.2 and 5);
\node at (3.3,-1.4)  {\small $\gamma_{\alpha}$};
\draw[thin]
(2.53,2.46) arc (50:18:5 and 8);
\node at (4.2,-1.4) {\small $\gamma_{\alpha'}$};
\draw[thick]
(2.7,-0.8) arc (270:284:5 and 9);
\draw[fill] (2.7,-0.8) circle (1.5pt);
\draw[fill] (3.9,-0.525) circle (1.5pt);
\node at (4.7,-0.625) [rotate=-25] {\tiny $(\tau,r_{\gamma_{\alpha'}}(\tau))$};
\draw[very thin, ->, >=angle 45]
(4.5,1.5) arc (90:180:1 and 2.1);
\node at (4.75,1.5) {\small $\Sigma_\tau$};
\draw[very thin, ->, >=angle 45]
(1.8,-1.3)--(2.65,-0.85);
\node at (1.4,-1.5) {\small $(\tau,r)$};
\end{tikzpicture}
\end{center}
\caption{For any point $(\tau,r)$ in $\{r\geq R\}\cap\{v-u\leq v^{\alpha'}\}$ with $\alpha'\in (\alpha,1)$ suitably chosen, integrate along $\Sigma_\tau$ from $(\tau,r_{\gamma_{\alpha'}}(\tb))\in \gamma_{\alpha'}$. }
\label{fig:7}
\end{figure}
%%%%%%%%%%%%%%%%%%%%%%%%%%%%%%%%
thus,
\begin{align}
\varphi_{\sfrak}(\tau,r)={}\varphi_{\sfrak}(\tau,r_1)
-\int_r^{r_1}\partial_\rho\varphi_{\sfrak}(\tau,\rho)\di \rho.
\end{align}
Note that $\tb= u|_{(\tau,r_{\gamma_{\alpha'}}(\tb))}\sim v$, $v(r_{\gamma_{\alpha'}}(\tb))\sim v\sim u$ and $\abs{v(r_{\gamma_{\alpha'}}(\tb))-v}\lesssim v^{\alpha'}=v^{1-\frac{\delta}{16}}$ on $\Sigmatb$.  Thus, using the estimate \eqref{dec:almostPrice:nega} but with $\delta$ replaced by a $\delta'\in (0,\delta)$ to be fixed gives
\begin{align}\begin{split}
\int_r^{r_{\gamma_{\alpha'}}(\tb)}
|\partial_\rho\varphi_{\sfrak}|(\tau,\rho)\di \rho \lesssim {}&\tau^{-1+\frac{\delta'}{2}}\int_r^{r_{\gamma_{\alpha'}}(\tb)} v^{-3}|_{\Sigma_\tau}\di \rho\\
\lesssim {}&v^{-1+\frac{\delta'}{2}}(v^{-2}-(v(r_{\gamma_{\alpha'}}(\tb)))^{-2}) \FBT \\
\lesssim {}&v^{-1+\frac{\delta'}{2}}\frac{(v(r_{\gamma_{\alpha'}}(\tb))-v)
(v(r_{\gamma_{\alpha'}}(\tb))+v)}{v^2(v(r_{\gamma_{\alpha'}}(\tb)))^2} \FBT \\
\lesssim {}&v^{-3+\frac{\delta'}{2}-\frac{\delta}{16}} \FBT .
\end{split}\end{align}
By taking $\delta'=\frac{\delta}{16}$, the above is bounded by $Cv^{-3-\frac{\delta}{32}} \FBT$.
Hence,
\begin{align}
\label{eq:Pricelaw:posi:bigr}
\hspace{4ex}&\hspace{-4ex}
|\varphi_{\sfrak}(\tau,r)
-4\tb^{-1}v^{-2}\NPCP{1}|\notag\\
\leq {}&\bigg|\varphi_{\sfrak}(\tau,r)
-\varphi_{\sfrak}(\tau,r_{\gamma_{\alpha'}}(\tb))\bigg|
+\bigg|\varphi_{\sfrak}(\tau,r_{\gamma_{\alpha'}}(\tb))
-\frac{4\NPCP{1}}{\tb(v(r_{\gamma_{\alpha'}}(\tb)))^2}\bigg|
+\bigg|\frac{4\NPCP{1}}{\tb(v(r_{\gamma_{\alpha'}}(\tb)))^2}
-\frac{4\NPCP{1}}{\tb v^2}\bigg|\notag\\
\lesssim{}&v^{-3-\frac{\delta}{32}} \FB_{\delta}
+v^{-3-\frac{\delta}{8}}\FBT
+v^{-3-\frac{\delta}{16}}\abs{\NPCP{1}}\notag\\
\lesssim{}&v^{-3-\frac{\delta}{32}}\FBT.
\end{align}

In the end, we consider $r\leq R$ region. Integrating along $\Sigmatb$ from the point $(\tb, R)$, and utilizing the estimate \eqref{eq:Pricelaw:posi:bigr} at $r=R$,  the estimate \eqref{dec:almostPrice:posi}  for $\prb \varphi_{\sfrak}$, and the estimate \eqref{eq:rela:g}, we obtain
\begin{align}\begin{split}
\hspace{2ex}&\hspace{-2ex}
\abs{\varphi_{\sfrak}(\tau,r)
-4\tau^{-1}v^{-2}\NPCP{1}}\\
&\leq{}|\varphi_{\sfrak}(\tau,R)
-4\tau^{-1}(v|_{\Sigma_\tau}(R))^{-2}\NPCP{1}|\\
&\quad
+|(4\tau^{-1}(v|_{\Sigma_\tau}(R))^{-2}-4\tau^{-1}v^{-2})\NPCP{1}|
+\bigg|\int_r^{R}\partial_\rho\varphi_{\sfrak}(\tau,\rho)\di \rho\bigg|\\
&\lesssim_{R}{}v^{-2}\tau^{-1-\frac{\delta}{32}}\FBT
+v^{-4}\abs{\NPCP{1}}
+v^{-3}\tau^{-1+\frac{\delta}{2}}\FB_{\delta} \lesssim_R v^{-2}\tau^{-1-\frac{\delta}{32}}\FBT.
\end{split}
\end{align}

In summary, we achieve the following estimate.
\begin{prop}
Assume on $\Sigma_{\tau_0}$ that there are constants $\beta\in(0,\half)$ and $D_0$ such that for $r\geq R$,
\begin{align}
\bigg|\rb^2\VR\hatPhips(\tau_0,v)-\NPCP{1}\bigg|\lesssim v^{-\beta}D_0.
\end{align}
Then for any $0<\delta\leq \min\{0.4, 2\beta\}$,  we have in $\Dzeroinfty$ that
\begin{align}
\abs{\varphi_{\sfrak}(\tau,r)
-4\tau^{-1}v^{-2}\NPCP{1}}
\lesssim{}v^{-2}\tau^{-1-\frac{\delta}{32}}\FBT.
\end{align}
\end{prop}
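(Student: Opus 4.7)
\medskip

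The plan is to prove this by integrating the first-order transport equation $\pu(\mu^{\half} r^{-1}V\hatPhips) = -3M\mu^{\frac{3}{2}} r^{-4}\hatPhips$ derived from the $\ell=1$ wave equation, then splitting $\Dzeroinfty$ into dyadic regions determined by the curves $\gamma_\alpha = \{v-u=v^\alpha\}$ and $r=R$. First, in the region $\{v-u \geq v^\alpha\}$ with $\alpha \in (\half,1)$, I would integrate the transport equation along a $v=\text{const}$ ray from $\Sigma_{\tb_0}$. The source $r^{-4}\hatPhips$ is controlled by the almost sharp pointwise decay in Theorem \ref{thm:almostPrice}, and a direct computation using $r \gtrsim v^\alpha + u^\alpha$ (equation \eqref{eq:rela:a}) gives
\begin{align*}
\bigl|\mu^{\frac{3}{2}}r^{-1}v^3\VR\hatPhips(u,v)-8\NPCP{1}\bigr|
\lesssim (v^{-\beta}+v^{-\eta})\FBT,\qquad \eta=2\alpha-1-\delta,
\end{align*}
where the boundary contribution on $\Sigma_{\tb_0}$ is handled via the assumption together with the asymptotic relation $|\mu^{\frac{3}{2}}r^{-1}v - 2|\lesssim r^{-1}\log(r-2M)$ from \eqref{asymp-factor}.

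Second, I would integrate $V\hatPhips$ along $u=\text{const}$ from a point on $\gamma_\alpha$ up to $(u,v)$ to recover $r^{-2}\hatPhips$. The leading term $\int_{v_{\gamma_\alpha}(u)}^v 8rv^{-3}\NPCP{1}\,dv'$ must be evaluated explicitly: after the substitution $2r = (v-u)+O(\log r)$, the integral produces the asymptotic profile $4u^{-1}v^{-2}\NPCP{1}$ up to the errors controlled by \eqref{eq:rela:b}--\eqref{eq:rela:c}. The contribution from the initial segment at $\gamma_\alpha$ is bounded using the almost Price's law for $\varphi_\sfrak$. Converting $r^{-2}\hatPhips$ to $\varphi_\sfrak = (r-M)^{-1}\Phips$ introduces an $O(r^{-2})$ correction that is absorbed into the error.

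Third, I would patch the estimate together across the full domain. In the very far region $\{v-u \geq v/2\}$ one uses $u+v \sim r$ directly; in the intermediate region $\{v^{\alpha'}\leq v-u \leq v/2\}$ with $\alpha' \in (\alpha,1)$, one uses $u \sim v$ so $\tb \sim u$. Choosing $\delta \leq \min\{0.4,2\beta\}$, $\alpha = 1-\tfrac{3\delta}{8}$, and $\alpha' = 1-\tfrac{\delta}{16}$ balances all error exponents. For the region $\{r\geq R\}\cap\{v-u\leq v^{\alpha'}\}$, one integrates $\prb\varphi_\sfrak$ along $\Sigma_\tb$ from the boundary $r_{\gamma_{\alpha'}}(\tb)$; the better $\tb^{-2}$ decay of $\prb\varphi_\sfrak$ in Theorem \ref{thm:almostPrice} exactly compensates the loss in the boundary-to-interior transfer. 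Finally, for $r\leq R$, one integrates along $\Sigma_\tb$ from $r=R$ using \eqref{eq:rela:g} to replace $v|_{\Sigma_\tb}(R)$ by $v$ up to $O(1)$ errors.

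The main obstacle will be the careful bookkeeping of error exponents across the three patching regions: the terms from the null-infinity analysis (like $u^{-2\alpha+\delta}$ and $u^{-3+2\alpha+\delta/2}$) must be simultaneously smaller than $u^{-1-\epsilon}$ after multiplying by $r^{-2}$ and re-expressing in terms of $(v,\tb)$, which forces the somewhat delicate choice $\alpha = 1 - 3\delta/8$, $\alpha'=1-\delta/16$. A secondary subtlety is that the leading constant $4\tb^{-1}v^{-2}\NPCP{1}$ must arise with the correct coefficient after the $\int 2v^{-3}(2r-(v-u))\,dv'$ cancellation, which is why the logarithmic term in \eqref{eq:rela:c} is harmless but needs to be tracked.
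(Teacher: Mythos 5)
Your proposal reproduces the paper's own proof essentially step for step: the same transport equation \eqref{eq:PhipsHigh1:l=1:v1}, the same $v=\text{const}$ integration with the initial-data assumption and \eqref{asymp-factor} feeding the $\NPCP{1}$ limit, the same $u=\text{const}$ integration from $\gamma_\alpha$ yielding the leading profile $4u^{-1}v^{-2}\NPCP{1}$, the same region decomposition with the identical parameter choices $\alpha=1-\tfrac{3\delta}{8}$ and $\alpha'=1-\tfrac{\delta}{16}$, and the same use of the improved $\prb\varphi_{\sfrak}$ decay in the region $\{r\geq R\}\cap\{v-u\leq v^{\alpha'}\}$. The argument is correct and there is no meaningful divergence from the paper.
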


%%%%%%%%%%%
\subsubsection{Asymptotics of $\Lxi^j\varphi_{\sfrak}$}
\label{sect:asymofvarphi+:high}
%%%%%%%%%%

We proceed to obtain precise behaviours for $\Lxi^j\varphi_{\sfrak}$. Applying $\pv^i$ to equation \eqref{eq:PhipsHigh1:l=1:v1} gives
\begin{align}
\label{eq:PhipsHigh1:l=1:v1:high}
\pu(\pv^i(\mu^{\half} r^{-1}V\hatPhips) ) ={}-{3M}\pv^i(\mu^{\frac{3}{2}} r^{-4}\hatPhips).
\end{align}

\begin{lemma}
Assume on $\Sigma_{\tau_0}$ that for any $i\in\mathbb{N}$, there exist  constants $\beta \in (0,\half)$ and $D_0$  such that for all $0\leq i'\leq i$ and $r\geq R$,
\begin{align}
\label{assum:initialdatawithNPconst:high}
\bigg|\prb^{i'}\bigg(\rb^2\VR\hatPhips({\tb_0},v)
-\NPCP{1}\bigg)\bigg|\lesssim v^{-{i'}-\beta}D_0.
\end{align}
Then for $v-u\geq v^{\alpha_i}$ with $\frac{i+2}{i+3}<{\alpha_i}<1$ and any  $0<\eta<-1-i-\frac{\delta}{2}+(2+i){\alpha_i}$,
\begin{align}
\label{eq:VPhinsHigh1:awayregion:high}
|v^{i+3}\pv^i(\mu^\frac{3}{2} r^{-1}\VR\hatPhips(u,v))-4(-1)^i (i+2)!\NPCP{1}|
\lesssim (v^{-\beta}+v^{-\eta})\FBT.
\end{align}
\end{lemma}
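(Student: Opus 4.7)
The plan is to prove the estimate by induction on $i$, with the base case $i=0$ being exactly \eqref{eq:VPhinsHigh1:awayregion}, which has already been established. Assuming the statement for all indices $0,1,\ldots,i-1$, I would integrate equation \eqref{eq:PhipsHigh1:l=1:v1:high} along $v=\text{const}$ from the point $(u_{\Sigma_{\tau_0}}(v),v)\in\Sigmazero$ up to $(u,v)$, following the same geometric setup as in Figure \ref{fig:5}. This yields the identity
\begin{align*}
v^{i+3}\pv^i(\mu^{1/2}r^{-1}V\hatPhips)(u,v)
&= v^{i+3}\pv^i(\mu^{1/2}r^{-1}V\hatPhips)(u_{\Sigma_{\tau_0}}(v),v) \\
&\quad - 3Mv^{i+3}\int_{u_{\Sigma_{\tau_0}}(v)}^u \pv^i(\mu^{3/2}r^{-4}\hatPhips)(u',v)\,\di u',
\end{align*}
and using $\mu^{1/2}r^{-1}V=\mu^{3/2}r^{-1}\VR$, the task reduces to two sub-estimates: showing that the boundary term approaches $4(-1)^i(i+2)!\NPCP{1}$ with error $O(v^{-\beta})$, and bounding the bulk integral by $O(v^{-\eta})$.

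For the boundary term on $\Sigmazero$, I would expand $\pv=\tfrac{1}{2}(\Lxi+\mu\partial_r)$ via Leibniz. The $\Lxi$-terms only contribute lower-order corrections (as $\Lxi\hatPhips$ is controlled by standard regularity of the initial data together with the base case), so the leading behaviour is governed by $\partial_r^i(\mu^{3/2}r^{-3}\cdot\rho^2\VR\hatPhips)$. Hypothesis \eqref{assum:initialdatawithNPconst:high} gives $\partial_r^{i'}(\rho^2\VR\hatPhips-\NPCP{1})=O(v^{-i'-\beta}D_0)$, so this produces a leading factor $v^{i+3}\partial_r^i(\mu^{3/2}r^{-3})\NPCP{1}$. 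Combining $\partial_r^i r^{-3}\to(-1)^i(i+2)!/2\cdot r^{-3-i}$ for large $r$ with the asymptotic $v=2r+O(\log(r-2M))$ from \eqref{asymp-factor} yields the constant $4(-1)^i(i+2)!\NPCP{1}$ in the limit, with a residual error of order $v^{-\beta}D_0+v^{-\alpha_i}\log v\cdot|\NPCP{1}|$, exactly analogous to \eqref{eq:VPhinsHigh1:awayregion:2}.

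For the bulk integral, I would apply Leibniz to write $\pv^i(\mu^{3/2}r^{-4}\hatPhips)=\sum_{j=0}^{i}\binom{i}{j}\pv^j(\mu^{3/2}r^{-4})\pv^{i-j}\hatPhips$. The coefficient factor satisfies $|\pv^j(\mu^{3/2}r^{-4})|\lesssim r^{-4-j}$. To obtain pointwise control on $\pv^{i-j}\hatPhips$ in the region $v-u\geq v^{\alpha_i}$, I would integrate the induction hypothesis for indices $i'<i$ along $u=\text{const}$ lines issuing from $\gamma_{\alpha_i}$, repeating the argument in Section \ref{sect:asymofvarphi+} after \eqref{eq-v-hatPhips}; this produces $|\pv^{i-j}\hatPhips|\lesssim r^{2-(i-j)}u^{-1+\delta/2}v^{-2}\FBT$. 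Each Leibniz summand is therefore dominated by $r^{-2-i}u^{-1+\delta/2}v^{-2}\FBT$, and the resulting integral
\begin{align*}
v^{i+3}\int_{u_{\Sigma_{\tau_0}}(v)}^u r^{-2-i}(u')^{-1+\delta/2}v^{-2}\,\di u'
\lesssim v^{-\eta}\FBT,
\end{align*}
is controlled via the change of variables $\di r\sim\mu\,\di u'$ combined with the bound $r\gtrsim(u')^{\alpha_i}$ from \eqref{eq:rela:a}; the integrability at both endpoints dictates precisely the lower threshold $\alpha_i>(i+2)/(i+3)$ and yields any positive $\eta<-1-i-\delta/2+(2+i)\alpha_i$.

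The principal obstacle is that the inductive hypothesis furnishes asymptotic control only of the specific combination $\mu^{3/2}r^{-1}\VR\hatPhips$, whereas the Leibniz expansion of the source requires pointwise control of $\pv^{i-j}\hatPhips$ itself for $0\leq j\leq i$. Extracting these from the inductive information forces a nested integration along $u=\text{const}$ lines (paralleling the chain of reductions from $\VR\hatPhips$ to $\varphi_\sfrak$ carried out in Section \ref{sect:asymofvarphi+}), with each integration consuming powers of $r$ and $v^{\alpha_i}$ in the effective weights and gradually tightening the admissible range of $\alpha_i$ from $1/2$ in the base case to the stated $(i+2)/(i+3)$ at level $i$. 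Careful bookkeeping of the constant-in-$v$ boundary contributions produced at each such integration, and verifying their compatibility with the desired decay $v^{-\eta}$, will be the most delicate step.
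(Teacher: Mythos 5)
Your plan has the right skeleton (integrate \eqref{eq:PhipsHigh1:l=1:v1:high} along $v=\mathrm{const}$, then estimate boundary term and bulk integral separately, with the boundary term analysis parallel to \eqref{eq:VPhinsHigh1:awayregion:2} at higher order), and your boundary-term treatment is essentially the same as the paper's. However, the bulk-integral strategy departs from the paper in a way that adds a genuine complication which the paper avoids.

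The paper does \emph{not} induct on $i$. The pointwise bound you need,
$\bigl|\pv^{i-j}\hatPhips\bigr|\lesssim r^{2-(i-j)}u^{-1+\delta/2}v^{-2}\FBT$
in the region $v-u\geq v^{\alpha_i}$, is not something to be extracted from the inductive hypothesis by integrating $\mu^{3/2}r^{-1}\VR\hatPhips$ along $u=\mathrm{const}$ lines. It is already available, at \emph{every} order $k$, from Theorem \ref{thm:almostPrice} (or the underlying Proposition \ref{prop:BED:Psiplus:l=1}): the almost-sharp decay there controls $\absCDerit{\Lxi^j\varphi_{\sfrak}}{\reg}$, and since $\CDerit$ contains $\Delta^{\half}\prb$ while $\pv=\tfrac12\mu\prb+\tfrac12\mu H\Lxi$, each $\pv$ acting on $\hatPhips\sim r^2\varphi_{\sfrak}$ costs a factor $r^{-1}$ uniformly. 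So $|\pv^i(\mu^{3/2}r^{-4}\hatPhips)|\lesssim r^{-2-i}v^{-2}\tau^{-1+\delta/2}\FB_\delta$ in one step, with no Leibniz expansion of the source and no induction. Substituting this into the bulk integral and using $r\gtrsim (u')^{\alpha_i}$ from \eqref{eq:rela:a} gives directly the integrability threshold $-(2+i)\alpha_i+i+\eta+\tfrac{\delta}{2}<-1$, i.e. $\alpha_i>\tfrac{i+2}{i+3}$ and $\eta<-1-i-\tfrac{\delta}{2}+(2+i)\alpha_i$, cleanly and without any nesting.

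The obstacle you flag at the end of your proposal — that the inductive hypothesis controls only $\pv^{i'}(\mu^{3/2}r^{-1}\VR\hatPhips)$, not $\pv^{i-j}\hatPhips$, forcing a cascade of $u$-integrations with potentially drifting constraints on $\alpha_i$ — is a real difficulty, and it arises \emph{only because} you chose the inductive route. The cascade you describe is in fact essentially the analysis of Section \ref{sect:asymofvarphi+:high} (which comes \emph{after} this lemma, where the asymptotics of $\Lxi^j\varphi_{\sfrak}$ are derived from the lemma's conclusion); running it inside the induction would be circular or at best badly ordered. Citing Theorem \ref{thm:almostPrice} at the point where you need the pointwise control on $\pv^{i-j}\hatPhips$ dissolves the problem entirely and reduces your proof to the paper's.
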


\begin{proof}
For $v-u\geq v^{\alpha_i}$, one can integrate equation \eqref{eq:PhipsHigh1:l=1:v1:high} along $v=const$ and estimate the integral on the RHS of equation \eqref{eq:PhipsHigh1:l=1:v1:high} by Theorem \ref{thm:almostPrice} and \eqref{eq:rela:a}:
\begin{align}
\label{eq:VPhinsHigh1:awayregion:n:high}\begin{split}
\bigg|v^{3+i}\int_{u_{\Sigma_{\tau_0}}(v)}^u
\partial_v^i(\mu^{\frac{3}{2}}r^{-4}\hatPhips(u',v))\di u'\bigg|&\lesssim v\int_{u_{\Sigma_{\tau_0}}(v)}^u
\mu^{\half}\Delta^{-\half}r^{-1}r^{-i}v^i\tau^{-1+\frac{\delta}{2}}(u',v)\di u'\FB_{\delta}\\
&=v^{-\eta}\int_{u_{\Sigma_{\tau_0}}(v)}^u r^{-2-i}v^{1+i+\eta}\tau^{-1+\frac{\delta}{2}}(u',v)\di u'\FB_{\delta}\\
&\lesssim v^{-\eta}\int_{u_{\Sigma_{\tau_0}}(v)}^u r^{-2-i}v^{1+i+\eta}(u')^{-1+\frac{\delta}{2}}(u',v)\di u'\FB_{\delta}\\
&\lesssim v^{-\eta}\int_{u_{\Sigma_{\tau_0}}(v)}^u
r^{-2-i+\frac{1+i+\eta}{\alpha_i}}
(u')^{-1+\frac{\delta}{2}}(u',v)du'\FB_{\delta}\\
&\lesssim v^{-\eta}\int_{u_{\Sigma_{\tau_0}}(v)}^u
(u')^{-(2+i){\alpha_i}+i+\eta+\frac{\delta}{2}}(u',v)\di u'\FB_{\delta}\\
&\lesssim v^{-\eta}\FB_{\delta},
\end{split}\end{align}
where in the last step we used $-(2+i){\alpha_i}+i+\eta+\frac{\delta}{2}<-1$ which holds true by assumption. Therefore,
\begin{align}
\label{eq:VPhinsHigh1:awayregion:1:high}
\abs{v^{3+i}\pv^i(\mu^{\frac{3}{2}} r^{-1}\VR\hatPhips(u,v))-v^{3+i}\pv^i(\mu^\frac{3}{2} r^{-1}\VR\hatPhips(u_{\Sigma_{\tau_0}}(v),v))}\lesssim{}v^{-\eta}\FB_{\delta}.
\end{align}
On the other hand,  we have
\begin{align}
\pv^i(\mu^\frac{3}{2} r^{-1}\VR\hatPhips(\tau_0,v))={}&\pv^{i}(v^{-3}\cdot \mu^{\frac{3}{2}}r^{-1}v v^2\VR\hatPhips(\tau_0,v))\notag\\
={}&\sum_{j=0}^i (-1)^j \half \frac{i!}{j!(i-j)!}(j+2)!v^{-3-j}\pv^{i-j}
(\mu^{\frac{3}{2}}r^{-1}v v^2\VR\hatPhips(\tau_0,v)).
\end{align}
Then
in view of the assumption \eqref{assum:initialdatawithNPconst:high}, the estimate \eqref{asymp-factor}, and $\partial_v=\half\mu(\partial_\rho+(2\mu^{-1}-\partial_rh(r))\Lxi)$, one obtains
\begin{align}
\label{eq:VPhinsHigh1:awayregion:2:high}
|\pv^i(\mu^\frac{3}{2} r^{-1}\VR\hatPhips(\tau_0,v))-4(-1)^i (i+2)! v^{-3-i}\NPCP{1}|
\lesssim v^{-\beta-i-3}D_0+v^{-4-i}(\abs{\NPCP{1}}+\FB_{\delta}).
\end{align}
The estimate \eqref{eq:VPhinsHigh1:awayregion:high} thus follows from the estimates \eqref{eq:VPhinsHigh1:awayregion:1:high} and \eqref{eq:VPhinsHigh1:awayregion:2:high}.
\end{proof}

Now we estimate $\Lxi^i\hatPhips$ in $v-u\geq v^{\alpha_i}$. In the estimate \eqref{eq:VPhinsHigh1:awayregion:high}, one can write $\pv=\Lxi-\pu$ and use equation \eqref{eq:PhipsHigh1:l=1:v1:high} to estimate $\pu(\pv^{i-1}(\mu^{\half} r^{-1}V\hatPhips) )$, thus
\begin{align}
\hspace{4ex}&\hspace{-4ex}
\abs{v^{i+3}\pv^{i-1}(\mu^\frac{3}{2} r^{-1}\VR\Lxi\hatPhips(u,v))-4(-1)^i (i+2)!\NPCP{1}}\notag\\
\lesssim{}& (v^{-\beta}+v^{-\eta})\FBT
+\abs{v^{i+3}\pv^{i-1}(\mu^{\frac{3}{2}} r^{-4}\hatPhips)}\notag\\
\lesssim{}& (v^{-\beta}+v^{-\eta})\FBT
+v^{-\eta}u^{-(1+i){\alpha_i}+i+\eta+\frac{\delta}{2}}\FB_{\delta}
\lesssim{}(v^{-\beta}+v^{-\eta})\FBT,
\end{align}
where we have used  a similar argument in \eqref{eq:VPhinsHigh1:awayregion:n:high} to estimate $\abs{v^{i+3}\pv^{i-1}(\mu^{\frac{3}{2}} r^{-4}\hatPhips)}$ in the second last step and $\eta<-1-i-\frac{\delta}{2}+(2+i){\alpha_i}$ in the last step. One can inductively proceed to obtain that for any $0\leq j\leq i$,
\begin{align}
\label{eq:VPhinsHigh1:awayregion:v1:high:any}
\abs{v^{i+3}\pv^{i-j}(\mu^\frac{3}{2} r^{-1}\VR\Lxi^j\hatPhips(u,v))-4(-1)^i (i+2)!\NPCP{1}}
\lesssim{}&(v^{-\beta}+v^{-\eta})\FBT.
\end{align}
In particular, for $i=j$, one has
\begin{align}
\label{eq:VPhinsHigh1:awayregion:v1:high}
\abs{v^{j+3}(\mu^\frac{3}{2} r^{-1}\VR\Lxi^j\hatPhips(u,v))-4(-1)^j (j+2)!\NPCP{1}}
\lesssim{}&(v^{-\beta}+v^{-\eta})\FBT.
\end{align}

Denote $\gamma_{\alpha_j}=\{v-u=v^{\alpha_j}\}$. On $u=const$, we have
\begin{align}\label{eq-v-hatPhips:high}
(r^{-2}\Lxi^j\hatPhips)(u,v)=(r(u,v))^ {-2}\Lxi^j\hatPhips(u,v_{\gamma_{\alpha_j}}(u))
+\half (r(u,v))^ {-2}\int_{v_{\gamma_{\alpha_j}}(u)}^{v}
 V\Lxi^j\hatPhips (u,v')\di v'.
 \end{align}
We utilize  the estimates \eqref{eq:rela:b}, \eqref{eq:rela:c} and \eqref{eq:VPhinsHigh1:awayregion:v1:high} to estimate the last term of \eqref{eq-v-hatPhips:high}:
\begin{align}\begin{split}
\hspace{4ex}&\hspace{-4ex}
\frac{1}{2}\bigg|\int_{v_{\gamma_{\alpha_j}}(u)}^{v}
 \bigg(V\Lxi^j\hatPhips (u,v')
 -4(-1)^j(j+2)!r(v')^{-3-j}\NPCP{1}\bigg)\di v'\bigg|\\
 &\lesssim \int_{v_{\gamma_{\alpha_j}}(u)}^{v}
 (r(v')^{-3-j}((v')^{-\beta}+(v')^{-\eta})\FBT
 +(v')^{-3-j}\abs{\NPCP{1}})\di v'\\
 &\lesssim \big[(v_{\gamma_{\alpha_j}}(u))^{-1-\beta-j}
 +(v_{\gamma_{\alpha_j}}(u))^{-1-\eta-j}\big] \FBT +(v_{\gamma_{\alpha_j}}(u))^{-2-j}\abs{\NPCP{1}}\\
 &\lesssim (u^{-1-\beta-j}
 +u^{-1-\eta-j} +u^{-2-j})\FBT
\end{split}\end{align}
and
\begin{align}
\label{eq:PhinsHigh1:awayregion:v1:high:any}
\half\hspace{2ex}&\hspace{-2ex}\int_{v_{\gamma_{\alpha_j}}(u)}^{v}
4(-1)^j(j+2)!r(u,v')(v')^{-3-j}\NPCP{1}\di v'\notag\\
=&(-1)^j(j+2)!\int_{v_{\gamma_{\alpha_j}}(u)}^{v}
[((v')^{-2-j}-u (v')^{-3-j})+(2r(u,v')-(v'-u))(v')^{-3-j}]\NPCP{1}\di v'\notag\\
=& (-1)^j(j+2)!\NPCP{1}\big[((j+2)^{-1}uv^{-2-j}-(j+1)^{-1}v^{-1-j})\notag\\
&\qquad \qquad \qquad \qquad
-((j+2)^{-1}uv^{-2-j}-(j+1)^{-1}v^{-1-j})(u,v_{\gamma_{\alpha_j}}(u))\big]\notag\\
& +(-1)^j(j+2)!\int_{v_{\gamma_{\alpha_j}}(u)}^{v}(2r(u,v')-(v'-u))(v')^{-3-j}\NPCP{1}\di v'\notag\\
=& (-1)^j(j+2)!\NPCP{1}[(j+2)^{-1} u (v^{-2-j} - u^{-2-j})
-(j+1)^{-1}(v^{-1-j}-u^{-1-j})]\notag\\
&
+(-1)^j(j+2)!\NPCP{1}[(j+1)^{-1}((v_{\gamma_{\alpha_j}}(u))^{-1-j}
-u^{-1-j})
-(j+2)^{-1}u((v_{\gamma_{\alpha_j}}(u))^{-2-j}-u^{-2-j})]\notag\\
& +(-1)^j(j+2)!\int_{v_{\gamma_{\alpha_j}}(u)}^{v}
(2r(u,v')-(v'-u))(v')^{-3-j}\NPCP{1}\di v'.
\end{align}
For the third last line of  equation \eqref{eq:PhinsHigh1:awayregion:v1:high:any}, it equals
\begin{align}
\hspace{4ex}&\hspace{-4ex}
(-1)^j j!\NPCP{1}(u^{-1-j}
- (j+2)v^{-1-j}
+(j+1)uv^{-2-j})\notag\\
={}&(-1)^j j!\NPCP{1}u^{-1-j}v^{-2-j}(
v(v^{1+j}-u^{1+j})
-(j+1)u^{1+j}(v-u))\notag\\
={}&(-1)^j j!\NPCP{1}u^{-1-j}v^{-2-j}(v-u)
\sum_{n=0}^{j}u^{j-n}(v^{n+1}-u^{n+1})
\notag\\
={}&(-1)^j j!\NPCP{1}u^{-1-j}v^{-2}(v-u)^2 \sum_{n=0}^{j}\sum_{i=0}^{n}\bigg(\frac{u}{v}\bigg)^{j-i},
\end{align}
and the absolute value of the last two lines of equation \eqref{eq:PhinsHigh1:awayregion:v1:high:any} is clearly bounded using \eqref{eq:rela:b} and \eqref{eq:rela:c} by $C(u^{-2-j}(\log u)^2 +u^{-2-j+\alpha_{j}})\abs{\NPCP{1}}$.
As a result,
\begin{align}
\hspace{4ex}&\hspace{-4ex}
\bigg|\half (r(u,v))^ {-2}\int_{v_{\gamma_{\alpha_j}}(u)}^{v}
 V\Lxi^j\hatPhips (u,v')\di v'
 -(-1)^j j!u^{-1-j}v^{-2}(v-u)^2 \sum_{n=0}^{j}\sum_{i=0}^{n}\bigg(\frac{u}{v}\bigg)^{j-i}\NPCP{1}\bigg|\notag\\
\lesssim{}& r^{-2}\big(u^{-1-\beta-j}
 +u^{-1-\eta-j} +u^{-2-j}
 +u^{-2-j}(\log u)^2 +u^{-2-j+\alpha_{j}}\big)\FBT\notag\\
 \lesssim{}& r^{-2}\big(u^{-1-\beta-j}
+u^{-1-\eta-j}+u^{-2-j+\alpha_{j}}\big)\FBT.
\end{align}
For the first term on the right hand of \eqref{eq-v-hatPhips:high},
\begin{align}
|(r(u,v))^ {-2}\Lxi^j\hatPhips(u,v_{\gamma_{\alpha_j}}(u))|&\lesssim \mu^{-\half}(r(u,v))^ {-2} (r(u,v_{\gamma_{\alpha_j}}(u)))^ {2}\cdot (v^{-2}\tau^{-1-j+\frac{\delta}{2}})
(u,v_{\gamma_{\alpha_j}}(u))\FB_{\delta}\notag\\
&\lesssim (r(u,v))^{-2}u^{-3+2\alpha_j-j+\frac{\delta}{2}}\FB_{\delta}.
\end{align}
Hence, we conclude
\begin{lemma}
For $v-u\geq v^{\alpha_j}$ with $\frac{j+2}{j+3}<{\alpha_j}<1$, and $0<\eta<-1-j-\frac{\delta}{2}+(2+j){\alpha_j}$, we have
\begin{align}\label{asymp-hatPhips-near-nullinfinity:high}
\hspace{4ex}&\hspace{-4ex}
\bigg|\Lxi^j\varphi_{\sfrak}
-4 (-1)^j j!u^{-1-j}v^{-2}  \sum_{n=0}^{j}\sum_{i=0}^{n}\bigg(\frac{u}{v}\bigg)^{j-i}\NPCP{1}\bigg|\notag\\
\lesssim {}&r^{-2}\big(u^{-1-\beta-j}
+u^{-1-\eta-j}
+u^{-3+2{\alpha_j}-j+\frac{\delta}{2}}
+u^{-2-j+\alpha_{j}}\big)\FBT.
\end{align}
\end{lemma}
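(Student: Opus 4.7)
The plan is to mirror the strategy used to prove the $j=0$ case \eqref{asymp-hatPhips-near-nullinfinity}, taking the higher-order null-direction asymptotics \eqref{eq:VPhinsHigh1:awayregion:v1:high} as the main analytic input. First, I would write down the analogue of \eqref{eq-v-hatPhips:high} by integrating $V\Lxi^j\hatPhips$ along $u=\mathrm{const}$ from the curve $\gamma_{\alpha_j}=\{v-u=v^{\alpha_j}\}$ to $(u,v)$, obtaining
\[
(r^{-2}\Lxi^j\hatPhips)(u,v)=r(u,v)^{-2}\Lxi^j\hatPhips(u,v_{\gamma_{\alpha_j}}(u))+\tfrac{1}{2}r(u,v)^{-2}\int_{v_{\gamma_{\alpha_j}}(u)}^{v}V\Lxi^j\hatPhips(u,v')\,\di v'.
\]

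Next, I would split the integrand as $V\Lxi^j\hatPhips=4(-1)^j(j+2)!\,r(v')^{-3-j}\NPCP{1}+E(u,v')$, where by \eqref{eq:VPhinsHigh1:awayregion:v1:high} the error satisfies $|E|\lesssim r(v')^{-3-j}((v')^{-\beta}+(v')^{-\eta})\FBT+(v')^{-3-j}|\NPCP{1}|$. Integrating $E$ from $v_{\gamma_{\alpha_j}}(u)$ to $v$, and using \eqref{eq:rela:b} to identify $v_{\gamma_{\alpha_j}}(u)\sim u$, this error after factoring $r(u,v)^{-2}$ contributes $r(u,v)^{-2}(u^{-1-\beta-j}+u^{-1-\eta-j}+u^{-2-j})\FBT$, exactly the first two error types on the right-hand side of the lemma.

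The central computation is the explicit integration of the leading-order piece. Using \eqref{eq:rela:c} to replace $2r(u,v')$ by $(v'-u)$ modulo an $O(\log r)$ correction, the antiderivative of $r(v')^{-3-j}$ along $u=\mathrm{const}$ is a combination of $(j+2)^{-1}u(v')^{-2-j}$ and $-(j+1)^{-1}(v')^{-1-j}$. Evaluating this primitive between $v_{\gamma_{\alpha_j}}(u)$ and $v$ and rearranging via the telescoping identity $v(v^{1+j}-u^{1+j})-(j+1)u^{1+j}(v-u)=(v-u)\sum_{n=0}^{j}u^{j-n}(v^{n+1}-u^{n+1})=(v-u)^{2}\sum_{n=0}^{j}\sum_{i=0}^{n}u^{j-i}v^{i+1-j}\cdot(u/v)^{\ldots}$ produces precisely the advertised principal term $4(-1)^j j!\,u^{-1-j}v^{-2}\sum_{n=0}^{j}\sum_{i=0}^{n}(u/v)^{j-i}\NPCP{1}$, while the boundary contributions at $v_{\gamma_{\alpha_j}}(u)$ together with the $\log r$ correction are bounded by $r(u,v)^{-2}(u^{-2-j}(\log u)^{2}+u^{-2-j+\alpha_j})|\NPCP{1}|$, which fits inside the $u^{-2-j+\alpha_j}\FBT$ error.

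Finally, the remaining boundary term $r(u,v)^{-2}\Lxi^j\hatPhips(u,v_{\gamma_{\alpha_j}}(u))$ is estimated by invoking the almost Price's law \eqref{dec:almostPrice:posi} at that point, which yields $|\Lxi^j\hatPhips|\lesssim \mu^{-1/2}r^{2}\cdot v^{-2}\tau^{-1-j+\delta/2}\FB_{\delta}$ there; combined with $r(u,v_{\gamma_{\alpha_j}}(u))^{2}\lesssim u^{2\alpha_j}$ from \eqref{eq:rela:a}, this produces the last error term $r(u,v)^{-2}u^{-3+2\alpha_j-j+\delta/2}\FB_{\delta}$. To pass from $r^{-2}\hatPhips$ to $\varphi_{\sfrak}$ I would use the relation $\varphi_{\sfrak}=r(r-M)^{-1}\mu^{1/2}\cdot r^{-2}\hatPhips$ together with $|r(r-M)^{-1}\mu^{1/2}-1|\lesssim r^{-2}$; the discrepancy is dominated by the almost Price's law bound from Theorem \ref{thm:almostPrice} and absorbed into the stated error terms. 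The main obstacle is the algebraic bookkeeping in the third step: verifying that the elementary primitive of $r(v')^{-3-j}$, evaluated between the two endpoints and cleaned up using \eqref{eq:rela:b}--\eqref{eq:rela:c}, genuinely reassembles into the specific double sum $\sum_{n=0}^{j}\sum_{i=0}^{n}(u/v)^{j-i}$ with coefficient $4(-1)^j j!$; everything else is a careful but routine adaptation of the $j=0$ calculation.
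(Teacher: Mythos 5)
Your proposal is correct and follows the paper's own proof of this lemma step by step: the integral representation along $u=\mathrm{const}$ from $\gamma_{\alpha_j}$, the splitting of $V\Lxi^j\hatPhips$ into leading term plus error controlled by the preceding high-order null asymptotics, the explicit primitive $\,(j+2)^{-1}u(v')^{-2-j}-(j+1)^{-1}(v')^{-1-j}$ reorganized via the telescoping identity into $4(-1)^j j!\,u^{-1-j}v^{-2}\sum_{n=0}^{j}\sum_{i=0}^{n}(u/v)^{j-i}\NPCP{1}$, the almost Price's law estimate for the boundary term at $\gamma_{\alpha_j}$, and the final passage from $r^{-2}\hatPhips$ to $\varphi_{\sfrak}$ via $|r(r-M)^{-1}\mu^{1/2}-1|\lesssim r^{-2}$. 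The only cosmetic nit is that the bound $r(u,v_{\gamma_{\alpha_j}}(u))^{2}\lesssim u^{2\alpha_j}$ comes from the defining relation $v-u=v^{\alpha_j}$ on $\gamma_{\alpha_j}$ together with \eqref{eq:rela:b}--\eqref{eq:rela:c}, rather than from \eqref{eq:rela:a} as cited.
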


\begin{proof}
The above discussions imply that the estimate \eqref{asymp-hatPhips-near-nullinfinity:high} holds with $\varphi_{\sfrak}$ replaced by $r^{-2}\PhipsHigh{1}$. It remains to estimate the difference between these two scalars.
By definition, $\varphi_{\sfrak}=r(r-M)^{-1}\mu^{\half}r^{-2}\PhipsHigh{1}$, and $\abs{r(r-M)^{-1}\mu^{\half}-1}\lesssim M^2r^{-2}$, thus the estimate \eqref{asymp-hatPhips-near-nullinfinity:high} follows.
\end{proof}

We then consider the asymptotics in the entire region $\Dzeroinfty$.
For $\{v-u\geq v^{\alpha_j}\}\cap\{v-u\geq\frac{v}{2}\}$, the RHS of \eqref{asymp-hatPhips-near-nullinfinity:high} is bounded by
\begin{align}
\label{eq:pricelaw:errorterm:veryfarregion:high}
Cv^{-2}\big(u^{-1-\beta-j}
+u^{-1-\eta-j}
+u^{-3+2{\alpha_j}-j+\frac{\delta}{2}}
+u^{-2-j+\alpha_{j}}\big)\FBT.
\end{align}
For $\{v-u\geq v^{\alpha'_j}\}\cap\{v-u\leq\frac{v}{2}\}$, where $\alpha'_j\in (\alpha_j,1)$, then  we can use \eqref{eq:rela:e} to bound the RHS of \eqref{asymp-hatPhips-near-nullinfinity:high} by
\begin{align}
\label{eq:pricelaw:errorterm:farregion:high}
Cv^{-2{\alpha'_j}}\big(v^{-1-\beta-j} +v^{-1-\eta-j}
+v^{-3+2{\alpha_j}-j+\frac{\delta}{2}}
+v^{-2-j+\alpha_{j}}\big)\FBT.
\end{align}
By taking $\alpha_j<1-\frac{\delta}{4}$ and ${\alpha'_j}\in (\max\{1-\frac{\eta}{2}, 1-\frac{\beta}{2},\frac{\alpha_j+1}{2}\}, 1)$, there exists a constant $\epsilon>0$ such that the expressions \eqref{eq:pricelaw:errorterm:veryfarregion:high} and \eqref{eq:pricelaw:errorterm:farregion:high} are bounded by $Cv^{-2}u^{-1-j-\epsilon}\FBT$. We have moreover that $\abs{\tb- u}\lesssim 1$ in $\{v-u\geq v^{\alpha'_j}\}$. Thus, the estimate \eqref{asymp-hatPhips-near-nullinfinity:high} yields that in $\{v-u\geq v^{\alpha'_j}\}$,
\begin{align}
\bigg|\Lxi^j\varphi_{\sfrak}
-4 (-1)^j j!\tb^{-1-j}v^{-2}  \sum_{n=0}^{j}\sum_{i=0}^{n}\bigg(\frac{\tb}{v}\bigg)^{j-i}\NPCP{1}\bigg|
\lesssim {}& v^{-2}\tb^{-1-j-\epsilon}\FBT.
\end{align}

For $\{r\geq R\}\cap\{v-u\leq v^{\alpha'_j}\}$, we integrate along $\Sigma_\tau$ from a point $(\tb,r_{\gamma_{\alpha_j'}}(\tb))\in \gamma_{\alpha_j'}= \{v-u=v^{\alpha'_j}\}$:
\begin{align}
\Lxi^j\varphi_{\sfrak}(\tau,r)={}\Lxi^j\varphi_{\sfrak}(\tau,r_{\gamma_{\alpha_j'}}(\tb))
-\int_r^{r_{\gamma_{\alpha_j'}}(\tb)}\partial_\rho\Lxi^j\varphi_{\sfrak}(\tau,\rho)\di \rho.
\end{align}
Note that $\abs{\tb-u|_{(\tau,r_{\gamma_{\alpha_j'}}(\tb))}}\lesssim 1$, $ u|_{(\tau,r_{\gamma_{\alpha_j'}}(\tb))}\sim v$, $v(r_{\gamma_{\alpha_j'}}(\tb))\sim u\sim v$ and $\abs{v(r_1)-v}\lesssim v^{\alpha'_j}$. Moreover, we have $\di \rho\sim \di v$ on $\Sigma_\tau$, thus using the estimate \eqref{dec:almostPrice:nega} but with $\delta$ replaced by a $\delta'\in (0,\delta)$ to be fixed gives
\begin{align}\begin{split}
\int_r^{r_{\gamma_{\alpha_j'}}(\tb)}|\partial_\rho\Lxi^j\varphi_{\sfrak}|(\tau,\rho)\di \rho
\lesssim {}&\tau^{-1-j+\frac{\delta'}{2}}\int_r^{r_{\gamma_{\alpha_j'}}(\tb)} v^{-3}|_{\Sigma_\tau}\di \rho\\
\lesssim {}&v^{-1-j+\frac{\delta'}{2}}(v^{-2}-(v(r_{\gamma_{\alpha_j'}}(\tb)))^{-2})\FB_{\delta'}
\\
\lesssim {}&v^{-1-j+\frac{\delta'}{2}}\frac{(v(r_{\gamma_{\alpha_j'}}(\tb))-v)
(v(r_{\gamma_{\alpha_j'}}(\tb))+v)}{v^2(v(r_{\gamma_{\alpha_j'}}(\tb)))^2}\FB_{\delta'}
\\
\lesssim {}&v^{-4-j+\frac{\delta'}{2}+\alpha'_j}\FB_{\delta'}.
\end{split}\end{align}
By taking $\delta'<2(1-\alpha'_j)$, the above is bounded by $Cv^{-3-j-\epsilon}\FB_{\delta'}$ for some $\epsilon>0$.
Hence,
\begin{align}
\label{eq:Pricelaw:posi:bigr:high}
\hspace{4ex}&\hspace{-4ex}
\bigg|\Lxi^j\varphi_{\sfrak}(\tau,r)
-4 (-1)^j j!\tb^{-1-j}v^{-2}  \sum_{n=0}^{j}\sum_{i=0}^{n}
\bigg(\frac{\tb}{v}\bigg)^{j-i}\NPCP{1}\bigg|\notag\\
\leq {}&|\Lxi^j\varphi_{\sfrak}(\tau,r)
-\Lxi^j\varphi_{\sfrak}(\tau,r_{\gamma_{\alpha_j'}}(\tb))|\notag\\
&
+\bigg|\Lxi^j\varphi_{\sfrak}(\tau,r_{\gamma_{\alpha_j'}}(\tb))
-4 (-1)^j j!\tb^{-1-j}(v(r_{\gamma_{\alpha_j'}}(\tb)))^{-2}  \sum_{n=0}^{j}\sum_{i=0}^{n}
\bigg(\frac{\tb}{v(r_{\gamma_{\alpha_j'}}(\tb))}\bigg)^{j-i}\NPCP{1}\bigg|\notag\\
&
+\bigg|4 (-1)^j j!u^{-1-j}\NPCP{1}\bigg((v(r_{\gamma_{\alpha_j'}}(\tb)))^{-2}  \sum_{n=0}^{j}\sum_{i=0}^{n}
\bigg(\frac{\tb}{v(r_{\gamma_{\alpha_j'}}(\tb))}\bigg)^{j-i}
-v^{-2}  \sum_{n=0}^{j}\sum_{i=0}^{n}
\bigg(\frac{\tb}{v}\bigg)^{j-i}\bigg)\bigg|\notag\\
\lesssim{}&v^{-3-j-\epsilon}\FBT
+v^{-3-j-\epsilon}\FB_{\delta'}
+v^{-4-j+\alpha'_j}\abs{\NPCP{1}}\notag\\
\lesssim{}&v^{-3-j-\epsilon}\FBT.
\end{align}

In the end, we consider $r\leq R$ region. Integrating along $\Sigmatb$ from the point $(\tb, R)$ and utilizing the estimate \eqref{eq:Pricelaw:posi:bigr:high} at $r=R$, the estimate \eqref{dec:almostPrice:posi}  for $\prb \varphi_{\sfrak}$ and the estimate \eqref{eq:rela:g}, we obtain
\begin{align}\begin{split}
\hspace{2ex}&\hspace{-2ex}
\abs{\Lxi^j\varphi_{\sfrak}(\tau,r)
-2 (-1)^j (j+2)!\tau^{-1}v^{-2}\NPCP{1}}\\
&\leq{}\big|\Lxi^j\varphi_{\sfrak}(\tau,R)
-2 (-1)^j (j+2)!\tau^{-1}(v|_{\Sigma_\tau}(R))^{-2}\NPCP{1}\big|\\
&\quad+\big|(2 (-1)^j (j+2)!\tau^{-1}(v|_{\Sigma_\tau}(R))^{-2}-2 (-1)^j (j+2)!\tau^{-1}v^{-2})\NPCP{1}\big|\\
&\quad
+\bigg|\int_r^{R}\partial_\rho\Lxi^j\varphi_{\sfrak}(\tau,\rho)\di \rho\bigg|\\
&\lesssim_{R}{}(v^{-2}\tau^{-1-j-\epsilon}
+v^{-3}\tau^{-1-j+\frac{\delta}{2}})\FBT\notag\\
&\lesssim_{R}{} v^{-2}\tau^{-1-j-\epsilon}\FBT,
\end{split}
\end{align}
where we have used that on $\Sigmatb\cap \{\rb=R\}$,
\begin{align}
\bigg|2 (-1)^j (j+2)!\tau^{-1-j}v^{-2}\NPCP{1}
-4 (-1)^j j!\tb^{-1-j}v^{-2}  \sum_{n=0}^{j}\sum_{i=0}^{n}
\bigg(\frac{\tb}{v}\bigg)^{j-i}\NPCP{1}\big|
\lesssim {}&v^{-4-j}\abs{\NPCP{1}}.
\end{align}

In summary, we achieve the following estimate.
\begin{thm}
\label{thm:pricelaw:ell=1:posi:high}
Let $j\in \mathbb{N}$. Assume on $\Sigma_{\tau_0}$ there are constants $\beta\in (0,\half)$ and $D_0$ such that for $r\geq R$ and all $0\leq i\leq j$,
\begin{align}
\label{eq:assum:Pricelaw:posi}
\bigg|\prb^i\bigg(\rb^2\VR\hatPhips(\tau_0,v)
-\NPCP{1}\bigg)\bigg|\lesssim v^{-i-\beta}D_0.
\end{align}
Then there exists an $\epsilon>0$ such that at any point in $\Dzeroinfty$,
\begin{align}
\label{eq:pricelaw:ell=1:posi:high}
\bigg|\Lxi^j\varphi_{\sfrak}
-4 (-1)^j j!\tb^{-1-j}v^{-2}  \sum_{n=0}^{j}\sum_{i=0}^{n}
\bigg(\frac{\tb}{v}\bigg)^{j-i}\NPCP{1}\bigg|
\lesssim{}&v^{-2}\tb^{-1-j-\epsilon}\FBT.
\end{align}
\end{thm}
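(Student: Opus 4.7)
The plan is to extend the asymptotic analysis carried out for the $j=0$ case in Section~\ref{sect:asymofvarphi+} to general $j\in\Naturals$ by commuting the transport equation \eqref{eq:PhipsHigh1:l=1:v1} with $\pv^i$ and then converting the $\pv$-derivatives into the Killing $\Lxi$-derivatives. First I would apply $\pv^j$ to \eqref{eq:PhipsHigh1:l=1:v1}, yielding
\[
\pu\bigl(\pv^j(\mu^{\Half}r^{-1}V\hatPhips)\bigr)=-3M\,\pv^j(\mu^{3/2}r^{-4}\hatPhips),
\]
and integrate along $v=\mathrm{const}$ from $\Sigmazero$. The right-hand side is bounded using the almost Price's law pointwise estimate \eqref{dec:almostPrice:posi} just as in the $j=0$ computation \eqref{eq:VPhinsHigh1:awayregion:n:high}, but the extra weight $v^j r^{-j}$ produced by the $j$ additional $\pv$-derivatives forces a stricter range $\alpha_j\in(\frac{j+2}{j+3},1)$ in order that the exponent $-(j+2)\alpha_j+j+\eta+\tfrac{\delta}{2}$ stay below $-1$. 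The initial data on $\Sigmazero$ is expanded via Leibniz's rule acting on $v^{-3}\cdot v^3\mu^{3/2}r^{-1}\VR\hatPhips$; combining the $i$-th order hypothesis \eqref{eq:assum:Pricelaw:posi} with the asymptotic factor \eqref{asymp-factor} identifies the leading term as $4(-1)^j(j+2)!\NPCP{1}\,v^{-3-j}$, with remainder controlled by $v^{-\beta-j-3}D_0+v^{-4-j}(|\NPCP{1}|+\FB_{\delta})$.

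Next, to replace $\pv$ by $\Lxi$ I would use $\pv=\Lxi-\pu$ iteratively: each time the extra $\pu$ term is substituted back using the wave equation and bounded by the previous level's source estimate. A finite induction yields, in the region $v-u\geq v^{\alpha_j}$,
\[
\bigl|v^{j+3}\mu^{3/2}r^{-1}\VR\Lxi^j\hatPhips-4(-1)^j(j+2)!\NPCP{1}\bigr|\lesssim(v^{-\beta}+v^{-\eta})\FBT.
\]
I then integrate this along $u=\mathrm{const}$ from the curve $\gamma_{\alpha_j}=\{v-u=v^{\alpha_j}\}$ to the target point. Inserting $2r=(v-u)+O(\log r)$ and performing the telescoping computation as in \eqref{eq:PhinsHigh1:awayregion:v1:high:any}, the leading contribution extracts the explicit polynomial
\[
(-1)^j j!\,u^{-1-j}v^{-2}(v-u)^2\sum_{n=0}^{j}\sum_{i=0}^{n}(u/v)^{j-i}\NPCP{1},
\]
with remainder $\lesssim r^{-2}\bigl(u^{-1-\beta-j}+u^{-1-\eta-j}+u^{-3+2\alpha_j-j+\delta/2}+u^{-2-j+\alpha_j}\bigr)\FBT$; the passage from $r^{-2}\PhipsHigh{1}$ to $\varphi_\sfrak$ then costs only an $O(r^{-2})$ correction absorbed via Theorem~\ref{thm:almostPrice}.

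Finally, the asymptotic must be spread from $\{v-u\geq v^{\alpha'_j}\}$, with $\alpha'_j\in(\alpha_j,1)$ slightly larger, to all of $\Dzeroinfty$. In the intermediate region $\{r\geq R\}\cap\{v-u\leq v^{\alpha'_j}\}$ I would integrate $\prb\Lxi^j\varphi_\sfrak$ along $\Sigmatb$ from $\gamma_{\alpha'_j}$ using the improved radial decay in \eqref{dec:almostPrice:posi} together with the geometric relations \eqref{eq:rela:b}--\eqref{eq:rela:f}; in the bounded region $\{r\leq R\}$ I would integrate radially from $r=R$ using \eqref{eq:rela:g}. Choosing $\alpha'_j\in\bigl(\max\{1-\tfrac{\eta}{2},1-\tfrac{\beta}{2},\tfrac{\alpha_j+1}{2}\},1\bigr)$ and the auxiliary $\delta'\in(0,2(1-\alpha'_j))$ makes every error term $\lesssim v^{-2}\tb^{-1-j-\epsilon}\FBT$ for some $\epsilon>0$. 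The main obstacle I anticipate is the careful bookkeeping in the second step: isolating the precise constant $c_{\sfrak,j}$ from the $u$-integration requires tracking which terms produced by successive integrations by parts belong to the leading profile versus which are genuine error, and at every commutation $\pv=\Lxi-\pu$ one must verify that the newly generated error respects the uniform constraint $\eta<(j+2)\alpha_j-j-1-\tfrac{\delta}{2}$, a constraint that tightens with $j$ and ultimately dictates the viable range of $\alpha_j$.
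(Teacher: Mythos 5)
Your proposal is correct and reproduces the paper's own argument essentially step by step: commuting \eqref{eq:PhipsHigh1:l=1:v1} with $\pv^j$ and integrating in $u$ from $\Sigmazero$, converting $\pv$ to $\Lxi$ via $\pv=\Lxi-\pu$ with the wave equation, integrating in $v$ from $\gamma_{\alpha_j}$ to extract the explicit polynomial profile, and finally propagating the asymptotics through the intermediate and bounded-$r$ regions using the sharpened $\prb$-decay from \eqref{dec:almostPrice:posi} and the geometric relations \eqref{eq:rela:b}--\eqref{eq:rela:g}. The parameter choices ($\alpha_j\in(\tfrac{j+2}{j+3},1)$, $\eta<(j+2)\alpha_j-j-1-\tfrac{\delta}{2}$, $\alpha'_j$ close to $1$, $\delta'<2(1-\alpha'_j)$) match those of the paper exactly.
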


%%%%%%%%%%%%%%%%%
\subsection{Spin $-\half$ component}
%%%%%%%%%%%%%%%%%%%%%%%%%%

For each $(m,\ell=1)$ mode of the spin $-\half$ component, we shall also consider its asymptotics in separate regions.

From the Dirac equations \eqref{eq:Dirac:TMEscalar}, one has
\begin{align}
\psins=-(r-M)Y\varphi_{\sfrak}+\varphi_{\sfrak}.
\end{align}
Commuting with the Killing vector $\Lxi$ gives
\begin{align}
\label{eq:Dirac:angularpsins:highorder:v1}
\Lxi^j\psins=-(r-M)Y\Lxi^j\varphi_{\sfrak}+\Lxi^j\varphi_{\sfrak}.
\end{align}

In the region $\{v-u\geq v^{\alpha'_j}\}$, one can rewrite equation \eqref{eq:Dirac:angularpsins:highorder:v1} as
\begin{align}
\label{eq:Dirac:angularpsins:highorder:v2}
\Lxi^j\psins={}&-(r-M)\mu^{-1}(2\Lxi-V)\Lxi^j\varphi_{\sfrak}
+\Lxi^j\varphi_{\sfrak}\notag\\
={}&-2(r-M)\mu^{-1}\Lxi^{j+1}\varphi_{\sfrak}
+(r-M)\VR\Lxi^j\varphi_{\sfrak}
+\Lxi^j\varphi_{\sfrak}\notag\\
={}&-2(r-M)\mu^{-1}\Lxi^{j+1}\varphi_{\sfrak}
+\mu^{\half}r^{-1}\VR \Lxi^j \PhipsHigh{1}\notag\\
&
+(r-M)\partial_r(\mu^{\half}(r-M)^{-1}r^{-1})\Lxi^j\PhipsHigh{1}
+\mu^{\half}(r-M)^{-1}r^{-1}\Lxi^j\PhipsHigh{1},
\end{align}
where in the last step we used $\varphi_{\sfrak}=(r-M)^{-1}\mu^{\half}r^{-1}\PhipsHigh{1}$. As a result,
\begin{align}
\label{eq:PhinsHigh1:awayregion:v1:high:pp}
\big|\Lxi^j\psins- \big(\mu^{\frac{3}{2}}r^{-1}\VR\Lxi^j\PhipsHigh{1}
-\Lxi^j\varphi_{\sfrak}
-(v-u)\Lxi^{j+1}\varphi_{\sfrak}\big)\big|
\lesssim{}& r^{-2}\abs{\VR \Lxi^j\PhipsHigh{1}}
+r^{-1}\abs{\Lxi^j\varphi_{\sfrak}}
+\log r\abs{\Lxi^{j+1}\varphi_{\sfrak}}.
\end{align}
We collect the estimates of the terms in the round bracket on the  LHS here: the estimate \eqref{eq:VPhinsHigh1:awayregion:v1:high} gives
\begin{subequations}
\begin{align}
\label{eq:VPhinsHigh1:awayregion:v1:high:p}
\abs{(\mu^\frac{3}{2} r^{-1}\VR\Lxi^j\hatPhips(u,v))
-4(-1)^j (j+2)!v^{-3-j}\NPCP{1}}
\lesssim{}&(v^{-3-j-\beta}+v^{-3-j-\eta})\FBT;
\end{align}
and the estimate \eqref{eq:pricelaw:ell=1:posi:high} gives  \begin{align}\label{asymp-hatPhips-near-nullinfinity:high:p1}
\hspace{4ex}&\hspace{-4ex}
\bigg|-\Lxi^j\varphi_{\sfrak}
-(-4) (-1)^j j!\tb^{-1-j}v^{-2}  \sum_{n=0}^{j}\sum_{i=0}^{n}
\bigg(\frac{\tb}{v}\bigg)^{j-i}\NPCP{1}\bigg|\notag\\
\lesssim {}&v^{-2}\tb^{-1-j-\epsilon}\FBT
\lesssim{}v^{-1}\tb^{-2-j-\epsilon}\FBT,\\
\label{asymp-hatPhips-near-nullinfinity:high:p2}
\hspace{4ex}&\hspace{-4ex}
\bigg|-(v-u)\Lxi^{j+1}\varphi_{\sfrak}
 -(-4) (-1)^{j+1} (j+1)!(v-u)\tb^{-2-j}v^{-2}  \sum_{n=0}^{j+1}\sum_{i=0}^{n}
\bigg(\frac{\tb}{v}\bigg)^{j+1-i}\NPCP{1}\bigg|\notag\\
\lesssim {}&(v-u)v^{-2}\tb^{-2-j-\epsilon}\FBT
\lesssim{}v^{-1}\tb^{-2-j-\epsilon}\FBT.
\end{align}
\end{subequations}
Summing up the above three estimates, one finds from \eqref{eq:PhinsHigh1:awayregion:v1:high:pp} that
\begin{align}
\label{eq:PhinsHigh1:awayregion:v1:high:ppp}
\hspace{4ex}&\hspace{-4ex}
\bigg|\Lxi^j\psins
-4(-1)^j j! v^{-1}\tb^{-2-j}\NPCP{1}
\bigg[
(j+1)(j+2)\bigg(\frac{\tb}{v}\bigg)^{j+2}
-\frac{\tb}{v}\sum_{n=0}^{j}\sum_{i=0}^{n}
\bigg(\frac{\tb}{v}\bigg)^{j-i}\notag\\
&\qquad \qquad \qquad \qquad \qquad \qquad\quad
+(j+1)\bigg(1-\frac{\tb}{v}\bigg)
\sum_{n=0}^{j+1}\sum_{i=0}^{n}
\bigg(\frac{\tb}{v}\bigg)^{j+1-i}
\bigg]
\bigg|\notag\\
\lesssim{}&\big(\abs{\prb \Lxi^j\varphi_{\sfrak}}
+\abs{r^{-1}\Lxi^j\varphi_{\sfrak}}
+\abs{\log v\Lxi^{j+1}\varphi_{\sfrak}}+v^{-3-j-\beta}+v^{-3-j-\eta}
+v^{-1}\tb^{-2-j-\epsilon}\big)\FBT\notag\\
\lesssim{}&\big(v^{-2-\alpha'_j}\tb^{-1-j+\frac{\delta'}{2}}
+\log v v^{-2}\tb^{-2-j+\frac{\delta'}{2}}
+v^{-3-j-\beta}+v^{-3-j-\eta}
+v^{-1}\tb^{-2-j-\epsilon}\big)\FBT\notag\\
\lesssim{}& v^{-1}\tb^{-2-j-\epsilon}\FBT
\end{align}
for some $\epsilon>0$.
Furthermore, simple but tedious calculations show that the terms in the square bracket on the LHS of \eqref{eq:PhinsHigh1:awayregion:v1:high:ppp} equal
\begin{align}
\hspace{4ex}&\hspace{-4ex}
(j+1)\bigg(\bigg(\frac{\tb}{v}\bigg)^{j}
-\bigg(\frac{\tb}{v}\bigg)^{j+2}\bigg)
+(j+1)(j+2)\bigg(\frac{\tb}{v}\bigg)^{j+1}\notag\\
\hspace{4ex}&\hspace{-4ex}
+(j+1)\sum_{n=0}^j \sum_{i=0}^n\bigg(\frac{\tb}{v}\bigg)^{j-i}
-(j+2)\frac{\tb}{v}\sum_{n=0}^j \sum_{i=0}^n \bigg(\frac{\tb}{v}\bigg)^{j-i}\notag\\
={}&(j+1)\bigg(\bigg(\frac{\tb}{v}\bigg)^{j}
-\bigg(\frac{\tb}{v}\bigg)^{j+2}\bigg)
+(j+2)\sum_{n=0}^j \bigg(\frac{\tb}{v}\bigg)^{j-n}
-\sum_{n=0}^j \sum_{i=0}^n\bigg(\frac{\tb}{v}\bigg)^{j-i}.
\end{align}
We can thus obtain that for $v-u\geq v^{\alpha'_j}$, there is an $\epsilon>0$ such that
\begin{align}
\label{eq:PhinsHigh1:awayregion:v1:high:p4}
\hspace{4ex}&\hspace{-4ex}
\bigg|\Lxi^j\psins
-4(-1)^j j! v^{-1}\tb^{-2-j}\NPCP{1}
\bigg[
(j+2)\sum_{n=0}^j \bigg(\frac{\tb}{v}\bigg)^{j-n}
-\sum_{n=0}^j \sum_{i=0}^n\bigg(\frac{\tb}{v}\bigg)^{j-i}\notag\\
&\qquad \qquad\qquad\qquad\qquad\qquad\quad
+(j+1)\bigg(\bigg(\frac{\tb}{v}\bigg)^{j}
-\bigg(\frac{\tb}{v}\bigg)^{j+2}\bigg)
\bigg]
\bigg|
\lesssim v^{-1}\tb^{-2-j-\epsilon}\FBT.
\end{align}

In the region where $\{v-u\leq v^{\alpha'_j}\}$, one can make use of equation \eqref{eq:YV:hyper} to rewrite the $Y$ derivative in \eqref{eq:Dirac:angularpsins:highorder:v1} and obtain
\begin{align}
\label{eq:Dirac:angularpsins:highorder}
\Lxi^j\psins=\Lxi^j\varphi_{\sfrak}
+(r-M)\prb\Lxi^j\varphi_{\sfrak}
-(r-M)\partial_r h\cdot \Lxi^{j+1}\varphi_{\sfrak}.
\end{align}
In this region, one has $cv\leq \tb\leq v$ and $r\leq C v^{\alpha'_j}$, and $c\leq \abs{\partial_r h}\leq C$. The absolute values of the last two terms on the RHS of \eqref{eq:Dirac:angularpsins:highorder} are bounded by $rv^{-2}\tb^{-2-j+\frac{\delta'}{2}}\FB_{\delta'}\lesssim v^{-4-j+\alpha'_j+\frac{\delta'}{2}}\FB_{\delta'}\lesssim v^{-3-j-\epsilon}\FB_{\delta'}$ for an $\epsilon>0$ since $\delta'<2(1-\alpha'_j)$. The asymptotics of the first term on the RHS of \eqref{eq:Dirac:angularpsins:highorder} are given by Theorem \ref{thm:pricelaw:ell=1:posi:high}. Therefore, in this region, we have
\begin{align}
\label{eq:PhinsHigh1:nearregion:v1:high:p4}
\bigg|\Lxi^j\psins(\tau,r)
-4 (-1)^j j!\tb^{-1-j}v^{-2}  \sum_{n=0}^{j}\sum_{i=0}^{n}
\bigg(\frac{\tb}{v}\bigg)^{j-i}\NPCP{1}\bigg|
\lesssim{}&v^{-2}\tb^{-1-j-\epsilon}\FBT.
\end{align}
The two estimates \eqref{eq:PhinsHigh1:awayregion:v1:high:p4} and \eqref{eq:PhinsHigh1:nearregion:v1:high:p4} together imply the following asymptotics for the spin $-\half$ component:
\begin{thm}
\label{thm:pricelaw:ell=1:nega:high}
Let $j\in \mathbb{N}$. Assume on $\Sigma_{\tau_0}$ that there are constants $\beta\in (0,\half)$ and $D_0$ such that for $r\geq R$ and any $0\leq i\leq j$,
\begin{align}
\label{eq:assum:Pricelaw:nega}
\bigg|\prb^i\bigg(\rb^2\VR\hatPhips(\tau_0,v)
-\NPCP{1}\bigg)\bigg|\lesssim v^{-i-\beta}D_0.
\end{align}
Then there exists an $\epsilon>0$ such that in $\Dzeroinfty$,
\begin{align}
\label{eq:Pricelaw:TNfoli:nega}
\hspace{4ex}&\hspace{-4ex}
\bigg|\Lxi^j\psins
-4(-1)^j j! v^{-1}\tb^{-2-j}\NPCP{1}
\bigg[
(j+2)\sum_{n=0}^j \bigg(\frac{\tb}{v}\bigg)^{j-n}
-\sum_{n=0}^j \sum_{i=0}^n\bigg(\frac{\tb}{v}\bigg)^{j-i}\notag\\
&\qquad \qquad\qquad\qquad\qquad\qquad\quad
+(j+1)\bigg(\bigg(\frac{\tb}{v}\bigg)^{j}
-\bigg(\frac{\tb}{v}\bigg)^{j+2}\bigg)
\bigg]
\bigg|
\lesssim v^{-1}\tb^{-2-j-\epsilon}\FBT.
\end{align}
\end{thm}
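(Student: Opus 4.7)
The plan is to exploit the first--order Dirac relation
\begin{align*}
\Lxi^j\psins = -(r-M)\, Y\,\Lxi^j\varphi_{\sfrak} + \Lxi^j\varphi_{\sfrak},
\end{align*}
obtained from the Dirac system of Proposition \ref{prop:DiracandTME} and the fact that $\Lxi$ commutes with $Y$, in order to reduce the asymptotic analysis of $\Lxi^j\psins$ to (i) the asymptotics of $\Lxi^j\varphi_{\sfrak}$ from Theorem \ref{thm:pricelaw:ell=1:posi:high} and (ii) the asymptotics of $\VR\Lxi^j\PhipsHigh{1}$ from \eqref{eq:VPhinsHigh1:awayregion:v1:high}. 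I would then split $\Dzeroinfty$ into a far region $\{v-u\geq v^{\alpha'_j}\}$ and a near region $\{v-u\leq v^{\alpha'_j}\}$, with the threshold $\alpha'_j\in(0,1)$ chosen exactly as in the proof of Theorem \ref{thm:pricelaw:ell=1:posi:high}, and treat the two regions separately.

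In the far region, I would rewrite $Y=\mu^{-1}(2\Lxi - V)$. Using $\varphi_{\sfrak}=(r-M)^{-1}\mu^{\half}r^{-1}\PhipsHigh{1}$, the quantity $(r-M)Y\Lxi^j\varphi_{\sfrak}$ decomposes as a linear combination of $2(r-M)\mu^{-1}\Lxi^{j+1}\varphi_{\sfrak}$ and $\mu^{\half}r^{-1}\VR\Lxi^j\PhipsHigh{1}$, modulo terms of type $O(r^{-1})\Lxi^j\PhipsHigh{1}$ and $O(r^{-1})\Lxi^j\varphi_{\sfrak}$. Substituting the known leading profiles of these three quantities (noting that $2(r-M)\mu^{-1}=v-u+O(\log r)$ produces the prefactor $v-u$ on the first term, whose asymptotic expansion comes from Theorem \ref{thm:pricelaw:ell=1:posi:high} at order $j+1$, that the second term comes from \eqref{eq:VPhinsHigh1:awayregion:v1:high}, and that the $\Lxi^j\varphi_{\sfrak}$ contribution comes from Theorem \ref{thm:pricelaw:ell=1:posi:high} at order $j$) and bounding the $O(r^{-1})$ remainders using the almost Price's law pointwise bounds from Theorem \ref{thm:almostPrice}, I obtain the claimed leading profile modulo $O(v^{-1}\tb^{-2-j-\epsilon})$.

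In the near region I would instead employ the hyperboloidal identity $Y=-\prb+\partial_r h\,\Lxi$ from \eqref{def:vectorVRintermsofprb}, so that
\begin{align*}
\Lxi^j\psins = \Lxi^j\varphi_{\sfrak} + (r-M)\prb\Lxi^j\varphi_{\sfrak} - (r-M)\partial_r h\,\Lxi^{j+1}\varphi_{\sfrak}.
\end{align*}
Since $r-M\lesssim v^{\alpha'_j}$ in this region and the pointwise bounds from Theorem \ref{thm:almostPrice} give extra $v$-decay for $\prb\Lxi^j\varphi_{\sfrak}$ and extra $\tb$-decay for $\Lxi^{j+1}\varphi_{\sfrak}$, both correction terms become $O(v^{-3-j-\epsilon})$ for some $\epsilon>0$ provided $\delta$ is chosen small enough with respect to $1-\alpha'_j$. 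The leading asymptotics of $\Lxi^j\psins$ are therefore inherited directly from Theorem \ref{thm:pricelaw:ell=1:posi:high}, and automatically match the far--region expression along $\{v-u=v^{\alpha'_j}\}$.

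The main obstacle, and the only genuinely nontrivial step beyond organizing these substitutions, is the combinatorial identity that converts the three distinct $\tb/v$--polynomial contributions produced in the far region (one proportional to $c_{\sfrak,j+1}$, one to $c_{\sfrak,j}$, and one to the bare constant $4(-1)^j (j+2)!\,\NPCP{1}$ carried by $\VR\Lxi^j\PhipsHigh{1}$, all with the extra factor $v-u$ from $2(r-M)\mu^{-1}$) into the single compact expression $c_{-\sfrak,j}$ of Definition \ref{def:cpnsj}. This amounts to verifying the algebraic rearrangement
\begin{align*}
(j+1)\Big(\tfrac{\tb}{v}\Big)^{j}\!\!-(j+1)\Big(\tfrac{\tb}{v}\Big)^{j+2}\!\!+(j+1)(j+2)\Big(\tfrac{\tb}{v}\Big)^{j+1}\!\!+(j+1)\!\!\sum_{n=0}^{j}\sum_{i=0}^{n}\!\Big(\tfrac{\tb}{v}\Big)^{j-i}\!\!-(j+2)\tfrac{\tb}{v}\!\sum_{n=0}^{j}\sum_{i=0}^{n}\!\Big(\tfrac{\tb}{v}\Big)^{j-i}
\end{align*}
collapses to $(j+2)\sum_{n=0}^j(\tb/v)^{j-n}-\sum_{n=0}^j\sum_{i=0}^n(\tb/v)^{j-i}+(j+1)\bigl((\tb/v)^j-(\tb/v)^{j+2}\bigr)$, which is elementary but requires careful telescoping of the nested sums.
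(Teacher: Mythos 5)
Your proposal reproduces the paper's proof essentially verbatim: the same splitting of $\Dzeroinfty$ at $\gamma_{\alpha'_j}$, the same rewriting of $Y$ as $\mu^{-1}(2\Lxi-V)$ in the far region and as $-\prb+\partial_rh\,\Lxi$ in the near region, substitution of the known profiles for $\VR\Lxi^j\PhipsHigh{1}$, $\Lxi^j\varphi_{\sfrak}$ and $\Lxi^{j+1}\varphi_{\sfrak}$, and the same telescoping identity collapsing the three $\tb/v$--polynomials into $c_{-\sfrak,j}$. No gaps; this is the paper's argument.
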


\begin{remark}
There is a different way of proving the Price's law for the spin $-\half$ component, that is, by the same argument as in Section \ref{sect:Pricelaw:ell=1:posi} applied to equation \eqref{eq:simpleformofeqofpsins}.  Both ways lead to the same asymptotics, however, the way of presenting here is simpler and much natural in the sense that we simply make use of the Dirac equations connecting these two components to obtain the asymptotics for one component from the other one.
\end{remark}

%%%%%%%%%%%%%%%%
\subsection{Proof of Theorem \ref{thm:pricelaw:nonzeroNP}}
\label{sect:Pricelaw:nonzeroNP:translation}
%%%%%%%%%%%

For the Dirac field on a Schwarzschild background, one can decompose the spin $\pm \half$ components into $\ell=1$ mode and $\ell\geq 2$ modes. For $\ell\geq 2$ part $(\Psipns)^{\ell\geq 2}$, we make use of the estimates in Proposition \ref{prop:almostpricelaw:ellgeq2} to obtain for any $\epsilon\in (0,\frac{\delta}{2})$,
\begin{subequations}
\label{eq:Pricelaw:ellbigger2:nonzero}
\begin{align}
\abs{\Lxi^j ((\varphi_{\sfrak})^{\ell\geq 2})}\lesssim {}& v^{-2}\tb^{-1-j-\epsilon}
(F^{(2)}(\regl(\ell_0,j),1+2\epsilon,\tb_0,(\Psipns)^{\ell\geq 2}))^{\half}, \\
\abs{\Lxi^j ((\psins)^{\ell\geq 2})}\lesssim {}& v^{-1}\tb^{-2-j-\epsilon}
(F^{(2)}(\regl(\ell_0,j),1+2\epsilon,\tb_0,(\Psipns)^{\ell\geq 2}))^{\half}.
\end{align}
\end{subequations}
For $\ell=1$ mode, one can further decompose it into azimuthal modes $m=-\half,  \half$ as in \eqref{eq:azimuthalmodes:decomp}, and for each spin-weighted spherical mode $(m,\ell=1)$, we can  define its corresponding N--P constants  $\NPCP{1}(m,\ell=1)$ (and $\NPCN{1}(m,\ell=1)$ which is equal to $\NPCP{1}(m,\ell=1)$ by Lemma \ref{lem:relationoftwoNPconsts}) as in Definition \ref{def:NPCs}.
Then, by the main results in Theorems \ref{thm:pricelaw:ell=1:posi:high} and \ref{thm:pricelaw:ell=1:nega:high}, and together with the estimates \eqref{eq:Pricelaw:ellbigger2:nonzero} for $\ell\geq 2$ modes, this proves Theorem \ref{thm:pricelaw:nonzeroNP}.

%%%%%%%%%%%
\section{Price's law decay for vanishing first Newman--Penrose constant}
\label{sect:pricelaw:zeroNPconst}
%%%%%%%%%%

We derive in this section the asymptotic profiles for the spin $\pm \half$ components in the case that the first N--P constant vanishes. The idea is to reduce this case to the nonvanishing first N--P constant case and then apply the estimates in the previous section. As demonstrated in \cite{angelopoulos2018late} which treats the scalar field, a fundamentally important part in realizing this idea is to construct a time integral of the $\ell=0$ mode of the scalar field, which solves the scalar wave equation as well and, in particular, the time derivative of which equals the $\ell=0$ mode of the scalar field. We follow \cite{angelopoulos2018late} in Section \ref{sect:timeintegral:def} to generalize this idea to the $\ell=1$ mode of the Dirac field and calculate the first N--P constant of the time integral in terms of the initial data of the spin $\half$ component. Section \ref{sect:controllingtimeintegralenergy} is then devoted to conducting an estimate for the initial energy of the time integral by a different weighted initial energy of the spin $\half$ component. In the end, in Section \ref{sect:proofoftheorem2:NVNP}, we apply Theorem \ref{thm:pricelaw:nonzeroNP} to this time integral and complete the proof of Theorem \ref{thm:pricelaw:zeroNP}.

%%%%%%%%%%%%%%%%%
\subsection{The time integral $g_{\sfrak}$ of $(\Phips)_{m,{\ell=1}}$}
\label{sect:timeintegral:def}
%%%%%%%%%%%%%%%%%%

We consider the $\ell=1$ mode $(\Phips)^{\ell=1}$ of the spin $\half$ component and further decompose it into $(m,\ell=1)$ modes $(\Phips)_{m,{\ell=1}}$, where $m=-\half, \half$. In what below, we consider a fixed $(m,\ell=1)$ mode of the spin $\half$ component and still use the same notations as the ones of the spin $\half$ component without ambiguity. Note in particular that these components and the scalars constructed from them are thus independent of $\theta, \pb$.

Recall the equation of this mode of the spin $\half$ component $\Phips$ from \eqref{eq:prbprbPhins:ell=1:impro}
\begin{align}
\label{eq:prbprbPhins:ell=1:impro:copy}
(r-M)^{-1}\Delta^{\half}\prb((r-M)^2\Delta^{\half}\prb((r-M)^{-1}\Phips))
=H_{\sfrak}(\Phips),
\end{align}
where
\begin{align}
H_{\sfrak}={}&
\Delta(2\mu^{-1}-H)H\Lxi^2
+2\Delta (\mu^{-1} -H) \Lxi\prb
+\Delta^{\half}\partial_r (\Delta^{\half}(2\mu^{-1}-H))
\Lxi,
\end{align}
and $H=2\mu^{-1}-\partial_rh(r)$. The time integral of this mode is constructed as follows. 

\begin{lemma}
\label{lem:timeintegral:posi}
Assume that $F^{(1)}(\regl,5-\delta,\tb_0,\Psipns)<\infty$ as defined in Definition \ref{def:Fenergies:big2} for a sufficiently large $\regl$ and a $\delta\in (0,\half)$,
 and assume further that there exists a finite constant $\tilde{D}_1$ such that
\begin{align}\label{Initial:NPCP:zero}
\lim\limits_{\rho\to\infty}\rho^{3}\VR\hatPhips\big|_{\Sigmazero}\leq \tilde{D}_1.
\end{align}
Then there exists a unique smooth solution $g_{\sfrak}$ to \eqref{eq:prbprbPhins:ell=1:impro:copy} which satisfies both
\begin{align}\label{asym:gs:infty}
\lim_{\rho\to\infty}g_{\sfrak}\big|_{{\Sigmazero}}=0
\end{align}
and
\begin{align}
\label{eq:Lxigs=Phips}
\Lxi g_{\sfrak}=\Phips.
\end{align}
Such a solution $g_{\sfrak}$ satisfies the {integrability condition}:
\begin{align}\label{integrability-condition:gs}
\lim\limits_{r\to\infty}(r-M)^2\Delta^{\half}\partial_\rho((r-M)^{-1}
g_{\sfrak})|_{{\Sigmazero}}
=\int_{2M}^{\infty}\tilde{H}_{\sfrak}(\Phips)|_{{\Sigmazero}}\di\rho,
\end{align}
where $\tilde{H}_{\sfrak}(\Phips)$ is defined as in \eqref{def:tildeHs:general} and can be rewritten as
\begin{align}
\tilde{H}_{\sfrak}(\Phips)=(r-M)[r\mu^{\half}(2\mu^{-1}-H)H\Lxi\Phips
+2r\mu^{\half}(\mu^{-1}-H)\partial_\rho\Phips
+\partial_r(\Delta^{\half}(2\mu^{-1}-H))\Phips].
\end{align}
\end{lemma}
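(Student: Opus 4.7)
I would realize $g_{\sfrak}$ as a smooth solution of the spin-$\sfrak$ Teukolsky master equation \eqref{eq:DTMEP} supported on the $(m,\ell=1)$ harmonic, whose Killing time-derivative $\Lxi g_{\sfrak}$ is identified with $\Phips$. Since the TME is linear and hyperbolic and $\Lxi$ is Killing, both the global existence of $g_{\sfrak}$ in $\Dzeroinfty$ and the relation $\Lxi g_{\sfrak}=\Phips$ will follow---once the Cauchy data of $g_{\sfrak}$ on $\Sigmazero$ are prescribed---from standard well-posedness for the TME and from Cauchy uniqueness applied to the two TME solutions $\Lxi g_{\sfrak}$ and $\Phips$, which will share their Cauchy data by construction. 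The non-trivial content of the lemma is therefore the construction of the initial datum $g_{\sfrak}|_{\Sigmazero}$; the integrability identity \eqref{integrability-condition:gs} will emerge naturally as a by-product of one of the integrations in that construction.

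On $\Sigmazero$, substituting the prospective relations $\Lxi g_{\sfrak}=\Phips$, $\Lxi\prb g_{\sfrak}=\prb\Phips$ and $\Lxi^2 g_{\sfrak}=\Lxi\Phips$ into the TME for $g_{\sfrak}$ turns the elliptic operator on the left-hand side of \eqref{eq:prbprbPhins:ell=1:impro:copy}, applied to $g_{\sfrak}$, into a second-order linear ODE in $\rho$ whose source is an explicit smooth function of the prescribed initial data $\Phips|_{\Sigmazero}$, $\prb\Phips|_{\Sigmazero}$ and $\Lxi\Phips|_{\Sigmazero}$. A direct matching of coefficients against definition \eqref{def:tildeHs:general} shows that this ODE is precisely the conservation-form identity
\[
\prb\!\Big((r-M)^2\Delta^{\half}\,\prb\big((r-M)^{-1}g_{\sfrak}\big)\Big)\Big|_{\Sigmazero}=\tilde{H}_{\sfrak}(\Phips)\big|_{\Sigmazero}.
\]
I would integrate this twice from spatial infinity. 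The first integration, taken over the whole interval $[2M,\infty)$---the boundary term at $\rho=2M$ vanishing because $(r-M)^2\Delta^{\half}$ does---immediately yields the identity \eqref{integrability-condition:gs}. The second integration, combined with the decay condition \eqref{asym:gs:infty} and a regularity requirement at the horizon, pins down $g_{\sfrak}|_{\Sigmazero}$ uniquely. The hypothesis \eqref{Initial:NPCP:zero} together with $F^{(1)}(\regl,5-\delta,\tb_0,\Psipns)<\infty$ supplies the quantitative decay of $\Phips$, $\prb\Phips$ and $\Lxi\Phips$ on $\Sigmazero$ that is needed to make both integrals absolutely convergent.

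With $g_{\sfrak}|_{\Sigmazero}$ in hand and $\Lxi g_{\sfrak}|_{\Sigmazero}:=\Phips|_{\Sigmazero}$, I would solve the TME \eqref{eq:DTMEP} as a Cauchy problem by standard linear hyperbolic theory, producing a unique globally smooth $g_{\sfrak}$ on $\Dzeroinfty$. The difference $\Lxi g_{\sfrak}-\Phips$ then solves the TME with vanishing Cauchy data on $\Sigmazero$---the matching of the second-order datum $\Lxi^2 g_{\sfrak}|_{\Sigmazero}=\Lxi\Phips|_{\Sigmazero}$ is built into the construction of $g_{\sfrak}|_{\Sigmazero}$ through the ODE step---so Cauchy uniqueness forces $\Lxi g_{\sfrak}=\Phips$ throughout $\Dzeroinfty$.

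The principal technical obstacle is the pair of endpoint analyses on $\Sigmazero$. At infinity, the $r$-weights in \eqref{def:tildeHs:general}---in particular the coefficient $r\mu^{\half}(2\mu^{-1}-\partial_rh)\partial_rh$ of $\Lxi\Phips$, which is only $O(1)$ thanks to the foliation property $\partial_rh-2\mu^{-1}\sim r^{-2}$---render bare qualitative decay of $\Phips$ insufficient, and it is the quantitative pointwise rates $\rho^{-i-\beta}$ furnished by \eqref{Initial:NPCP:zero} (applied with $i=0,1$) that actually secure absolute integrability of $\tilde{H}_{\sfrak}(\Phips)$ and of its once-integrated primitive. At the horizon, the ODE has a regular singular point due to the degenerate factor $\Delta^{\half}$, and one must verify that the constant of integration produced by the first indefinite integration is forced to assume the unique value that cancels the would-be logarithmic blow-up, so that the resulting $g_{\sfrak}|_{\Sigmazero}$ is smooth across $\rho=2M$; this step leans on the assumed smoothness of $\psipns|_{\Sigmazero}$ in a regular coordinate system, which propagates via the Dirac equations to smoothness of $\Lxi\Phips|_{\Sigmazero}$ at the horizon.
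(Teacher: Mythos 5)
Your proposal follows the paper's strategy: construct the Cauchy datum $g_{\sfrak}|_{\Sigmazero}$ by double integration of the conservation-form ODE $\prb\big((r-M)^2\Delta^{\half}\prb((r-M)^{-1}g_{\sfrak})\big)=\tilde{H}_{\sfrak}(\Phips)$, fix the two integration constants via the integrability condition at $\rho=2M$ and the decay \eqref{asym:gs:infty} at $\rho=\infty$, verify smoothness across the horizon, and then extend off $\Sigmazero$ by linear hyperbolic well-posedness and identify $\Lxi g_{\sfrak}=\Phips$ via Cauchy uniqueness together with the $\Lxi$-invariance of the equation. The overall architecture is the same as the paper's.

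There is, however, an internal inconsistency in your treatment of the first integration that needs to be resolved. You first assert that integrating over $[2M,\infty)$ ``immediately yields'' \eqref{integrability-condition:gs} because the boundary term at $\rho=2M$ vanishes on account of the prefactor $(r-M)^2\Delta^{\half}$. That is not correct as stated: the prefactor vanishes, but for a generic choice of the integration constant the inner derivative $\prb((r-M)^{-1}g_{\sfrak})$ blows up like $\mu^{-1/2}\sim(r-2M)^{-1/2}$ (not logarithmically, as you say), and then $(r-M)^2\Delta^{\half}\prb((r-M)^{-1}g_{\sfrak})$ has a nonzero limit at $\rho=2M$, so the boundary term survives. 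The vanishing of that boundary term is \emph{equivalent} to the $C^1$-regularity of $g_{\sfrak}$ at the horizon, and this is precisely what the paper proves (by contradiction): the integrability condition is the unique choice of constant making the solution regular. Your later remark---that the constant produced by the first integration must be chosen to cancel the singular behavior at the regular singular point $\rho=2M$---is the correct formulation and should supersede the earlier claim. A minor separate point: the rates $\rho^{-i-\beta}$ you attribute to \eqref{Initial:NPCP:zero} are not what that hypothesis supplies; it only asserts that the limit of $\rho^3\VR\hatPhips$ along $\Sigmazero$ is bounded. Together with the $O(\rho^{-1})$ decay of $\Phips$ and $\rho\prb\Phips$ coming from the energy finiteness, this already gives $\tilde{H}_{\sfrak}(\Phips)=O(\rho^{-2})$, which is all that is needed for both integrals to converge at infinity.
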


\begin{definition}
This unique smooth scalar $g_{\sfrak}$ constructed from $\Phips$ as in Lemma \ref{lem:timeintegral:posi} is called the \emph{time integral} of the spin $\half$ component $\Phips$.
\end{definition}

\begin{proof}
We first discuss the asymptotic behaviors of $\tilde{H}_{\sfrak}(\Phips)$ as $\rho\to\infty$ and $\rho\to 2M $ on $\Sigmazero$.  By $\hat{V}=\partial_\rho+H\Lxi$ and   $\hatPhips=\mu^{-\half}r\Phips$, we have
\begin{align}\label{equ:tildeH}
\tilde{H}_{\sfrak}(\Phips)=(r-M)[(2-\mu H)\VR\hatPhips-\mu^{\half}r\partial_\rho(H\Phips)
-2M\mu^{-\half}r^{-1}H\Phips].
\end{align}
From Theorem \ref{thm:almostPrice},  the assumption implies $|\Phips|+|\rb\prb\Phips|\lesssim \rho^{-1}(F^{(1)}(\regl,5-\delta,\tb_0,\Psipns))^{\half}$ as $\rho\to\infty$. Furthermore, by \eqref{Initial:NPCP:zero}, and $|H|\lesssim \rho^{-2}$, $|\partial_\rho H|\lesssim \rho^{-3}$,  we get
\begin{align}\label{asym:tildeH:Phips:infinity}
\abs{\tilde{H}_{\sfrak}(\Phips)}\lesssim \rho^{-2}(\tilde{D}_1 + (F^{(1)}(\regl,5-\delta,\tb_0,\Psipns))^{\half}).
\end{align}
From the smoothness of $\Phips$ and $\lim\limits_{r\to 2M } \partial_rh=1$,  we have, as $\rho\to 2M $,
\begin{align}\label{asym:tildeH:Phips:horizon}
\abs{\tilde{H}_{\sfrak}(\Phips)}\lesssim \mu^{-\half}(1+\mu)(F^{(1)}(\regl,5-\delta,\tb_0,\Psipns))^{\half}.
\end{align}
Therefore, the fact that the integral $\int_{2M }^{\infty}\tilde{H}_{\sfrak}(\Phips)|_{\Sigmazero}\di\rho$ exists follows from \eqref{asym:tildeH:Phips:infinity} and \eqref{asym:tildeH:Phips:horizon}.

We then determine the initial value of $g_{\sfrak}$ on $\Sigmazero$ by requiring it to be $C^1$. By \eqref{eq:prbprbPhins:ell=1:impro:copy}, we have
\begin{align}\begin{split}
\partial_\rho((r-M)^2\Delta^{\half}\prb((r-M)^{-1}g_{\sfrak}))
=(r-M)\Delta^{-\half}H_{\sfrak}(g_{\sfrak})
=\tilde{H}_{\sfrak}(\Phips),
\end{split}\end{align}
hence, for $2M <\rho<R<\infty$, we get
\begin{align}
(r-M)^2\Delta^{\half}\partial_\rho((r-M)^{-1}g_{\sfrak})(\tb_0,\rho')\Big|_{\rho}^{R}
=\int_\rho^R\tilde{H}_{\sfrak}(\Phips)(\tb_0,\rho')\di \rho'.
\end{align}
Taking $R\to\infty$, then
\begin{align}\label{integral:equ:partialgs}\begin{split}
 &\lim\limits_{R\to\infty}(r-M)^2\Delta^{\half}
 \partial_\rho((r-M)^{-1}g_{\sfrak})(\tb_0,R)
 -(r-M)^2\Delta^{\half}\partial_\rho((r-M)^{-1}g_{\sfrak})(\tb_0,\rho)\\
 =&\int_\rho^\infty\tilde{H}_{\sfrak}(\Phips)(\tb_0,\rho')\di \rho'
 =\int_{2M }^\infty\tilde{H}_{\sfrak}(\Phips)(\tb_0,\rho')\di \rho'
 -\int_{2M }^\rho\tilde{H}_{\sfrak}(\Phips)(\tb_0,\rho')\di \rho'.
 \end{split}\end{align}
In order to make sure that $g_{\sfrak}$ is actually $C^1$ at $2M $, $g_{\sfrak}$ has to satisfy the integrability condition \eqref{integrability-condition:gs} on $\Sigmazero$. In fact, if $\lim\limits_{r\to\infty}(r-M)^2\Delta^{\half}\partial_\rho((r-M)^{-1}
g_{\sfrak})|_{\Sigmazero}-\int_{2M }^{\infty}\tilde{H}_{\sfrak}(\Phips)|_{\Sigmazero}\di\rho=c\neq 0$, then this implies
\begin{align}\label{nonregu:equ:gs:horizon}
(r-M)^2\Delta^{\half}\partial_\rho((r-M)^{-1}g_{\sfrak})(\tb_0,\rho)
=-c+\int_{2M }^{\rho}\tilde{H}_{\sfrak}(\Phips)(\tb_0,\rho')\di\rho'.
\end{align}
Integrating \eqref{nonregu:equ:gs:horizon} from a fixed point $\rho_1>2M $, and by \eqref{asym:tildeH:Phips:horizon}, we get $\abs{g_{\sfrak}}\lesssim 1$ as $\rho\to 2M $.
Using \eqref{nonregu:equ:gs:horizon} and \eqref{asym:tildeH:Phips:horizon} again, then we have $\partial_\rho g_{\sfrak}\sim \mu^{-\half}$ as $\rho\to 2M $, which contradicts with the continuity of $\partial_\rho g_{\sfrak}$ at $2M $.

On the other hand, integrability condition \eqref{integrability-condition:gs} and the assumption \eqref{asym:gs:infty} together are sufficient to uniquely determine $g_{\sfrak}$ on $\Sigmazero$. From \eqref{integral:equ:partialgs} and the integrability condition \eqref{integrability-condition:gs}, we can get
\begin{align}\label{regular:equa:gs:horizon}
(r-M)^2\Delta^{\half}\partial_\rho((r-M)^{-1}g_{\sfrak})(\tb_0,\rho)
=\int_{2M }^{\rho}\tilde{H}_{\sfrak}(\Phips)(\tb_0,\rho')\di\rho'.
\end{align}
Rewrite \eqref{regular:equa:gs:horizon} as
\begin{align}
\partial_\rho((r-M)^{-1}g_{\sfrak})(\tb_0,\rho)=
\frac{1}{(r-M)^{2}\Delta^{\half}}
\bigg(\int_{2M }^{\infty}\tilde{H}_{\sfrak}(\Phips)(\tb_0,\rho')\di\rho'
-\int_{\rho}^{\infty}\tilde{H}_{\sfrak}(\Phips)(\tb_0,\rho')\di\rho'\bigg),
\end{align}
then by integrating along $\Sigmazero$ from $\rho=\infty$ and making use of the assumption \eqref{asym:gs:infty}, we can solve $g_{\sfrak}$ uniquely everywhere on $\Sigmazero$.

We now show that this unique solution $g_{\sfrak}$ can be smoothly extended to $\rho=2M$. First, by \eqref{regular:equa:gs:horizon} and \eqref{asym:tildeH:Phips:horizon}, we get, as $\rho\to 2M $,
\begin{align}
|\partial_\rho((r-M)^{-1}g_{\sfrak})(\tb_0,\rho)|\lesssim 1,
\end{align}
hence, for fixed $\rho_1>2M $, the integral $\int_{\rho}^{\rho_1}\partial_\rho((r-M)^{-1}g_{\sfrak})(\tb_0,\rho')\di \rho'$ is continuous to $2M $, thus $g_{\sfrak}$  can be continuously extended to $2M $. Second, we prove that $g_{\sfrak}$ is smooth at $2M $. We first discuss $\partial_\rho g_{\sfrak}$.
By \eqref{regular:equa:gs:horizon} and \eqref{asym:tildeH:Phips:horizon}, we have
\begin{align}\label{regular:equa:partialgs:horizon}\begin{split}
\partial_\rho g_{\sfrak}(\tb_0,\rho)&=(r-M)^{-1}g_{\sfrak}+(r-M)^{-1}\Delta^{-\half}
\int_{2M }^{\rho}\tilde{H}_{\sfrak}(\Phips)(\tb_0,\rho')\di\rho'\\
&=(r-M)^{-1}g_{\sfrak}+(r-M)^{-1}r^{-\half}
(r-2M)^{-\half}\int_{2M }^{\rho}(\rho'-2M)^{-\half}
\check{H}_{\sfrak}(\Phips)(\tb_0,\rho')\di\rho',
\end{split}\end{align}
where $\check{H}_{\sfrak}(\Phips)$ is smooth at $2M $. Hence, the limit $\lim\limits_{\rho\to 2M }\partial_\rho g_{\sfrak}(\tb_0,\rho)$ exists following from \eqref{regular:equa:partialgs:horizon}, and, as a result, the $\rho$-derivative of $g_{\sfrak}$ at $2M $ exists. In fact, $\partial_\rho g_{\sfrak}(\tb_0,2M )=\lim\limits_{\rho\to 2M }\frac{g_{\sfrak}(\tb_0,\rho)-g_{\sfrak}(\tb_0,2M )}{\rho-2M }=
\lim\limits_{\rho\to 2M }\partial_\rho g_{\sfrak}(\tb_0,\rho)$, hence $\partial_\rho g_{\sfrak}$ is continuous at $2M $.
For the higher order derivatives, we need the following property.
\begin{remark}\label{remark:smooth:function}
Let $f(r)$ is a smooth function in $[0,1]$, and $g(r)=r^{-\half}\int_0^rs^{-\half}f(s)\di s$ for $r\in (0,1]$, $g(0)=\lim\limits_{r\to 0}r^{-\half}\int_0^rs^{-\half}f(s)\di s=2f(0)$. Then, $g(r)$ is smooth in $[0,1]$. In fact, by the smoothness of $f(r)$ at $0$, for any fixed $k$, we have
\begin{align}
f(r)=\sum_{j=0}^{k}\frac{1}{j!}f^{(j)}(0)r^j+f_k(r)r^{k+1},
\end{align}
 where $f_k(r)$ is a smooth function. By the definition of $g(r)$, we get, for $r\in(0,1]$,
 \begin{align}
 g(r)=\sum_{j=0}^k\frac{1}{(j+\half)j!}f^{(j)}(0)r^j+
r^{-\half}\int_0^rf_k(s)s^{k+\half}\di s,
\end{align}
and thus,
\begin{align}
g^{(k)}(r)=\frac{1}{k+\half}f^{(k)}(0)
+\partial^k\bigg(r^{-\half}\int_0^rf_k(s)s^{k+\half}\di s\bigg).
\end{align}
The above equality implies
\begin{align}
\bigg|g^{(k)}(r)-\frac{1}{k+\half}f^{(k)}(0)\bigg|\lesssim r,
\end{align}
which yields $\lim\limits_{r\to 0}g^{(k)}(r)=\frac{2}{1+2k}f^{(k)}(0)$. On the other hand, by $g^{(k)}(0)=\lim\limits_{r\to 0}\frac{g^{(k-1)}(r)-g^{(k-1)}(0)}{r}=\lim\limits_{r\to 0}g^{(k)}(r)$, we know the derivative of $g^{(k-1)}(r)$ at $r=0$ exists, and $g^{(k)}(r)$ is continuous at $r=0$.
Hence $g(r)$ is smooth in $[0,1]$.
\end{remark}
\noindent Following from \eqref{regular:equa:partialgs:horizon} and Remark \ref{remark:smooth:function}, we know $\partial_\rho^k g_{\sfrak}$ are continuous at $2M $, for all $k\geq 0$. Thus, $g_{\sfrak}$ is smooth in $[2M ,\infty)$. By standard theory of global well-posedness of linear wave equations, such a solution $g_{\sfrak}$ is unique.
\end{proof}

Further, we collect some expressions of the time integral $g_{\sfrak}$ and derivatives of $g_{\sfrak}$ in terms of $\Phips$.
\begin{lemma}\label{lemma:asym:formu:gs}
Let $g_{\sfrak}$ be the time integral of $\Phips$ which is constructed in Lemma \ref{lem:timeintegral:posi}. Then, for $\rb\geq R$, we have
\begin{subequations}
\label{expression:gsandr2gs:copy}
\begin{align}\label{expression:gs:copy}
g_{\sfrak}(\tb_0,\rho)
={}&-\half (\rho^{-1}+M\rho^{-2}+O(\rho^{-3}))\int_{2M}^\rho\tilde{H}_{\sfrak}(\Phips)
(\tb_0,\rho')\di\rho'\notag\\
& -\half(\rb-M)\int_{\rho}^{\infty}\big( (\rb')^{-2}+O((\rb')^{-3}))\big)\tilde{H}_{\sfrak}(\Phips)
(\tb_0,\rho')\di\rho',\\
\label{eq:expression:r2Vgs}
\rho^2\hat{V}(\mu^{-\half}\rb g_{\sfrak})(\tb_0,\rho)={}&
(M+O(\rho^{-1}))\int_{2M}^\rho
\tilde{H}_{\sfrak}(\Phips)(\tb_0,\rho')\di\rho'
+(\rho^3+M\rho^2+\rb O(1))H\Phips(\tb_0,\rho)\notag\\
&-\rb^3(1+O(\rb^{-1}))\int_{\rho}^{\infty}\big( (\rb')^{-2}+O((\rb')^{-3})\big)\tilde{H}_{\sfrak}(\Phips)
(\tb_0,\rho')\di\rho'.
\end{align}
\end{subequations}
\end{lemma}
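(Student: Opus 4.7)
The plan is to integrate the identity \eqref{regular:equa:gs:horizon} from Lemma \ref{lem:timeintegral:posi} and then apply Fubini to obtain an explicit representation of $g_{\sfrak}$ on $\Sigmazero$ in terms of $\tilde H_{\sfrak}(\Phips)$. Dividing \eqref{regular:equa:gs:horizon} through by $(r-M)^2\Delta^{1/2}$ and integrating from $\rho$ to $\infty$, using the boundary condition $g_{\sfrak} \to 0$ from \eqref{asym:gs:infty} to kill the constant of integration (which is justified since $(r-M)^{-1}g_{\sfrak}\to 0$ as $\rho\to\infty$), produces a double integral for $(r-M)^{-1} g_{\sfrak}(\tb_0,\rho)$. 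I would then swap the order of integration via Fubini and split into the parts where $\rho''\le\rho$ and $\rho''>\rho$ to arrive at the compact representation
\begin{align*}
g_{\sfrak}(\tb_0,\rho) = -(r-M)\,I(\rho)\int_{2M}^\rho \tilde H_{\sfrak}(\Phips)(\tb_0,\rho'')\,\di\rho'' - (r-M)\int_\rho^\infty \tilde H_{\sfrak}(\Phips)(\tb_0,\rho'')\,I(\rho'')\,\di\rho'',
\end{align*}
where $I(\rho):=\int_\rho^\infty[(r'-M)^2\Delta(r')^{1/2}]^{-1}\,\di\rho'$.

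For \eqref{expression:gs:copy}, I would next expand the kernel as $((r'-M)^2\Delta(r')^{1/2})^{-1} = r'^{-3}(1+3Mr'^{-1}+O(r'^{-2}))$, integrate term-by-term to obtain $I(\rho) = \tfrac{1}{2}\rho^{-2} + M\rho^{-3} + O(\rho^{-4})$, and then compute $(r-M)I(\rho) = \tfrac{1}{2}(\rho^{-1} + M\rho^{-2} + O(\rho^{-3}))$. The second integrand in the representation above contributes the factor $\tfrac{1}{2}((\rho'')^{-2} + O((\rho'')^{-3}))$ pulled out of $I(\rho'')$, so combining these expansions gives \eqref{expression:gs:copy} directly.

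For \eqref{eq:expression:r2Vgs}, I would use $\hat V = \partial_\rho + H\Lxi$ together with $\Lxi g_{\sfrak} = \Phips$ to expand
\begin{align*}
\hat V(\mu^{-1/2} r g_{\sfrak}) = \partial_\rho(\mu^{-1/2} r)\,g_{\sfrak} + \mu^{-1/2} r\,\partial_\rho g_{\sfrak} + H\mu^{-1/2} r\,\Phips.
\end{align*}
Then I would apply \eqref{regular:equa:gs:horizon} once more to replace $\partial_\rho g_{\sfrak}$ by $(r-M)^{-1} g_{\sfrak} + (r-M)^{-1}\Delta^{-1/2}\int_{2M}^\rho \tilde H_{\sfrak}(\Phips)\,\di\rho'$, and finally substitute the already-derived expression \eqref{expression:gs:copy} into the remaining $g_{\sfrak}$ factors. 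This recasts everything as a sum of (i) a coefficient times $\int_{2M}^\rho\tilde H_{\sfrak}(\Phips)\di\rho'$, (ii) a coefficient times $H\Phips$, and (iii) a coefficient times $\int_\rho^\infty\tilde H_{\sfrak}(\Phips)\,((\rho')^{-2}+O((\rho')^{-3}))\di\rho'$. The asymptotic expansions $\mu^{-1/2}r = r + M + O(r^{-1})$, $\partial_\rho(\mu^{-1/2} r) = 1 + O(r^{-2})$, and $\mu^{-1/2}r(r-M)^{-1}\Delta^{-1/2} = r^{-1} + 3Mr^{-2} + O(r^{-3})$, after multiplication by $\rho^2$, produce the coefficients claimed in \eqref{eq:expression:r2Vgs}.

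The hard part will be the cancellation in the coefficient of $\int_{2M}^\rho\tilde H_{\sfrak}(\Phips)\di\rho'$: the contribution coming from substituting $g_{\sfrak}$ via \eqref{expression:gs:copy} is $-(\rho+2M+O(\rho^{-1}))$, while the contribution from the $\int_{2M}^\rho\tilde H_{\sfrak}(\Phips)$ piece of $\partial_\rho g_{\sfrak}$ is $+(\rho+3M+O(\rho^{-1}))$, and these must cancel at leading order so that only $M + O(\rho^{-1})$ remains. Carrying two subleading orders through every expansion and verifying this cancellation carefully is the only genuinely delicate calculation; the other two coefficients in \eqref{eq:expression:r2Vgs} follow immediately from single-expansion bookkeeping.
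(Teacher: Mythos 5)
Your proposal is correct and follows essentially the same route as the paper. For \eqref{expression:gs:copy} you integrate \eqref{regular:equa:gs:horizon} out to infinity and swap the order of integration (Fubini) while the paper does the equivalent step via integration by parts on $\int_\rho^\infty K(r)\int_{2M}^r\tilde H_\sfrak\,\di\rho'\,\di r$ — the two produce the identical pair of integrals, one over $[2M,\rho]$ with the tail kernel $I(\rho)$ pulled out and one over $[\rho,\infty)$ with kernel $I(\rho'')$. For \eqref{eq:expression:r2Vgs} your expansion of $\hat V(\mu^{-1/2}r g_\sfrak)$ via the product rule, the substitution $\Lxi g_\sfrak = \Phips$, the replacement of $\partial_\rho g_\sfrak$ via \eqref{regular:equa:gs:horizon}, and the final substitution of \eqref{expression:gs:copy} match the paper step for step, and you correctly identified the key cancellation $-(\rho+2M)+(\rho+3M)=M$ at leading order together with the need to carry two subleading orders in $\mu^{-1/2}$, $(\rho-M)^{-1}$, and $\Delta^{-1/2}$ to extract the $O(\rho^{-1})$ remainder.
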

\begin{proof}
First, we integrate
\begin{align}\label{regular:equa:gs:horizon:scaled}
\partial_\rho ((\rb-M)^{-1}g_{\sfrak})(\tb_0,\rho)
=(\rho-M)^{-2}\rho^{-1}\mu^{-\half}
\int_{2M}^\rho\tilde{H}_{\sfrak}(\Phips)(\tb_0,\rho')
\di\rho'
\end{align}
from $\rho=\infty$, and then apply integration by parts, arriving at 
\begin{align}\label{expression:gs}\begin{split}
(\rho-M)^{-1}g_{\sfrak}(\tb_0,\rho)
&=-\int_{\rho}^{\infty}
(r-M)^{-2}r^{-1}\mu^{-\half}\int_{2M }^r\tilde{H}_{\sfrak}
(\Phips)(\tb_0,\rho')
\di\rho'\di r\\
&=-\int_{\rho}^{\infty}
r^{-3}(1+3Mr^{-1}+O(r^{-2}))\int_{2M }^r\tilde{H}_{\sfrak}
(\Phips)(\tb_0,\rho')
\di\rho'\di r\\
&=\bigg(\half r^{-2}+Mr^{-3}+O(r^{-4})\bigg)\int_{2M }^r\tilde{H}_{\sfrak}
(\Phips)(\tb_0,\rho')\di\rho'
\Big|_{\rho}^{\infty}\\
&\quad -\int_{\rho}^{\infty}
\bigg(\half r^{-2}+Mr^{-3}+O(r^{-4})\bigg)\tilde{H}_{\sfrak}(\Phips)
(\tb_0,\rho')\di\rho'\\
&=-\bigg(\half \rho^{-2}+M\rho^{-3}+O(\rho^{-4})\bigg)\int_{2M }^\rho\tilde{H}_{\sfrak}(\Phips)
(\tb_0,\rho')\di\rho'\\
&\quad -\int_{\rho}^{\infty}\bigg(\half (\rb')^{-2}+M(\rb')^{-3}+O((\rb')^{-4})\bigg)\tilde{H}_{\sfrak}(\Phips)
(\tb_0,\rho')\di\rho'.
\end{split}\end{align}
Here, in the derivation from the first line to the second line, we have used $\mu^{-\half}=(1-2Mr^{-1})^{-\half}=1+Mr^{-1} +O(r^{-2})$ and $(r-M)^{-2}=r^{-2}(1-Mr^{-1})^{-2}=r^{-2}(1+2Mr^{-1}+O(r^{-2}))$.
Equation \eqref{expression:gs:copy} follows directly from \eqref{expression:gs}.

Second, one has from $\hat{V}=\partial_\rho+H\Lxi$ and \eqref{eq:Lxigs=Phips} that
\begin{align}\label{equ:gs:Vgs1}
\rho^2\hat{V}(\mu^{-\half}\rb g_{\sfrak})={}&
\big[-\mu^{-\frac{3}{2}}M\rho +\mu^{-\half}(\rb-M)^{-1}\rb^2(2\rb-M) \big] g_{\sfrak}
+\mu^{-\half}\rb^3 (\rb-M)\prb((\rb-M)^{-1}g_{\sfrak})\notag\\
&
+\mu^{-\half}\rho^3H\Phips,
\end{align}
where, the term $\mu^{-\half}\rho^2\partial_\rho(\rb g_{\sfrak})$ has been rewritten in terms of $g_\sfrak$ and $\prb((\rb-M)^{-1}g_{\sfrak})$. One can further use \eqref{regular:equa:gs:horizon} to express $\prb((\rb-M)^{-1}g_{\sfrak})$, eventually leading to
\begin{align}
\rho^2\hat{V}(\mu^{-\half}\rb g_{\sfrak})={}&
\big[-\mu^{-\frac{3}{2}}M\rho +\mu^{-\half}(\rb-M)^{-1}\rb^2(2\rb-M) \big] g_{\sfrak}
+\mu^{-\half}\rho^3H\Phips\notag\\
&+\mu^{-1}\rho^2(\rb-M)^{-1}\int_{2M }^\rho
\tilde{H}_{\sfrak}(\Phips)(\tb_0,\rho')\di\rho'\notag\\
={}&(2\rb^2+2M\rho+O(1)) g_{\sfrak}+\mu^{-\half}\rho^3H\Phips
+\mu^{-1}\rho^2(\rb-M)^{-1}\int_{2M }^\rho
\tilde{H}_{\sfrak}(\Phips)(\tb_0,\rho')\di\rho'.
\end{align}
In view of the expression \eqref{expression:gs:copy}, this further equals
\begin{align}\label{eq:r2VRrgsfrak:explicit}
\hspace{4ex}&\hspace{-4ex}
-(\rb+2M+O(\rho^{-1}))\int_{2M }^\rho\tilde{H}_{\sfrak}(\Phips)
(\tb_0,\rho')\di\rho'\notag\\
\hspace{4ex}&\hspace{-4ex}
-\rb^3(1+O(\rb^{-1}))\int_{\rho}^{\infty}( (\rb')^{-2}+O((\rb')^{-3})))\tilde{H}_{\sfrak}(\Phips)
(\tb_0,\rho')\di\rho'\notag\\
\hspace{4ex}&\hspace{-4ex}
+(\rb+3M+O(\rho^{-1}))\int_{2M }^\rho
\tilde{H}_{\sfrak}(\Phips)(\tb_0,\rho')\di\rho'\notag
+(\rho^3+M\rho^2+\rho O(1))H\Phips\\
={}&(M+O(\rho^{-1}))\int_{2M }^\rho
\tilde{H}_{\sfrak}(\Phips)(\tb_0,\rho')\di\rho'
+(\rho^3+M\rho^2+\rho O(1))H\Phips\notag\\
&-\rb^3(1+O(\rb^{-1}))\int_{\rho}^{\infty}( (\rb')^{-2}+O((\rb')^{-3})))\tilde{H}_{\sfrak}(\Phips)
(\tb_0,\rho')\di\rho',
\end{align}
which proves the equality \eqref{eq:expression:r2Vgs}.
\end{proof}

From Lemma \ref{lemma:asym:formu:gs}, we  have the following asymptotic behavior for $g_{\sfrak}$ and its derivative on $\Sigmazero$.

\begin{cor}\label{coroll:asym:gs:Vgs}
Let $g_{\sfrak}$ be the time integral of $\Phips$ which is constructed in Lemma \ref{lem:timeintegral:posi}.
We have on $\Sigmazero$ that for $\rho$ sufficiently large,
\begin{subequations}
\begin{align}\label{asym:formu:r-2:gs}
\hspace{4ex}&\hspace{-4ex}\bigg|g_{\sfrak}(\tb_0,\rho)
+\half\rho^{-1}\int_{2M}^{\infty}
\tilde{H}_{\sfrak}(\Phips)(\tb_0,\rho')\di\rho'\bigg|
\lesssim_{\delta} \rho^{-2+\frac{\delta}{2}}(F^{(1)}(\regl,5-\delta/2,\tb_0,\Psipns))^{\half},\\
\label{asym:formu:r-2:V:gs}
\hspace{4ex}&\hspace{-4ex}\bigg|\rho^{2}\VR (\mu^{-\half}\rb g_{\sfrak})
-M\int_{2M}^{\infty}\tilde{H}_{\sfrak}(\Phips)(\tb_0,\rho')\di\rho'
+\rho^3\int_\rho^{\infty}2(\rho')^{-1}\VR\hatPhips(\tb_0,\rho')\di\rho'\bigg|\notag\\
\lesssim_{\delta}{}& \rho^{-1}
(\tilde{D}_1 + (F^{(1)}(\regl,5-\delta,\tb_0,\Psipns))^{\half}).
\end{align}
\end{subequations}
Furthermore, if limit $\lim\limits_{\rho\to\infty}\rho^3\VR\hatPhips\big|_{\Sigmazero}$ exits,  then
\begin{align}\label{calcu:NPCPTI}
\lim\limits_{\rho\to\infty}\rho^2\VR(\mu^{-\half} \rb g_{\sfrak})(\tb_0,\rho)
=M\int_{2M}^\infty
\tilde{H}_{\sfrak}(\Phips)(\tb_0,\rho')\di\rho'
-\frac{2}{3}\lim\limits_{\rho\to\infty}\rho^3\VR\hatPhips\big|_{\Sigmazero}.
\end{align}
\end{cor}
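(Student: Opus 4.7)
The strategy is to substitute the explicit expressions \eqref{expression:gs:copy} and \eqref{eq:expression:r2Vgs} from Lemma \ref{lemma:asym:formu:gs} and carefully track the leading asymptotic contributions against the decay bounds for $\tilde{H}_\sfrak(\Phips)$ on $\Sigmazero$. The primary quantitative input is the pointwise estimate \eqref{asym:tildeH:Phips:infinity}, namely $|\tilde{H}_\sfrak(\Phips)| \lesssim \rho^{-2}(\tilde{D}_1 + (F^{(1)}(\regl,5-\delta,\tb_0,\Psipns))^{1/2})$; together with the improved pointwise bounds implied by Theorem \ref{thm:almostPrice} applied on $\Sigmazero$, which yield the $\rho^{-2+\delta/2}$ (rather than $\rho^{-2}$) rate appearing in \eqref{asym:formu:r-2:gs}.

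For \eqref{asym:formu:r-2:gs}, I would split the first integral in \eqref{expression:gs:copy} as $\int_{2M}^{\rho} = \int_{2M}^{\infty} - \int_{\rho}^{\infty}$. The $-\tfrac12 \rho^{-1}\int_{2M}^{\infty}\tilde{H}_\sfrak\,d\rho'$ piece is precisely the advertised leading term. The tail $\tfrac12 \rho^{-1}\int_{\rho}^{\infty}\tilde{H}_\sfrak\,d\rho'$ is $O(\rho^{-2+\delta/2})$ by the improved decay of $\tilde{H}_\sfrak$ on $\Sigmazero$; the $O(\rho^{-2})$ prefactor times the bounded integral contributes $O(\rho^{-2})$; and for the second line of \eqref{expression:gs:copy}, the factor $(\rb-M)$ against $\int_\rho^\infty (\rho')^{-2}\tilde{H}_\sfrak\,d\rho' = O(\rho^{-3})$ gives $O(\rho^{-2})$. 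Summing, one obtains the claimed bound.

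For \eqref{asym:formu:r-2:V:gs}, start from \eqref{eq:expression:r2Vgs}. The first term, $(M + O(\rho^{-1}))\int_{2M}^\rho \tilde{H}_\sfrak\,d\rho'$, reproduces $M\int_{2M}^{\infty}\tilde{H}_\sfrak\,d\rho'$ up to $O(\rho^{-1})$ since the tail $\int_\rho^\infty \tilde{H}_\sfrak d\rho' = O(\rho^{-1})$. The decisive step is recognizing that the third term matches $-\rho^3 \int_\rho^\infty 2(\rho')^{-1} \VR \hatPhips\,d\rho'$: I would expand $\tilde{H}_\sfrak$ via \eqref{equ:tildeH} as
\[
\tilde{H}_\sfrak(\Phips) = 2r\,\VR\hatPhips + R_1(\rho),
\]
where $R_1$ collects the remaining pieces $(r-M)[-\mu H \VR\hatPhips - \mu^{1/2}r\partial_\rho(H\Phips) - 2M\mu^{-1/2}r^{-1}H\Phips] + (\text{lower order in the first term})$, each of which decays at least as $\rho^{-3}$ on $\Sigmazero$ using $|H|\lesssim \rho^{-2}$, $|\partial_\rho H|\lesssim \rho^{-3}$, $|\Phips|\lesssim \rho^{-1}$, and $|\VR\hatPhips|\lesssim \tilde{D}_1 \rho^{-3}$. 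Substituting this decomposition into the $\int_\rho^\infty(\rho')^{-2}\tilde{H}_\sfrak\,d\rho'$ integral yields $\int_\rho^\infty 2(\rho')^{-1}\VR\hatPhips\,d\rho' + O(\rho^{-4})$, so after multiplying by $-\rho^3(1+O(\rho^{-1}))$ the principal part emerges and the remainder is $O(\rho^{-1})$. The middle term $(\rho^3+M\rho^2+\rho O(1))H\Phips$ is $O(1)$ pointwise, but combining with the $O((\rho')^{-3})$ remainder in the last integral of \eqref{eq:expression:r2Vgs} and using $r^2 H \to c_0$ (cf.~the hyperboloidal setup), this piece rearranges into terms of order $O(\rho^{-1})$; the cleanest way is to group $\rho^3 H\Phips$ against the principal part of the tail integral of $\VR\hatPhips$ via the identity $\VR\hatPhips = \VR(\mu^{-1/2}r\Phips)$ and integration by parts.

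Finally, \eqref{calcu:NPCPTI} is an immediate corollary: if $\lim_{\rho\to\infty} \rho^3 \VR\hatPhips|_{\Sigmazero} = \tilde{D}_1$ exists, then by a dominated convergence / L'H\^opital argument,
\[
\rho^3 \int_\rho^\infty 2(\rho')^{-1}\VR\hatPhips\,d\rho' \;\longrightarrow\; 2\tilde{D}_1 \cdot \rho^3 \int_\rho^\infty (\rho')^{-4}\,d\rho' \;=\; \tfrac{2}{3}\tilde{D}_1,
\]
so passing to the limit in \eqref{asym:formu:r-2:V:gs} yields \eqref{calcu:NPCPTI}. The main technical obstacle is the bookkeeping in the previous paragraph: ensuring that the seemingly $O(1)$ contribution from $\rho^3 H \Phips$ precisely cancels the subleading terms in the two integrals rather than polluting the identification of the $-\tfrac23\tilde{D}_1$ coefficient. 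I would handle this by working with $\VR(\mu^{-1/2}\rb g_\sfrak)$ directly, re-deriving \eqref{eq:expression:r2Vgs} with all coefficients expanded to one more order in $\rho^{-1}$, so that the algebraic cancellation with the $\rho^3 H \Phips$ term is transparent.
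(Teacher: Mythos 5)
Your outline for \eqref{asym:formu:r-2:gs} and for \eqref{calcu:NPCPTI} matches the paper (split the integral at $\rho$, and use dominated convergence respectively; the paper also establishes the needed pointwise bound $\absCDeri{\tilde{H}_{\sfrak}(\Phips)}{\reg}\lesssim_\delta \rho^{-2+\delta/2}(F^{(1)}(\regl,5-\delta/2,\tb_0,\Psipns))^{1/2}$ via Sobolev). But the argument for \eqref{asym:formu:r-2:V:gs} has a concrete error that undermines the cancellation you flag as the main obstacle.

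You decompose $\tilde{H}_\sfrak(\Phips) = 2r\,\VR\hatPhips + R_1$ and claim each piece of $R_1$ is $O(\rho^{-3})$. That is false for the $\partial_\rho(H\Phips)$ piece. Since $H=O(\rho^{-2})$, $\Phips=O(\rho^{-1})$, $\partial_\rho H=O(\rho^{-3})$, and $\partial_\rho\Phips=O(\rho^{-2})$, one only gets $\partial_\rho(H\Phips)=O(\rho^{-4})$, so $(r-M)\mu^{1/2}r\,\partial_\rho(H\Phips)=O(\rho^{-2})$ — one power worse than you assert. Consequently $\int_\rho^\infty(\rho')^{-2}R_1\,\di\rho'=O(\rho^{-3})$ rather than $O(\rho^{-4})$, and after multiplying by $\rho^3$ this is a genuine $O(1)$ term, not $O(\rho^{-1})$. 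It therefore cannot be discarded into the error; it is precisely the $O(1)$ term that cancels $\rho^3 H\Phips$. Your proposed fix (integration by parts on $\VR\hatPhips=\VR(\mu^{-1/2}r\Phips)$, or expanding \eqref{eq:expression:r2Vgs} to one more order) aims at the wrong term and would not produce the cancellation.

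What actually closes the argument — and what the paper does — is to keep the $\partial_\rho(H\Phips)$ piece of $\tilde{H}_\sfrak$ explicit and integrate it exactly: since $(r'-M)\mu^{1/2}r'\cdot(\rho')^{-2}=1+O((\rho')^{-1})$, one has
\begin{align*}
-\rho^3\int_\rho^\infty(\rho')^{-2}\bigl[-(r'-M)\mu^{1/2}r'\,\partial_{\rho'}(H\Phips)\bigr]\di\rho'
&=\rho^3\int_\rho^\infty\partial_{\rho'}(H\Phips)\di\rho'+O(\rho^{-1})\\
&=-\rho^3(H\Phips)(\rho)+O(\rho^{-1}),
\end{align*}
using $H\Phips\to 0$ at infinity. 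This exactly annihilates the $\rho^3 H\Phips$ term coming from \eqref{eq:expression:r2Vgs}, leaving $-\rho^3\int_\rho^\infty 2(\rho')^{-1}\VR\hatPhips\di\rho'$ plus $O(\rho^{-1})$. Without isolating this exact telescoping contribution, the estimate \eqref{asym:formu:r-2:V:gs} and hence \eqref{calcu:NPCPTI} do not follow.
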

\begin{proof}
First, from the fact that $H=O(r^{-2})$ for $r$ away from horizon, standard Sobolev inequalities applied to the expression \eqref{equ:tildeH} yields that for $\rb\geq R$,
\begin{align}
\label{eq:pointwisenorm:tildeHsPhips}
\absCDeri{\tilde{H}_{\sfrak}(\Phips)}{\reg}\lesssim_{\reg,\delta} \rb^{-2+\frac{\delta}{2}}(F^{(1)}(\reg+\regl,5-\delta/2,\tb_0,\Psipns))^{\half}.
\end{align}
Moreover, this trivially implies
\begin{align}
\label{eq:integral2Mtoinfty:tildeHsPhips}
\int_{2M }^{\infty}\tilde{H}_{\sfrak}(\Phips)|_{\Sigmazero}\di\rho\lesssim_{\delta}
(F^{(1)}(\regl,5-\delta/2,\tb_0,\Psipns))^{\half}.
\end{align}
As a result, the estimate
\eqref{asym:formu:r-2:gs} follows from the above two estimate and \eqref{expression:gs:copy}.

By using $|H|\lesssim \rho^{-2}$, $|\Phips|\lesssim\rho^{-1}(F^{(1)}(\regl,5-\delta/2,\tb_0,\Psipns))^{\half}$,
\eqref{asym:tildeH:Phips:infinity}
and \eqref{eq:expression:r2Vgs},
we obtain
 \begin{align}
 \rho^{2}\VR (\mu^{-\half}rg_{\sfrak})
={}&M\int_{2M}^{\infty}\tilde{H}_{\sfrak}(\Phips)(\tb_0,\rho')\di\rho'
+\rho^3H\Phips
-\rho^3\int_\rho^{\infty}[2(\rho')^{-1}\VR\hatPhips
-\partial_\rho(H\Phips)](\tb_0,\rho')\di\rho'\notag\\
&+O(\rho^{-1})(\tilde{D}_1 + (F^{(1)}(\regl,5-\delta,\tb_0,\Psipns))^{\half})\notag\\
={}&M\int_{2M}^{\infty}\tilde{H}_{\sfrak}(\Phips)(\tb_0,\rho')\di\rho'
-\rho^3\int_\rho^{\infty}2(\rho')^{-1}\VR\hatPhips
(\tb_0,\rho')\di\rho'\notag\\
&+O(\rho^{-1})(\tilde{D}_1 + (F^{(1)}(\regl,5-\delta,\tb_0,\Psipns))^{\half}),
\end{align}
 which proves the estimate \eqref{asym:formu:r-2:V:gs}.

Last, if the limit $\lim\limits_{\rho\to\infty}\rho^3\VR\hatPhips(\tb_0,\rho)$ exists,  then we have
\begin{align}
\lim\limits_{\rho\to\infty}\rho^3\int_{\rho}^{\infty}
2(\rho')^{-1}\VR\hatPhips(\tb_0,\rho')\di\rho'=\frac{2}{3}
\lim\limits_{\rho\to\infty}\rho^3\VR\hatPhips\big|_{\Sigmazero}.
\end{align}
Substituting the above equation into \eqref{asym:formu:r-2:V:gs}, we get \eqref{calcu:NPCPTI}.
\end{proof}

Similar to Definition \ref{def:NPCs}, we define the first N--P constant of the time integral:

\begin{definition}
\label{def:NPconstofgsfrak}
Define the first N--P constant of $g_{\sfrak}$ which is constructed from Lemma \ref{lem:timeintegral:posi} to be
\begin{align}
\NPCPTI{1}=\lim\limits_{\rho\to\infty}\rho^2
\VR(\mu^{-\half} rg_{\sfrak})(\tb_0,\rho).
\end{align}
\end{definition}

We can then calculate the first N--P constant of the time integral from Corollary \ref{coroll:asym:gs:Vgs}. 

\begin{lemma}
Under the assumptions \eqref{Initial:NPCP:zero} for $\Phips$ and \eqref{asym:gs:infty} for $g_{\sfrak}$, the first N--P constant $\NPCPTI{1}$ of the scalar $g_{\sfrak}$ defined as in Definition \ref{def:NPconstofgsfrak} is finite and satisfies
\begin{align}
\label{expression:NPCPTI}
\NPCPTI{1}=M\int_{2M}^\infty
\tilde{H}_{\sfrak}(\Phips)(\tb_0,\rho')\di\rho'
-\frac{2}{3}\lim\limits_{\rho\to\infty}\rho^3\VR\hatPhips\big|_{\Sigmazero}.
\end{align}
\end{lemma}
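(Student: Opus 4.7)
The plan is to deduce the statement almost directly from Corollary~\ref{coroll:asym:gs:Vgs}, which already contains the key identity \eqref{calcu:NPCPTI}. What remains is to verify that the two hypotheses of the present lemma are enough to put us in the setting of that corollary and to ensure finiteness of each piece of the claimed formula.

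First, I would record that the assumption \eqref{Initial:NPCP:zero} guarantees that the limit $\lim_{\rho\to\infty}\rho^{3}\VR\hatPhips|_{\Sigmazero}$ exists as a finite quantity bounded by $\tilde{D}_1$; this is precisely the hypothesis needed to invoke \eqref{calcu:NPCPTI}. Second, I would verify finiteness of the bulk integral $\int_{2M}^{\infty}\tilde{H}_{\sfrak}(\Phips)(\tb_0,\rho')\,\di\rho'$. Near infinity this follows from the improved decay estimate \eqref{asym:tildeH:Phips:infinity}, which in turn relied on \eqref{Initial:NPCP:zero} through Theorem~\ref{thm:almostPrice}; near the horizon it follows from the smoothness bound \eqref{asym:tildeH:Phips:horizon}. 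Together, these reproduce \eqref{eq:integral2Mtoinfty:tildeHsPhips}, so the integral is absolutely convergent.

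Third, I would invoke Lemma~\ref{lem:timeintegral:posi} to conclude that the time integral $g_{\sfrak}$ is well-defined and smooth up to and including the horizon, and that \eqref{regular:equa:gs:horizon} holds. With $g_{\sfrak}$ in hand, Lemma~\ref{lemma:asym:formu:gs} supplies the explicit representation \eqref{eq:expression:r2Vgs} of $\rho^{2}\hat{V}(\mu^{-1/2}rg_{\sfrak})$ as a sum of three controlled terms. Taking $\rho\to\infty$: the first term converges to $M\int_{2M}^{\infty}\tilde{H}_{\sfrak}(\Phips)(\tb_0,\rho')\,\di\rho'$; the middle term is $O(\rho^{-1})$ thanks to $H=O(r^{-2})$ and the pointwise bound on $\Phips$; and the third term converges to $-\tfrac{2}{3}\lim_{\rho\to\infty}\rho^{3}\VR\hatPhips|_{\Sigmazero}$ by the standard Abel-type identity
\[
\lim_{\rho\to\infty}\rho^{3}\int_{\rho}^{\infty}2(\rho')^{-1}\VR\hatPhips(\tb_0,\rho')\,\di\rho'
=\tfrac{2}{3}\lim_{\rho\to\infty}\rho^{3}\VR\hatPhips|_{\Sigmazero},
\]
valid whenever the right-hand limit exists. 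Putting these three together gives the finiteness of $\NPCPTI{1}$ together with the formula \eqref{expression:NPCPTI}, which is precisely the content of \eqref{calcu:NPCPTI}.

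There is no real obstacle here: the statement is essentially a repackaging of Corollary~\ref{coroll:asym:gs:Vgs} under the lemma's hypotheses. The only mildly delicate point is the Abel-type passage to the limit in the third term; if one wanted to be careful, one could justify it by L'H\^opital or by an explicit $\epsilon$-argument using that $\rho^{3}\VR\hatPhips|_{\Sigmazero}$ converges to a finite limit as $\rho\to\infty$, so this is a genuinely elementary step.
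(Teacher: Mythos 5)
Your proposal is correct and follows the paper's intended approach: the lemma is a direct consequence of Corollary~\ref{coroll:asym:gs:Vgs}, identity \eqref{calcu:NPCPTI}, once one checks that the hypotheses \eqref{Initial:NPCP:zero} and \eqref{asym:gs:infty} secure the existence of the limit $\lim_{\rho\to\infty}\rho^{3}\VR\hatPhips|_{\Sigmazero}$, the absolute convergence of $\int_{2M}^{\infty}\tilde{H}_{\sfrak}(\Phips)|_{\Sigmazero}\,\di\rho'$ (via \eqref{asym:tildeH:Phips:infinity}--\eqref{asym:tildeH:Phips:horizon}), and the existence of the smooth time integral $g_{\sfrak}$ from Lemma~\ref{lem:timeintegral:posi}. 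The Abel/L'H\^opital limit you single out as the only delicate step is exactly the passage carried out in the proof of Corollary~\ref{coroll:asym:gs:Vgs}, so your argument faithfully reconstructs the paper's reasoning.
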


%%%%%%%%%%%%%%%%%%
\subsection{Control the initial energy of $g_{\sfrak}$ by an initial energy of $\Phips$}
\label{sect:controllingtimeintegralenergy}
%%%%%%%%%%%%%%%%

In this subsection, we shall prove that an initial energy of $g_{\sfrak}$ is bounded by an initial energy of $\Phips$. This is necessary since applying the estimates in Theorem \ref{thm:pricelaw:zeroNP} to the time integral requires a bound for the weighted initial energy of the time integral. 
To be precise, we shall prove the following result.

\begin{prop}
\label{prop:energycompare}
Let $\reg\in \mathbb{N^+}$.
There exists a universal constant $\regl\in \mathbb{N}$ such that
\begin{align}
\label{eq:initialenergycompare}
F^{(1)}(\reg,3-\delta,\tb_0,g_{\sfrak})\lesssim_{\delta,\reg}{}F^{(1)}(\reg+\regl,5-\delta/2,\tb_0,\Psipns).
\end{align}
\end{prop}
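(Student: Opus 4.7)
The plan is to bound each of the four constituent weighted norms that make up $F^{(1)}(\reg,3-\delta,\tb_0,g_{\sfrak})$ by integrating the defining relations of $g_{\sfrak}$ on $\Sigmazero$ and exploiting the one power of $r$ decay that $g_{\sfrak}$ gains over $\Phips$. The central identity is \eqref{regular:equa:gs:horizon},
\begin{align*}
(r-M)^2\Delta^{\half}\partial_\rho((r-M)^{-1}g_{\sfrak})(\tb_0,\rho) = \int_{2M}^{\rho}\tilde{H}_{\sfrak}(\Phips)(\tb_0,\rho')\di\rho',
\end{align*}
which together with $\Lxi g_{\sfrak}=\Phips$ and the boundary condition $g_{\sfrak}(\tb_0,\infty)=0$ reduces every derivative of $g_{\sfrak}$ on $\Sigmazero$ to an algebraic/differential combination of $\Phips$, $\Lxi\Phips$, $\prb\Phips$ and a single radial integration of these objects against explicit weights.

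First, I would dispose of time and angular derivatives by commutation: since $\Lxi g_{\sfrak}=\Phips$, one has $\Lxi^{k+1}g_{\sfrak}=\Lxi^k\Phips$, and $\edthR,\edthR'$ commute with the defining ODE and with $\Lxi$. Thus any norm involving at least one $\Lxi$ or any number of angular derivatives is immediately controlled by the corresponding norm of $\Phips$ (which is part of the RHS). The remaining content of the proposition is the bound for purely radial derivatives of $g_{\sfrak}$ and for $g_{\sfrak}$ itself. For the base case, I would apply the representations \eqref{expression:gs:copy} and \eqref{eq:expression:r2Vgs} of Lemma~\ref{lemma:asym:formu:gs}, which express $g_{\sfrak}(\tb_0,\rho)$ and $\rho^2\VR(\mu^{-\half}\rho g_{\sfrak})(\tb_0,\rho)$ as explicit linear combinations of $\int_{2M}^{\rho}\tilde{H}_{\sfrak}(\Phips)\di\rho'$, $\int_{\rho}^{\infty}(\rho')^{-2}\tilde{H}_{\sfrak}(\Phips)\di\rho'$, and $\rho^3 H\Phips$, with coefficients of size $r^{-1}$, $O(1)$, respectively. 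Weighting each of these by a small positive power of $r$ inside the $\rho'$-integral and applying Cauchy--Schwarz converts them to weighted $L^2$-norms of $\tilde{H}_{\sfrak}(\Phips)$; because $\tilde{H}_{\sfrak}(\Phips)$ near infinity is of schematic size $r^2\cdot(H\Lxi\Phips+\prb\Phips + r^{-1}\Phips)$ with $H=O(r^{-2})$, this weighted Cauchy--Schwarz produces exactly an $r^2$-size gain on $r^{-2}\abs{g_{\sfrak}}^2$ and on $r^{1-\delta}\abs{rV\PhipsHigh{1}_g}^2$ that accounts for the jump from the target weight $3-\delta$ to the source weight $5-\delta/2$ (the $\delta/2$ slack being used in a Hardy inequality, Lemma~\ref{lem:HardyIneq}, to swallow $\prb$ versus zero-th order terms).

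For higher orders, I would differentiate \eqref{regular:equa:gs:horizon} in $\rho$ iteratively and, whenever a second or higher pure $\prb$ derivative of $g_{\sfrak}$ appears, trade it for $\Lxi$, $\prb$ and $\edthR\edthR'$ derivatives of lower order via the elliptic equation \eqref{eq:prbprbPhins:ell=1:impro:copy}, where the angular term is harmless since $g_{\sfrak}$ is supported on $\ell=1$ and $\edthR\edthR'g_{\sfrak}$ is a constant multiple of $g_{\sfrak}$. In the region $\{\rho\leq 4M\}$ where the $\PhipsHigh{1}_g$-norms in $F^{(1)}$ are absent, the smoothness of $g_{\sfrak}$ on $[2M,\infty)$ established in Lemma~\ref{lem:timeintegral:posi}, together with the horizon bound \eqref{asym:tildeH:Phips:horizon} and a Sobolev embedding on the compact radial range, gives an $L^\infty$ bound for $g_{\sfrak}$ and its low-order derivatives by the $W^{\reg+\regl}_{*}$-norms of $\Phips$ and hence by the RHS. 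The $\Psins$ contribution to $F^{(1)}(\reg,3-\delta,\tb_0,g_\sfrak)$ enters only through the (unweighted) basic energy $F(\reg,0,\tb_0,g_\sfrak)$, which we estimate from the Dirac-type relation $\Lxi((r-M)Y g_\sfrak) = (r-M)Y\Phips-\Phips+\Lxi g_\sfrak+\cdots$ using the same ODE analysis for the spin $-\sfrak$ time integral implicitly defined by the first-order Dirac system.

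The main obstacle will be the bookkeeping of weights in the Cauchy--Schwarz/Hardy step so as to land on exactly $5-\delta/2$ and no worse. Two points need care: at infinity, the gain comes cleanly from the $r^{-2}$ prefactor in $H$ and $\partial_r(\Delta^{\half}H)$ appearing in $\tilde{H}_{\sfrak}$, but one must not sacrifice this factor when differentiating in $\rho$; at the horizon, $\tilde{H}_{\sfrak}(\Phips)$ vanishes only like $\mu^{\half}$ as in \eqref{asym:tildeH:Phips:horizon}, and the factor $(r-M)^{-2}\Delta^{-\half}$ in the ODE must be integrated against this near-horizon profile without losing a derivative. The latter issue also controls $\hatPhi_g^{(1)}=\mu^{-\half}r g_{\sfrak}$, whose naive singularity at $\rho=2M$ is irrelevant because the $\PhipsHigh{1}_g$-norms are integrated only over $\Sigmazero^{\geq 4M}$.
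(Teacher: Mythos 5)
Your proposal follows essentially the same route as the paper's proof: it rests on the representation formulas of Lemma \ref{lemma:asym:formu:gs} and the defining ODE \eqref{regular:equa:gs:horizon}, converts radial $\rho'$-integrals of $\tilde{H}_{\sfrak}(\Phips)$ into weighted $L^2$-norms via Cauchy--Schwarz/Minkowski and Hardy, iterates radially (whether by differentiating the ODE directly as the paper does, or by trading via the elliptic equation as you suggest) for higher-order $\prb$-derivatives, and commutes with $\Lxi$ using $\Lxi g_{\sfrak}=\Phips$ to dispose of time derivatives. One small clarification: $F^{(1)}(\reg,3-\delta,\tb_0,g_{\sfrak})$ is shorthand for $F^{(1)}(\reg,3-\delta,\tb_0,\Psi_{\sfrak,TI})$ with $\Psi_{\sfrak,TI}=r g_{\sfrak}$, a spin-$\sfrak$ energy only, so the $\Psins$/spin-$(-\sfrak)$-time-integral argument you added at the end is not part of this proposition (the paper controls $F^{(1)}$ of $\Psi_{-\sfrak,TI}$ separately, at \eqref{eq:energyofgpmsfrak:3-delta}, after this result).
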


\begin{proof}
From  \eqref{equ:tildeH}, we have for $\rb$ large that
\begin{align}
\label{eq:esti:tildeHsPhis:away}
\abs{\tilde{H}_{\sfrak}(\Phips)}\lesssim{}&\mu \rb \abs{\VR \PhipsHigh{1}}
+\rb^{-2}\abs{\PhipsHigh{1}}
+\rb^{-1}\abs{\prb \PhipsHigh{1}}\notag\\
\lesssim{}&\rb^{-2+\frac{\delta}{4}}
\abs{\mu\rb^{3-\frac{\delta}{4}}\VR \PhipsHigh{1}}
+\rb^{-2}(\abs{\PhipsHigh{1}}
+\rb\abs{\prb \PhipsHigh{1}}),
\end{align}
thus
\begin{subequations}
\begin{align}
\int_{2M }^\infty \abs{\tilde{H}_{\sfrak}(\Phips)
(\tb_0,\rho')}\di\rho'\lesssim{}&
\sup_{\Sigmazero}\abs{\mu^{\frac{3}{2}}r^{3-\frac{\delta}{4}}\VR \PhipsHigh{1}}
+\norm{\Psips}_{W_{-2}^{1}(\Sigmazero)},\\
\int_{\rb}^\infty \abs{\tilde{H}_{\sfrak}(\Phips)
(\tb_0,\rho')}\di\rho'\lesssim{}&
\rb^{-1+\frac{\delta}{4}}\Big(\sup_{\Sigmazero^{\geq \rb}}\abs{\mu^{\frac{3}{2}}r^{3-\frac{\delta}{4}}\VR \PhipsHigh{1}}
+\norm{\Psips}_{W_{-1-\delta/2}^{1}(\Sigmazero^{\geq \rb})}\Big),\\
\rb^3\int_{\rho}^{\infty} (\rb')^{-2}\abs{\tilde{H}_{\sfrak}(\Phips)
(\tb_0,\rho')}\di\rho'
\lesssim{}&\rb^3\int_{\rho}^{\infty}
(\rb')^{-4}\big((\rb')^3 \abs{\VR \PhipsHigh{1}}
+\abs{\Psips}
+\rb'\abs{\prb \Psips}\big)\di\rho'\notag\\
\lesssim{}&
\rb^3\int_{\rho}^{\infty}
(\rb')^{-4}(\rb')^3 \abs{\VR \PhipsHigh{1}}\di \rb'
+\rb^3\int_{\rho}^{\infty}
(\rb')^{-4}\big(
\abs{\Psips}
+r\abs{\prb \Psips}\big)\di\rho'.
\end{align}
\end{subequations}
Therefore, applying these estimates to the expressions \eqref{expression:gsandr2gs:copy} and using the Minkowski integral inequality, one can obtain
\begin{subequations}
\begin{align}
&\norm{\rb g_{\sfrak}}^2_{W_{-2}^{0}(\Sigmazero)}
\lesssim{}\norm{\rho^2\hat{V}\PhipsHigh{1}}^2_{W_{1-\frac{\delta}{2}}^{0}(\Sigmazero^{\geq 4M})}
+\norm{\Psips}^2_{W_{-2}^{1}(\Sigmazero)}
+\sup_{\Sigmazero}\abs{\mu^{\frac{3}{2}}r^{3-\frac{\delta}{4}}\VR \PhipsHigh{1}}^2,\\
&\norm{\rho^2\hat{V}(\mu^{-\half}\rb g_{\sfrak})}^2_{W_{-1-\delta}^{0}(\Sigmazero^{\geq 4M})}\lesssim_{\delta}{}\norm{\rho^2\hat{V}\PhipsHigh{1}}^2_{W_{1-\frac{\delta}{2}}^{0}(\Sigmazero^{\geq 4M})}
+\norm{\Psips}^2_{W_{-1-\delta/2}^{1}(\Sigmazero)}
+\sup_{\Sigmazero}\abs{\mu^{\frac{3}{2}}r^{3-\frac{\delta}{4}}\VR \PhipsHigh{1}}^2.
\end{align}
\end{subequations}
A simple application of Hardy's inequality allows us to bound
\begin{align}
\norm{\Psips}^2_{W_{-1-\delta/2}^{1}(\Sigmazero)}
+\sup_{\Sigmazero}\abs{\mu^{\frac{3}{2}}r^{3-\frac{\delta}{4}}\VR \PhipsHigh{1}}^2
\lesssim_{\delta}{}&
\norm{\rho^2\hat{V}\PhipsHigh{1}}^2_{W_{1-\frac{\delta}{2}}^{\regl}(\Sigmazero^{\geq 4M})}+\norm{\Psips}^2_{W_{-2}^{\regl}(\Sigmazero)}
\end{align}
for some $\regl>0$, therefore,
\begin{align}
\norm{\rb g_{\sfrak}}^2_{W_{-2}^{0}(\Sigmazero)}
+\norm{\rho^2\hat{V}(\mu^{-\half}\rb g_{\sfrak})}^2_{W_{-1-\delta}^{0}(\Sigmazero^{\geq 4M})}
\lesssim{}&
\norm{\rho^2\hat{V}\PhipsHigh{1}}^2_{W_{1-\frac{\delta}{2}}^{\regl}(\Sigmazero^{\geq 4M})}+\norm{\Psips}^2_{W_{-2}^{\regl}(\Sigmazero)}.
\end{align}

By applying further the differential operator $\rb \prb$, one can argue in the same way as proving \eqref{eq:r2VRrgsfrak:explicit} that for any $i\in \mathbb{N}$,
\begin{align}
\label{expression:highorder:r2VRrgs}
(\rb\prb)^i(\rho^2\hat{V}(\mu^{-\half}\rb g_{\sfrak}))
={}&c_1(\rb+O(1))\int_{\rb}^\infty
\tilde{H}_{\sfrak}(\Phips)(\tb_0,\rho')\di\rho'
+O(1)\int_{2M }^\infty
\tilde{H}_{\sfrak}(\Phips)(\tb_0,\rho')\di\rho'\notag\\
&-c_3\rb^3(1+O(\rb^{-1}))\int_{\rho}^{\infty}( (\rb')^{-2}+O((\rb')^{-3})))\tilde{H}_{\sfrak}(\Phips)
(\tb_0,\rho')\di\rho'\notag\\
&
+\sum_{j=0}^{i-1}(c_{2,j}+O(\rb^{-1})) (\rb \prb)^j(\rb^2\tilde{H}_{\sfrak}(\Phips)),
\end{align}
where $c_1$, $c_2$, and $\{c_{3,j}\}_{j=0,1,\ldots, i-1}$ are finite constants depending only on $i$. Using the estimate \eqref{eq:esti:tildeHsPhis:away}, we have for the last line that
\begin{align}
\bigg\|\sum_{j=0}^{i-1}(c_{2,j}+O(\rb^{-1})) (\rb \prb)^j(\rb^2\tilde{H}_{\sfrak}(\Phips))
\bigg\|^2_{W_{-1-\delta}^{0}(\Sigmazero^{\geq 4M})}
\lesssim_{\delta,i}{}&
\norm{\rho^2\hat{V}\PhipsHigh{1}}^2_{W_{1-\frac{\delta}{2}}^{i}(\Sigmazero^{\geq 4M})}
+\norm{\Psips}^2_{W_{-2}^{i}(\Sigmazero)}.
\end{align}
Meanwhile, the first two lines of the LHS of \eqref{expression:highorder:r2VRrgs} are estimated in the same way, thus, for any $i\geq 1$,
\begin{align}
\sum_{j=0}^i\norm{(\rb\prb)^j(\rho^2\hat{V}(\mu^{-\half}\rb g_{\sfrak}))}^2_{W_{-1-\delta}^{0}(\Sigmazero^{\geq 4M})}\lesssim_{\delta,i}{}&
\norm{\rho^2\hat{V}\PhipsHigh{1}}^2_{W_{1-\frac{\delta}{2}}^{i+\regl}(\Sigmazero^{\geq 4M})}
+\norm{\Psips}_{W_{-1-\delta/2}^{i+\regl}(\Sigmazero)}.
\end{align}

By taking more $\prb$ derivatives on equation \eqref{regular:equa:partialgs:horizon}, we can bound $\prb^{i}g_{\sfrak}$ near horizon by $\Psips$ for any $i\in \mathbb{N}$, that is, for any finite $R>2M$,
\begin{align}
\sum_{j=0}^i \norm{(\rb\prb)^j g_{\sfrak}}^2_{W_{0}^{0}(\Sigmazero^{\leq R})}\lesssim_{R,i}{}&
\norm{\Psips}_{W_{-2}^{1+i}(\Sigmazero^{\leq R})}.
\end{align}
In total, we have thus for any $i\in \mathbb{N}^+$ that there exists a constant $\regl>0$ such that
\begin{align}
\label{eq:initialenergycompare:v1}
\hspace{6ex}&\hspace{-6ex}
\sum_{j=0}^i \norm{(\rb\prb)^j (\rb g_{\sfrak})}^2_{W_{-2}^{0}(\Sigmazero)}
+\sum_{j=0}^i\norm{(\rb\prb)^j(\rho^2\hat{V}(\mu^{-\half}\rb g_{\sfrak}))}^2_{W_{-1-\delta}^{0}(\Sigmazero^{\geq 4M})}\notag\\
\lesssim_{i,\delta}{}&
\norm{\rho^2\hat{V}\PhipsHigh{1}}^2_{W_{1-\frac{\delta}{2}}^{i+\regl}(\Sigmazero^{\geq 4M})}
+\norm{\Psips}_{W_{-1-\delta/2}^{i+\regl}(\Sigmazero)}.
\end{align}
In the end, by making use of $\Lxi g_{\sfrak}=\Phips$ and this estimate \eqref{eq:initialenergycompare:v1}, we achieve for any $\reg\in\mathbb{N}$,
\begin{align}
\norm{\rb g_{\sfrak}}^2_{W_{-2}^{\reg}(\Sigmazero)}
+\norm{\rho^2\hat{V}(\mu^{-\half}\rb g_{\sfrak})}^2_{W_{-1-\delta}^{\reg}(\Sigmazero^{\geq 4M})}
\lesssim_{\reg,\delta}{}&
\norm{\rho^2\hat{V}\PhipsHigh{1}}^2_{W_{1-\frac{\delta}{2}}^{\reg+\regl}(\Sigmazero^{\geq 4M})}
+\norm{\Psips}_{W_{-1-\delta/2}^{\reg+\regl}(\Sigmazero)}.
\end{align}
This is precisely the estimate \eqref{eq:initialenergycompare}.
\end{proof}

%%%%%%%%%%%%%%%%%%5
\subsection{Proof of Theorem \ref{thm:pricelaw:zeroNP}}
\label{sect:proofoftheorem2:NVNP}
%%%%%%%%%%%%%%%%%%%5

We prove Theorem \ref{thm:pricelaw:zeroNP} in this subsection. In Section \ref{sect:pfofThm2:ell1m} we derive the asymptotics for $\ell=1$ mode, and in Section \ref{sect:pfofThm2:ell2m}, we combine the asymptotics for $\ell=1$ mode with the almost sharp decay estimates for $\ell\geq 2$ modes in Theorem \ref{thm:almostPrice} (or Proposition \ref{prop:almostpricelaw:ellgeq2}) to conclude Theorem \ref{thm:pricelaw:zeroNP}.

%%%%%%%%%%%%%%%
\subsubsection{Estimates for $\ell=1$ mode}
\label{sect:pfofThm2:ell1m}
%%%%%%%%%%%%%%%%
We consider only a fixed $(m,\ell=1)$ mode of the spin $\pm \half$ components first.

To begin with, we need the following estimate for the time integral on the initial hypersurface $\Sigmazero$.

\begin{prop}
Let $j\in \mathbb{N}$. Assume on $\Sigmazero$ that there are constants  $\beta\in(0,\half)$, $\tilde{D}_0\geq 0$ and $\tilde{D}_1$ such that for all $0\leq i\leq j$ and $r\geq R$,
\begin{align}\label{assume:VhatPhips:zeroNPC}
\big|\rb^{i}\prb^i\big(\VR\hatPhips({\tb_0},\rho)
-{\tilde{D}_1}{\rho^{-3}}\big)\big|\lesssim \rho^{-3-\beta}\tilde{D}_0,
\end{align}
and assume further  for a suitably small $\delta\in (0,\half)$ and a suitably large $\regl=\regl(j)$ that
\begin{align}
F^{(1)}(\regl,5-\delta/2,\tb_0,\Psipns)<\infty.
\end{align}
Then it holds on $\Sigmazero$ that for all $0\leq i\leq j+1$ and $r\geq R$,
\begin{align}
\label{eq:assump:Vhatgsfrak:zeroNPC:fromVhatPhips}
|\rb^i\prb^i(\hat{V}(\mu^{-\half}rg_{\sfrak})-\NPCPTI{1}\rho^{-2})|\lesssim
\rho^{-2-\beta}\big(\tilde{D}_0+\abs{\tilde{D}_1}+(F^{(1)}(\regl,5-\delta/2,\tb_0,\Psipns))^{\half}\big),
\end{align}
where $\NPCPTI{1}$ is given by \eqref{expression:NPCPTI}.
\end{prop}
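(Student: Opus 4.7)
The plan is to leverage the explicit formula \eqref{eq:expression:r2Vgs} for $\rho^2\hat{V}(\mu^{-\half}rg_\sfrak)$ together with the formula \eqref{expression:NPCPTI} for $\NPCPTI{1}$, so that the difference on the left-hand side of \eqref{eq:assump:Vhatgsfrak:zeroNPC:fromVhatPhips} is reduced to a controlled, finite collection of integrals which can be estimated directly from the hypothesis \eqref{assume:VhatPhips:zeroNPC}.

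First I would treat the $i=0$ case. Combining \eqref{eq:expression:r2Vgs} with \eqref{expression:NPCPTI} (and recalling that $H\Phips = O(\rho^{-3})$ so that $\rho^3 H\Phips$ is subleading when $\VR\hatPhips$ has the decay rate assumed), one obtains
\[
\rho^2\hat{V}(\mu^{-\half}r g_\sfrak) - \NPCPTI{1} = \tfrac{2}{3}\tilde{D}_1 - \rho^3\int_\rho^\infty 2(\rho')^{-1}\VR\hatPhips(\tb_0,\rho')\,\di\rho' + \mathcal{R}(\rho),
\]
where $\mathcal{R}(\rho)$ collects the $O(\rho^{-1})$ remainders in \eqref{eq:expression:r2Vgs} evaluated against $\tilde{H}_\sfrak(\Phips)$ and $H\Phips$, and is $O(\rho^{-1})(\tilde{D}_0+\abs{\tilde{D}_1}+(F^{(1)}(\regl,5-\delta/2,\tb_0,\Psipns))^{\half})$ after invoking the bound \eqref{eq:pointwisenorm:tildeHsPhips} and the pointwise control on $\Phips$ furnished by Theorem \ref{thm:almostPrice}. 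Plugging $\VR\hatPhips=\tilde{D}_1(\rho')^{-3}+O((\rho')^{-3-\beta})\tilde{D}_0$ into the remaining integral and computing $2\rho^3\int_\rho^\infty(\rho')^{-4}\di\rho'=\tfrac{2}{3}$ gives the cancellation of $\tfrac{2}{3}\tilde{D}_1$, leaving the tail contribution of size $O(\rho^{-\beta})\tilde{D}_0$. This establishes the $i=0$ case.

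For the higher-order cases $1\leq i\leq j+1$, I would apply $(\rho\partial_\rho)^i$ to both sides of the identity above. Derivatives of the $\rho$-prefactors simply rescale existing terms, while derivatives falling on $\int_\rho^\infty(\cdot)\di\rho'$ produce, via the fundamental theorem of calculus, boundary contributions at $\rho$ involving lower-order derivatives of $\tilde{H}_\sfrak(\Phips)$ and of $(\rho')^{-1}\VR\hatPhips$. The latter are controlled directly by the hypothesis \eqref{assume:VhatPhips:zeroNPC}, since each $\rho\partial_\rho$ falling on $(\rho')^{-1}\VR\hatPhips$ is estimated by $\rho^{-3-\beta}\tilde{D}_0$ (together with an $O(\rho^{-3})\tilde{D}_1$ exact piece whose contributions telescope against the $\tilde{D}_1$-corrections produced by the $\rho$-prefactor derivatives, exactly as in the $i=0$ case). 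For the derivatives of $\tilde{H}_\sfrak(\Phips)$ appearing in the boundary terms, one expands using the form \eqref{equ:tildeH} and estimates via the assumption \eqref{assume:VhatPhips:zeroNPC} (which, after one integration along $\Sigmazero$, controls $\rho^i\partial_\rho^i\hatPhips$), and via the Sobolev-type pointwise control on $\rho^i\partial_\rho^i(\Lxi\Phips)$ that follows from the energy bound $F^{(1)}(\regl,5-\delta/2,\tb_0,\Psipns)$ (noting that $\Lxi$ commutes with all operators in sight and the initial data contains $\Lxi\Phips$).

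The main obstacle is a bookkeeping one: each application of $\rho\partial_\rho$ costs one derivative of $\tilde{H}_\sfrak(\Phips)$, and $\tilde H_\sfrak$ itself contains one $\Lxi$-derivative; thus, to reach $i=j+1$ derivatives of $\hat{V}(\mu^{-\half}rg_\sfrak)$ one needs control on up to $j+1$ radial derivatives of $\Phips$ and its $\Lxi$-derivative on $\Sigmazero$. The derivative loss $\reg\mapsto\reg+\regl(j)$ in the energy norm is chosen precisely to absorb this, so with $\regl$ sufficiently large the required pointwise bounds on $\rho^i\partial_\rho^i\Phips$ and $\rho^i\partial_\rho^i(\Lxi\Phips)$ follow from standard Sobolev embedding on $\Sigmazero$ applied to the finite initial energy. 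Combining the controlled boundary contributions with the explicit cancellation against the $\tilde{D}_1$-corrections in the prefactor derivatives yields \eqref{eq:assump:Vhatgsfrak:zeroNPC:fromVhatPhips} for all $0\leq i\leq j+1$.
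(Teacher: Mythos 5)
Your proposal is correct but takes a genuinely different route from the paper for $i\geq 1$. The $i=0$ case you do exactly as the paper does (via \eqref{asym:formu:r-2:V:gs}, the hypothesis, and the explicit cancellation of $\tfrac{2}{3}\tilde D_1$). For the higher orders, however, you propose to differentiate the explicit integral representation \eqref{eq:expression:r2Vgs} term by term and verify that the FTC boundary contributions of $\rho^2\tilde H_\sfrak(\Phips)(\rho)$ telescope against the $\tilde D_1$-pieces produced by differentiating the $\rho$-prefactors. The paper instead goes back to the wave equation $Y(\mu^{3/2}r^{-1}\hat V g_\sfrak^{(1)})=-6Mr^{-3}g_\sfrak$ and, after writing $Y=-\prb+\partial_r h\,\Lxi$ and $\Lxi g_\sfrak^{(1)}=\hatPhips$, converts it into the first-order $\rho$-ODE \eqref{equ:partial:V:gs1} for $\hat V g_\sfrak^{(1)}$; each new radial derivative then comes algebraically from differentiating that ODE, and a clean induction in $j$ closes the argument. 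Your route has the minor advantage that \eqref{eq:expression:r2Vgs} does not involve $g_\sfrak$ itself, so you do not need the auxiliary bound \eqref{highderivative:gs} on $\prb^i g_\sfrak$; the paper's ODE route has the advantage that the $\tilde D_1$-cancellations occur automatically in the ODE structure rather than having to be tracked at each order of differentiation of a multi-term integral formula. Both routes rest on the same substantive inputs: pointwise bounds on $\rho^i\prb^i\Phips$ and $\rho^i\prb^i(\Lxi\Phips)$ on $\Sigmazero$ from the energy $F^{(1)}(\regl,5-\delta/2,\tb_0,\Psipns)$ via Sobolev, $|\partial_\rho^i H|\lesssim\rho^{-2-i}$, and the fact that the $O(\cdot)$-coefficients in \eqref{eq:expression:r2Vgs} are derivative-stable since they are Taylor expansions of smooth metric factors. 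One point in your sketch worth tightening: control on $\rho^i\prb^i\hatPhips$ is not "one integration" away from \eqref{assume:VhatPhips:zeroNPC} alone — since $\VR\hatPhips=\prb\hatPhips+H\Lxi\hatPhips$ on $\Sigmatb$ you must first discard the $H\Lxi\hatPhips$ term using the energy before reading off $\prb\hatPhips$, and then integrate from infinity for $\hatPhips$ itself; this is the same Sobolev input the paper uses when it states $|\prb^i\Phips|\lesssim\rho^{-1-i}D$.
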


\begin{proof}
For simplicity, denote $D=\tilde{D}_0+\abs{\tilde{D}_1}+(F^{(1)}(\regl,5-\delta/2,\tb_0,\Psipns))^{\half}$.
We first prove the $j=0$ case. The $i=0$ case is manifest from \eqref{asym:formu:r-2:V:gs}, the assumption \eqref{assume:VhatPhips:zeroNPC} and the definition \eqref{expression:NPCPTI} of $\NPCPTI{1}$, and it remains to show the $i=1$ case.
By \eqref{assume:VhatPhips:zeroNPC} and Lemma \ref{lem:timeintegral:posi}, the time integral $g_{\sfrak}$ of $\Phips$ satisfies
\begin{align}\label{Lxigs}
\Lxi g_{\sfrak}=\Phips
\end{align}
and
\begin{align}
\label{eq:gsHigh1:l=1:v1:gs}
Y(\mu^{\frac{3}{2}} r^{-1}\hat{V}g_{\sfrak}^{(1)}) ={}-{6M} r^{-3}g_{\sfrak},
\end{align}
where $g_{\sfrak}^{(1)}=\mu^{-\half}rg_{\sfrak}$. The derivation of equation \eqref{eq:gsHigh1:l=1:v1:gs} comes from the fact that $g_{\sfrak}$ and $\PhipsHigh{1}$ satisfy the same equation \eqref{eq:wave:PhipsHigh1:simple}. From Corollary \ref{coroll:asym:gs:Vgs}, we have for the N-P constant $\NPCPTI{1}$ of $g_{\sfrak}$ that
\begin{align}
\NPCPTI{1}=M\int_{2M}^{\infty}\tilde{H}_{\sfrak}(\Phips)(\tau_0,\rho)\di \rho-\frac{2}{3}\tilde{D}_1.
\end{align}
Using $Y=-\partial_\rho+\partial_rh\Lxi$ and equations  \eqref{Lxigs} and \eqref{eq:gsHigh1:l=1:v1:gs}, we obtain
\begin{align}\label{equ:partial:V:gs1}
-\partial_\rho(\hat{V}g_{\sfrak}^{(1)})+(\rho^{-1}-3M\mu^{-1}\rho^{-2})\hat{V}g_{\sfrak}^{(1)}
+\partial_rh\VR\hatPhips=-6M\mu^{-\frac{3}{2}}\rho^{-2}g_{\sfrak}.
\end{align}
Furthermore, substituting  \eqref{asym:formu:r-2:gs}, \eqref{asym:formu:r-2:V:gs} and $|H|=|2\mu^{-1}-\partial_rh|\lesssim \rho^{-2}$ into \eqref{equ:partial:V:gs1}, we get
\begin{align}
|-\partial_\rho(\hat{V}g_{\sfrak}^{(1)})+\rho^{-1}\hat{V}g_{\sfrak}^{(1)}
+2\VR\hatPhips+6M\rho^{-2}g_{\sfrak}|\lesssim \rho^{-4}D.
\end{align}
On the other hand, by \eqref{assume:VhatPhips:zeroNPC}, we have
\begin{align}
\bigg|\int_\rho^{\infty}2({\rho'})^{-1}\VR\hatPhips(\tb_0,\rho')\di\rho'
-\frac{2c}{3}\rho^{-3}\bigg|
\lesssim \rho^{-3-\beta}\tilde{D}_0.
\end{align}
Using \eqref{asym:formu:r-2:gs}, \eqref{asym:formu:r-2:V:gs} and \eqref{assume:VhatPhips:zeroNPC} again gives
\begin{align}
\hspace{2ex}&\hspace{-2ex}-\partial_\rho(\hat{V}g_{\sfrak}^{(1)})+\rho^{-1}\hat{V}g_{\sfrak}^{(1)}
+2\VR\hatPhips+6M\rho^{-2}g_{\sfrak}\notag\\
&=-\partial_\rho(\hat{V}g_{\sfrak}^{(1)}-\NPCPTI{1}\rho^{-2})+2\NPCPTI{1}\rho^{-3}
+M\rho^{-3}\int_{2M}^{\infty}\tilde{H}_{\sfrak}(\Phips)(\tb_0,\rho')\di\rho'-\frac{2c}{3}\rho^{-3}
+O(\rho^{-3-\beta})D\notag\\
&\quad\ +2c\rho^{-3}+O(\rho^{-3-\beta})
-3M\int_{2M}^{\infty}\tilde{H}_{\sfrak}(\Phips)(\tb_0,\rho')\di\rho'+O(\rho^{-4})D\notag\\
&=-\partial_\rho(\hat{V}g_{\sfrak}^{(1)}-\NPCPTI{1}\rho^{-2})+O(\rho^{-3-\beta})D.
\end{align}
Hence, we have proved that
\begin{align}
|\partial_\rho(\hat{V}g_{\sfrak}^{(1)}-\NPCPTI{1}\rho^{-2})|\lesssim\rho^{-3-\beta}D.
\end{align}

For $j\geq 1$ cases,
we  prove by induction. Assume the statement hold for $j=j_0-1$, and to complete the induction, it suffices to prove \eqref{eq:assump:Vhatgsfrak:zeroNPC:fromVhatPhips} for $i=j_0+1$ under the assumption that \eqref{assume:VhatPhips:zeroNPC} holds for all $0\leq i\leq j_0$.
To be more precise, under the assumption that
\begin{align}\label{highderivative:V:hatPhips}
|\partial_\rho^i(\VR\hatPhips(\tau_0,\rho)-\rb^{-3}{\tilde{D}_1})|\lesssim \rho^{-3-i-\beta}\tilde{D}_0,\quad \text{for all}\ 0\leq i\leq j
\end{align}
together with the estimates followed from inductive hypothesis
\begin{align}\label{highderivative:V:gs1}
|\partial_\rho^{i}(\hat{V}g_{\sfrak}^{(1)}(\tau_0,\rho)
-\NPCPTI{1}\rho^{-2})|\lesssim\rho^{-2-i-\beta}D,\quad \text{for all }\ 0\leq i\leq j,
\end{align}
it suffices to prove \eqref{highderivative:V:gs1} for $i=j+1$ to close the induction.
From \eqref{equ:tildeH}, we have
\begin{align}
\tilde{H}_{\sfrak}(\Phips)=(r-M)[(2-\mu H)(\VR\hatPhips-\rho^{-3}{\tilde{D}_1})+\tilde{D}_1(2-\mu H)\rho^{-3}-\mu^{\half}r\partial_\rho(H\Phips)
-2M\mu^{-\half}r^{-1}H\Phips].
\end{align}
Applying $\partial_\rho^j$ to this equation, and using $|\partial_\rho^i H|\lesssim \rho^{-2-i}$ for $0\leq i\leq j$,  $|\partial_\rho^i \Phips|\lesssim \rho^{-1-i}D$ for $0\leq i\leq j+1$, and
\eqref{highderivative:V:hatPhips}, we have
\begin{align}
|\partial_\rho^i \tilde{H}_{\sfrak}(\Phips)|\lesssim \rho^{-2-i}D, \quad \text{for all }\ 0\leq i\leq j.
\end{align}
Furthermore, from \eqref{equ:gs:Vgs1}, one has
\begin{align}\begin{split}
\hspace{4ex}&\hspace{-4ex}\big[-\mu^{-\frac{3}{2}}M\rho +\mu^{-\half}(\rb-M)^{-1}\rb^2(2\rb-M) \big] \bigg(g_{\sfrak}
+\half\rho^{-1}\int_{2M}^{\infty}
\tilde{H}_{\sfrak}(\Phips)(\tb_0,\rho')\di\rho'\bigg)\\
={}&\rho^2(\hat{V}g_{\sfrak}^{(1)}-\NPCPTI{1}\rho^{-2})
+\NPCPTI{1}
-\mu^{-\half}\rho^3H\Phips\\
&+\half\rho^{-1}\big[-\mu^{-\frac{3}{2}}M\rho +\mu^{-\half}(\rb-M)^{-1}\rb^2(2\rb-M) \big]
\int_{2M}^{\infty}\tilde{H}_{\sfrak}(\Phips)(\tb_0,\rho')\di\rho',
\end{split}\end{align}
thus we achieve
\begin{align}\label{highderivative:gs}
\bigg|\partial_\rho^i\bigg(g_{\sfrak}
+\half\rho^{-1}\int_{2M}^{\infty}\tilde{H}_{\sfrak}(\Phips)(\tb_0,\rho')\di\rho'\bigg)\bigg|
\lesssim \rho^{-2-i}D, \quad \text{for all }\ 0\leq i\leq j.
\end{align}

Last, we rewrite \eqref{equ:partial:V:gs1} as
\begin{align}\begin{split}
\hspace{2ex}&\hspace{-2ex}-\partial_\rho(\hat{V}g_{\sfrak}^{(1)}-\NPCPTI{1}\rho^{-2})
+(\rho^{-1}-3M\mu^{-1}\rho^{-2})(\hat{V}g_{\sfrak}^{(1)}-\NPCPTI{1}\rho^{-2})\\
\hspace{2ex}&\hspace{-2ex}
+\partial_rh(\VR\hatPhips-\tilde{D}_1\rho^{-3})
+6M\mu^{-\frac{3}{2}}\rho^{-2}\bigg(g_{\sfrak}
+\half\rho^{-1}\int_{2M}^{\infty}\tilde{H}_{\sfrak}(\Phips)(\tb_0,\rho')\di\rho'\bigg)\\
=&-2\NPCPTI{1}\rho^{-3}-\NPCPTI{1}(\rho^{-3}-3M\mu^{-1}\rho^{-4})
-\tilde{D}_1\partial_rh\rho^{-3}
+3M\mu^{-\frac{3}{2}}\rho^{-3}\int_{2M}^{\infty}\tilde{H}_{\sfrak}(\Phips)(\tb_0,\rho')\di\rho'.
\end{split}\end{align}
Applying $\partial_\rho^{j+1}$ to the above equation, and by \eqref{highderivative:V:hatPhips}, \eqref{highderivative:V:gs1} and \eqref{highderivative:gs}, this justifies the estimate \eqref{highderivative:V:gs1} for $i=j+1$ and finishes the proof.
\end{proof}

We can now turn to the full $\ell=1$ mode. One can uniquely define a scalar function $g_{-\sfrak}$ by a Dirac system from $g_{\sfrak}$
\begin{subequations}\label{eq:Dirac:TMEscalarTI}
\begin{align}
 g_{\sfrak}={}&
(\Delta^{\Half}\VR)(\Delta^{\Half}g_{-\sfrak}),\\
- g_{-\sfrak}={}&
Yg_{\sfrak}.
\end{align}
\end{subequations}
Then the scalar $\psins$ defined by $\psins=\Lxi g_{-\sfrak}$ and the scalar $\psips=\Phips=\Lxi g_{\sfrak}$ solve the Dirac equations \eqref{eq:Dirac:TMEscalar}.
As a result, by defining $\varphi_{\sfrak,TI}=(r-M)^{-1}g_{\sfrak}$ and $\psi_{-\sfrak, TI}=g_{-\sfrak}$, where the subscript $TI$ means they are defined by the time integral of the spin $\pm \half$ components, Theorem \ref{thm:pricelaw:nonzeroNP} applies to $(g_{\sfrak},g_{-\sfrak})$ and yields that for a suitably small $\delta$,
there exists an $\epsilon>0$ and a $\regl=\regl(j)>0$ such that
\begin{subequations}
\label{eq:Pricelaw:zero:gs}
\begin{align}
\hspace{6ex}&\hspace{-6ex}\bigg|\Lxi^j \varphi_{\sfrak,TI}-c_{\sfrak,j}v^{-2}\tb^{-1-j}
\sum_{m=\pm\half}\NPCPTI{1}(m,\ell=1)
Y_{m,\ell=1}^{\sfrak}(\cos\theta)e^{im\pb}\bigg|
\notag\\
\lesssim_{j,\delta} {}& v^{-2}\tb^{-1-j-\epsilon}
\Big[
(F^{(1)}(\regl,5-\delta/2,\tb_0,\Psipns))^{\half} +\sum_{m=\pm\half}\abs{\NPCPTI{1}(m,\ell=1)}+ \tilde{D}_0+\abs{\tilde{D}_1}\Big], \\
\hspace{6ex}&\hspace{-6ex}\bigg|\Lxi^j \psi_{-\sfrak,TI}
-c_{-\sfrak,j}v^{-1}\tb^{-2-j}
\sum_{m=\pm\half}\NPCPTI{1}(m,\ell=1)
Y_{m,\ell=1}^{-\sfrak}(\cos\theta)e^{im\pb}\bigg|\notag\\
\lesssim_{j,\delta} {}& v^{-1}\tb^{-2-j-\epsilon}\Big[
(F^{(1)}(\regl,5-\delta/2,\tb_0,\Psipns))^{\half} +\sum_{m=\pm\half}\abs{\NPCPTI{1}(m,\ell=1)}+ \tilde{D}_0+\abs{\tilde{D}_1}\Big],
\end{align}
\end{subequations}
where $c_{\sfrak,j}$ and $c_{-\sfrak,j}$ are defined in \eqref{def:cpnsj}.
Note that we have used here the following estimate to achieve the above inequalities:
\begin{align}
\label{eq:energyofgpmsfrak:3-delta}
F^{(1)}(\regl,3-\delta,\tb_0,\Psi_{ \pm\sfrak, TI})\lesssim_{\regl, \delta}{}F^{(1)}(\regl,5-\delta/2,\tb_0,\Psipns).
\end{align}
This estimate can be proved in the following way.  We have shown in Proposition \ref{prop:energycompare} that
\begin{align}
\label{eq:energyofgsfrak:3-delta}
F^{(1)}(\regl,3-\delta,\tb_0,\Psi_{ \sfrak, TI})=
F^{(1)}(\regl,3-\delta,\tb_0,g_{\sfrak})\lesssim_{\regl, \delta}{}F^{(1)}(\regl,5-\delta/2,\tb_0,\Psipns).
\end{align}
For the other part $F^{(1)}(\regl,3-\delta,\tb_0,\Psi_{-\sfrak, TI})$, it is clear that the integrals over finite radius region is bounded by $C F^{(1)}(\regl,3-\delta,\tb_0,\Psi_{ \sfrak, TI})$ in view of the equations \eqref{eq:Dirac:TMEscalarTI}, thus we simply need to estimate the integrals for $r\geq 4M$. By definition \ref{def:Fenergies:big2}  and the equations \eqref{eq:Dirac:TMEscalarTI}, $F(\regl,0,\tb_0,\Psi_{-\sfrak, TI})\lesssim_{\regl} F(\regl,0,\tb_0,\Psi_{\sfrak, TI})$, and
\begin{align}
\norm{rV\tildePhinsHighTI{1}}^2_{W_{3-\delta-2}^{\regl}(\Sigmatb^{\geq 4M})}
\lesssim_{\regl}{}& \norm{rV\tildePhipsHighTI{1}}^2_{W_{3-\delta-2}^{\regl}(\Sigmatb^{\geq 4M})}\lesssim_{\regl,\delta} F^{(1)}(\regl,3-\delta,\tb_0,g_{\sfrak}).
\end{align}
In conclusion, $F^{(1)}(\regl,3-\delta,\tb_0,\Psi_{ -\sfrak, TI})\lesssim_{\regl,\delta} F^{(1)}(\regl,3-\delta,\tb_0,\Psi_{ \sfrak, TI})$. Combined with the estimate \eqref{eq:energyofgsfrak:3-delta}, we obtain the estimate \eqref{eq:energyofgpmsfrak:3-delta}.

%%%%%%%%%%%%%
\subsubsection{Asymptotics for the entire Dirac field}
\label{sect:pfofThm2:ell2m}
%%%%%%%%%%%%

Since $\Lxi\varphi_{\sfrak, TI}=\varphi_{\sfrak}$ and $\Lxi\varphi_{-\sfrak,TI}=\psins$, we obtain from \eqref{eq:Pricelaw:zero:gs} that for a suitably small $\delta$,
there exists an $\epsilon>0$ and a $\regl=\regl(j)>0$ such that
\begin{subequations}
\label{eq:Pricelaw:zero:ell=1}
\begin{align}
\hspace{6ex}&\hspace{-6ex}\bigg|\Lxi^j (\varphi_{\sfrak})^{\ell=1}-c_{\sfrak,j+1}v^{-2}\tb^{-2-j}
\sum_{m=\pm\half}\NPCPTI{1}(m,\ell=1)
Y_{m,\ell=1}^{\sfrak}(\cos\theta)e^{im\pb}\bigg|
\notag\\
\lesssim_{j,\delta} {}& v^{-2}\tb^{-2-j-\epsilon}
\Big[
(F^{(1)}(\regl,5-\delta/2,\tb_0,(\Psipns)^{\ell=1}))^{\half} \notag\\
&\qquad \qquad \qquad +\sum_{m=\pm\half}(\abs{\NPCPTI{1}(m,\ell=1)}+\abs{\tilde{D}_1(m,\ell=1)})+ \tilde{D}_0\Big], \\
\hspace{6ex}&\hspace{-6ex}\bigg|\Lxi^j (\psins)^{\ell=1}
-c_{-\sfrak,j+1}v^{-1}\tb^{-3-j}
\sum_{m=\pm\half}\NPCPTI{1}(m,\ell=1)
Y_{m,\ell=1}^{-\sfrak}(\cos\theta)e^{im\pb}\bigg|\notag\\
\lesssim_{j,\delta} {}& v^{-1}\tb^{-3-j-\epsilon}\Big[
(F^{(1)}(\regl,5-\delta/2,\tb_0,(\Psipns)^{\ell=1}))^{\half} \notag\\
&\qquad \qquad \qquad +\sum_{m=\pm\half}(\abs{\NPCPTI{1}(m,\ell=1)}+\abs{\tilde{D}_1(m,\ell=1)})+ \tilde{D}_0\Big],
\end{align}
\end{subequations}
where $c_{\sfrak,j+1}$ and $c_{-\sfrak,j+1}$ are as defined in \eqref{def:cpnsj}. Recall that this estimate holds under the assumption that
\begin{align}
(F^{(1)}(\regl,3-\delta,\tb_0,(\Psi_{\pm\sfrak,TI})^{\ell=1}))^{\half}
+\sum_{m=\pm\half}(\abs{\NPCPTI{1}(m,\ell=1)}+\abs{\tilde{D}_1(m,\ell=1)})+\tilde{D}_0
 <\infty.
\end{align}
One can utilize the estimate \eqref{eq:initialenergycompare} to bound the first term by $(F^{(1)}(\regl,5-\delta/2,\tb_0,(\Psipns)^{\ell=1}))^{\half} $, and from the expression \eqref{expression:NPCPTI} for a fixed mode $(m,\ell=1)$ and
the expression \eqref{equ:tildeH} of $\tilde{H}_{\sfrak}((\Phips)^{\ell=1})$, the second term is bounded by $C\Big(F^{(1)}(\regl,5-\delta/2,\tb_0,(\Psipns)^{\ell=1})^{\half} +\sum\limits_{m=\pm\half}\abs{\tilde{D}_1(m,\ell=1)} +\tilde{D}_0\Big)$.
Thus the estimates \eqref{eq:Pricelaw:zero:ell=1} are valid under the assumption \eqref{thm:Pricelaw:zero:assump2}.

Consider next the $\ell=2$ mode and $\ell\geq 3$ modes. It is clear from Proposition \ref{prop:almostpricelaw:ellgeq2} that for any $j\in \mathbb{N}$ and any $\delta\in (0,\half)$,
\begin{subequations}
\label{eq:pointwise:zero:ell=2}
\begin{align}
\abs{ \Lxi^j(\varphi_{\sfrak})^{\ell=2}}
\lesssim_{ j,\delta}{}&v^{-2}\tb^{-2-\frac{\delta}{2}-j}
(F^{(2)}(\regl(j),3+\delta,\tb_0,(\Psipns)^{\ell=2}))^{\half},\\
\abs{\Lxi^j(\psins)^{\ell=2}}
\lesssim_{j,\delta}{}&v^{-1}\tb^{-3-\frac{\delta}{2}-j}
(F^{(2)}(\regl(j),3+\delta,\tb_0,(\Psipns)^{\ell=2}))^{\half};
\end{align}
\end{subequations}
and
\begin{subequations}
\label{eq:pointwise:zero:ellbig3}
\begin{align}
\abs{ \Lxi^j(\varphi_{\sfrak})^{\ell\geq 3}}
\lesssim_{j,\delta}{}&v^{-2}\tb^{-2-\frac{\delta}{2}-j}
(F^{(3)}(\regl(j),1+\delta,\tb_0,(\Psipns)^{\ell\geq 3}))^{\half},\\
\abs{\Lxi^j(\psins)^{\ell\geq 3}}
\lesssim_{ j,\delta}{}&v^{-1}\tb^{-3-\frac{\delta}{2}-j}
(F^{(3)}(\regl(j),1+\delta,\tb_0,(\Psipns)^{\ell\geq 3}))^{\half}.
\end{align}
\end{subequations}
The estimates \eqref{eq:Pricelaw:zero:ell=1}--\eqref{eq:pointwise:zero:ellbig3} together prove Theorem \ref{thm:pricelaw:zeroNP}.

%%%%%%%%%%
\section*{Acknowledgment}
%%%%%%%%%

The first author S. M. acknowledges the support by the ERC grant ERC-2016 CoG 725589 EPGR. The authors are grateful to the anonymous referees for many valuable comments and suggestions.

%%%%%%%%%%%%%%%%%%%%%%%%%%%%%%%%%%%%%%%%%%%%%%
%%%%%%%%%%%%%%%%%%%%%%%%%%%%%%%%%%%%%%%%%%%%%%

\appendix

\section{Derivation of Dirac equations and Teukolsky master equation on a Kerr background}
\label{app:TMEandDiracEq}

Consider $\Phi_A$ as a test field on Kerr spacetimes. Let $\Sigma=r^2 +a^2\cos^2\theta$ and $\Delta=r^2-2Mr +a^2$, where $M$ and $a$ are the mass and angular momentum per mass of the Kerr black-hole spacetime. We follow \cite{Teukolsky1973I}
and choose a  Kinnersley null tetrad $(\tilde{l},\tilde{n},m,\bar{m})$  \cite{Kinnersley1969tetradForTypeD} which reads in Boyer-Lindquist coordinates:
\begin{align}
\begin{split}\label{eq:Kinnersley tetrad}
\tilde{l}^\mu &= {\Delta}^{-1}(r^2+a^2 , \Delta , 0 , a), \\
\tilde{n}^\mu &= \frac{1}{2\Sigma} (r^2+a^2 , - \Delta , 0 , a), \\
m^\mu &= -\frac{1}{\sqrt{2} }\rho^{\star}\left(i a \sin{\theta},0 , 1, \frac{i}{\sin{\theta}}\right),
\end{split}
\end{align}
and $(\bar{m})^{\mu}$ and ${\rho}^{\star}$ being the complex conjugate of $m^{\mu}$ and $\rho = -1/(r- i a \cos{\theta})$, respectively. Similar to the definitions \eqref{def:oandiota} and \eqref{def:compsofDiracspinor}, let  $\tilde{o}^A$ and $\tilde{\iota}^A$ be the associated dyad legs of the Kinnersley null tetrad, and let $\tilde{\chi}_0$ and $\tilde{\chi}_1$ be the components of $\tilde{\Phi}_A$ along the dyad legs $\tilde{o}^A$ and $\tilde{\iota}^A$. Then the Dirac equations \eqref{eq:Dirac:spinorform}, as shown in \cite{Teukolsky1973I}, take the form of
\begin{subequations}\label{eq:Dirac:operatorform}
\begin{align}
\label{eq:Dirac:operatorform:1}
(\delta^{\star}-\alpha+\pi)\tilde{\chi}_0={}&(D-\rho+\epsilon)\tilde{\chi}_1,\\
\label{eq:Dirac:operatorform:2}
(\Delta+\mu-\gamma)\tilde{\chi}_0={}&(\delta+\beta-\tau)\tilde{\chi}_1.
\end{align}
\end{subequations}
Here, $\delta, D, \Delta, \delta^{\star}$ are differential operators, and $\alpha, \pi, \rho,\epsilon,\mu,\gamma,\beta,\tau$ are spin coefficients. Their explicit forms and values in Kerr spacetimes are given in \eqref{eq:appx:diffopers} and \eqref{eq:appx:spincoeffs}.

However, it is well-known that this Kinnersley tetrad has singularity at $\Horizon$, thus we shall choose a regular null tetrad instead.
A Hartle--Hawking null tetrad \cite{HHtetrad72}, which is regular at $\Horizon$ in a regular coordinate system, say, the ingoing Eddington-Finkelstein coordinate system, reads in Boyer--Lindquist coordinates:
\begin{align}
\begin{split}\label{eq:HartleHawkingtetrad:Kerr}
l^\mu &= {(2\Sigma)}^{-1}(r^2+a^2 , \Delta , 0 , a), \\
n^\mu &= \Delta^{-1} (r^2+a^2 , - \Delta , 0 , a), \\
m^\mu &= -2^{-\half}\rho^{\star}\left(i a \sin{\theta},0 , 1, i\csc\theta\right).
\end{split}
\end{align}
and $\bar{m}^{\mu}$ being the complex conjugate of $m^{\mu}$.
Let  ${o}^A$ and ${\iota}^A$ be the associated dyad legs of the Hartle--Hawking null tetrad as in \eqref{def:oandiota}, and let $\chi_0$ and $\chi_1$ be the components of $\Phi_A$ along dyad legs $o^A$ and $\iota^A$ as defined in \eqref{def:compsofDiracspinor}. The components $\chi_0$ and $\chi_1$ are thus regular up to and on $\Horizon$.

Denote the future-directed ingoing and outgoing principal null vector in B-L coordinates
\begin{subequations}
\begin{align}\label{def:VectorFieldYandV:Kerr}
Y&= \frac{(r^2+a^2)\partial_t +a\partial_{\phi}}{\Delta}-\partial_r, \ &\ \VR&= \frac{(r^2+a^2)\partial_t+
a\partial_{\phi}}{\Delta}+\partial_r,
\end{align}
and define in B-L coordinates
\begin{align}
\curlL{n}={}&\partial_{\theta}-\frac{i}{\sin\theta}\partial_{\phi}
-ia\sin\theta\partial_t + n\cot\theta,\\
\curlLd{n}={}&\partial_{\theta}
+\frac{i}{\sin\theta}\partial_{\phi}
+ia\sin\theta\partial_t + n\cot\theta.
\end{align}
\end{subequations}
Denote $s$  the spin-weight $\pm \frac{1}{2}$ and $\sfrak$ its absolute value $\frac{1}{2}$, and define our Teukolsky scalars of Dirac field as
\begin{align}
\psis=\left\{
             \begin{array}{ll}
               \Sigma^{\half}\chi_0, & \hbox{$s=\half$;} \\
               (2\Sigma)^{-\half}(r-ia\cos\theta)\chi_1, & \hbox{$s=-\half$.}
             \end{array}
           \right.
\end{align}

Applying $(D+\epsilon^{\star}-\rho-\rho^{\star})$ to
\eqref{eq:Dirac:operatorform:2}
and $(\delta-\alpha^{\star}-\tau+\pi^{\star})$ to
\eqref{eq:Dirac:operatorform:2} and taking the difference, one obtains a decouple equation of $\tilde{\chi}_0$:
\begin{align}
[(D+\epsilon^{\star}-\rho-\rho^{\star})(\Delta-\gamma+\mu)
-(\delta-\alpha^{\star}-\tau+\pi^{\star})
(\delta^{\star}-\alpha+\tau)]\tilde{\chi}_0={}0.
\end{align}
Interchanging $\tilde{l}\leftrightarrow \tilde{n}$ and $m\leftrightarrow \bar{m}$ gives
\begin{align}
[(\Delta-\gamma^{\star}+\mu+\mu^{\star})(D+\epsilon-\rho)
-(\delta^{\star}+\beta^{\star}+\pi-\tau^{\star})
(\delta+\beta-\tau)]\tilde{\chi}_1={}&0.
\end{align}
In this Kinnersley tetrad, the nonvanishing spin coefficients are
\begin{align}
\label{eq:appx:spincoeffs}
\beta={}&\frac{\cot\theta}{2\sqrt{2}(r+ia\cos \theta)},&
\pi={}&\frac{ia\sin\theta}{\sqrt{2}(r-ia\cos\theta)^2},
&\rho={}&\frac{-1}{r-ia\cos\theta}, &
\tau={}&\frac{-ia\sin\theta}{\sqrt{2}\Sigma}, \notag\\
\mu={}&\frac{-\Delta}{2(r-ia\cos\theta)\Sigma}, & \alpha={}&\pi-\frac{\cot\theta}{2\sqrt{2}(r-ia\cos\theta)}, &\gamma={}&\mu+\frac{r-M}{2\Sigma},
\end{align}
and the differential operators in \eqref{eq:Dirac:operatorform} are
\begin{align}
\label{eq:appx:diffopers}
D={}&\VR, &\Delta={}&\frac{\Delta}{2\Sigma}Y,&
\delta={}&\frac{1}{\sqrt{2}(r+ia\cos\theta)}\curlLd{0}, &\delta^{\star}={}&\frac{1}{\sqrt{2}(r-ia\cos\theta)}\curlL{0}.
\end{align}
In view of the relations
\begin{align}
\psis=\left\{
             \begin{array}{ll}
               2^{-\Half}\Delta^{\Half}\tilde{\chi}_0, & \hbox{$s=\Half$;} \\
               \Delta^{-\Half}(r-ia\cos\theta)\tilde{\chi}_1, & \hbox{$s=-\Half$,}
             \end{array}
           \right.
\end{align}
we obtain from equations \eqref{eq:Dirac:operatorform} the following Dirac equations
\begin{subequations}\label{eq:Dirac:TMEscalar::Kerr}
\begin{align}
\eth'   \psips={}&
(\Delta^{\Half}\VR)(\Delta^{\Half}\psins),\\
\eth \psins={}&
Y\psips,
\end{align}
\end{subequations}
where $\eth'=\edthR'-ia\sin\theta\Lxi$ and $\eth=\edthR +ia\sin\theta\Lxi$.
As is shown by Teukolsky in \cite{Teukolsky1973I},
the scalars $\psips^{\text{Teu}}=\tilde{\chi}_0$ and $\psins^{\text{Teu}}=\rho^{-1}\tilde{\chi}_1$ satisfy the celebrated
 Teukolsky master equation (TME). Since $\psips=\frac{1}{\sqrt{2}}\Delta^{\sfrak}\psi_{\sfrak}^{\text{Teu}}$ and $\psins=-\Delta^{-\sfrak}\psi_{{-\sfrak}}^{\text{Teu}}$, by taking into account of this rescaling, one obtains the following form of TME in Boyer--Lindquist coordinates:
\begin{align}
\begin{split}\label{eq:TME:Kerr}
& -\left[\tfrac{(r^2+a^2)^2}{\Delta} -a^2 \sin^2{\theta} \right] \tfrac{\partial^2 \psis}{\partial t^2} - \tfrac{4Mar}{\Delta} \tfrac{\partial^2 \psis}{\partial t \partial \phi}-\left[\tfrac{a^2}{\Delta} -\tfrac{1}{\sin^2{\theta}} \right] \tfrac{\partial^2 \psis}{\partial \phi^2}   \\
&  +\Delta^{s} \tfrac{\partial}{\partial r} \left( \Delta^{-s+1} \tfrac{\partial \psis}{\partial r} \right) + \tfrac{1}{\sin{\theta}} \tfrac{\partial}{\partial \theta} \left( \sin{\theta} \tfrac{\partial \psis}{\partial \theta}\right) +2s \left[ \tfrac{a(r-M)}{\Delta} + \tfrac{i \cos{\theta}}{\sin^2{\theta} } \right] \tfrac{\partial \psis}{\partial \phi} \\
&  +2s\left[ \tfrac{M(r^2-a^2)}{\Delta} -r -ia \cos{\theta} \right] \tfrac{\partial \psis}{\partial t}- (s^2 \cot^2{\theta} +s) \psis = 0 .
\end{split}
\end{align}

%%%%%%%%%%%%%%%%%%%
\section{A list of scalars constructed from the spin $\pm \half$ components}
%%%%%%%%%%%%%%%

For convenience, we collect the scalars constructed from the spin $\pm \half$ components in the table below so that one can easily relate them in terms of the scalars $\psips$ and $\psins$.

\begin{table}[htbp]
\begin{center}
\begin{tabular}{l|ll}
&$s=\sfrak$   & $s=-\sfrak$\\
\hline
$\psis$ & $r\chi_0$  as in \eqref{def:psipns:Schw}& $2^{-\half}\chi_1$ as in \eqref{def:psipns:Schw}\\
$\phis$ &$r^{-1}\psips$ as in \eqref{def:phis} &$\mu^{\half}\psins$ as in \eqref{def:phis}\\
$\Phis$ & $\psips$  as in \eqref{def:varphis} & $r\mu^{\half}\psins$ as in \eqref{def:varphis} \\
$\Psi_s$ & $r\psips$ as in \eqref{def:PsipsPsins} & $r\psins$ as in \eqref{def:PsipsPsins}\\
$\varphi_s$ &$(r-M)^{-1}\psips$ as in \eqref{def:varphisfrak} &\textbackslash\\
$\Phi_{s}^{(1)}$ &$\mu^{-\half}r\psips$ as in Definition \ref{def:hatPhips}& $\curlVR \Phi_{-\sfrak}$ as in Definition \ref{def:tildePhiplusandminusHigh}\\
$\Phi_{s}^{(i)}$ &$\curlVR^{i-1}\Phi_{\sfrak}^{(1)}$
 as in Definition \ref{def:tildePhiplusandminusHigh}& $\curlVR^{i-1} \Phi_{-\sfrak}^{(1)}$ as in Definition \ref{def:tildePhiplusandminusHigh}\\
$\tilde{\Phi}_{s}^{(i)}$ &as in Definition \ref{def:tildePhiplusandminusHigh} & as in Definition \ref{def:tildePhiplusandminusHigh}\\
$g_s$ &$\partial_\tau g_{\sfrak}=\psips$ as in Lemma \ref{lem:timeintegral:posi} &$\partial_\tau g_{-\sfrak}=\psins$ as in \eqref{eq:Dirac:TMEscalarTI}
\end{tabular}
\end{center}
\caption{Expressions of the spin $\pm \half$ components.}
\end{table}

\providecommand{\bysame}{\leavevmode\hbox to3em{\hrulefill}\thinspace}
\providecommand{\MR}{\relax\ifhmode\unskip\space\fi MR }
% \MRhref is called by the amsart/book/proc definition of \MR.
\providecommand{\MRhref}[2]{%
  \href{http://www.ams.org/mathscinet-getitem?mr=#1}{#2}
}
\providecommand{\href}[2]{#2}

\end{document}